\theoremstyle{plain}
\newtheorem{prop}{Proposition}[section]
\newtheorem{lem}[prop]{Lemma}
\newtheorem*{lem*}{Lemma}
\newtheorem{thm}[prop]{Theorem}
\newtheorem{cor}[prop]{Corollary}
\theoremstyle{definition}
\newtheorem{defi}[prop]{Definition}
\newtheorem{ex}[prop]{Example}
\theoremstyle{remark}
\newtheorem{rem}[prop]{Remark}
\newtheorem*{conv}{Convention}
\numberwithin{equation}{section}
\begin{document}

\selectlanguage{english}

	\title{Categorification of the adjoint action of quantum groups}
	
	\author{Laurent Vera}
	
	\maketitle
	
	\begin{abstract}
		Let $U$ be a quantized enveloping algebra. We consider the adjoint action of an $\mathfrak{sl}_2$-subalgebra of $U$ on a subalgebra of $U^+$ that is maximal integrable for this action. We categorify this representation in the context of quiver Hecke algebras. We obtain an action of the 2-category associated with $\mathfrak{sl}_2$ on a category of modules over certain quotients of quiver Hecke algebras. Our approach is similar to that of Kang-Kashiwara \cite{kk} for categorifications of highest weight modules via cyclotomic quiver Hecke algebras. One of the main new features is a compatibility of the categorical action with the monoidal structure, categorifying the notion of derivation on an algebra. As an application of some of our results, we categorify the higher order quantum Serre relations, extending results of Sto\v{s}i\'{c} \cite{stosic} to the non simply-laced case.
	\end{abstract}
	
	\tableofcontents
	
	\section{Introduction}	
	The study of categorified quantum groups began, in its current form, with the work of Chuang and Rouquier \cite{ChR}. While categorifications of representations of Lie algebras and their quantized versions had already appeared (for instance in \cite{ariki}, \cite{bfk} or \cite{huerf} among others), the key novelty in \cite{ChR} was the introduction of Hecke algebra actions at the level of natural transformations in the axiomatic. The resulting notion is called an $\mathfrak{sl}_2$-categorification. This was later generalized to arbitrary symmetrizable Kac-Moody types by Khovanov and Lauda in \cite{catII}, \cite{catIII}, and Rouquier in \cite{2km}. More precisely, let $C$ be a symmetrizable generalized Catan matrix and $U$ the corresponding quantum group. In \cite{catII} and \cite{2km}, a family of graded algebra $(H_{\beta})_{\beta \in Q^+}$ attached to $C$ is introduced, where $Q^+$ denotes the cone of linear combinations of simple roots with coefficients in $\mathbb{Z}_{\geqslant 0}$. These algebras are now known as the Khovanov-Lauda-Rouquier algebras (or simply KLR algebras, or quiver Hecke algebras). It is shown by Khovanov and Lauda in \cite{catII} that over a field, the direct sum over $\beta \in Q^+$ of the Grothendieck groups of the category of finitely generated graded projective $H_{\beta}$-modules is isomorphic to the integral positive part of $U$. The multiplication of $U$ corresponds to an induction product on the KLR algebras side. Furthermore, Khovanov-Lauda and Rouquier introduce 2-categories which categorify Beilinson-Lusztig-MacPherson's idempotent version of the quantum group $\dot{U}$. While the definitions of the 2-categories in \cite{catIII} and \cite{2km} differ on a few points, it was shown by Brundan in \cite{brun} that they are actually isomorphic. Hence there is an essentially unique 2-quantum group $\mathcal U$ associated to $C$. Rouquier proved in \cite{2km} that in the case $C=(2)$, the 2-representations of the 2-category $\mathcal U$ recover the $\mathfrak{sl}_2$-categorifications of \cite{ChR}.
	
	In \cite{catI}, Khovanov and Lauda also conjecture that the irreducible module of $U$ of highest weight $\Lambda$ is categorified by the cyclotomic KLR algebras $(H_{\beta}^{\Lambda})_{\beta \in Q^+}$, which are certain quotients of the KLR algebras. This was proved by Kang and Kashiwara in \cite{kk} (see also \cite{webs}). We recall briefly the results and the strategy of proof of \cite{kk}. Kang and Kashiwara prove that the 2-category $\mathcal U$ acts on the direct sum of the categories of $H_{\beta}^{\Lambda}$-modules, for $\beta \in Q^+$. Given a simple root $i$, the Chevalley generators $F_i$ and $E_i$ of $\mathcal U$ act as some induction and restriction functors $F_i^{\Lambda}$ and $E_i^{\Lambda}$ between the algebras $H_{\beta}^{\Lambda}$. Their proof that these functors yield an action of $\mathcal U$ is based on the following key result. Given a module $M$ over $H_{\beta}^{\Lambda}$, they show that there is an exact sequence
	\begin{equation}\label{seskk}
		0 \rightarrow \overline{F}_i(M) \rightarrow F_i(M) \rightarrow F_i^{\Lambda}(M) \rightarrow 0.
	\end{equation}
	where $\overline{F}_i$ is a ``left $i$-induction" functor and $F_i$ a ``right $i$-induction" functor between KLR algebras. Furthermore this exact sequence is natural in $M$. From this they can recover many of the properties needed to construct a representation of $\mathcal U$, such as the exactness of the functors $F_i^{\Lambda}$ and $E_i^{\Lambda}$, and a categorification of the Lie algebra relation $[e_i,f_i]=h_i$. They also prove that at the Grothendieck group level, this action of $\mathcal U$ gives the irreducible module of $U$ of highest weight $\Lambda$.
	
	In this paper, we categorify part of the adjoint action of $U$ using a similar approach. By adjoint action, we mean the left adjoint action of $U$ on itself arising from the Hopf algebra structure. Explicitly, for a simple root $i$, the adjoint action of the Chevalley generators $e_i,f_i,k_i$ on a element $y$ of $U$ of weight $\beta \in Q^+$ takes the form
	\begin{equation}\label{adj}
		\mathrm{ad}_{e_i}(y)= e_iy - q_i^{\left<i^{\vee},\beta\right>}ye_i, \quad \mathrm{ad}_{f_i}(y)=(f_iy-yf_i)k_i, \quad \mathrm{ad}_{k_i}(y) = q_i^{\left<i^{\vee},\beta\right>}y.
	\end{equation}
	where $i^{\vee}$ denotes the coroot associated to $i$, and $\left< ,\right>$ is the pairing between the dual weight and the weight lattices. Note that the adjoint action of $U$ on itself is not integrable. Our approach does not yield a categorification of the complete adjoint action of $U$, but rather of the action of a given $\mathfrak{sl}_2$-subalgebra $U_i$ of $U$ on a subalgebra $U^+[i]$ of $U^+$ which is integrable for the action of $U_i$, and maximal for this property. More explicitly, for a fixed simple root $i$, $U_i$ is the subalgebra of $U$ generated by $e_i,f_i$ and $k_i$. The subalgebra $U^+[i]$ of $U^+$ is generated by the $\mathrm{ad}_{e_i}^{(n)}(e_j)$ for $n \geqslant 0$ and $j$ a simple root not equal to $i$. Here $\mathrm{ad}_{e_i}^{(n)}$ denotes the $n^{\mathrm{th}}$ divided power of $\mathrm{ad}_{e_i}$. The subalgebra $U^+[i]$ is studied by Lusztig in \cite[Chapter~38]{Lu}, as part of his study of the braid group action. We prove in Proposition \ref{int} that the adjoint action induces an integrable representation of $U_i$ on $U^+[i]$, and that $U^+[i]$ is the largest subspace of $U^+$ with this property. It is this representation that we categorify. To do so, we start by categorifying the algebra $U^+[i]$. For $\beta \in Q^+$, we define an algebra $H_{\beta}^i$ as the quotient of $H_{\beta}$ by the two-sided ideal generated by the idempotent $1_{\beta-i,i}$. These algebras are the analogues of the cyclotomic KLR algebras in our context. However, they behave quite differently: for instance, they are typically infinite dimensional (see Proposition \ref{infgen}) while the cyclotomic KLR algebras are always finite dimensional. Then, we define the category $\mathcal H[i]$ as the direct sum of the categories of finitely generated graded $H_{\beta}^i$-modules, for $\beta \in Q^+$. Our category $\mathcal H[i]$ is a Serre and monoidal full subcategory of the category of all modules over the KLR algebras, and serves as a categorical analogue of the subalgebra $U^+[i]$. On $\mathcal H[i]$ we consider an endofunctor $\mathrm{ad}_{E_i}$, which can be defined as an induction functor between the algebras $H_{\beta}^i$, in a way similar to the functors $F_i^{\Lambda}$ of \cite{kk}. In Proposition \ref{adei}, we endow the powers of $\mathrm{ad}_{E_i}$ with an action of the affine nil Hecke algebras. In particular, we obtain well-defined divided powers $\mathrm{ad}_{E_i}^{(n)}$. This action is one of the axioms to establish a structure of 2-representation of $\mathfrak{sl}_2$ on $\mathcal H[i]$. The other axioms are more delicate to check, and we prove them using a similar approach to \cite{kk}. Our first main result is the following.
	
	\begin{thm}[Theorem \ref{tauinj}]\label{first}
		For all $M \in \mathcal{H}[i]$ of weight $\beta \in Q^+$, there is a short exact sequence
		\[
			0 \rightarrow q_i^{\left< i^{\vee},\beta\right>}ME_i \rightarrow E_iM \rightarrow \mathrm{ad}_{E_i}(M) \rightarrow 0.
		\]
		Furthermore, this sequence is natural in $M$.
	\end{thm}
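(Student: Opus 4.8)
The result is a categorification of the commutation relation $\mathrm{ad}_{e_i}(y)=e_iy-q_i^{\langle i^\vee,\beta\rangle}ye_i$ of \eqref{adj}. Here $E_iM=H_{\beta+i}1_{i,\beta}\otimes_{H_\beta}M$ and $ME_i=H_{\beta+i}1_{\beta,i}\otimes_{H_\beta}M$ are the functors that add a strand coloured $i$ on the far left, respectively on the far right, where $M$ is regarded as an $H_\beta$-module by inflation along $H_\beta\twoheadrightarrow H_\beta^i$, so that both are modules over the full algebra $H_{\beta+i}$, while $\mathrm{ad}_{E_i}(M)$ is the induction functor between the quotients $H_\bullet^i$. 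I would split the argument into: (i) constructing a natural transformation $\tau\colon q_i^{\langle i^\vee,\beta\rangle}ME_i\to E_iM$; (ii) identifying $\operatorname{coker}\tau$ with $\mathrm{ad}_{E_i}(M)$; (iii) proving $\tau$ injective. Naturality in $M$ will be automatic.

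For (i), $\tau$ is left multiplication by the element $\psi_w1_{i,\beta}\in H_{\beta+i}$ that slides the new $i$-strand from the first to the last position, a product of $|\beta|$ elementary crossings; its degree is $-(\alpha_i,\beta)$, which is exactly compensated by the shift $q_i^{\langle i^\vee,\beta\rangle}$, so $\tau$ is homogeneous of degree $0$. That it factors through the relative tensor product amounts to the identity, in $H_{\beta+i}$, that sliding the $i$-strand past the block of $\beta$-strands intertwines the two copies of $H_\beta$ inside $H_{\beta+i}$ (acting on the first, respectively on the last, $|\beta|$ strands), up to terms of strictly smaller length coming from strands of $\beta$ coloured $i$; $H_{\beta+i}$-linearity and naturality are clear because the defining element does not involve $M$. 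This is the analogue of the construction in \cite{kk}.

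For (ii), the canonical surjection $E_iM\twoheadrightarrow\mathrm{ad}_{E_i}(M)$ has kernel the $H_{\beta+i}$-submodule generated by $1_{\beta,i}E_iM$, which contains $\operatorname{im}\tau$ since $\psi_w1_{i,\beta}=1_{\beta,i}\psi_w1_{i,\beta}$. The reverse inclusion follows from the PBW basis theorem for $H_{\beta+i}$: any element of $1_{\beta,i}H_{\beta+i}1_{i,\beta}$ is, modulo strictly shorter terms handled by induction on length, of the form $a\,\psi_w\,b$ with $a\in 1_{\beta,i}H_{\beta+i}1_{\beta,i}$ and $b\in 1_{i,\beta}H_{\beta+i}1_{i,\beta}$, and in $-\otimes_{H_\beta}M$ the factor $b$ is absorbed into $M$ and $a$ into the left action. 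The hypothesis $M\in\mathcal H[i]$, i.e. $1_{\beta-i,i}M=0$, enters here, both to discard the residual error terms and to identify $-\otimes_{H_\beta}M$ with $-\otimes_{H_\beta^i}M$.

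Step (iii) is the main obstacle: the slide $\psi_w$ has a kernel when applied to an arbitrary $H_\beta$-module, so its injectivity on $ME_i$ genuinely uses the defining relation of $H_\beta^i$, in the same spirit as — though technically different from — the way \eqref{seskk} uses the cyclotomic relation in \cite{kk}. It suffices to prove the identity of graded dimensions $\dim_q E_iM=\dim_q\bigl(q_i^{\langle i^\vee,\beta\rangle}ME_i\bigr)+\dim_q\mathrm{ad}_{E_i}(M)$ — or, equivalently, the corresponding identity in the Grothendieck groups, which one can extract from Proposition \ref{int} together with the description of the algebras $H_\bullet^i$ (Proposition \ref{infgen}) — because all three modules have graded dimensions lying in $\mathbb Z((q))$, so the degree-zero surjection $q_i^{\langle i^\vee,\beta\rangle}ME_i\twoheadrightarrow\operatorname{im}\tau$ furnished by (i) and (ii) is then forced to be an isomorphism. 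I would establish this identity by dissecting $E_iM$ with the KLR basis theorem: filter $H_{\beta+i}1_{i,\beta}$ as an $(H_{\beta+i},H_\beta\otimes H_{\alpha_i})$-bimodule into subquotients free over $H_\beta\otimes H_{\alpha_i}$, and track how imposing $1_{\beta-i,i}M=0$ truncates the resulting filtration of $E_iM$; alternatively, construct directly a filtration of $ME_i$ and $E_iM$ compatible with $\tau$ and check injectivity of the induced maps on the associated graded using the PBW basis.
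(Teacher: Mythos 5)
Your steps (i) and (ii) track the paper's own construction of $\tau_{E_i,M}$ and the identification of its cokernel (Propositions \ref{welldef} and \ref{coker}); both are fine. The gap is in (iii), which you yourself flag as the main obstacle, and neither of your two suggestions there closes it.

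The graded-dimension route is circular. To run it you need an \emph{a priori} formula for $\dim_q \mathrm{ad}_{E_i}(M) = \dim_q\bigl(H_{\beta+i}^i 1_{i,\beta}\otimes_{H_\beta^i} M\bigr)$, but the only way the paper (or anyone) computes this bimodule is \emph{via} the short exact sequence of Theorem \ref{tauinj}. Propositions \ref{int} and \ref{infgen} do not supply such a formula: the first is a statement about the quantum group with no bridge to $\mathcal H[i]$ established at this stage, and the second only says $H_\beta^i$ is infinitely generated. In fact the whole point of the theorem is to justify that $\mathrm{ad}_{E_i}$ decategorifies to $\mathrm{ad}_{e_i}$, so importing the decategorified identity as input begs the question. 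Moreover, the paper works over an arbitrary commutative ring $K$, where ``graded dimension in $\mathbb Z((q))$'' is not available. The filtration alternative is not fleshed out, and a naive associated-graded argument by $\tau$-length does not obviously work because of the polynomial error terms arising from $\tau_k^2 1_\nu = Q_{\nu_k,\nu_{k+1}}(x_k,x_{k+1})1_\nu$; these mix different length strata.

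The paper's actual argument for injectivity is structurally different and has two stages you omit. First (Theorem \ref{injforgen}), for the specific modules $M = \mathrm{ad}_{E_i}^{(n)}(E_{j_r}\ldots E_{j_1})$ and their subquotients, one constructs an explicit ``quasi left inverse'' $\tau_{M,E_i}\colon E_iM\to ME_i$ involving the polynomial $Q_{i,\beta}(x_{n+r+1},x_1,\ldots,x_{n+r})$, and shows $\tau_{M,E_i}\circ\tau_{E_i,M}$ is multiplication by a carefully engineered \emph{central} element $\Theta$ of $H_{\beta+i}$ whose leading term in $x_1$ has invertible coefficient; this forces $\Theta$, hence $\tau_{E_i,M}$, to act injectively. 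Second (Theorem \ref{genforhi}), one proves that $\mathcal H[i]$ is generated as a Serre subcategory by these modules, and this step already uses the derivation-type filtration of Proposition \ref{derivationweak}, which in turn uses the injectivity on the generators. Proposition \ref{exactweak} then lets one transport injectivity along filtrations. Without a Serre-generation statement or the explicit central-element trick, your step (iii) is not a proof; it is the place where essentially all of the paper's Subsection \ref{proof} happens.
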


	Here, the coefficient $q_i^{\left< i^{\vee},\beta\right>}$ denotes a grading shift. The short exact sequence of Theorem \ref{first} can be seen as a categorification of equation (\ref{adj}), and it shows in particular that the functor $\mathrm{ad}_{E_i}$ lifts the operator $\mathrm{ad}_{e_i}$ to the categorical setting. It is an analogue of the short exact sequence (\ref{seskk}) from \cite{kk} in our framework. From this, we can deduce as in \cite{kk} that the functor $\mathrm{ad}_{E_i}$ is exact (Corollary \ref{exact}). At this stage, the main axiom left to check is the categorification of the relation $[e_i,f_i]=h_i$. This is done in Theorem \ref{weyl}. The conclusion of this work is that the action of $U_i$ on $U^+[i]$ lifts to an $\mathfrak{sl}_2$ categorical action on $\mathcal H[i]$. This is our second main result.

	\begin{thm}[Theorem \ref{2rep}]\label{second}
		The endofunctor $\mathrm{ad}_{E_i}$ induces a 2-representation of $\mathfrak{sl}_2$ on $\mathcal H[i]$.
	\end{thm}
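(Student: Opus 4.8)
The plan is to check, one at a time, that the data assembled in the previous sections fit the definition of a 2-representation of $\mathfrak{sl}_2$, in the sense of Rouquier \cite{2km} (equivalently, a 2-representation of the 2-quantum group $\mathcal{U}$ attached to the Cartan matrix $(2)$, using Brundan \cite{brun}). Such a structure consists of: a weight decomposition $\mathcal{H}[i] = \bigoplus_{\beta \in Q^+} \mathcal{H}[i]_\beta$, where $\mathcal{H}[i]_\beta$ is assigned the $\mathfrak{sl}_2$-weight $\langle i^\vee, \beta \rangle$, precisely the integer governing the grading shift in Theorem \ref{first}; a pair of functors $\mathrm{ad}_{E_i}$ and $\mathrm{ad}_{F_i}$ forming an adjoint pair and in fact biadjoint up to grading shifts; an action of the affine nil Hecke algebras on the powers of $\mathrm{ad}_{E_i}$; and the inversion relations comparing $\mathrm{ad}_{E_i}\mathrm{ad}_{F_i}$ and $\mathrm{ad}_{F_i}\mathrm{ad}_{E_i}$ weight space by weight space. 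The nil Hecke action is exactly Proposition \ref{adei}, so the work reduces to the adjunctions and the inversion relations, following the template of Kang-Kashiwara \cite{kk} with Theorem \ref{first} in the role of the short exact sequence (\ref{seskk}).

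First I would invoke the exactness of $\mathrm{ad}_{E_i}$ (Corollary \ref{exact}), deduced from Theorem \ref{first} as in \cite{kk} from the exactness on $\mathcal{H}[i]$ of the right and left $i$-induction functors $q_i^{\langle i^\vee, \beta\rangle}(-)E_i$ and $E_i(-)$. Next I would pin down $\mathrm{ad}_{F_i}$ as the restriction-type functor right adjoint to $\mathrm{ad}_{E_i}$, check that it preserves $\mathcal{H}[i]$ (it sends a finitely generated graded $H_\beta^i$-module to a finitely generated graded $H_{\beta-i}^i$-module, using that $\mathcal{H}[i]$ is a Serre monoidal subcategory of the module category over the KLR algebras), and obtain the unit and counit of $(\mathrm{ad}_{E_i}, \mathrm{ad}_{F_i})$ by descending the usual induction-restriction adjunction on the KLR algebras to the quotients $H_\beta^i$. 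Exactness of $\mathrm{ad}_{F_i}$ then follows since it is adjoint to an exact functor. For the biadjointness, that $\mathrm{ad}_{F_i}$ is also, up to a shift, left adjoint to $\mathrm{ad}_{E_i}$, I would argue from a Frobenius-type property of the algebras $H_\beta^i$ analogous to the one used in \cite{kk} for cyclotomic KLR algebras, or else recover it a posteriori from the nil Hecke action together with the inversion relations.

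The heart of the proof is the categorification of the relation $[e_i, f_i] = h_i$, which is Theorem \ref{weyl}: on $\mathcal{H}[i]_\beta$ it provides natural isomorphisms exhibiting $\mathrm{ad}_{E_i}\mathrm{ad}_{F_i}$ and $\mathrm{ad}_{F_i}\mathrm{ad}_{E_i}$ as direct sums of one another together with $|\langle i^\vee, \beta\rangle|$ shifted copies of the identity functor, the side carrying the identity summands being dictated by the sign of $\langle i^\vee, \beta\rangle$. Feeding this, the nil Hecke action of Proposition \ref{adei}, and the (co)units above into the definition, I would verify that the 2-morphisms required to be invertible in the inversion axiom are indeed invertible; this being the last axiom, $\mathrm{ad}_{E_i}$ then induces a 2-representation of $\mathfrak{sl}_2$ on $\mathcal{H}[i]$. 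As a complement one identifies the resulting action on Grothendieck groups with the integrable $U_i$-module $U^+[i]$ of Proposition \ref{int}, and notes that $\mathrm{ad}_{E_i}$ and $\mathrm{ad}_{F_i}$ act locally nilpotently, consistently with integrability.

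The main obstacle is the same delicate point as in \cite{kk}: it is not enough to know that $\mathrm{ad}_{E_i}\mathrm{ad}_{F_i}$ and $\mathrm{ad}_{F_i}\mathrm{ad}_{E_i}$ abstractly differ by copies of the identity; one must exhibit the comparison as the specific 2-morphism built from the generators $x$ and $\tau$ of the nil Hecke action and from the unit and counit, and check its compatibility with all structure maps. This forces a careful analysis of how the natural short exact sequence of Theorem \ref{first} interacts with the nil Hecke action, together with attentive bookkeeping of grading shifts. A further difficulty, absent from \cite{kk}, is that $\mathcal{H}[i]$ is neither a highest- nor a lowest-weight situation: the weights $\langle i^\vee, \beta\rangle$ are unbounded in both directions and the algebras $H_\beta^i$ are typically infinite dimensional (Proposition \ref{infgen}), so the arguments must be carried out within finitely generated graded modules, where gradings are bounded below and graded Hom spaces are finite dimensional in each degree, so that the decomposition statements retain their meaning.
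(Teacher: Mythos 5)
Your proposal matches the paper's proof: verify Definition \ref{2repsl2} directly by checking exactness of $\mathrm{ad}_{E_i}$ and $\mathrm{ad}_{F_i}$ (Corollaries \ref{exact} and \ref{proj}), the nil Hecke action on powers of $\mathrm{ad}_{E_i}$ (Proposition \ref{adei}), and the invertibility of the maps $\rho_w$, which is the content of Theorem \ref{weyl}, proved by combining the short exact sequence of Theorem \ref{tauinj} with the Mackey decompositions and reducing to the generating objects $\mathrm{ad}_{E_i}^{(n)}(E_{j_r}\ldots E_{j_1})$ via Theorem \ref{genforhi}. One minor point: the paper's Definition \ref{2repsl2} does not ask for biadjointness as a separate axiom, so there is no need to invoke a Frobenius-type property of $H_\beta^i$ (which is not obviously available since these algebras are infinite-dimensional); your alternative observation that biadjointness follows formally from the one-sided adjunction, the nil Hecke action, and the inversion relation is the right one.
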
 
	
	A key new feature of our work is a compatibility of the 2-representation with the monoidal structure. In general, the adjoint action of a Hopf algebra on itself is compatible with the multiplication. In the case of the quantum group $U$, this takes the simple form $\mathrm{ad}_{e_i}(yz) = \mathrm{ad}_{e_i}(y)z+  q_i^{\left< i^{\vee},\beta\right>}y\mathrm{ad}_{e_i}(z)$ for $y,z \in U$ and $y$ of weight $\beta \in Q^+$. We prove that this formula has a categorical analogue, in the form of a short exact sequence.
	\begin{thm}[Corollary \ref{derivation}]\label{third}
		For $M,N \in \mathcal H[i]$ with $M$ of weight $\beta \in Q^+$, there is a short exact sequence
		\[
			0 \rightarrow q_i^{\left< i^{\vee},\beta\right>}M \ \mathrm{ad}_{E_i}(N) \rightarrow \mathrm{ad}_{E_i}(MN) \rightarrow \mathrm{ad}_{E_i}(M)N \rightarrow 0.
		\]
		Furthermore, this sequence is natural in $M,N$.
	\end{thm}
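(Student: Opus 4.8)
The plan is to read off Corollary~\ref{derivation} from Theorem~\ref{first}. Recall that Theorem~\ref{first} identifies, for $X\in\mathcal H[i]$ of weight $\delta$, the functor $\mathrm{ad}_{E_i}(X)$ with the cokernel of a natural monomorphism $\tau_X\colon q_i^{\left\langle i^{\vee},\delta\right\rangle}XE_i\hookrightarrow E_iX$. The proof rests on three ingredients. First, natural isomorphisms
\[
	E_i(MN)\cong (E_iM)N,\qquad (MN)E_i\cong M(NE_i),\qquad M(E_iN)\cong (ME_i)N,
\]
which hold because $E_i$ and $(-)E_i$ insert the extra $i$-strand at one fixed end of the colour string while the monoidal product concatenates colour strings, so that the two sides of each isomorphism are re-bracketings of one and the same induced module. (Making this precise is a formal consequence of the transitivity of the induction product, once one checks that the various quotients $H_\bullet^i$ are compatible; it is not the main difficulty.) Secondly, the functors $M\circ(-)$ and $(-)\circ N$ are exact on $\mathcal H[i]$, a standard property of induction for quiver Hecke algebras that is inherited since $\mathcal H[i]$ is a Serre monoidal subcategory. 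Thirdly, and crucially, the natural transformation $\tau$ is \emph{multiplicative}: under the identifications above,
\[
	\tau_{MN}=(\tau_M\circ N)\circ(M\circ\tau_N)
\]
(up to the evident grading shifts, which add up since $\left\langle i^{\vee},\beta\right\rangle+\left\langle i^{\vee},\gamma\right\rangle=\left\langle i^{\vee},\beta+\gamma\right\rangle$), expressing that sliding the new $i$-strand past $MN$ amounts to sliding it first past $N$ and then past $M$.

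The main obstacle is this last point. To establish it one unwinds the construction of $\tau_X$ from the proof of Theorem~\ref{first}: it is induced by the intertwiner of the quiver Hecke algebra carrying the colour string $X\,|\,i$ to $i\,|\,X$, a product of elementary generators. One then checks a braid-type identity: the intertwiner moving the rightmost strand of $MN\,|\,i$ all the way to the left is the composite of the one moving it past the $N$-block followed by the one moving it past the $M$-block. This follows from the defining relations of the quiver Hecke algebra — it is of the same nature as the functoriality of the $R$-matrices pervasive in the theory — but it requires careful bookkeeping of the grading shifts and of the passage to the quotients $H_\bullet^i$. If this multiplicativity has already been recorded while constructing $\tau$, the remainder of the proof is a short diagram chase.

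Granting the multiplicativity, we conclude as follows. Work inside $E_i(MN)\cong (E_iM)N$. Since $\tau_M\circ N$ is a monomorphism (Theorem~\ref{first} together with exactness of $(-)\circ N$), the formula for $\tau_{MN}$ gives
\[
	\operatorname{Im}(\tau_{MN})=(\tau_M\circ N)\bigl(M\circ\operatorname{Im}(\tau_N)\bigr)\ \subseteq\ \operatorname{Im}(\tau_M)\circ N=\operatorname{Im}(\tau_M\circ N).
\]
Hence the canonical epimorphism $E_i(MN)\twoheadrightarrow\bigl(E_iM/\operatorname{Im}(\tau_M)\bigr)N=\mathrm{ad}_{E_i}(M)N$ factors through $\mathrm{ad}_{E_i}(MN)=E_i(MN)/\operatorname{Im}(\tau_{MN})$, producing the right-hand map of the desired sequence; this map is an epimorphism, and its kernel is $\operatorname{Im}(\tau_M\circ N)/\operatorname{Im}(\tau_{MN})$. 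Using once more that $\tau_M\circ N$ is a monomorphism and that $M\circ(-)$ is exact, this kernel is identified with $M\circ\bigl(E_iN/\operatorname{Im}(\tau_N)\bigr)=M\,\mathrm{ad}_{E_i}(N)$, carrying the grading shift $q_i^{\left\langle i^{\vee},\beta\right\rangle}$ coming from $\tau_M$. This is precisely the short exact sequence of Corollary~\ref{derivation}, and naturality in $M$ and $N$ is inherited from that of $\tau$, of the re-bracketing isomorphisms, and of the product functors. As a consistency check, applying the Grothendieck group functor returns the identity $\mathrm{ad}_{e_i}(yz)=\mathrm{ad}_{e_i}(y)z+q_i^{\left\langle i^{\vee},\beta\right\rangle}y\,\mathrm{ad}_{e_i}(z)$.
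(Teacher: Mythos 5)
Your proposal follows the paper's own argument: the multiplicativity $\tau_{E_i,MN}=\tau_{E_i,M}N \circ M\tau_{E_i,N}$ is indeed recorded separately as Proposition \ref{tauprod} (proved before Theorem \ref{tauinj}), and Corollary \ref{derivation} is then exactly the cokernel diagram chase you describe, which the paper packages as the abstract lemma on cokernels of a composite $f=f_2\circ f_1$ with $f_2$ injective. One small calibration on where the work in Proposition \ref{tauprod} actually lies: it is not the grading shifts nor the identity $\tau_{[1\uparrow s]}\tau_{[s+1\uparrow r+s]}=\tau_{[1\uparrow r+s]}$ that need care, but verifying that the error terms produced by commuting an element $h\in H_{\beta+\gamma}$ past $\tau_{[1\uparrow r+s]}$ (via Lemma \ref{xtaucom}) annihilate $M\otimes N$ — this is precisely where the $\mathcal H[i]$-condition, namely that $1_{\beta-i,i,\gamma}$ and $1_{\beta,\gamma-i,i}$ act by zero on $M\otimes N$, enters.
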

	This feature does not appear for cyclotomic KLR algebras (there is no monoidal structure in that case) and, to the author's knowledge, this is the first example of a 2-representation with such structure. We use this in a crucial way to simplify the computations in our proofs. Iterating the short exact sequence of Theorem \ref{third}, we obtain an interesting filtration of $\mathrm{ad}_{E_i}^n(MN)$. We describe it in detail in Proposition \ref{nderivation}. As one of the consequences of Theorem \ref{third}, we can prove the following result, which categorifies the fact that $U^+[i]$ is generated by the $\mathrm{ad}_{e_i}^{(n)}(e_j)$ for $n\geqslant 0$ and $j \in I \setminus \lbrace i \rbrace$ as a subalgebra of $U^+$.
	\begin{thm}[Theorem \ref{genforhi}, Corollary \ref{gen2}]
		The category $\mathcal H[i]$ is generated by 
		\begin{itemize}
			\item the modules $\mathrm{ad}_{E_i}^{(n)}(E_{j_1}\ldots E_{j_r})$ for $n\geqslant 0$ and $j_1,\ldots,j_r \in I \setminus \lbrace i \rbrace$ as a Serre subcategory,
			\item the modules $\mathrm{ad}_{E_i}^{(n)}(E_j)$ for $n\geqslant 0$ and $j \in I \setminus \lbrace i \rbrace$, as a Serre and monoidal subcategory.
		\end{itemize}
	\end{thm}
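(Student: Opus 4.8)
The plan is to deduce the second assertion (Corollary~\ref{gen2}) from the first (Theorem~\ref{genforhi}), so almost all of the work lies in the first bullet. Write $\mathcal{S}\subseteq\mathcal{H}[i]$ for the smallest Serre subcategory containing all grading shifts of the modules $\mathrm{ad}_{E_i}^{(n)}(E_{j_1}\cdots E_{j_r})$ with $n\geqslant 0$ and $j_1,\dots,j_r\in I\setminus\{i\}$. A Serre subcategory is closed under quotients and finite direct sums, and every finitely generated graded $H_\beta^i$-module is a quotient of a finite direct sum of grading shifts of the projective modules $P_\beta^i(\nu):=H_\beta^i 1_\nu$, where $\nu$ runs over words of weight $\beta$ (since $1=\sum_\nu 1_\nu$ in $H_\beta^i$). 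Hence it suffices to prove that $P_\beta^i(\nu)\in\mathcal{S}$ for every word $\nu$.

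First I would record two closure properties of $\mathcal{S}$. Since $\mathrm{ad}_{E_i}$ is exact (Corollary~\ref{exact}) and $\mathrm{ad}_{E_i}\circ\mathrm{ad}_{E_i}^{(n)}$ is a direct sum of grading shifts of $\mathrm{ad}_{E_i}^{(n+1)}$ (a formal consequence of the affine nil Hecke action of Proposition~\ref{adei}), $\mathrm{ad}_{E_i}$ sends every generator of $\mathcal{S}$ into $\mathcal{S}$; by exactness $\{X\in\mathcal{H}[i]:\mathrm{ad}_{E_i}(X)\in\mathcal{S}\}$ is a Serre subcategory containing these generators, so it contains $\mathcal{S}$, that is, $\mathrm{ad}_{E_i}(\mathcal{S})\subseteq\mathcal{S}$. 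Next, for $j\in I\setminus\{i\}$ the functor $E_j$ is exact and preserves $\mathcal{H}[i]$ (since $\mathcal{H}[i]$ is monoidal and contains $E_j\mathbf{1}$), so by the same reasoning it is enough to check that $E_j$ sends generators into $\mathcal{S}$. For this, apply the iterated derivation filtration of Proposition~\ref{nderivation} (coming from Theorem~\ref{third}) with $M=E_j\mathbf{1}$ and $N=E_{j_1}\cdots E_{j_r}\mathbf{1}$: it exhibits $E_j\,\mathrm{ad}_{E_i}^{(n)}(E_{j_1}\cdots E_{j_r})$, up to a grading shift, as an extremal step of a filtration of $\mathrm{ad}_{E_i}^{(n)}(E_jE_{j_1}\cdots E_{j_r})$, hence as a subquotient of a grading shift of a generator, hence as an object of $\mathcal{S}$. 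Thus $E_j(\mathcal{S})\subseteq\mathcal{S}$ for all $j\neq i$.

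Granting these, the modules $P_\beta^i(\nu)$ are reached by induction on the length of $\nu$: $P_0^i(\varnothing)=\mathbf{1}$ is the $n=r=0$ generator, and for the inductive step one peels off a letter $\ell$ at the appropriate end of $\nu$, writing $\nu=\ell\nu'$. The crux is the pair of natural isomorphisms
\[
P_\beta^i(j\nu')\cong E_j\big(P_{\beta-j}^i(\nu')\big)\quad(j\neq i),\qquad P_\beta^i(i\nu')\cong\mathrm{ad}_{E_i}\big(P_{\beta-i}^i(\nu')\big),
\]
which close the induction in view of the two closure properties. To obtain them, let $\pi_\beta\colon H_\beta\text{-mod}\to H_\beta^i\text{-mod}$ be the right exact functor $H_\beta^i\otimes_{H_\beta}(-)$ attached to the quotient $H_\beta\twoheadrightarrow H_\beta^i$; then $P_\beta^i(\nu)=\pi_\beta(H_\beta1_\nu)$ and $H_\beta1_{\ell\nu'}\cong E_\ell(H_{\beta-\ell}1_{\nu'})$, so the two isomorphisms reduce to the compatibilities $\pi_\beta\circ E_j\cong E_j\circ\pi_{\beta-j}$ for $j\neq i$ and $\pi_\beta\circ E_i\cong\mathrm{ad}_{E_i}\circ\pi_{\beta-i}$. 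Here one uses that, by Theorem~\ref{first}, for $Y\in\mathcal{H}[i]$ the sub-object $q_i^{\langle i^\vee,\beta\rangle}YE_i$ of $E_iY$ is exactly the submodule generated by the image of the defining ideal of $H_\beta^i$, so that $\mathrm{ad}_{E_i}(Y)\cong\pi(E_iY)$; combined with right exactness of $\pi$, both compatibilities then amount to the diagrammatic fact that $E_j$ (resp.\ $E_i$) carries the defining ideal of $H_\beta^i$ inside that of $H_{\beta+j}^i$ (resp.\ $H_{\beta+i}^i$) — adding a strand on the appropriate side to a diagram factoring through an idempotent $1_{\sigma i}$ produces one factoring through $1_{j\sigma i}$ (resp.\ $1_{i\sigma i}$), which still ends in the colour $i$. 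This KLR-diagrammatic input is the only genuinely computational step; it is of the same nature as the one underlying Theorem~\ref{first}, and I expect it to be the main obstacle.

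Finally, for the second bullet let $\mathcal{S}'$ be the smallest Serre and monoidal full subcategory of $\mathcal{H}[i]$ containing all $\mathrm{ad}_{E_i}^{(n)}(E_j)$, $n\geqslant 0$, $j\in I\setminus\{i\}$. Iterating the short exact sequence of Theorem~\ref{third} (precisely, Proposition~\ref{nderivation}) equips $\mathrm{ad}_{E_i}^{(n)}(E_{j_1}\cdots E_{j_r})$ with a finite filtration whose subquotients are grading shifts of the monoidal products $\mathrm{ad}_{E_i}^{(n_1)}(E_{j_1})\cdots\mathrm{ad}_{E_i}^{(n_r)}(E_{j_r})$ with $n_1+\cdots+n_r=n$. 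Each such product lies in $\mathcal{S}'$ because $\mathcal{S}'$ is monoidal and contains every factor; hence $\mathrm{ad}_{E_i}^{(n)}(E_{j_1}\cdots E_{j_r})\in\mathcal{S}'$ because $\mathcal{S}'$ is Serre, and the first bullet gives $\mathcal{S}'=\mathcal{H}[i]$.
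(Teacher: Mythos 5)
Your high-level strategy — reduce to showing each projective generator $H^i_\beta 1_\nu=\pi_i(E_\nu)$ lies in $\mathcal S$, then peel off letters of $\nu$ one at a time using the compatibilities of $\pi_i$ with $E_j$ ($j\neq i$) and with $E_i$, and verify that $\mathcal S$ is closed under $E_j$ and $\mathrm{ad}_{E_i}$ — is exactly the strategy of the paper's Theorem~\ref{genforhi} (and your deduction of the second bullet from Proposition~\ref{nderivation} matches the paper's Corollary~\ref{gen2} verbatim). But there are two points on which the blind version goes wrong or falls short.

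First, a circularity issue that you could not have anticipated without seeing the paper's architecture. In the paper, Theorem~\ref{genforhi} is proved \emph{inside} the proof of the main injectivity theorem~\ref{tauinj}: the very last sentence of \S\ref{proof} reads ``Now Corollary~\ref{injforserre} and Theorem~\ref{genforhi} imply Theorem~\ref{tauinj}.'' Consequently, the paper's argument for Theorem~\ref{genforhi} is not allowed to use Corollary~\ref{exact}, Corollary~\ref{derivation}, or Proposition~\ref{nderivation}, all of which are downstream of Theorem~\ref{tauinj}. Your proof invokes Corollary~\ref{exact} (``Since $\mathrm{ad}_{E_i}$ is exact'') and Proposition~\ref{nderivation} freely. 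The paper instead gets by with the weaker Propositions~\ref{exactweak} and~\ref{derivationweak}, which only require injectivity of $\tau_{E_i,(-)}$ for the specific modules at hand; this was established separately in Theorem~\ref{injforgen} and Corollary~\ref{injforserre}. As a standalone deduction placed after the main theorem your argument is valid, but it cannot replace the paper's proof of Theorem~\ref{genforhi} in the logical order the paper actually uses.

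Second, the step you yourself flag as ``the main obstacle'' — the compatibilities $\pi_i\circ E_j\cong E_j\circ\pi_i$ for $j\neq i$ and $\pi_i\circ E_i\cong\mathrm{ad}_{E_i}\circ\pi_i$ — is not a diagrammatic or computational statement at all. It is Lemma~\ref{piiE} of the paper, and its proof is a short formal argument by adjunction and Yoneda: for $N\in\mathcal H[i]$ one identifies both $\mathrm{Hom}(\pi_i(E_jM),N)$ and $\mathrm{Hom}(\pi_i(E_j\pi_i(M)),N)$ with $\mathrm{Hom}(M,1_{j,\beta}N)$, using that $1_{j,\beta}N$ already lies in $\mathcal H[i]$. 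No claim about ``adding a strand carries one defining ideal into the other'' is needed, and in particular the reverse inclusion (that the two submodules actually agree) is not obvious from your diagrammatic heuristic. Your write-up leaves this key compatibility unproved; once it is replaced by Lemma~\ref{piiE}, the rest of your induction goes through, but this is a genuine gap rather than a stylistic difference.

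Everything else in your argument — the reduction to $H^i_\beta 1_\nu$, the closure of $\mathcal S$ under the exact functors, the use of the extremal term $V_0\simeq E_j\,\mathrm{ad}_{E_i}^n(E_{j_1}\cdots E_{j_r})$ of the derivation filtration to prove $E_j(\mathcal S)\subseteq\mathcal S$, and the deduction of the second bullet — is correct and faithful to what the paper does.
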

	Thanks to this theorem, we are able to reduce some explicit computations to modules of the form $\mathrm{ad}_{E_i}^{(n)}(E_{j_1}\ldots E_{j_r})$, which can be understood quite well.
	
	As we mentioned above, the action of $U_i$ on $U^+[i]$ is an integrable representation. As another consequence of Theorems \ref{first} and \ref{third}, we prove in Corollary \ref{nilp} that the functor $\mathrm{ad}_{E_i}$ is locally nilpotent. More precisely, we prove in Proposition \ref{vanish} that the algebra $H_{\beta}^i$ is zero precisely when $s_i(\beta) \notin Q^+$, where $s_i$ is the reflection of the root lattice corresponding the the simple root $i$. In particular, the algebra $H_{\beta+ni}^i$ is zero when $n$ is large enough, which proves that $\mathrm{ad}_{E_i}$ is locally nilpotent. 
		
	Finally, as an application of the above results, we construct projective resolutions of the modules $\mathrm{ad}_{E_i}^{(n)}(E_{j_1}\ldots E_{j_r})$ and prove a categorification of the higher order quantum Serre relations. This generalizes results of Sto\v{s}i\'{c} \cite{stosic} to the non simply laced case. At the decategorified level, the higher order quantum Serre relations state that $\mathrm{ad}_{e_i}^{(n)}(e_j^m)=0$ for all simple roots $i\neq j$ and $n>-m\left< i^{\vee},j\right>$. One can write this more explicitly as
	\[
		\sum_{k=0}^n (-1)^k q_i^{k(n+m\left< i^{\vee},j\right>-1)} e_i^{(n-k)}e_j^me_i^{(k)} =0.
	\]
	This takes the following categorical form. For a simple root $i$, $n\geqslant 0$ and $M$ an $H^i_{\beta}$-module, we define a complex of $H_{\beta+ni}$-modules of the form
	\[
		\mathrm{Ad}_{E_i}^{(n)}(M) = 0 \rightarrow q_i^{n(n+\left<i^{\vee},\beta\right>-1) } ME_i^{(n)} \rightarrow \ldots\rightarrow q_i^{k(n+\left<i^{\vee},\beta\right>-1)}E_i^{(n-k)}ME_i^{(k)} \rightarrow \ldots \rightarrow E_i^{(n)}M \rightarrow 0,
	\]
	where the term $E_i^{(n)}M$ is in cohomological degree 0. We prove in Theorem \ref{cohodp} that the cohomology of $\mathrm{Ad}_{E_i}^{(n)}(M)$ is concentrated in degree 0, and equal to $\mathrm{ad}_{E_i}^{(n)}(M)$. By our vanishing criterion on the algebras $H_{\beta+ni}^i$, we also know that this cohomology is zero when $n$ is large enough. When $M=E_j^m$, the bound for the vanishing of the cohomology is simply $n>-m\left<i^{\vee},j\right>$. In that case, we get a complex of projective modules with zero cohomology, hence a null-homotopic complex (Theorem \ref{Serre}). This categorifies the higher order quantum Serre relation. Our method is completely different from that of \cite{stosic}, where the result is proved by constructing explicit homotopies, using the thick diagrammatic calculus for KLR algebras (see \cite{thick}, \cite{thick2}). Our approach has the drawback of not providing explicit homotopies, but has the advantage of working for any type and minimizing the amount of computations done in the homotopy category of KLR algebras. Similar projective resolutions also appear in \cite{bkm} and recently in \cite{resol}.
	
	We now describe the structure of the paper. In Section 2, we recall the main definitions regarding quantum groups following \cite{Lu}, and we briefly discuss their adjoint representation. In Section 3, we define the KLR algebras, and recall a few basic results (including the PBW theorem and a description of the center). We also explain how they categorify the positive part of quantum groups. In Section 4, we define and study the category $\mathcal H[i]$ and the functor $\mathrm{ad}_{E_i}$. After proving some elementary properties, we state our main theorem \ref{tauinj} and deduce from it various consequences, such as the exactness of $\mathrm{ad}_{E_i}$ (Corollary \ref{exact}), the compatibility with the monoidal structure (Corollary \ref{derivation}) and the vanishing criterion for the algebras $H_{\beta}^i$ (Proposition \ref{vanish}). The end of Section 4 is devoted to the proof of Theorem \ref{tauinj}. In Section 5, we complete the proof that $\mathcal H[i]$ is endowed with a structure of $\mathfrak{sl}_2$-categorification. This entails checking the categorification of the Lie algebra relation $[e_i,f_i]=h_i$, which amounts to proving some Mackey-type decompositions for the algebras $H_{\beta}^i$. Finally, Section 6 is devoted to the construction of projective resolutions and to the proof of the categorical higher quantum Serre relations. Section 6 does not rely on Section 5.
	
	\subsection*{Acknowledgments} I would like to thank my advisor, Rapha\"{e}l Rouquier for his support and many helpful discussions.
		
	\bigskip
	
	\section{Quantum groups}	
	\subsection{Definitions}
	\subsubsection{Root datum} We refer to \cite{Lu} for a detailed introduction to quantum groups. For the rest of this paper, we fix a \textit{Cartan datum} $(I,\cdot)$. This means that $I$ is a non empty set and ${\cdot:\mathbb{Z}I \otimes_{\mathbb{Z}} \mathbb{Z}I \rightarrow \mathbb{Z}}$ is a symmetric bilinear form such that
	\begin{itemize}
		\item for all $i \in I$, we have $i\cdot i \in 2\mathbb{Z}_{>0}$,
		\item for all $i\neq j \in I$, we have $\frac{2 i\cdot j}{i\cdot i} \in \mathbb{Z}_{\leqslant 0}$.		
	\end{itemize}

	We also fix a \textit{root datum} of type $(I,\cdot)$. This is the data of finitely generated free abelian groups $X$ (the \textit{weight latice}) and $Y$ (the \textit{dual weight latice}), together with a perfect pairing $\left< \cdot, \cdot \right> : Y\otimes_{\mathbb Z} X \rightarrow \mathbb Z$ and injective set maps $(I \hookrightarrow X, i \mapsto i)$, $(I \hookrightarrow Y, i \mapsto i^{\vee})$ satisfying the condition
	\[
		\left< i^{\vee}, j \right> = 2\frac{ i \cdot j}{i\cdot i}
	\]
	for all $i,j \in I$. Let $c_{i,j}=\left<i^{\vee}, j \right>$ and $d_i=\frac{i\cdot i}{2}$. For all $i \in I$, there is an automorphism $s_i$ of the lattice $X$ defined by $s_i(\lambda) = \lambda -\left< i^{\vee},\lambda\right>i$ for $\lambda \in X$. The \textit{root lattice} is $Q = \oplus_{i \in I} \mathbb{Z}i$, and we let $Q^+ = \oplus_{i \in I} \mathbb{Z}_{\geqslant 0} i$. Given an element $\beta = \sum_{i\in I}n_ii$ of $Q^+$, the integer $n=\sum_{i\in I}n_i$ is called the \textit{height} of $\beta$, and denoted $\vert\beta\vert$. We also let
	\[
		I^{\beta} = \Big \{ (j_1,\ldots,j_n) \in I^n, \, \sum_{k=1}^n j_k = \beta \Big \}.
	\]
	
	\subsubsection{Quantum groups} Let us start by introducing some notation in $\mathbb{Q}(q)$. For all $i \in I$, we let $q_i = q^{d_i}$.
	For $k\in \mathbb{Z}$, the quantum integer $\left[k\right]_i$ is given by
	\[
		\left[k\right]_i = \frac{q_i^k - q_i^{-k}}{q_i-q_i^{-1}}.
	\]
	If $k \geqslant 0$, the quantum factorial $\left[k\right]_i!$ is given by
	\[
		\left[k\right]_i! = \prod_{l=1}^{k} \left[l\right]_i.
	\]
	
	\begin{defi} The \textit{quantum group} $U$ associated to the above root datum is the unital $\mathbb Q(q)$-algebra on the generators $e_i, f_i$ for $i \in I$, and $k_{\mu}$ for $\mu \in Y$ subject to the relations
	\begin{enumerate}
		\item $k_0=1$ and $k_{\mu}k_{\mu'} = k_{\mu + \mu'}$ for all $\mu,\mu' \in Y$,\\
		
		\item $k_{\mu}e_i = q^{\left< \mu,i \right>} e_ik_{\mu}$ and $k_{\mu}f_i = q^{-\left< \mu,i \right>} f_ik_{\mu}$ for all $i \in I$ and $\mu \in Y$,\\
		
		\item for all $i,j \in I$
		\[
			e_if_j-f_je_i = \delta_{i,j} \frac{k_i - k_i^{-1}}{q_i-q_i^{-1}}
		\]
		where $k_i = k_{d_ii^{\vee}}$,\\
		
		\item for all $i \neq j \in I$, the quantum Serre relations:
		\[
			\sum_{\ell=0}^{-c_{i,j}+1} (-1)^{\ell}e_i^{(\ell)}e_je_i^{(-c_{i,j} +1-\ell)}=0, \quad
			\sum_{\ell=0}^{-c_{i,j}+1} (-1)^{\ell}f_i^{(\ell)}f_jf_i^{(-c_{i,j} +1-\ell)}=0,
		\]
		where the elements $e_i^{(\ell)}$ and $f_i^{(\ell)}$ are called the \textit{divided powers} and are defined by
		\[
			e_i^{(\ell)} = \frac{1}{[\ell]_i!}e_i^{\ell}, \quad f_i^{(\ell)} = \frac{1}{[\ell]_i!}f_i^{\ell}.
		\]
	\end{enumerate}
	\end{defi}

	The algebra $U$ is $Q$-graded with $e_i$ in degree $i$ and $f_i$ in degree $-i$ for all $i\in I$, and $k_{\mu}$ in degree 0 for all $\mu \in Y$. Furthermore, $U$ has a structure of $Q$-graded Hopf algebra, with coproduct $\Delta$ and antipode $S$ defined by the following formulas:
	\[
		\begin{array}{lclcl}
		\Delta(e_i) = e_i\otimes 1 + k_i\otimes e_i, & & \Delta(f_i) = f_i \otimes k_i^{-1} + 1\otimes f_i, & & \Delta(k_{\mu}) = k_{\mu} \otimes k_{\mu},\\
		S(e_i) = -k_i^{-1}e_i, & & S(f_i) = -f_ik_i, & & S(k_{\mu}) = k_{\mu}^{-1},
		\end{array}
	\]
	for all $i \in I$ and $\mu \in Y$.
	
	For $i \in I$, we denote by $U_i$ the subalgebra of $U$ generated by $e_i, f_i$ and $k_i$. We also denote by $U^+$ the subalgebra of $U$ generated by the $e_j$ for $j\in I$. There is a non-degenerate bilinear form $y\otimes z \mapsto (y,z)$ on $U^+$. To describe it, we start by endowing $U^+ \otimes_{\mathbb{Q}(q)} U^+$ with an algebra structure by defining the multiplication
	\[
		(y\otimes z)(y'\otimes z') = q^{\beta\cdot\gamma} (yy')\otimes(zz')
	\]
	for $y,y',z,z' \in U^+$ with $z,y'$ homogeneous of respective degrees $\beta,\gamma \in Q^+$. There is a morphism of algebras $r : U^+ \rightarrow U^+ \otimes_{\mathbb{Q}(q)} U^+$ defined by $r(e_i) = e_i\otimes 1+1\otimes e_i$ for all $i \in I$. Then there is a unique bilinear form $y\otimes z \mapsto (y,z)$ on $U^+$ satisfying the following properties (see \cite[Proposition~1.2.3]{Lu}):
	\begin{enumerate}
		\item $(1,1)=1$,
		\item $(e_i,e_j) = \delta_{i,j}\frac{1}{1-q_i^{2}}$ for all $i,j \in I$,
		\item $(y,zz') = (r(y),z\otimes z')$ for all $y,z,z' \in U^+$,
		\item $(yy',z) = (y\otimes y',r(z))$ for all $y,y',z \in U^+$.
	\end{enumerate}
	
	Finally, we define the integral form of $U$. Let $\mathcal{A} = \mathbb{Z}\left[q,q^{-1}\right]$. We let $_{\mathcal A}U$ be the sub-$\mathcal{A}$-algebra of $U$ generated by the $k_{\mu}$ for $\mu \in Y$, and $e_i^{(n)},f_i^{(n)}$ for $i\in I$ and $n\geqslant 0$. We have subalgebras of $_{\mathcal A}U$ defined as above: $_{\mathcal A}U^+$ and $_{\mathcal A}U_i$ for $i \in I$.
	
	\subsubsection{Representations of quantum groups} A \textit{weight representation} of $U$ is a $U$-module $V$ that decomposes as
	\[
		V = \bigoplus_{\lambda \in X} V_{\lambda}
	\]
	where $V_{\lambda} = \big \{ v \in V \vert \, \forall \mu \in Y, k_{\mu} v = q^{\left< \lambda,\mu\right>}v \big \}$.
	
	An \textit{integrable representation} of $U$ is a weight representation $V$ on which the action of $e_i$ and $f_i$ is locally nilpotent for all $i \in I$. More explictly, a weight representation $V$ is an integrable representation if and only if for all $v \in V$ and $i \in I$, there exists an integer $n\geqslant 0$ such that $e_i^nv=f_i^nv=0$.
	
	\medskip
	
	\subsection{Adjoint representation}If $A$ is a Hopf algebra with coproduct $\Delta$ and antipode $S$, the (left) adjoint representation of $A$ on itself is defined by
	\[
		\mathrm{ad}_z(y) = \sum z_{(1)}yS(z_{(2)})
	\]
	for all $y,z \in A$. Here we have used Sweedler's notation for the coproduct
	\[
		\Delta(z) = \sum z_{(1)} \otimes z_{(2)}.
	\]
	The adjoint action is compatible with the product of $A$, in the following sense: for all $y,y',z \in A$ we have
	\[
		\mathrm{ad}_z(yy') = \sum \mathrm{ad}_{z_{(1)}}(y)\mathrm{ad}_{z_{(2)}}(y').
	\]
	
	In the case of the quantum group $U$, there are simple formulas for the adjoint action of the algebra generators of $U$. For $y \in U$ homogeneous of degree $\beta \in Q^+$, $i \in I$ and $\mu \in Y$ we have
	\[
		\mathrm{ad}_{e_i}(y) = e_iy - q_i^{\left<i^{\vee},\beta\right>} ye_i, \quad
		\mathrm{ad}_{f_i}(y) = (f_iy-yf_i)k_i, \quad
		\mathrm{ad}_{k_{\mu}}(y) = q^{\left<\mu,\beta\right>}y.
	\]	
	The compatibility with the product takes the form of the following ``$q$-Leibniz formulas"
	\begin{equation}\label{qder}
		\begin{array}{l}\mathrm{ad}_{e_i}(yz) = \mathrm{ad}_{e_i}(y)z+q_i^{\left<i^{\vee},\beta\right>} y\mathrm{ad}_{e_i}(z), \\
		\mathrm{ad}_{f_i}(yz) = q_i^{\left<i^{\vee},\gamma\right>}\mathrm{ad}_{f_i}(y)z+y\mathrm{ad}_{e_i}(z), \end{array}
	\end{equation}
	for all $i\in I$ and $y,z \in U$ homogeneous of respective degrees $\beta,\gamma \in Q^+$. Hence $\mathrm{ad}_{e_i}$ and $\mathrm{ad}_{f_i}$ can be thought of as ``$q$-derivations" of the algebra $U$. We can define divided powers for $\mathrm{ad}_{e_i}$ and $\mathrm{ad}_{f_i}$ as above:
	\[
		\mathrm{ad}_{e_i}^{(n)} = \frac{1}{[n]_i!}\mathrm{ad}_{e_i}^n, \quad \mathrm{ad}_{f_i}^{(n)} = \frac{1}{[n]_i!}\mathrm{ad}_{f_i}^n.
	\]	
	The quantum Serre relations take a particularly simple form in terms of the adjoint representation. Namely, we have
	\[
		\mathrm{ad}_{e_i}^{(-c_{i,j}+1)}(e_j) = 0, \quad \mathrm{ad}_{f_i}^{(-c_{i,j}+1)}(f_j) = 0,
	\]
	for all $i,j \in I$. These can be generalized to higher order quantum Serre relations (see \cite[Proposition~7.1.5]{Lu}). We have
	\[
		\mathrm{ad}_{e_i}^{(n)}(e_j^m) = 0, \quad \mathrm{ad}_{f_i}^{(n)}(f_j^m) = 0,
	\]
	for all $i\neq j \in I$ and $n,m \in \mathbb Z_{\geqslant 0}$ such that $n> -mc_{i,j}$.\\
	
	The adjoint representation is a weight representation of $U$. However, it is not a integrable representation of $U$ because the operators $\mathrm{ad}_{e_i}$ and $\mathrm{ad}_{f_i}$ are not locally nilpotent. For instance, for every integer $m\geqslant 0$ we have
	\begin{equation}\label{eiei}
		\mathrm{ad}_{e_i}^m(e_i) = \bigg(\prod_{k=1}^{m}(1-q_i^{2m})\bigg)e_i^{m+1} \neq 0.
	\end{equation}
	Nevertheless, an integrable representation can be obtained by looking at the action of a given $\mathfrak{sl}_2$-subalgebra on a certain subalgebra of $U^+$. More precisely, we fix $i\in I$ and consider the adjoint action of the subalgebra $U_i$ on $U$. Following Lusztig \cite[Chapter~38]{Lu}, we let $U^+[i]$ be the subalgebra of $U^+$ generated by the elements $\mathrm{ad}_{e_i}^{(n)}(e_j)$, for $j \in I \setminus \lbrace i \rbrace$ and $n \geqslant 0$. The subalgebra $U^+[i]$ can also be described in terms of the inner product of $U^+$, as follows.
	
	\begin{prop}[{\cite[Proposition~38.1.6]{Lu}}]\label{rad}
		The subalgebra $U^+[i]$ consists of the elements $y\in U^+$ such that $(ze_i,y)=0$ for all $z \in U^+$.
	\end{prop}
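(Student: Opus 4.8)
The plan is to prove the two inclusions $U^+[i] \subseteq \{y \in U^+ : (ze_i, y) = 0 \ \forall z \in U^+\}$ and the reverse one, using the compatibility of the bilinear form $(\,,\,)$ with the comultiplication $r$ (properties (3) and (4) of the form) together with the $q$-Leibniz formula for $\mathrm{ad}_{e_i}$.

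First I would unwind what the orthogonality condition $(ze_i, y) = 0$ for all $z \in U^+$ means. Using property (4), $(ze_i, y) = (z \otimes e_i, r(y))$. Writing $r(y) = \sum y' \otimes y''$ in homogeneous components, and using that $(z \otimes e_i, \cdot)$ only pairs nontrivially with the component where $y''$ has degree $i$, the condition says precisely that the ``degree-$i$ in the second slot'' part of $r(y)$ vanishes, i.e. writing $r(y) = \sum_{\gamma} r_{\gamma}(y)$ according to the degree $\gamma$ of the right tensor factor, we need $r_i(y) = 0$. So let $W = \{y \in U^+ : r_i(y) = 0\}$; we must show $W = U^+[i]$. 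One checks easily that $W$ is a subalgebra, using that $r$ is an algebra map and the twisted multiplication on $U^+ \otimes U^+$.

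Next I would relate $r_i$ to $\mathrm{ad}_{e_i}$. The key computation is that for $y \in U^+$ homogeneous of degree $\beta$, the component $r_i(y)$ encodes, up to an invertible scalar, the ``adjoint derivative in direction $e_i$'' — more precisely there should be a formula expressing the fact that $\mathrm{ad}_{e_i}$ acts on $U^+$ essentially as (a twist of) the skew-derivation dual to $r_i$. Concretely, one knows (Lusztig, 38.1.x, or can derive it from the Hopf algebra structure) that $U^+$ decomposes under $\mathrm{ad}_{U_i}$ into a sum of finite-dimensional simple $U_i$-modules, and that $U^+[i]$ is exactly the span of the highest weight vectors for this action together with... — actually the cleanest route is: $U^+[i]$ is a $\mathrm{ad}_{e_i}$-stable subalgebra (by the $q$-Leibniz formula \eqref{qder}, since it is generated by the $\mathrm{ad}_{e_i}^{(n)}(e_j)$) and it consists of those $y$ with $(e_i^{(k)}, \text{something}) $... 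Let me instead state the plan as: show $U^+[i] \subseteq W$ by checking the generators $\mathrm{ad}_{e_i}^{(n)}(e_j)$ lie in $W$ (a direct computation using $r(e_j) = e_j \otimes 1 + 1 \otimes e_j$ and that $r$ intertwines $\mathrm{ad}_{e_i}$ with its action on the tensor product, plus the higher Serre relation $\mathrm{ad}_{e_i}^{(-c_{ij}+1)}(e_j)=0$ to handle finiteness), and that $W$ being a subalgebra then contains all of $U^+[i]$.

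For the reverse inclusion $W \subseteq U^+[i]$ — which I expect to be the main obstacle — I would argue by a dimension/weight count in each graded piece. The subalgebra $U_i^+ = \langle e_i \rangle$ and the multiplication map $U^+[i] \otimes U_i^+ \to U^+$ should be shown to be an isomorphism of graded vector spaces (this is essentially Lusztig's triangular-type decomposition associated to the simple reflection $s_i$, coming from the braid group action / Levi decomposition in Chapter 38). Granting that, any $y \in U^+$ of degree $\beta$ writes uniquely as $\sum_k y_k e_i^k$ with $y_k \in U^+[i]$; applying $r_i$ and using that $r_i$ kills $U^+[i]$ but detects the top $e_i$-power (since $r_i(x e_i^k)$ has leading term involving $x e_i^{k-1} \otimes e_i$ with invertible coefficient), one deduces $r_i(y) = 0 \Rightarrow y_k = 0$ for $k \geq 1$, hence $y = y_0 \in U^+[i]$. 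The delicate points are establishing the freeness of $U^+$ over $U^+[i]$ on the right by powers of $e_i$, and making the ``leading term'' computation for $r_i$ precise; both are available in \cite[Chapter~38]{Lu}, so in the write-up I would cite the relevant decomposition and keep the orthogonality bookkeeping self-contained.
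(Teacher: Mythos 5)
The paper does not prove this statement at all---it is quoted verbatim from Lusztig, \cite[Proposition~38.1.6]{Lu}, so there is no internal argument to compare your proposal against. Your plan is essentially a reconstruction of Lusztig's development. The translation of the orthogonality condition into ``the degree-$i$ component of $r(y)$ in the right tensor slot vanishes'' is correct (it uses the non-degeneracy of the form on graded pieces and $(e_i,e_i)\neq 0$), and so is the verification that $W=\ker$ of that component is a subalgebra and that the generators $\mathrm{ad}_{e_i}^{(n)}(e_j)$ lie in $W$; this gives $U^+[i]\subseteq W$. That half is sound.

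The concern is the reverse inclusion $W\subseteq U^+[i]$, which is exactly the nontrivial content of Lusztig's 38.1.6, and where your argument quietly becomes circular. You invoke Proposition~\ref{dec}, i.e.\ the isomorphism $U^+[i]\otimes\mathbb{Q}(q)[e_i]\xrightarrow{\sim} U^+$, to write $y=\sum_k y_k e_i^k$ with $y_k\in U^+[i]$ and then read off $y_k=0$ for $k\geqslant 1$ from $r_i(y)=0$. But in Lusztig's Chapter~38 the decomposition of \cite[38.1.2, 38.1.5]{Lu} is established for the \emph{kernel} subspace ${}^i\mathbf f=\ker{}_ir$ (your $W$), and it is precisely 38.1.6---the statement you are trying to prove---that identifies this kernel with the subalgebra generated by the $\mathrm{ad}_{e_i}^{(n)}(e_j)$. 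So Proposition~\ref{dec}, as phrased in this paper for $U^+[i]$, already presupposes Proposition~\ref{rad}: using it as a black box means you have merely derived one quoted Lusztig statement from another, not produced an independent proof. (And note that establishing the analogous decomposition with $W$ in place of $U^+[i]$ would not rescue the argument: your leading-term computation then only shows $y=y_0\in W$, which is a tautology.) To close the gap non-circularly you would need either Lusztig's actual argument for 38.1.6---a weight-space comparison, or a graded-dimension count identifying $\ker r_i$ with the span of $\mathrm{ad}_{e_i}$-strings starting from the $e_j$---or else prove the freeness $U^+\simeq U^+[i]\otimes\mathbb{Q}(q)[e_i]$ directly from the definition of $U^+[i]$ as the generated subalgebra, without passing through the kernel description, which I do not think is easier.
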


	We also have the following useful result.

	\begin{prop}[{\cite[Lemmas 38.1.2 and 38.1.5]{Lu}}]\label{dec}
		The multiplication map $U^+[i] \otimes_{\mathbb{Q}(q)} \mathbb{Q}(q)[e_i] \rightarrow U^+$ is an isomorphism.
	\end{prop}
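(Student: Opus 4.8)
The plan is to realize $U^+[i]$ as the kernel of a twisted derivation of $U^+$ and then to recover $U^+$ from that kernel by ``integrating along $e_i$''. Recall from \cite[Ch.~1]{Lu} that the form on $U^+$ is $Q$-graded (so $U^+_\beta$ and $U^+_\gamma$ are orthogonal for $\beta \neq \gamma$), restricts to a non-degenerate form on each weight space $U^+_\beta$, and that the induced form on $U^+ \otimes U^+$ is $(a\otimes b, c\otimes d) = (a,c)(b,d)$. For $y \in U^+_\beta$, let $r_i(y) \in U^+_{\beta - i}$ be the unique element whose tensor with $e_i$ is the $U^+_{\beta-i}\otimes_{\mathbb{Q}(q)} U^+_i$-component of $r(y)$; this defines a linear endomorphism $r_i$ of $U^+$. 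Property (4) of the form, together with the grading, gives $(ze_i, y) = (e_i,e_i)\,(z, r_i(y))$ for $z \in U^+_{\beta-i}$, so since $(e_i,e_i) \neq 0$ and the form is non-degenerate, Proposition~\ref{rad} becomes
\[
	U^+[i] = \ker r_i .
\]
Since $r$ is an algebra map, expanding $r(yy') = r(y)\,r(y')$ in the bidegree $(|yy'|-i,\, i)$ shows $r_i$ is a twisted derivation, $r_i(yy') = q^{\,i\cdot|y'|}\,r_i(y)\,y' + y\,r_i(y')$ for homogeneous $y,y'$, with $r_i(e_j) = \delta_{i,j}$. Writing $\langle n\rangle := 1 + q_i^2 + \cdots + q_i^{2(n-1)} \neq 0$, this yields $r_i(e_i^n) = \langle n\rangle\, e_i^{n-1}$ and, more generally, $r_i(w\,e_i^n) = \langle n\rangle\, w\, e_i^{n-1}$ whenever $r_i(w) = 0$.

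With this in hand, injectivity should be immediate. The multiplication map is $Q$-graded, so it is enough to treat a homogeneous element $\sum_{n=0}^N y_n \otimes e_i^n$ with $y_n \in U^+[i]$; if it maps to $0$ and $y_N \neq 0$ with $N$ maximal, then applying $r_i^N$ to the relation $\sum_n y_n e_i^n = 0$ and using $r_i(y_n) = 0$ kills every term with $n < N$ and turns the $n = N$ term into $\big(\prod_{j=1}^{N}\langle j\rangle\big) y_N$, forcing $y_N = 0$ --- a contradiction. For surjectivity I would prove $U^+_\beta \subseteq \sum_{n\geq 0} U^+[i]\,e_i^n$ by induction on the height $|\beta|$, the case $|\beta| = 0$ holding because $1 \in \ker r_i = U^+[i]$. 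For $y \in U^+_\beta$ with $|\beta|\geq 1$: if $\beta - i \notin Q^+$ then $r_i(y) = 0$ and $y \in U^+[i]$; otherwise the induction hypothesis applied to $r_i(y) \in U^+_{\beta-i}$ gives a finite expansion $r_i(y) = \sum_n z_n e_i^n$ with $z_n \in U^+[i]$, and then $y' := \sum_n \langle n+1\rangle^{-1}\, z_n\, e_i^{n+1}$ satisfies $r_i(y') = r_i(y)$, so $y - y' \in U^+[i]$ and $y = (y-y') + y' \in U^+[i] + \sum_{n\geq 1} U^+[i]\,e_i^n$.

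I expect the only genuinely substantive points to be the identification $U^+[i] = \ker r_i$ (where Proposition~\ref{rad} does the real work) and the twisted Leibniz rule for $r_i$; once these are set up, the rest is the bookkeeping above. The step requiring the most care is simply fixing the correct bidegree component defining $r_i$ and the resulting $q$-twist in the Leibniz rule --- the precise constants never matter since no $\langle n\rangle$ vanishes. As a variant, injectivity can be replaced by a dimension count: the Leibniz rule (equivalently, the surjectivity above) makes $r_i\colon U^+_\beta \to U^+_{\beta-i}$ onto, so $\dim U^+[i]_\beta = \dim U^+_\beta - \dim U^+_{\beta-i}$, and then $\sum_{n\geq 0}\dim U^+[i]_{\beta-ni}$ is a finite telescoping sum equal to $\dim U^+_\beta$, so the surjection between spaces of equal finite dimension is an isomorphism.
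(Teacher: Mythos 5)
Your proof is correct, and it follows essentially the same route as the source the paper cites for this statement: Lusztig's argument in \cite[\S 38.1]{Lu} likewise identifies $U^+[i]$ with the kernel of the twisted derivation $r_i$ (extracted from the $(\cdot)\otimes e_i$ component of the coproduct $r$) and then uses the twisted Leibniz rule together with the non-vanishing of the $q$-integers $[n]_i$ to establish both injectivity and the inductive generation of $U^+$ by $U^+[i]$ and powers of $e_i$. The paper itself gives no proof beyond the citation, so there is nothing further to compare.
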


	We now prove the main property of $U^+[i]$ regarding the adjoint action.
	
	\begin{prop}\label{int}
		The subalgebra $U^+[i]$ is stable under the adjoint action of $U_i$. The adjoint action of $U_i$ on $U^+[i]$ is an integrable representation. Furthermore, $U^+[i]$ is maximal in the following sense: if $V\subseteq U^+$ is an integrable $U_i$-submodule of $U$ for the adjoint action, then $V\subseteq U^+[i]$.
	\end{prop}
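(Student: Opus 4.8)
The plan is to establish the three assertions in the order stated, obtaining the stability and the integrability together, and I would realise $U^{+}[i]$ as a sum of finite-dimensional integrable $U_{i}$-submodules of $U$. Fix $j\in I\setminus\{i\}$ and let $V_{j}$ be the $U_{i}$-submodule of $U$ generated by $e_{j}$ for the adjoint action. The adjoint action makes $U$ a left module algebra over itself, so $\mathrm{ad}\colon U\to\mathrm{End}_{\mathbb{Q}(q)}(U)$ is an algebra homomorphism; in particular $[\mathrm{ad}_{e_{i}},\mathrm{ad}_{f_{i}}]=\mathrm{ad}_{[e_{i},f_{i}]}$, and $\mathrm{ad}_{[e_{i},f_{i}]}$ acts on a homogeneous element of weight $\gamma$ by the scalar $[\langle i^{\vee},\gamma\rangle]_{i}$. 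Together with $\mathrm{ad}_{f_{i}}(e_{j})=[f_{i},e_{j}]k_{i}=0$ and the quantum Serre relation $\mathrm{ad}_{e_{i}}^{(-c_{i,j}+1)}(e_{j})=0$, this shows that $V_{j}$ is spanned by the elements $\mathrm{ad}_{e_{i}}^{(n)}(e_{j})$ with $0\leqslant n\leqslant -c_{i,j}$; being a finite-dimensional weight module over $U_{i}$ it is integrable. Since the iterated multiplication maps $U^{\otimes r}\to U$ are morphisms of adjoint $U$-modules, for any $j_{1},\ldots,j_{r}\in I\setminus\{i\}$ the image of $V_{j_{1}}\otimes\cdots\otimes V_{j_{r}}$ in $U$ is again a finite-dimensional integrable $U_{i}$-submodule. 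By definition $U^{+}[i]$ is the sum of all these images over $r\geqslant 0$, and a sum of integrable $U_{i}$-submodules is an integrable $U_{i}$-submodule; this proves the first two statements, and in particular that $\mathrm{ad}_{f_{i}}(U^{+}[i])\subseteq U^{+}[i]$.

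For the maximality, let $V\subseteq U^{+}$ be a $U_{i}$-submodule of $U$ for the adjoint action; it is enough to show $y\in U^{+}[i]$ for every $y\in V$. By Proposition~\ref{dec} write $y=\sum_{n\geqslant 0}y_{n}e_{i}^{n}$ with $y_{n}\in U^{+}[i]$ uniquely determined and almost all zero; the goal is $y_{n}=0$ for $n\geqslant 1$. The key point is that $\mathrm{ad}_{f_{i}}$ does not preserve $U^{+}$, whereas $\mathrm{ad}_{f_{i}}(y)\in V\subseteq U^{+}$. The standard commutator computation in $U_{i}$ gives, for $n\geqslant 1$,
\[
	\mathrm{ad}_{f_{i}}(e_{i}^{n})=[f_{i},e_{i}^{n}]\,k_{i}=[n]_{i}\,e_{i}^{n-1}\,\frac{q_{i}^{-(n-1)}-q_{i}^{n-1}k_{i}^{2}}{q_{i}-q_{i}^{-1}},
\]
together with $\mathrm{ad}_{f_{i}}(1)=0$; in particular the coefficient $a_{n}$ of $e_{i}^{n-1}k_{i}^{2}$ in $\mathrm{ad}_{f_{i}}(e_{i}^{n})$ is a nonzero scalar. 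Applying the $q$-Leibniz rule \eqref{qder} and the inclusion $\mathrm{ad}_{f_{i}}(U^{+}[i])\subseteq U^{+}[i]$ proved above, one writes $\mathrm{ad}_{f_{i}}(y)=P+Q\,k_{i}^{2}$ with $P\in U^{+}$ and $Q=\sum_{n\geqslant 1}a_{n}\,y_{n}\,e_{i}^{n-1}\in U^{+}$.

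Now $\mathrm{ad}_{f_{i}}(y)$ lies in $U^{+}$; since $k_{i}^{2}\neq 1$, the triangular decomposition of $U$ forces $Q\,k_{i}^{2}=0$, hence $Q=0$. Writing $Q=\sum_{m\geqslant 0}a_{m+1}\,y_{m+1}\,e_{i}^{m}$ with $a_{m+1}y_{m+1}\in U^{+}[i]$ and using the uniqueness in Proposition~\ref{dec}, we get $a_{m+1}y_{m+1}=0$, so $y_{m+1}=0$ for all $m\geqslant 0$ because $a_{m+1}\neq 0$. Hence $y=y_{0}\in U^{+}[i]$, which gives $V\subseteq U^{+}[i]$. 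Note that this last argument uses only that $V$ is a subspace of $U^{+}$ stable under $\mathrm{ad}_{f_{i}}$, which is weaker than being an integrable $U_{i}$-submodule.

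The parts I expect to be routine are the determination of the modules $V_{j}$ and the bookkeeping with the $q$-Leibniz rule. The main obstacle is the maximality statement, and within it the realisation that membership of $y\in U^{+}$ in $U^{+}[i]$ is detected by whether $\mathrm{ad}_{f_{i}}$ keeps $y$ inside $U^{+}$; turning this into the linear condition $Q=0$ requires isolating the $k_{i}^{2}$-component of $\mathrm{ad}_{f_{i}}(y)$ via the triangular decomposition, which is the reason one proves first that $U^{+}[i]$ is $\mathrm{ad}_{f_{i}}$-stable so that the summand $P$ indeed lands in $U^{+}$.
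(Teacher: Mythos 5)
Your proof is correct, and for the maximality assertion it takes a genuinely different route from the paper. For the first two assertions your argument is essentially the paper's, repackaged: the paper verifies $\mathrm{ad}_{e_i}$- and $\mathrm{ad}_{f_i}$-stability on the algebra generators $\mathrm{ad}_{e_i}^{(n)}(e_j)$ and then invokes the $q$-Leibniz rule and the observation that locally nilpotent elements form a $U_i$-stable subalgebra; you instead realise $U^+[i]$ explicitly as a sum of images, under iterated multiplication, of tensor products of the finite-dimensional $U_i$-modules $V_j$. The underlying commutation relation, $[\mathrm{ad}_{e_i},\mathrm{ad}_{f_i}]=\mathrm{ad}_{[e_i,f_i]}$ acting on weight $\gamma$ by $[\langle i^\vee,\gamma\rangle]_i$, is the same in both.

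For the maximality, the two arguments both begin with the decomposition $y=\sum_n y_n e_i^n$ from Proposition~\ref{dec}, but they detect the unwanted $y_n$, $n\geqslant 1$, using different operators. The paper uses the integrability hypothesis in the form $\mathrm{ad}_{e_i}^m(y)=0$: via the $q$-Leibniz rule and the non-vanishing of $\mathrm{ad}_{e_i}^m(e_i^\ell)$ (equation~(\ref{eiei})), the top term $a_N y_N e_i^{N+m}$ in $\mathrm{ad}_{e_i}^m(y)$ cannot cancel against lower-degree terms, forcing $y_N=0$. You instead use only that $\mathrm{ad}_{f_i}$ preserves $V\subseteq U^+$: the formula for $\mathrm{ad}_{f_i}(e_i^n)$ has a nonzero $k_i^2$-component, so the $k_i^2$-component of $\mathrm{ad}_{f_i}(y)$ in the triangular decomposition is $\sum_{n\geqslant1}a_n y_n e_i^{n-1}$ with $a_n\neq0$, which must vanish since $\mathrm{ad}_{f_i}(y)\in U^+$ and $k_i^2\neq1$; applying the uniqueness in Proposition~\ref{dec} once more gives $y_n=0$. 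The paper's argument shows that $\mathrm{ad}_{e_i}$-local nilpotence alone characterises membership in $U^+[i]$, while yours shows that $\mathrm{ad}_{f_i}$-stability inside $U^+$ alone does; both are strictly weaker than the stated hypothesis of integrability, so both prove slightly stronger results than claimed. Your route is somewhat more elementary in that it needs only a single application of $\mathrm{ad}_{f_i}$ rather than a high iterate of $\mathrm{ad}_{e_i}$, and it nicely isolates the role of the $k_i^2$ term; the paper's is more in line with the rest of the article, which centres on $\mathrm{ad}_{E_i}$. (One small note unrelated to correctness: when you invoke the displayed $q$-Leibniz formula~(\ref{qder}) for $\mathrm{ad}_{f_i}$, the paper has a typographical error there, writing $\mathrm{ad}_{e_i}(z)$ instead of $\mathrm{ad}_{f_i}(z)$ and the exponent with the wrong sign; you have used the correct version.)
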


	\begin{proof}
		We need to prove that $U^+[i]$ is stable under $\mathrm{ad}_{e_i}$ and $\mathrm{ad}_{f_i}$. Since $\mathrm{ad}_{e_i}$ and $\mathrm{ad}_{f_i}$ satisfy formulas (\ref{qder}), it suffices to prove that $\mathrm{ad}_{e_i}(y),\mathrm{ad}_{f_i}(y) \in U^+[i]$ for $y$ an algebra generator of $U^+[i]$.
		
		Let $y=\mathrm{ad}_{e_i}^{n}(e_j)$ for $n\geqslant 0$ and $j \in I \setminus \lbrace i\rbrace$. Then $\mathrm{ad}_{e_i}(y) = \mathrm{ad}_{e_i}^{n+1}(e_j) \in U^+[i]$. Hence $U^+[i]$ is stable under $\mathrm{ad}_{e_i}$. To prove that $\mathrm{ad}_{f_i}(x) \in U^+[i]$, we remark that
		\begin{align*}
			&\mathrm{ad}_{f_i}(e_j) = 0, \\
			&\mathrm{ad}_{f_i}(y) = \mathrm{ad}_{e_i}(\mathrm{ad}_{f_i}(\mathrm{ad}_{e_i}^{n-1}(e_j))) - [c_{i,j}+2(n-1)]_i\mathrm{ad}_{e_i}^{n-1}(e_j),
		\end{align*}
		the second equation following from the relation $e_if_i-f_ie_i=\frac{k_i-k_i^{-1}}{q_i-q_i^{-1}}$. The result now follows by an immediate induction on $n$.
		
		Hence we have a representation of $U_i$ on $U^+[i]$. Let us check that this representation is integrable. The set of elements $y\in U^+[i]$ on which $\mathrm{ad}_{e_i}$ and $\mathrm{ad}_{f_i}$ act nilpotently is a sub-$U_i$-module, and also a subalgebra given relations (\ref{qder}). By the quantum Serre relations, $\mathrm{ad}_{e_i}$ and $\mathrm{ad}_{f_i}$ act nilpotently on every $e_j$ for $j \in I\setminus \lbrace i \rbrace$. Since these generate $U^+[i]$ as an algebra and $U_i$-module, we conclude that the action of $U_i$ on $U^+[i]$ is integrable.
		
		Finally, we prove the maximality of $U^+[i]$ with respect to these properties. Let $y\in U^+$ be such that $\mathrm{ad}_{e_i}^m(y)=0$ for some $m\geqslant 0$. By Proposition \ref{dec}, we can write $y$ in the form
		\[
			y = \sum_{\ell=0}^N y_{\ell}e_i^{\ell}
		\] 
		for some elements $y_0,\ldots,y_{N}$ of $U^+[i]$ with $y_N\neq0$. Assume $N>0$. Using equations (\ref{qder}) and (\ref{eiei}), we see that for all $\ell>0$ we have
		\[
			\mathrm{ad}_{e_i}^m(y_{\ell}e_i^{\ell}) \in a_{\ell}y_{\ell}e_i^{\ell+m} +\sum_{k<\ell+m} U^+[i]e_i^k
		\]
		for some $a_{\ell} \in \mathbb{Q}(q)$ non-zero. Hence
		\[
			0 =\mathrm{ad}_{e_i}^m(y) \in a_Ny_Ne_i^{N+m} +\sum_{k<N+m} U^+[i]e_i^k.
		\]
		By the uniqueness part of Proposition \ref{dec}, we have $y_N=0$ which is a contradiction. Thus $N=0$, and the result follows.
	\end{proof}
	
	\bigskip
	
	\section{KLR algebras} In this section, we define the KLR algebras and recall how they categorify the positive part of the quantum group. We follow the approach and definitions of Rouquier \cite{2km}, \cite{qha}. We also refer to \cite{brundansurvey} for a survey of the subject. At the end of the section, we prove some elementary computational results about KLR algebras that will be useful in the proofs of our main results.\\ 
	
	For the rest of this paper, we fix a commutative ring $K$. We use the symbol $\otimes$ for $\otimes_K$. If $M$ is a graded $K$-module with degree $d$ component $M_d$, we denote by $qM$ the shifted module whose grading is defined by $(qM)_d=M_{d+1}$. For $a = \sum_{\ell \in \mathbb Z} a_{\ell}q^{\ell} \in \mathbb{N}[q,q^{-1}]$ we let
	\[
		aM = \bigoplus_{\ell \in \mathbb Z} (q^{\ell}M)^{\oplus a_{\ell}}.
	\]
	
	\subsection{Polynomial rings and nil Hecke algebras}
	\subsubsection{Symmetric groups} The symmetric group on $n$ letters is denoted by $\mathfrak S_n$. For $k \neq \ell$ two integers in $\lbrace1,\ldots,n\rbrace$, we let $s_{k,\ell}$ be the permutation $(k \, \ell) \in \mathfrak S_n$, and we put $s_k=s_{k,k+1}$. Given a finite sequence $\underline{k}=(k_1,\ldots,k_r)$ of elements of $\lbrace1,\ldots,n-1\rbrace$, we let $s_{\underline{k}}=s_{k_1}\ldots s_{k_r}$. If $k\leqslant \ell$, we denote by $\left[k\uparrow \ell\right]$ the sequence $(k,k+1,\ldots,\ell)$ and by $\left[\ell \downarrow k\right]$ the sequence $(\ell,\ell-1,\ldots k)$. If $k>\ell$, $\left[k\uparrow \ell\right]$ and $\left[\ell \downarrow k\right]$ are understood to be the empty sequence $\varnothing$, in which case $s_{\varnothing} =1$.
	
	For $\omega \in \mathfrak S_n$, a \textit{reduced decomposition} of $\omega$ is a finite sequence $\underline{k} = (k_1,\ldots,k_r)$ such that $s_{\underline{k}} = \omega$ and $r$ is minimal (the integer $r$ is the \textit{length} of $\omega$, denoted by $l(\omega)$). The longest element of $\mathfrak S_n$ is denoted by $\omega_0[1,n]$. If $k<\ell \in \lbrace 1,\ldots,n\rbrace$, we denote by $\omega_0[k,\ell]$ the longest element of the parabolic subgroup of $\mathfrak{S}_n$ generated by $s_k,\ldots,s_{\ell-1}$. The elements $\omega_0[k,\ell]$ satisfy the relations
	\begin{equation}\label{longest}
		\begin{array}{ll}
		\omega_0[k,\ell+1]& = \omega_0[k,\ell]s_{[\ell\downarrow k]} = s_{[k\uparrow \ell]}\omega_0[k,\ell] \\
		&=\omega_0[k+1,\ell+1]s_{[k\uparrow \ell]} = s_{[\ell\downarrow k]}\omega_0[k+1,\ell+1].
		\end{array}		
	\end{equation}
	
	\subsubsection{Demazure operators} Let $P_n =K[x_1,\ldots x_n]$. We consider $P_n$ graded with $x_k$ in degree 2 for all $k\in \lbrace1,\ldots,n\rbrace$. The symmetric group $\mathfrak{S}_n$ acts on $P_n$ by permuting $x_1,\ldots,x_n$. For $k\neq \ell$ two integers in $\lbrace1,\ldots,n-1\rbrace$, we define the \textit{Demazure operator}, or \textit{divided difference operator}, $\partial_{k,\ell}$ on $P_n$ by
	\[
		\partial_{k,\ell}(f) = \frac{f-s_{k,\ell}(f)}{x_{\ell}-x_k}.
	\]
	For $k \in \lbrace1,\ldots,n-1\rbrace$, we put $\partial_k = \partial_{k,k+1}$. The Demazure operators are $P_n^{\mathfrak S_n}$-linear. They are also ``skew derivations" of the algebra $P_n$ for the automorphism $s_{k,\ell}$, in the sense that the satisfy the following skewed Leibniz formula:
	\begin{equation}\label{twistedder}
		\partial_{k,\ell}(fg) = \partial_{k,\ell}(f)g + s_{k,\ell}(f)\partial_{k,\ell}(g)
	\end{equation}
	for all $k\neq \ell \in \lbrace1,\ldots, n-1\rbrace$ and $f,g \in P_n$. The Demazure operators are all conjugate under the action of $\mathfrak S_n$: for $k\neq \ell$ and $\omega \in \mathfrak S_n$ we have $\omega \partial_{k,\ell} \omega^{-1} = \partial_{\omega(k),\omega(\ell)}$. We also have the following relations:
	\[
		\begin{array}{ll}
			\partial_k^2 = 0 & \text{ for all } k \in \lbrace1,\ldots n-1\rbrace, \\
			\partial_k\partial_{\ell} = \partial_{\ell}\partial_k & \text{ for all } k,\ell \in \lbrace1,\ldots n-1\rbrace \text{ such that } \vert k-\ell \vert >1, \\
			\partial_{k+1}\partial_k\partial_{k+1} = \partial_k\partial_{k+1}\partial_k & \text{ for all } k \in \lbrace1,\ldots n-2\rbrace.
		\end{array}
	\]
	
	\medskip
	
	\subsubsection{Affine nil Hecke algebras} We follow \cite[Subsection 3.1]{2km}.
	\begin{defi}\label{anha} The \textit{affine nil Hecke algebra} of rank $n$ is the $K$-algebra $H_n^0$ on the generators $x_1,\ldots,x_n$ and $\tau_1,\ldots,\tau_{n-1}$ subject to the following relations
	\begin{enumerate}
		\item $x_kx_{\ell}=x_{\ell}x_k$ for all $k,\ell \in \lbrace 1,\ldots,n\rbrace$,\\
		
		\item\label{tausq} $\tau_k^2=0$ for all $k \in \lbrace1,\ldots,n-1\rbrace$,\\
		
		\item\label{taux} $\tau_kx_{\ell} - x_{s_k(\ell)}\tau_k = \left \{ \begin{array}{rl}
				1 & \text{ if } \ell=k+1 \\
				-1 & \text{ if } \ell =k \\
				0 & \text{ otherwise}
			\end{array} \right.
			$ for all for all $k\in \lbrace1,\ldots,n-1\rbrace$ and $\ell \in \lbrace 1,\ldots,n\rbrace$,\\
			
		\item\label{taucom} $\tau_k\tau_{\ell} = \tau_{\ell}\tau_k$ for all $k,\ell \in \lbrace1,\ldots,n-1\rbrace$ such that $\vert k-\ell\vert >1$,\\
		
		\item\label{taubraid} $\tau_{k+1}\tau_k\tau_{k+1} - \tau_k\tau_{k+1}\tau_{k}=0$ for all $k \in \lbrace1,\ldots,n-2\rbrace$.
	\end{enumerate}
	\end{defi}
	The algebra $H_n^0$ is graded, with $x_{\ell}$ in degree 2 for all $\ell \in \lbrace 1,\ldots,n\rbrace$ and $\tau_k$ in degree $-2$ for all $k \in \lbrace1,\ldots,n-1\rbrace$. There is an isomorphism of graded $K$-algebras (\cite[Proposition~3.4]{2km})
	\begin{equation}\label{isonil}
		\left \{ \begin{array}{rcl}
			H_n^0 & \xrightarrow{\sim} & \mathrm{End}_{P_n^{\mathfrak S_n}}(P_n), \\
			x_k & \mapsto & x_k, \\
			\tau_k & \mapsto & \partial_k.
		\end{array} \right.
	\end{equation}	
	Given a finite sequence $\underline{k}=(k_1,\ldots,k_r)$ of elements of $\lbrace1,\ldots,n-1\rbrace$, we put $\tau_{\underline k} = \tau_{k_1}\ldots \tau_{k_r}$ and $\partial_{\underline k}=\partial_{k_1}\ldots\partial_{k_r}$. For all $\omega \in \mathfrak S_n$, we can define an element $\tau_{\omega} \in H_n^0$ in the following way: take $\underline{k}$ a reduced expression of $\omega$, and let $\tau_{\omega}=\tau_{\underline{k}}$. By relations (\ref{taucom}) and (\ref{taubraid}) in Definition \ref{anha}, this does not depend on the choice of the reduced expression. There is a similarly defined operator $\partial_{\omega}$ on $P_n$. By relation (\ref{tausq}) in Definition \ref{anha}, for all $\omega_1,\omega_2 \in \mathfrak S_n$ we have
	\[
		\tau_{\omega_1}\tau_{\omega_2} = \left \{ \begin{array}{ll}
			\tau_{\omega_1\omega_2} & \text{if } l(\omega_1\omega_2) = l(\omega_1)+l(\omega_2), \\
			0 & \text{otherwise.}
		\end{array} \right.
	\]
		
	Since $P_n$ is a free graded $P_n^{\mathfrak{S}_n}$-module of graded rank $q^{-\frac{n(n-1)}{2}}[n]!$, the $K$-algebra $H_n^0$ is a matrix algebra over $P_n^{\mathfrak S_n}$. Let $e_n =  x_2x_3^2\ldots x_n^{n-1}\tau_{\omega_0[1,n]}$. Then $e_n$ is a primitive idempotent of $H_n^0$, and there is an isomorphism of graded $(H_n^0,P_n^{\mathfrak S_n})$-bimodules
	\begin{equation}\label{en}
		\left \{
			\begin{array}{rcl}
				P_n & \xrightarrow{\sim} & q^{-n(n-1)}H_n^0e_n \\
				P & \mapsto & P\tau_{\omega_0[1,n]}
			\end{array}
		\right.
	\end{equation}
	From this, we deduce that there is an isomorphism of graded $(H_n^0,P_n^{\mathfrak S_n})$-bimodules
	\begin{equation}\label{divpow}
		H_n^0 \simeq q^{-\frac{n(n-1)}{2}}[n]! (H_n^0e_n).
	\end{equation}
	
	\medskip
		
	\subsection{KLR algebras}
	\subsubsection{Definitions} For all $i,j \in I$, we fix a polynomial $Q_{i,j}(u,v) \in K[u,v]$. Assume that this data satisfies the following conditions:
	\begin{itemize}
		\item for all $i \in I$, $Q_{i,i}(u,v) = 0$,
		\item for all $i,j \in I$, $Q_{i,j}(u,v) = Q_{j,i}(v,u)$,
		\item for all $i\neq j \in I$, there are some invertible elements $t_{i,j}$ and $t_{j,i}$ of $K$ such that $Q_{i,j}(u,v)$ has the form
		\[
			Q_{i,j}(u,v) \in t_{i,j}u^{-c_{i,j}} + t_{j,i}v^{-c_{j,i}} + \sum_{\substack{s,t >0 \\ d_is+d_jt=- i\cdot j}} Ku^sv^t.
		\]
	\end{itemize}
	The KLR algebras are a family of $K$-algebras attached to the data of these polynomials.

	\begin{defi}\label{klr} Let $\beta$ be an element of $Q^+$ of height $n$. The \textit{Khovanov-Lauda-Rouquier algebra} $H_{\beta}$ is the unital $K$-algebra with generators $1_{\nu}$ for $\nu \in I^{\beta}$, $x_1,\ldots, x_n$ and $\tau_1,\ldots,\tau_{n-1}$ subject to the following relations:
	\begin{enumerate}
		\item $1_{\nu}1_{\nu'} = \delta_{\nu,\nu'}1_{\nu}$ for all $\nu,\nu' \in I^{\beta}$, and $\sum_{\nu \in I^{\beta}} 1_{\nu} = 1_{\beta}$, where $1_{\beta}$ is the unit of $H_n$,\\
		
		\item $x_k1_{\nu} = 1_{\nu}x_k$ for all $\nu \in I^{\beta}$ and $k \in \lbrace1,\ldots,n\rbrace$,\\
		
		\item $\tau_k1_{\nu} = 1_{s_k(\nu)}\tau_k$ for all $\nu \in I^{\beta}$ and $k \in \lbrace1,\ldots,n-1\rbrace$,\\
		
		\item $x_kx_{\ell}=x_{\ell}x_k$ for all $k,\ell \in \lbrace1,\ldots,n\rbrace$, \\
		
		\item\label{Tausq} $\tau_k^21_{\nu} = Q_{\nu_k,\nu_{k+1}}(x_k,x_{k+1})1_{\nu}$ for all $\nu \in I^{\beta}$ and $k \in \lbrace1,\ldots,n-1\rbrace$,\\
		
		\item\label{Taux} $\left(\tau_k x_{\ell} - x_{s_k(\ell)}\tau_k\right)1_{\nu} = \left \{ \begin{array}{ll}
			1_{\nu} & \text{ if } \ell=k+1 \text{ and } \nu_k=\nu_{k+1} \\
			-1_{\nu} & \text{ if } \ell=k \text{ and } \nu_k=\nu_{k+1}\\
			0 & \text{ otherwise }
			\end{array} \right.$ for all $k \in \lbrace1,\ldots,n-1\rbrace$, $\ell\in \lbrace1,\ldots n\rbrace$ and $\nu \in I^{\beta}$,\\
			
		\item $\tau_k\tau_{\ell} = \tau_{\ell}\tau_k$ for all $k,\ell \in \lbrace1,\ldots,n-1\rbrace$ such that $\vert k-\ell \vert >1$,\\
		
		\item\label{Taubraid} $\left(\tau_{k+1}\tau_k\tau_{k+1} - \tau_k\tau_{k+1}\tau_k\right)1_{\nu} = \delta_{\nu_k,\nu_{k+2}} \partial_{k,k
			+2}\left(Q_{\nu_k,\nu_{k+1}} (x_{k+2},x_{k+1}) \right)1_{\nu}$ for all $k\in \lbrace1,\ldots,n-2\rbrace$ and $\nu \in I^{\beta}$.
	\end{enumerate}
	\end{defi}

	\begin{conv} We will number components of tuples $\nu \in I^{n}$ from the right to the left, to match with the graphical calculus interpretation of the KLR algebras. Namely, we will write $\nu = (\nu_n,\ldots,\nu_1)$ for $\nu \in I^n$ and we let the symmetric group $\mathfrak{S}_n$ act on $I^n$ accordingly. For instance, if $\nu = (c,b,a) \in I^3$, we have $s_1(\nu) = (c,a,b)$ and $s_2(\nu) = (b,c,a)$.
	\end{conv}

	For a sequence $\underline{k} = (k_1,\ldots,k_r)$ of elements of $\lbrace1,\ldots,n-1\rbrace$, we put $\tau_{\underline{k}}=\tau_{k_1}\ldots\tau_{k_r}$. We also define a (possibly non-unital) $K$-algebra $H_n$ by
	\[
		H_n = \bigoplus_{\substack{\beta \in Q^+ \\ \vert \beta \vert =n}} H_{\beta}.
	\]
	The algebra $H_n$ is unital only when $I$ is finite. There is a grading on $H_n$ defined in the following way:
	\begin{itemize}
		\item $1_{\nu}$ is in degree 0,
		\item $x_k1_{\nu}$ is in degree $\nu_k\cdot\nu_k$,
		\item $\tau_{\ell}1_\nu$ is in degree $-\nu_l\cdot \nu_{l+1}$,
	\end{itemize}
	for all $\nu \in I^{n}$, $k \in \lbrace 1,\ldots, n\rbrace$ and $\ell \in \lbrace1,\ldots,n-1 \rbrace$. There is an anti-automorphism $\mathrm{rev}_{n}$ of $H_{n}$ defined by
	\begin{align*}
	\mathrm{rev}_{n}\big(1_{(\nu_n,\ldots,\nu_1)}\big) = 1_{(\nu_1,\ldots,\nu_n)}, \quad		
	\mathrm{rev}_{n}(x_k) = x_{n-k}, \quad
	\mathrm{rev}_{n}(\tau_{\ell}) = \tau_{n-1-{\ell}},
	\end{align*}
	for all $(\nu_n,\ldots,\nu_1) \in I^{n}$, $k\in \lbrace 1,\ldots,n \rbrace$ and $\ell \in \lbrace1,\ldots,n-1\rbrace$. Hence $H_{n} \simeq H_{n}^{\mathrm{op}}$.
	
	Let
	\[
		\mathcal{H} = \bigoplus_{\beta \in Q^+}H_{\beta}\mathrm{-mod}
	\]
	be the category of finitely generated graded modules over the KLR algebras. The morphisms in $\mathcal H$ are given by degree preserving module maps. The category $\mathcal H$ is $K$-linear and abelian. Furthermore it is graded, in the sense that it is equipped with the grading shift functor $M \mapsto qM$.

	\subsubsection{PBW theorem} The KLR algebras satisfy a PBW type theorem.	
	\begin{thm}[{\cite[Theorem~3.7]{2km}}]\label{pbw}
		For all $\beta \in Q^+$, the $K$-algebra $H_{\beta}$ is free as a $K$-module. Let $n=\vert \beta\vert$ and let $T$ be a complete set of reduced decompositions of elements of $\mathfrak S_{n}$. Then the following sets are bases over $K$ for $H_{\beta}$:
		\begin{align*}
			&\left \{  \tau_{\underline{\omega}}x_1^{a_1}\ldots x_n^{a_n} 1_{\nu} \vert \, \nu \in I^{\beta}, \, a_1,\ldots,a_n \geqslant 0, \, \underline{\omega}\in T \right \},\\
			&\left \{ x_1^{a_1}\ldots x_n^{a_n} \tau_{\underline{\omega}}1_{\nu} \vert \, \nu \in I^{\beta}, \, a_1,\ldots,a_n \geqslant 0, \, \underline{\omega} \in T \right \}.
		\end{align*}
	\end{thm}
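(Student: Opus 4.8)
The plan is to establish both statements at once by the standard upper-bound/lower-bound method: first show that each of the two sets \emph{spans} $H_\beta$ over $K$, then produce a representation of $H_\beta$ on a free $K$-module on which the elements of the first set act by $K$-linearly independent operators. Freeness of $H_\beta$ over $K$ is then a byproduct of the independence step, so the only inputs are the relations of Definition \ref{klr} and the structure of the affine nil Hecke algebra recalled around (\ref{isonil}).

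For the spanning statement I would filter $H_\beta$ by the number of $\tau$'s: let $F_{\leqslant d}H_\beta$ be the $K$-span of all products $g_0\tau_{k_1}g_1\cdots\tau_{k_r}g_r$ with $r\leqslant d$, where each $g_s$ lies in the subalgebra generated by the $x_\ell$'s and the idempotents $1_\nu$. This is an exhaustive, multiplicative, $\mathbb Z_{\geqslant 0}$-filtration, and the associated graded $\mathrm{gr}\,H_\beta$ is easy to describe: in $\mathrm{gr}$, relation \ref{Taux} becomes $\tau_k x_\ell 1_\nu = x_{s_k(\ell)}\tau_k 1_\nu$ (the unit term drops in degree), relation \ref{Tausq} becomes $\tau_k^2 1_\nu = 0$ (the polynomial $Q$ drops in degree), and relation \ref{Taubraid} becomes the honest braid relation $\tau_{k+1}\tau_k\tau_{k+1}1_\nu = \tau_k\tau_{k+1}\tau_k 1_\nu$. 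Hence $\mathrm{gr}\,H_\beta$ is spanned by monomials $\tau_{\underline{k}}\,x_1^{a_1}\cdots x_n^{a_n}1_\nu$, and since the nil-braid and commutation relations hold exactly there while $\tau_k^2 = 0$, any non-reduced $\tau_{\underline{k}}$ vanishes while a reduced $\tau_{\underline{k}}$ depends only on the element $\omega\in\mathfrak S_n$ it represents. Thus $\mathrm{gr}\,H_\beta$, hence $H_\beta$ itself, is spanned by the elements $\tau_{\underline{\omega}}x_1^{a_1}\cdots x_n^{a_n}1_\nu$ with $\underline{\omega}\in T$; moving the $x$'s to the left instead gives the second spanning set (or one applies the anti-automorphism $\mathrm{rev}_n$, which interchanges the two forms).

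For linear independence I would construct the \emph{polynomial representation}. Set $\mathrm{Pol}_\beta = \bigoplus_{\nu\in I^\beta}P_n\cdot\mathbf 1_\nu$, a free graded $K$-module, and let $1_\mu$ act by projection onto the $\mathbf 1_\mu$-summand, $x_k$ by multiplication, and $\tau_k$ as follows: on the summand $\mathbf 1_\nu$ with $\nu_k = \nu_{k+1}$, as the Demazure operator $\partial_k$, staying within $\mathbf 1_\nu$; on the summand $\mathbf 1_\nu$ with $\nu_k\neq\nu_{k+1}$, by $f\mathbf 1_\nu\mapsto c_{\nu_k,\nu_{k+1}}(x_k,x_{k+1})\,s_k(f)\,\mathbf 1_{s_k\nu}$, where, after fixing a total order on $I$, one sets $c_{i,j} = 1$ for $i<j$ and $c_{i,j} = Q_{j,i}$ for $i>j$ --- the point of this asymmetric choice being that $\tau_k^2$ then acts by multiplication by $Q_{\nu_k,\nu_{k+1}}(x_k,x_{k+1})$, as required. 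One checks directly that this action respects every relation of Definition \ref{klr}; the only nontrivial verification is relation \ref{Taubraid}, which after splitting into cases according to the coincidences among $\nu_k,\nu_{k+1},\nu_{k+2}$ boils down to identities between Demazure operators, the case $\nu_k = \nu_{k+2}\neq\nu_{k+1}$ producing exactly the stated correction term $\partial_{k,k+2}(Q_{\nu_k,\nu_{k+1}}(x_{k+2},x_{k+1}))$. Granting the representation, fix $\nu$: every operator coming from $\tau_{\underline{\omega}}x_1^{a_1}\cdots x_n^{a_n}1_\nu$ annihilates the summands $\mathbf 1_\mu$ with $\mu\neq\nu$ and maps $\mathbf 1_\nu$ into $\mathbf 1_{\omega\nu}$, so any linear relation among them decomposes according to the target index $\omega\nu$. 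On each piece, the permutations $\omega$ with a given value $\omega\nu$ form a coset of the stabilizer of $\nu$ in $\mathfrak S_n$, on which $\tau_{\underline{\omega}}$ acts by a pure composite of Demazure operators while a coset representative merely relabels the summand; a triangularity argument on the exponents $(a_1,\ldots,a_n)$ and on $\ell(\omega)$ --- the same one showing that $\{\tau_\omega x_1^{a_1}\cdots x_n^{a_n}\}$ is a $K$-basis of $H_n^0$ via the isomorphism (\ref{isonil}) --- forces all coefficients to vanish. Hence the first set is $K$-linearly independent, so it is a basis and $H_\beta$ is $K$-free; the second set is a basis as well, being its image under $\mathrm{rev}_n$.

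The step I expect to be the main obstacle is the well-definedness of the polynomial representation: pinning down the twisting polynomials $c_{i,j}$ so that relation \ref{Tausq} holds off the diagonal, and then the case analysis for relation \ref{Taubraid}, which is where the nontrivial correction term has to come out with exactly the right divided-difference operator. Everything else --- the filtration computation for spanning, the triangularity argument for independence (which essentially reduces to the already-recalled case of $H_n^0$), and the passage to the second basis via $\mathrm{rev}_n$ --- is routine bookkeeping once the representation is in place.
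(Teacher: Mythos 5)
This theorem is quoted from \cite[Theorem~3.7]{2km} and is not reproved in the present paper, so there is no in-paper argument to compare against; your outline follows exactly the route of that reference and of Khovanov--Lauda: a filtration by $\tau$-length to show the stated set spans, and a faithful polynomial representation with the asymmetric twist $c_{i,j}$ to get linear independence, after which freeness and the second basis are automatic. One gloss on the last triangularity step, which you state too loosely: the minimal-length representative $w_{0}$ of the coset $w_{0}\,\mathrm{Stab}(\nu)$ does not ``merely relabel'' the summand $\mathbf 1_{\nu}$ --- what is true (and what you should say) is that any reduced word for $w_{0}$ passes only through walls with $\nu_{k}\neq\nu_{k+1}$ (otherwise $w_{0}$ would have a shorter element in its coset), so $\tau_{w_{0}}$ acts as $f\mapsto p_{w_{0}}\cdot w_{0}(f)$ for $p_{w_{0}}$ a product of off-diagonal $Q$-polynomials, and the point you actually need is that multiplication by $p_{w_{0}}$ is injective on $P_{n}$ (guaranteed by the invertibility of the leading coefficients $t_{i,j}$ over an arbitrary commutative $K$), after which the relation collapses to the faithfulness of the nil Hecke algebra of the Young subgroup $\mathrm{Stab}(\nu)$ on $P_{n}$, which is the case (\ref{isonil}) already recalled.
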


	Let $P_{\beta}$ be the subalgebra of $H_{\beta}$ generated by $x_1,\ldots,x_n$ and $1_{\nu}$ for $\nu \in I^{\beta}$. As a consequence of Theorem \ref{pbw}, there is an isomorphism of algebras
	\[
		P_{\beta} \simeq \bigoplus_{\nu \in I^{\beta}} P_n1_{\nu}.
	\]
	Furthermore, $H_{\beta}$ is free of rank $n!$ as a left (or right) module over $P_{\beta}$, with basis $\lbrace \tau_{\underline{\omega}} \vert \, \underline{\omega} \in T \rbrace$, for $T$ any set of reduced expressions for the elements of $\mathfrak S_n$.
	
	\subsubsection{Center} Let $\beta \in Q^+$ of height $n$. The symmetric group $\mathfrak S_{n}$ acts naturally on $I^{\beta}$ by permutation of the components. Thus it also acts on the algebra $P_{\beta}$ by permuting $x_1,\ldots,x_{n}$ and the $1_{\nu}$, $\nu \in I^{\beta}$.
	
	\begin{prop}[{\cite[Proposition 3.9]{2km}}]\label{center}
		The center of $H_{\beta}$ is $P_{\beta}^{\mathfrak S_{n}}$.
	\end{prop}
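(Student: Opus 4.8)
The plan is to prove the two inclusions $P_\beta^{\mathfrak{S}_n} \subseteq Z(H_\beta)$ and $Z(H_\beta) \subseteq P_\beta^{\mathfrak{S}_n}$ separately, using the PBW theorem (Theorem \ref{pbw}) and the structure of $H_\beta$ as a free module over $P_\beta$ with basis $\{\tau_{\underline{\omega}}\}$ throughout.

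\emph{Step 1: $P_\beta^{\mathfrak{S}_n}$ is central.} A symmetric polynomial $f \in P_\beta^{\mathfrak{S}_n}$ obviously commutes with all $x_k$ and all $1_\nu$. So it remains to check that $f$ commutes with each $\tau_k$. From defining relation \eqref{Taux}, for $\nu \in I^\beta$ one computes that $\tau_k g 1_\nu - s_k(g) \tau_k 1_\nu$ equals $\partial_k(g) 1_\nu$ when $\nu_k = \nu_{k+1}$ (using that $\partial_k$ on a monomial in $x_k, x_{k+1}$ is captured by iterating relation \eqref{Taux}), and in general the ``$s_k$-twisted commutator'' of $\tau_k$ with a polynomial in the $x_\ell$ is a derivation-type correction supported where $\nu_k = \nu_{k+1}$. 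When $f$ is symmetric, $s_k(f) = f$ and $\partial_k(f) = 0$, so $\tau_k f 1_\nu = f \tau_k 1_\nu = f 1_{s_k(\nu)} \tau_k$; summing over $\nu$ gives $\tau_k f = f \tau_k$. Hence $f$ is central.

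\emph{Step 2: the center is contained in $P_\beta$.} Let $z \in Z(H_\beta)$. Using the second PBW basis, write $z = \sum_{\underline{\omega} \in T, \nu} f_{\underline{\omega},\nu} \tau_{\underline{\omega}} 1_\nu$ with $f_{\underline{\omega},\nu} \in P_n$, and let $\underline{\omega}_0$ be of maximal length among those appearing with a nonzero coefficient. The idea is to commute $z$ past a large power of some $x_k$ and compare leading terms. For $N \gg 0$, the element $x_k^N z - z x_k^N$ must vanish; expanding $\tau_{\underline{\omega}} x_k^N$ back into the PBW basis via relation \eqref{Taux} produces a leading term $(\text{permuted } x)^N \tau_{\underline{\omega}}$ plus strictly lower-length (in $\tau$) terms, so comparing the coefficient of $\tau_{\underline{\omega}_0}$ forces $f_{\underline{\omega}_0,\nu}$ to be fixed by the permutation $\omega_0$ acts by on the variables; pushing this for all $k$ and using that a word $\underline{\omega}_0$ of positive length moves some variable, one finds the coefficient cannot be consistent unless $l(\underline{\omega}_0) = 0$. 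Thus $z \in \bigoplus_\nu P_n 1_\nu = P_\beta$.

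\emph{Step 3: conclude $z$ is symmetric.} Having $z = \sum_\nu f_\nu 1_\nu \in P_\beta$, centrality against $\tau_k$ gives, via relation \eqref{Taux} applied on the idempotent $1_\nu$ with $\nu_k = \nu_{k+1}$, the equality $s_k(f_\nu) = f_{s_k(\nu)}$ together with $\partial_k(f_\nu) = 0$ on the locus where it matters; this is precisely the condition that $(f_\nu)_\nu$ defines an element of $P_\beta^{\mathfrak{S}_n}$. Combining Steps 1--3 yields $Z(H_\beta) = P_\beta^{\mathfrak{S}_n}$.

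The main obstacle is Step 2: the bookkeeping of how $\tau_{\underline{\omega}} x_k^N$ re-expands in the PBW basis, in particular controlling that the top-$\tau$-length term survives and carries enough information to force the length down to zero. One clean way to organize this is to use the faithful polynomial representation of $H_\beta$ on $\bigoplus_\nu P_n 1_\nu$ (an analogue of \eqref{isonil}, available since $H_\beta$ is free over $P_\beta$ with the $\tau$-basis), where $\tau_k$ acts by a Demazure-type operator on the matching idempotent components; then a central element acts as a $P_\beta^{\mathfrak{S}_n}$-linear endomorphism commuting with all the Demazure operators and all multiplications, and one reads off directly that it must be multiplication by a symmetric polynomial. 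I would likely present Step 2 in that representation-theoretic form to avoid the raw combinatorics.
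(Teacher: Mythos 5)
Note that the paper does not prove this statement; it is cited from Rouquier \cite[Proposition~3.9]{2km} and used as a black box, so there is no in-paper argument to compare against. Your reconstruction is nonetheless essentially the standard argument from the cited source, and it is correct: check $P_\beta^{\mathfrak{S}_n}\subseteq Z(H_\beta)$ via the twisted-Leibniz relation for $\tau_k$; then use the PBW basis and commutation with the $x_k$ to force a central element into $P_\beta$; then read $\mathfrak{S}_n$-invariance off commutation with the $\tau_k$.

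Two small remarks on the writeup. In Step 2 the passage to $x_k^N$ for large $N$ is unnecessary and the phrase ``forces $f_{\underline{\omega}_0,\nu}$ to be fixed by the permutation'' is misleading: commuting once with $x_k$ and comparing the $\tau_{\omega_0}$-coefficient yields $f_{\omega_0,\nu}(x_k - x_{\omega_0(k)})=0$, and since $x_k-x_{\omega_0(k)}$ is a monic, hence non-zero-divisor, element of $P_n$ this forces $\omega_0(k)=k$ whenever $f_{\omega_0,\nu}\neq 0$; ranging over $k$ forces $\omega_0=1$. The constraint is on $\omega_0$, not on $f$. Also, your suggested cleanup via the polynomial representation quietly assumes that representation is faithful for the KLR algebra; this is true but is itself a nontrivial theorem (Rouquier proves it separately), whereas the direct PBW computation only needs Theorem \ref{pbw}, so the ``raw combinatorics'' is actually the lighter-weight route here.
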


	\medskip
		
	\subsection{Monoidal structure} The category $\mathcal H$ has a monoidal structure, which we describe now.
	\subsubsection{Inductions between KLR algebras} Assume that $I$ is finite. For $n,m \geqslant 0$, there is a morphism of $K$-algebras $r_{n}^{n+m} : H_{n} \rightarrow H_{n+m}$ (called \textit{right inclusion}) defined by
	\[
		r_{n}^{n+m}\left(1_{\nu}\right) = \sum_{\mu \in I^{m}} 1_{\mu\nu}, \quad
		r_{n}^{n+m}\left(x_k\right) = x_k, \quad
		r_{n}^{n+m}\left(\tau_{\ell}\right) = \tau_{\ell},
	\]
	for all $\nu \in I^{n}$, $k \in \lbrace1,\ldots,n\rbrace$ and $\ell \in \lbrace1,\ldots,n-1\rbrace$. Here, $\mu\nu$ denotes the concatenation of $\mu \in I^m$ with $\nu \in I^n$. We also have a morphism of $K$-algebras $l_{n}^{n+m}: H_n \rightarrow H_{n+m}$ (called \textit{left inclusion}) defined by
	\[
		l_{n}^{n+m}\left(1_{\nu}\right) = \sum_{\mu \in I^{m}} 1_{\nu\mu}, \quad
		l_{n}^{n+m}\left(x_k\right) = x_{m+k}, \quad
		l_{n}^{n+m}\left(\tau_{\ell}\right) = \tau_{m+\ell},
	\]
	for all $\nu \in I^{n}$, $k \in \lbrace1,\ldots,n\rbrace$ and $\ell \in \lbrace1,\ldots,n-1\rbrace$. By Theorem \ref{pbw}, right and left inclusion are injective. Let $H_{m,n} = H_{m} \otimes H_{n}$. There is an injective morphism of $K$-algebras
	\[
		\left \{ \begin{array}{rcl}
			H_{m,n} & \rightarrow & H_{n+m}, \\
			y\otimes z & \mapsto & y \diamond z = l_{m}^{n+m}(y)r_{n}^{n+m}(z).
		\end{array} \right.			
	\]
	This defines an associative binary operation $\diamond$ on $\bigoplus_{k \geqslant 0} H_{k}$ with unit element $1_0 \in H_0 = K$. This morphism also endows $H_{n+m}$ with a structure of $H_{m,n}$-bimodule. By Theorem \ref{pbw}, $H_{n+m}$ is free of rank $\frac{(n+m)!}{n!m!}$ as a left (resp. right) $H_{m,n}$-module, and there are decompositions
	\begin{equation}\label{coset}
		H_{n+m} = \bigoplus_{\underline{\omega} \in T} H_{m,n}\tau_{\underline{\omega}}= \bigoplus_{\underline{\omega} \in T'} \tau_{\underline{\omega}} H_{m,n},
	\end{equation}
	where $T$ (resp. $T'$) is a complete set of reduced decompositions for minimal length representatives of right (resp. left) cosets of $\mathfrak S_{m}\times \mathfrak S_{n}$ in $\mathfrak S_{n+m}$. Hence there is a biexact bifunctor defined by
	\[
		\left \{ \begin{array}{rcl}
			H_{m}\mathrm{-mod} \times H_{n}\mathrm{-mod} & \rightarrow & H_{n+m}\mathrm{-mod}, \\
			(M,N) & \mapsto & MN = H_{n+m}\otimes_{H_{m,n}} \left(M\otimes N\right).
		\end{array} \right.
	\]	
	This defines the monoidal structure of $\mathcal H$.\\
	
	It will be convenient to work with the algebras $H_{\beta}$ rather than with $H_n$. Let us describe the monoidal structure at their level. For $\beta_1,\ldots,\beta_r \in Q^+$, we put $1_{\beta_1,\ldots,\beta_r} = 1_{\beta_1} \diamond \ldots \diamond 1_{\beta_r}$, an idempotent of $H_{\beta_1+\ldots+\beta_r}$.	Let $\beta,\gamma \in Q^+$ of respective heights $m,n$, and let $H_{\beta,\gamma}=H_{\beta}\otimes H_{\gamma}$. There is a non-unital morphism of $K$-algebras
	\[
		\left \{ \begin{array}{rcl}
		H_{\beta,\gamma} &\rightarrow &H_{\beta+\gamma},\\
		y\otimes z &\mapsto &y \diamond z.
		\end{array}\right.
	\]
	The image of the unit of $H_{\beta,\gamma}$ by this morphism is the idempotent $1_{\beta,\gamma}$ of $H_{\beta+\gamma}$. This endows $H_{\beta+\gamma}1_{\beta,\gamma}$ (resp. $1_{\beta,\gamma}H_{\beta+\gamma}$) with a structure of $(H_{\beta+\gamma},H_{\beta,\gamma})$-bimodule (resp. $(H_{\beta,\gamma},H_{\beta+\gamma})$-bimodule). It is free as a right (resp. left) $H_{\beta,\gamma}$-module, with basis $\lbrace\tau_{\underline{\omega}}, \, \underline{\omega} \in T \rbrace$ (resp. $\lbrace\tau_{\underline{\omega}}, \, \underline{\omega} \in T' \rbrace$) with $T$ (resp. $T'$) as above. If $M \in H_{\beta}\mathrm{-mod}$ and $N\in H_{\gamma}\mathrm{-mod}$, then $MN$ is an $H_{\beta+\gamma}$ module given by
	\[
		MN = H_{\beta+\gamma}1_{\beta,\gamma}\otimes_{H_{\beta,\gamma}} (M\otimes N).
	\]
	This also defines the monoidal structure in the case of $I$ infinite.
	
	Finally, we describe some useful induction and restriction functors. For $i\in I$, we denote by $E_i$ the free $H_i$-module of rank 1. The \textit{left $i$-induction functor} is the functor defined by
	\[
		E_i(-) : \left \{ \begin{array}{rcl}
			\mathcal H & \rightarrow & \mathcal H \\
			M & \mapsto & E_iM
		\end{array} \right.
	\]
	It is exact. Since $E_i$ is free of rank 1, for $M \in H_{\beta}\mathrm{-mod}$ we simply have
	\[
		E_iM \simeq H_{\beta+i}1_{i,\beta} \otimes_{H_{\beta}}M.
	\]
	The \textit{left $i$-restriction functor} $F_i$ is the right adjoint to the left $i$-induction functor. For $M \in H_{\beta}\mathrm{-mod}$, $F_i(M) = 1_{i,\beta-i}M$ (viewed as an $H_{\beta-i}$-module, via the right inclusion $H_{\beta-i} \rightarrow 1_{i,\beta-i}H_{\beta}1_{i,\beta-i}$). When $\beta-i \notin Q^+$, we understand $1_{i,\beta-i}=0$. The functor $F_i$ is also exact. There is a similarly defined pair of adjoint functors called right $i$-induction and right $i$-restriction, but they will not be as useful to us.
	
	\subsubsection{Divided powers} Let $n\geqslant 0$ and $i\in I$. The algebra $H_{ni}$ is isomorphic to the affine nil Hecke algebra $H_n^0$ (up to a grading dilatation by $d_i$). The $H_{ni}$-module $E_i^n$ is free of rank 1. Its endomorphism ring (as an $H_{ni}$-module without grading) is $H_{ni}^{\mathrm{opp}}$, acting by right multiplication. Recall the primitive idempotent $e_n = x_2\ldots x_{n}^{n-1}\tau_{\omega_0[1,n]}$ of $H_n^0$. We define the $n^{\text{th}}$ divided power $E_i^{(n)}$ by
	\[
		E_i^{(n)} = q_i^{-\frac{n(n-1)}{2}}E_i^ne_n.
	\]
	As a graded $H_{ni}$-module, we have simply $E_i^{(n)} \simeq q_i^{-n(n-1)}P_{ni}$ by the isomorphism (\ref{en}). By the structure theory of the affine nil Hecke algebra (\ref{divpow}) there is an isomorphism of graded $H_{ni}$-modules
	\[
		E_i^{n} \simeq [n]_i! E_{i}^{(n)}.
	\]
	
	\medskip
	
	\subsection{Categorified quantum groups} Consider the full subcategory $\mathcal H\mathrm{-proj}$ of $\mathcal H$ consisting of projective modules. This is an additive and graded category. The \textit{split Grothendieck group} $K_0(\mathcal H\mathrm{-proj})$ is the quotient of the free abelian group on isomophism classes $[M]$ of objects $M \in \mathcal H\mathrm{-proj}$ by the relations $[M]+[N] = [M\oplus N]$ for all $M,N \in \mathcal H\mathrm{-proj}$. Since $\mathcal H\mathrm{-proj}$ is graded, we can endow $K_0(\mathcal H\mathrm{-proj})$ with a structure of $\mathbb{Z}[q,q^{-1}]$-module as follows: for $p \in \mathbb{Z}[q,q^{-1}]$ and $M \in \mathcal H\mathrm{-proj}$, we let $p[M] = [pM]$. Furthermore, there is a product on $K_0(\mathcal{H}\mathrm{-proj})$ induced from the moinoidal structure: if $M,N \in \mathcal H\mathrm{-proj}$, we define $[M][N] = [MN]$. Hence $K_0(\mathcal H\mathrm{-proj})$ has the structure of a $\mathbb{Z}[q,q^{-1}]$-algebra. There is also a non-degenerate bilinear form on $K_0(\mathcal H\mathrm{-proj})$ defined as follows: for $M,N \in \mathcal{H}\mathrm{-proj}$ let
	\[
		\left([M],[N]\right) = \sum_{k\in \mathbb{Z}} \mathrm{rk}(\mathrm{Hom}_{\mathcal H}(M,q^kN)) q^{k}.
	\]
	Khovanov and Lauda proved that $\mathcal{H}\mathrm{-proj}$ categorifies half of the integral quantum group $_{\mathcal A}U^+$.
	\begin{thm}[{\cite[Theorem~8]{catII}}]
		Assume that $K$ is a field. Then there is a unique isomorphism of $\mathbb Z[q,q^{-1}]$-algebras
		\[
		 _{\mathcal A}U^+ \xrightarrow{\sim} K_0(\mathcal H\mathrm{-proj})
		\]
		sending $e_i^{(n)}$ to $\big[E_i^{(n)}\big]$ for all $i \in I$ and $n \geqslant 0$. Furthermore, this isomorphism is an isometry for the inner products on $_{\mathcal A}U^+$ and $K_0(\mathcal H\mathrm{-proj})$.
	\end{thm}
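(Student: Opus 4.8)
This is the categorification theorem of Khovanov and Lauda, and the plan is to construct a $\mathbb Z[q,q^{-1}]$-algebra homomorphism $\gamma\colon {}_{\mathcal A}U^+ \to K_0(\mathcal H\mathrm{-proj})$ with $\gamma(e_i^{(n)}) = [E_i^{(n)}]$, and then to prove successively that it is surjective and injective. Uniqueness is immediate: the divided powers $e_i^{(n)}$ generate ${}_{\mathcal A}U^+$ as a $\mathbb Z[q,q^{-1}]$-algebra, so an algebra homomorphism is determined by its values on them.

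\emph{Construction of $\gamma$.} Over $\mathbb Q(q)$ the algebra $U^+$ is presented by the generators $e_i$ subject to the quantum Serre relations, so to obtain an algebra map $U^+ \to \mathbb Q(q)\otimes_{\mathbb Z[q,q^{-1}]}K_0(\mathcal H\mathrm{-proj})$ sending $e_i$ to $[E_i]$ it suffices to check that the classes $[E_i]$ satisfy the quantum Serre relations in $K_0$. Since $K_0(\mathcal H\mathrm{-proj})$ is a free, hence torsion-free, $\mathbb Z[q,q^{-1}]$-module, it is enough to produce, for $i\neq j$ and $d = 1-c_{i,j}$, a graded isomorphism of projective $H_{di+j}$-modules between $\bigoplus_{\ell\ \text{even}} E_i^{(\ell)}E_jE_i^{(d-\ell)}$ and $\bigoplus_{\ell\ \text{odd}} E_i^{(\ell)}E_jE_i^{(d-\ell)}$ (with the grading shifts read off from the decategorified relation). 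This categorified quantum Serre relation is established by a direct study of the bimodules $H_{di+j}1_{\ell i,\,j,\,(d-\ell)i}$, or via an explicit filtration, using the structure of the affine nil Hecke algebra recalled in (\ref{divpow}) to handle the divided powers $E_i^{(\ell)}$. One then checks that the resulting map carries ${}_{\mathcal A}U^+$ into $K_0(\mathcal H\mathrm{-proj})$; since it sends $e_i^{(n)}$ to $[E_i^{(n)}]$ and these generate ${}_{\mathcal A}U^+$, this is clear.

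\emph{Surjectivity.} Because $1_\beta = \sum_{\nu\in I^\beta}1_\nu$, the regular module splits as $H_\beta = \bigoplus_{\nu}H_\beta 1_\nu$, and every finitely generated graded projective is a summand of a finite sum of grading shifts of $H_\beta$. For $\nu = (\nu_n,\dots,\nu_1)$ the module $H_\beta 1_\nu$ is (isomorphic to) the product $E_{\nu_n}E_{\nu_{n-1}}\cdots E_{\nu_1}$ of rank-one induction functors, so $[H_\beta 1_\nu]\in \mathrm{im}(\gamma)$. To conclude that $\mathrm{im}(\gamma)$ exhausts $K_0$, one uses that $K_0(\mathcal H\mathrm{-proj})_\beta$ is free over $\mathbb Z[q,q^{-1}]$ on the classes of the indecomposable projectives $P_b$, and that the transition matrix from the $[H_\beta 1_\nu]$ to the $[P_b]$ is unitriangular with respect to a suitable order (each $P_b$ occurs with multiplicity one in degree zero in a suitably chosen $H_\beta 1_\nu$), hence invertible over $\mathbb Z[q,q^{-1}]$; therefore the $[H_\beta 1_\nu]$ already span $K_0(\mathcal H\mathrm{-proj})_\beta$, and $\gamma$ is onto.

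\emph{Injectivity and the isometry.} Endow $K_0(\mathcal H\mathrm{-proj})$ with the Hom-pairing $([M],[N]) = \sum_k \mathrm{rk}\,\mathrm{Hom}_{\mathcal H}(M,q^kN)q^k$ and ${}_{\mathcal A}U^+$ with the bilinear form $(\cdot,\cdot)$ characterised by the four listed properties. The crucial point is that $\gamma$ intertwines these two forms: it is enough to verify $(\gamma(y),\gamma(z)) = (y,z)$ when $y,z$ run over monomials in the $e_i$, which follows from the inductive description of $(\cdot,\cdot)$ through the map $r$ on the $U^+$ side and, on the $K_0$ side, from the adjunction between $E_i$ and $F_i$ together with the PBW theorem (Theorem \ref{pbw}), which makes the graded ranks of the spaces $1_\nu H_\beta 1_\mu$ explicitly computable. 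Granting this, injectivity is formal: if $\gamma(x) = 0$ then $(x,y) = (\gamma(x),\gamma(y)) = 0$ for all $y$, so $x = 0$ by non-degeneracy of $(\cdot,\cdot)$ on $U^+$; and the same computation is precisely the statement that $\gamma$ is an isometry. The technical heart of the argument is thus the categorified quantum Serre relation needed to define $\gamma$, together with the (more bookkeeping-heavy) verification that $\gamma$ preserves the bilinear forms; once these are in hand, surjectivity and injectivity are comparatively soft.
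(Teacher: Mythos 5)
The paper does not prove this statement: it is quoted verbatim as \cite[Theorem~8]{catII} and used as a black box, so there is no proof in the paper to compare against. Your sketch should therefore be measured against the actual Khovanov--Lauda argument, which it follows in broad outline (well-definedness via a categorified quantum Serre relation, injectivity via compatibility with the bilinear form and its non-degeneracy, uniqueness from the divided powers generating $_{\mathcal A}U^+$).

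The weak link is your surjectivity step. You assert that the transition matrix from the $[H_\beta 1_\nu]$ to the classes of indecomposable projectives $[P_b]$ is unitriangular, on the grounds that ``each $P_b$ occurs with multiplicity one in degree zero in a suitably chosen $H_\beta 1_\nu$.'' That is a genuinely nontrivial claim about the weight-space structure of simple $H_\beta$-modules (roughly, that each simple has a one-dimensional extremal weight space), and you give no order on indecomposables nor any reason the off-diagonal terms are ``smaller''. As stated this is an assertion, not an argument. Khovanov--Lauda avoid it: they work simultaneously with $K_0(\mathcal H\text{-proj})$ and the Grothendieck group of finite-dimensional modules, show the Euler form makes these two lattices dual over $\mathbb{Z}[q,q^{-1}]$, construct the adjoint map on the dual side, and deduce surjectivity of $\gamma$ from injectivity of the dual map together with this perfect pairing. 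So the structure of your proof is right except for surjectivity, where you would either need to supply the missing combinatorial input (a Kleshchev--Ram-type statement about extremal weights of simples, which was not available to Khovanov--Lauda and which your sketch does not even gesture at) or switch to the duality argument that the cited source actually uses.
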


	\medskip
	
	\subsection{Some computational lemmas} We now recall some elementary results in KLR algebras that will be used in the proofs below. First, we recall some basic results about computations in affine nil Hecke algebras.
	
	\begin{lem}\label{compnil}\begin{enumerate}
			\item Let $n\geqslant 0$, let $k \in \lbrace1,\ldots,n-1\rbrace$ and let $P\in P_n$. In $H_n^0$ we have
			\[
			\tau_kP - s_k(P)\tau_k = \partial_k(P).
			\]
			\item Let $ n\geqslant 0$, let $\underline{k}$ be a finite sequence of elements of $\lbrace 1,\ldots, n-1\rbrace$ and let $P\in P_n$. In $H_n^0$ we have
			\[
			\tau_{\underline{k}} P\tau_{\omega_0[1,n]} = \partial_{\underline{k}}(P)\tau_{\omega_0[1,n]}.
			\]
		\end{enumerate}
	\end{lem}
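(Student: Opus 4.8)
The plan is to prove part (1) directly and then deduce part (2) from it together with the vanishing of $\tau_k\tau_{\omega_0[1,n]}$. For part (1), I would compare the two $K$-linear maps $P_n\to H_n^0$ defined by $D_1(P)=\tau_kP-s_k(P)\tau_k$ and $D_2(P)=\partial_k(P)$, and show that each is a skew derivation with respect to the automorphism $s_k$, i.e.\ satisfies $D(PQ)=D(P)Q+s_k(P)D(Q)$. For $D_2$ this is the twisted Leibniz formula $(\ref{twistedder})$; for $D_1$ it follows by inserting and subtracting the term $s_k(P)\tau_kQ$ and using that $s_k$ is an algebra automorphism. Next, both maps vanish on $K$, and they agree on the algebra generators $x_1,\dots,x_n$: by relation $(\ref{taux})$ of Definition \ref{anha}, $D_1(x_\ell)$ is $1$, $-1$, or $0$ according to whether $\ell=k+1$, $\ell=k$, or $\ell\notin\{k,k+1\}$, and the same values come out of $\partial_k(x_\ell)=\frac{x_\ell-s_k(x_\ell)}{x_{k+1}-x_k}$. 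Since $P_n$ is generated as a $K$-algebra by $x_1,\dots,x_n$, an induction on the degree of a monomial, using the skew Leibniz rule, then forces $D_1=D_2$ on all of $P_n$. Alternatively, one may transport the identity through the isomorphism $(\ref{isonil})$, $H_n^0\xrightarrow{\sim}\mathrm{End}_{P_n^{\mathfrak S_n}}(P_n)$ sending $\tau_k\mapsto\partial_k$, under which it is exactly $(\ref{twistedder})$.

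For part (2), I would first record the one-index case $\tau_kP\tau_{\omega_0[1,n]}=\partial_k(P)\tau_{\omega_0[1,n]}$ for all $P\in P_n$ and $k\in\{1,\dots,n-1\}$. By part (1), $\tau_kP=s_k(P)\tau_k+\partial_k(P)$, so this reduces to $\tau_k\tau_{\omega_0[1,n]}=0$. Writing $\tau_k=\tau_{s_k}$, this holds because $\omega_0[1,n]$ is the longest element of $\mathfrak S_n$, hence $l(s_k\omega_0[1,n])\le l(\omega_0[1,n])<1+l(\omega_0[1,n])$, so the multiplication rule for the $\tau_\omega$ recalled above gives $\tau_{s_k}\tau_{\omega_0[1,n]}=0$. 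The general case then follows by induction on the length of the sequence $\underline{k}$: writing $\underline{k}=(k_1,\underline{k}')$ one has $\tau_{\underline{k}}P\tau_{\omega_0[1,n]}=\tau_{k_1}\bigl(\tau_{\underline{k}'}P\tau_{\omega_0[1,n]}\bigr)=\tau_{k_1}\partial_{\underline{k}'}(P)\tau_{\omega_0[1,n]}=\partial_{k_1}\bigl(\partial_{\underline{k}'}(P)\bigr)\tau_{\omega_0[1,n]}=\partial_{\underline{k}}(P)\tau_{\omega_0[1,n]}$, applying the induction hypothesis to $\underline{k}'$ and then the one-index case to the polynomial $\partial_{\underline{k}'}(P)$.

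I do not anticipate any real obstacle: both parts are elementary. The only points deserving some care are, in part (1), making explicit that a $K$-linear map out of $P_n$ satisfying the skew Leibniz rule is determined by its values on the generators $x_1,\dots,x_n$, via the induction on monomial degree; and, in part (2), getting the order of composition of the Demazure operators right, since in the inductive step the polynomial to which the one-index identity is applied is $\partial_{\underline{k}'}(P)$ rather than $P$.
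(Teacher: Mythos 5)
Your proposal is correct and follows essentially the same approach as the paper: for part (1) you compare the two skew derivations and check agreement on the algebra generators $x_1,\dots,x_n$, and for part (2) you reduce to the one-index case, which follows from part (1) together with $\tau_k\tau_{\omega_0[1,n]}=0$. The only (harmless) difference is that you spell out the induction step in part (2) in full, whereas the paper leaves it implicit.
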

	
	\begin{proof}
		For statement (1), consider the two maps
		\[
		\left \{ \begin{array}{rcl} P_n & \rightarrow & H_n^0 \\ P & \mapsto &P\tau_k - s_k(P)\tau_k \end{array} \right., \
		\left \{ \begin{array}{rcl} P_n & \rightarrow & H_n^0 \\ P & \mapsto & \partial_k(P) \end{array} \right..
		\]
		They are both skew derivations, in the sense that they satisfy relation (\ref{twistedder}) with respect to the automorphism $s_k$. Furthermore, by relation (\ref{taux}) in Definition \ref{anha}, they coincide on $x_1,\ldots,x_n$, which are algebra generators of $P_n$. Hence, they are equal and we have $\tau_kP - s_k(P)\tau_k = \partial_k(P)$ for all $P \in P_n$.
		
		For statement (2), by induction on the length of the finite sequence $\underline{k}$, it suffices to treat the case where $\underline{k}$ has length 1. If $k \in \lbrace1,\ldots,n-1\rbrace$, then for all $P \in P_n$ we have
		\begin{align*}
		\tau_k P\tau_{\omega_0[1,n]} &= s_k(P)\tau_k \tau_{\omega_0[1,n]} + \partial_k(P)\tau_{\omega_0[1,n]}
		\end{align*}
		by statement (1) of the lemma. However, $\tau_k\tau_{\omega_0[1,n]} =0$ since $l(s_k\omega_0[1,n]) < l(\omega_0[1,n])$. Hence we have $\tau_k P\tau_{\omega_0[1,n]}=\partial_k(P)\tau_{\omega_0[1,n]}$, which completes the proof.
	\end{proof}
	
	\begin{lem}\label{xtaucom}
		Let $m,n \geqslant 0$, and let $T$ be a complete set of reduced expressions of minimal length representatives of left cosets of $\mathfrak S_m\times \mathfrak S_n$ in $\mathfrak S_{m+n}$. Let $\underline{\omega}_0$ be the longest element in $T$. Then for all $y \in H_n$ and $z \in H_{m}$ we have
		\[
			(y\diamond z)\tau_{\underline{\omega}_0} - \tau_{\underline{\omega}_0}(z\diamond y) \in \sum_{\underline{\omega} \in T \setminus \lbrace \underline{\omega}_0 \rbrace} \tau_{\underline{\omega}} H_{m,n}.
		\]
	\end{lem}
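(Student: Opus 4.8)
The plan is to read off both sides of the asserted congruence in the direct sum decomposition $H_{n+m} = \bigoplus_{\underline\omega\in T}\tau_{\underline\omega}H_{m,n}$ (the left‑coset version of \eqref{coset}) and compare their components in the top summand. Write $V = \sum_{\underline\omega\in T\setminus\{\underline\omega_0\}}\tau_{\underline\omega}H_{m,n}$, so that $H_{n+m} = V\oplus\tau_{\underline\omega_0}H_{m,n}$ and $V$ is a right $H_{m,n}$-submodule; the claim is exactly that the $\tau_{\underline\omega_0}H_{m,n}$-component of $(y\diamond z)\tau_{\underline\omega_0}$ equals $\tau_{\underline\omega_0}(z\diamond y)$. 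Let $w\in\mathfrak S_{n+m}$ be the permutation underlying $\underline\omega_0$: it is the unique longest minimal-length representative of a left coset of $\mathfrak S_m\times\mathfrak S_n$, it has length $mn$, and in one-line notation it is $[\,m{+}1,\dots,m{+}n,1,\dots,m\,]$, i.e. it interchanges the block of $n$ strands with the block of $m$ strands. Informally, $(y\diamond z)$ carries $y$ on the $n$-strand block and $z$ on the $m$-strand block, and sliding it past the crossing $\tau_{\underline\omega_0}$ relabels it into $z\diamond y$; the content of the lemma is that every local resolution forced by the defining relations of $H_{n+m}$ during this computation falls into $V$.

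I would first record two facts about $w$. \emph{(i)} Since $w$ is the unique element of length $mn$ in $T$, the space $V$ contains every product of at most $mn-1$ of the generators $\tau_k$ of $H_{n+m}$: by the PBW theorem such a product involves only basis vectors whose $\tau$-part is $\tau_{w'}$ with $\ell(w')<mn$, whereas for $0\neq h\in H_{m,n}$ the element $\tau_{\underline\omega_0}h$ always has a PBW component with $\tau$-part $\tau_{w'}$, $\ell(w')\ge mn$ (because $\tau_{\underline\omega_0}\tau_u$ for $u\in\mathfrak S_m\times\mathfrak S_n$ is a reduced word for $wu$, of length $mn+\ell(u)$, and one runs a leading-term/triangularity argument on $h=\sum_u\tau_u P_u$). \emph{(ii)} $w$ normalizes $\mathfrak S_m\times\mathfrak S_n$, conjugating it to the reordered parabolic $\mathfrak S_n\times\mathfrak S_m\subseteq\mathfrak S_{n+m}$ and sending simple reflections to simple reflections; moreover $\ell(sw)=\ell(w)+1$ for every simple reflection $s$ of that reordered parabolic, as one sees at once from the one-line notation of $w$. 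Fact \emph{(ii)} also lets me check the auxiliary statement that left multiplication by the image of $H_n\otimes H_m$ (i.e. by $l_n^{n+m}(H_n)$ and $r_m^{n+m}(H_m)$) preserves $V$: polynomials commute past the $\tau_{\underline\omega}$ up to strictly shorter terms, and for such a $\tau_s$, $\tau_s\tau_{\underline\omega}$ involves only PBW vectors with $\tau$-part $\tau_{\omega'}$, $\ell(\omega')\le\ell(\omega)+1\le mn$, and $\omega'=w$ is impossible since $\ell(sw)=mn+1$.

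With these in hand I would reduce to generators. The asserted membership is $K$-bilinear in $(y,z)$, so one may take $y,z$ monomials in the generators; using the identities $(y_1y_2)\diamond z=l_n^{n+m}(y_1)\,(y_2\diamond z)$, $z\diamond(y_1y_2)=r_n^{m+n}(y_1)\,(z\diamond y_2)$ and their mirror images in $z$ (which follow from the fact that $l_n^{n+m}(H_n)$ and $r_m^{n+m}(H_m)$ commute), together with the fact that $V$ is a right $H_{m,n}$-module and the fact from \emph{(ii)} that $V$ is stable under left multiplication by $H_n\otimes H_m$, an induction on total word length reduces the statement to the case where each of $y,z$ is the unit or a single generator; and since $g\diamond g'=(g\diamond 1)(1\diamond g')$ one is left with the cases $(g,1_{H_m})$ and $(1_{H_n},g)$ for $g$ a single generator. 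Each of these is a direct computation. For an idempotent $1_\nu$ one has $\tau_{\underline\omega_0}1_\nu=1_{w(\nu)}\tau_{\underline\omega_0}$ and the two sides of the lemma agree exactly. For $x_k$, iterating $(\tau_k x_\ell-x_{s_k(\ell)}\tau_k)1_\nu\in\{0,\pm 1_\nu\}$ gives $\tau_{\underline\omega_0}x_k=x_{w(k)}\tau_{\underline\omega_0}+(\text{terms from which one }\tau\text{ has been dropped})$, the latter in $V$ by \emph{(i)}, and $x_{w(k)}$ is precisely the block-relabeled generator. For a crossing $\tau_k$ of $H_m$ (say), the two sides are $\tau_s\tau_{\underline\omega_0}$ and $\tau_{\underline\omega_0}\tau_{s'}$ with $s=ws'w^{-1}$: these are $\tau$-words for the \emph{same} permutation $sw=ws'$ of length $mn+1$, so they differ only by the braid correction terms of the relation $(\tau_{k+1}\tau_k\tau_{k+1}-\tau_k\tau_{k+1}\tau_k)1_\nu=\delta_{\nu_k,\nu_{k+2}}\partial_{k,k+2}(\cdots)1_\nu$, which are polynomials times $\tau$-words of length $\le mn-1$ and hence lie in $V$ by \emph{(i)}. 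Assembling these base cases with the reduction proves the lemma.

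The main obstacle — and the reason the combinatorics above is needed — is that in $H_{n+m}$ the elements $\tau_{w'}$ depend on the chosen reduced word, so commuting $\tau_{\underline\omega_0}$ past a generator produces correction terms, and one must make sure that none of them re-enters the top summand $\tau_{\underline\omega_0}H_{m,n}$. That $w$ is the unique longest minimal coset representative, together with $\ell(sw)=\ell(w)+1$ for the relevant simple reflections $s$, is exactly what forces every correction term to be strictly shorter in the sense of \emph{(i)} and hence to land in $V$.
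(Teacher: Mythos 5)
Your proof is correct, and it shares the paper's key observation: putting the $\tau_k$ in filtration degree $1$, relations (\ref{Taux}) and (\ref{Taubraid}) of Definition \ref{klr} hold exactly in the associated graded, so the difference $(y\diamond z)\tau_{\underline{\omega}_0}-\tau_{\underline{\omega}_0}(z\diamond y)$ has filtration degree strictly less than $mn$ and hence lies in $V=\sum_{\underline{\omega}\neq\underline{\omega}_0}\tau_{\underline{\omega}}H_{m,n}$ by the coset decomposition (\ref{coset}). Where you differ is the reduction to generators. The paper works entirely in the associated graded $\mathrm{gr}(H_{n+m})$, verifies the identity on generators there, and extends to all $y\otimes z$ with the one-line remark that the set of pairs for which it holds is a subalgebra. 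You instead stay in $H_{n+m}$ and set up an explicit induction on word length, which forces you to prove two auxiliary stability facts about $V$: that it is a right $H_{m,n}$-submodule (immediate from (\ref{coset})) and, less trivially, that it is stable under left multiplication by the image of $H_n\otimes H_m$ (your observation (ii)); the paper needs neither. The extra bookkeeping buys you a concrete handle on the permutation $w=[m{+}1,\ldots,m{+}n,1,\ldots,m]$ underlying $\underline{\omega}_0$ and on the conjugation $w(\mathfrak S_m\times\mathfrak S_n)w^{-1}$, which some readers may find illuminating, but the paper's associated-graded shortcut is considerably shorter.
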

	
	\begin{proof}
		There is a filtration of $H_{n+m}$ with $1_{\nu}$, $x_{\ell}$ in degree 0 for all $\nu \in I^{n+m}$ and $\ell \in \lbrace1,\ldots,n+m\rbrace$, and $\tau_{k}$ in degree 1 for all $k\in \lbrace1,\ldots,n+m-1\rbrace$. In the associated graded $\mathrm{gr}(H_{n+m})$, relations (\ref{Taux}) and (\ref{Taubraid}) of Definition \ref{klr} become
		\[
			\tau_kx_{\ell} - x_{s_k(\ell)}\tau_k=0, \quad
			\tau_{k+1}\tau_k\tau_{k+1} - \tau_k\tau_{k+1}\tau_k=0.
		\]
		From these, we deduce that in $\mathrm{gr}(H_{n+m})$, we have
		\begin{equation}\label{assgr}
			(y\diamond z)\tau_{\underline{\omega}_0} - \tau_{\underline{\omega}_0}(z\diamond y) =0
		\end{equation}
		for all $y \in \lbrace x_1,\ldots, x_n,\tau_1,\ldots,\tau_{n-1},1_{\nu}, \nu \in I^n\rbrace$ and $z\in \lbrace x_1,\ldots,x_m,\tau_1,\ldots,\tau_{m-1}, 1_{\nu}, \nu \in I^m \rbrace$. However, the linear span of the elements $y\otimes z \in H_{n,m}$ for which equation (\ref{assgr}) holds in $\mathrm{gr}(H_{n+m})$ is a subalgebra of $H_{n,m}$. So equation (\ref{assgr}) holds in $\mathrm{gr}(H_{n+m})$ for all $y\otimes z \in H_{n,m}$. Hence for all $y\otimes z \in H_{n,m}$ we have
		\[
			(y\diamond z)\tau_{\underline{\omega}_0} - \tau_{\underline{\omega}_0}(z\diamond y)\in\sum_{l(\underline{\omega})<l(\underline{\omega}_0)} \tau_{\underline{\omega}}(H_{n+m})_{0}
		\]
		where $(H_{n+m})_{0}$ is the piece of the filtration in degree 0 (that is, $(H_{n+m})_{0}=\oplus_{\nu\in I^{n+m}}P_{n+m}1_{\nu}$). Using decomposition (\ref{coset}), we can write the right hand side as wanted.
	\end{proof}
	
	We now generalize relations (\ref{Tausq}) and (\ref{Taubraid}) from Definition \ref{klr}. For this, we need to introduce some notation. For $i\in I$ and $\nu=(\nu_1,\ldots,\nu_n) \in I^{n}$, we define
	\[
		Q_{i,\nu}(u,v_1,\ldots,v_n) = \prod_{\substack{1\leqslant k \leqslant n \\ \nu_k \neq i}} Q_{i,\nu_k}(u,v_k) \in K[u,v_1\ldots,v_n].
	\]
	These polynomials generalize the polynomials $Q_{i,j}(u,v)$, and will appear many times in the following sections.
	
	\begin{lem}\label{multistrand}
		Let $n \geqslant 1$, let $i \in I$ and let $\nu \in I^n$ be such that $\nu_k \neq i$ for all $k \in \lbrace1,\ldots,n\rbrace$. In $H_{n+1}$ the following relations hold:
		\begin{enumerate}
			\item $\tau_{[n\downarrow 1]}\tau_{[1\uparrow n]} 1_{i,\nu} = Q_{i,\nu}(x_{n+1},x_1,\ldots,x_n) 1_{i,\nu}$,\\
			
			\item $\tau_{[1\uparrow n]} \tau_{[n\downarrow1]} 1_{\nu,i} = Q_{i,\nu}(x_1,x_{2},\ldots,x_{n+1}) 1_{\nu,i}$.\\
			
		\end{enumerate}
		In $H_{n+2}$ the following relation holds:
		\begin{enumerate}
			\setcounter{enumi}{2}
			\item $\left(\tau_{[n+1\downarrow2]}\tau_1\tau_{[2\uparrow n+1]} - \tau_{[1\uparrow n]}\tau_{n+1}\tau_{[n\downarrow 1]}\right)1_{i,\nu,i} = \partial_{1,n+2}\left(Q_{i,\nu}(x_{n+2},x_{2},\ldots,x_{n+1})\right)1_{i,\nu,i}$.
		\end{enumerate}
	\end{lem}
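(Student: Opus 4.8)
The plan is to prove all three relations by the same device used for Lemma \ref{compnil}(1): each side of each identity, viewed as an operator obtained by multiplying generators of $H_{n+1}$ (resp. $H_{n+2}$), can be moved past the polynomial generators in a controlled way, and the two "multi-strand" operators $\tau_{[n\downarrow 1]}$ and $\tau_{[1\uparrow n]}$ (a single strand labeled $i$ being dragged through $n$ strands labeled $\nu_1,\dots,\nu_n$, none of which equals $i$) behave exactly like a composition of the $\tau$'s in relations (\ref{Tausq}) and (\ref{Taubraid}), because at every crossing the two colours are distinct. So the products $\prod Q_{i,\nu_k}$ arise by iterating the quadratic relation, and the partial-difference term in (3) arises by iterating the braid relation. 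First I would set up the induction on $n$; the base case $n=1$ of (1) and (2) is precisely relation (\ref{Tausq}) of Definition \ref{klr}, and the base case $n=1$ of (3) is precisely relation (\ref{Taubraid}), since then $\partial_{1,3}(Q_{i,\nu_1}(x_3,x_2))$ is the right-hand side there.

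\textbf{Relations (1) and (2).} These are dual under the anti-automorphism $\mathrm{rev}$, so it suffices to prove (1). For the inductive step, write $\tau_{[n\downarrow 1]} = \tau_{[n\downarrow 2]}\tau_1$ and $\tau_{[1\uparrow n]} = \tau_1\tau_{[2\uparrow n]}$. On $1_{i,\nu}$ the inner strand has colour $i$ and strand $1$ has colour $\nu_1 \neq i$, so $\tau_1^2 1_{i,\nu} = Q_{i,\nu_1}(x_1,x_2)1_{s_1(i,\nu)}$ splits off the factor $Q_{i,\nu_1}$; the remaining $\tau_{[n\downarrow 2]}\cdot Q_{i,\nu_1}(x_1,x_2)\cdot\tau_{[2\uparrow n]}$ is handled by commuting the polynomial $Q_{i,\nu_1}(x_1,x_2)$ — which involves only $x_1$ and $x_2$, variables untouched by the braids $\tau_2,\dots,\tau_n$ except for a relabelling of indices governed by the permutation — out to the front, and then applying the induction hypothesis to the length-$(n-1)$ version acting on the strands $2,\dots,n+1$. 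Some care with index bookkeeping is needed here, but no genuinely new relation is invoked: each step uses only relations (\ref{Taux}) (to slide $\tau$'s past $x$'s) and the fact that consecutive colours stay distinct. The outcome is $Q_{i,\nu_1}(x_{n+1},x_1)\cdot\prod_{k=2}^n Q_{i,\nu_k}(x_{n+1},x_k)\, 1_{i,\nu} = Q_{i,\nu}(x_{n+1},x_1,\dots,x_n)1_{i,\nu}$, as claimed.

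\textbf{Relation (3).} Here I would peel off the outer crossings on each side so as to reduce to relation (\ref{Taubraid}) together with relations (1)--(2) just proved. Concretely, $\tau_{[n+1\downarrow 2]}\tau_1\tau_{[2\uparrow n+1]}$ describes the rightmost $i$-strand travelling left across the block $\nu$, crossing the leftmost $i$-strand, and returning; similarly for the second term with the roles of the two $i$-strands swapped. Using relation (\ref{Taubraid}) on a single triple of strands $(i,\nu_k,i)$ at a time and relations (1)--(2) to absorb the $Q_{i,\nu_k}$-factors produced along the way, one reduces the difference of the two words to $\partial_{1,n+2}$ applied to $Q_{i,\nu}(x_{n+2},x_2,\dots,x_{n+1})$ times $1_{i,\nu,i}$; the Leibniz-type formula (\ref{twistedder}) for $\partial_{1,n+2}$ is what lets the single partial-difference operator act on the whole product $\prod_k Q_{i,\nu_k}$. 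Alternatively — and this may be cleaner — one can run a direct induction on $n$: the inductive step inserts one more $\nu$-strand, uses (\ref{Taubraid}) once to handle the new crossing, uses (1)/(2) to commute the resulting $Q$-factor through, and then invokes the degree-$(n-1)$ case of (3).

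\textbf{Main obstacle.} The conceptual content is light — everything is a consequence of Definition \ref{klr} plus the observation that distinct colours never trigger the $Q$-term at a crossing — so the real difficulty is purely organizational: keeping track of how the indices of the $x$-variables and the $\tau$-generators are permuted as strands are slid past one another, and making sure the polynomial $Q_{i,\nu}$ is evaluated at the correct variables at each stage. I expect the cleanest write-up to phrase the whole argument via the graphical calculus, where relations (1)--(3) are visually obvious "straightening" moves, and then remark that this translates into the algebraic identities above; but the induction-on-$n$ argument sketched here is self-contained and avoids introducing diagrams.
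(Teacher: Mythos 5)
Your strategy is essentially the paper's: prove (1), get (2) by $\mathrm{rev}$, and prove (1) and (3) by induction on $n$ with the base case given by Definition~\ref{klr}, relations~(\ref{Tausq}) and~(\ref{Taubraid}). The one genuine difference in organization is which crossing you peel off. The paper writes $\tau_{[n+1\downarrow 1]}\tau_{[1\uparrow n+1]} = \tau_{n+1}\big(\tau_{[n\downarrow 1]}\tau_{[1\uparrow n]}\big)\tau_{n+1}$, peeling the \emph{outer} crossing; the inductive hypothesis then lives in the right-included copy $r_{n+1}^{n+2}(H_{n+1})$ and the variables are unchanged. You instead write $\tau_{[n\downarrow 2]}\,\tau_1^2\,\tau_{[2\uparrow n]}$, peeling the \emph{inner} crossing; the inductive hypothesis must then be applied to the left-included copy $l_{n}^{n+1}(H_n)$, which forces a shift $x_k\mapsto x_{k+1}$. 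Both inductions close, so this is a legitimate alternative, though the paper's version keeps the bookkeeping a little lighter.

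You do, however, have a technical error in the displayed identity. You wrote $\tau_1^2 1_{i,\nu} = Q_{i,\nu_1}(x_1,x_2)1_{s_1(i,\nu)}$, but the $\tau_1^2$ in your decomposition is not adjacent to $1_{i,\nu}$: it acts after $\tau_{[2\uparrow n]}$ has been applied, i.e.\ on the idempotent $1_{(\nu_n,\ldots,\nu_2,i,\nu_1)}$, whose entries at positions $1,2$ are $\nu_1$ and $i$. So the correct local computation is
\[
\tau_1^2\, 1_{(\nu_n,\ldots,\nu_2,i,\nu_1)} = Q_{\nu_1,i}(x_1,x_2)\,1_{(\nu_n,\ldots,\nu_2,i,\nu_1)} = Q_{i,\nu_1}(x_2,x_1)\,1_{(\nu_n,\ldots,\nu_2,i,\nu_1)}.
\]
Two things to fix: the arguments of $Q_{i,\nu_1}$ must be $(x_2,x_1)$, not $(x_1,x_2)$ — harmless here because $Q_{i,j}(u,v)=Q_{j,i}(v,u)$, but it matters for where the variable lands after you push it out — and the idempotent on the right-hand side must be the \emph{same} idempotent, not $1_{s_1(i,\nu)}$; $\tau_1^2$ is length-zero and never changes the idempotent. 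After you push $Q_{i,\nu_1}(x_2,x_1)$ past $\tau_{[n\downarrow 2]}$ using relation~(\ref{Taux}) (no error terms since the $i$-strand never crosses another $i$), $x_2$ becomes $x_{n+1}$ and you recover the correct factor $Q_{i,\nu_1}(x_{n+1},x_1)$, after which the shifted induction hypothesis supplies $Q_{i,(\nu_n,\ldots,\nu_2)}(x_{n+1},x_2,\ldots,x_n)$.

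For (3) your write-up is too sketchy to check — "apply (\ref{Taubraid}) to one triple $(i,\nu_k,i)$ at a time" is not obviously a well-posed procedure, since the $\nu$-strands are a block and not individually adjacent to both $i$-strands at the same moment. Your fallback — a direct induction peeling the outer $\tau_{n+2}$, applying the length-$n$ case of (3) inside, and using relations (1)/(2) and the twisted Leibniz formula~(\ref{twistedder}) to absorb the $Q$-factors — is exactly the paper's argument and does work; this is the bulk of the computation and would need to be carried out in full.
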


	\begin{proof}
		Relation (2) follows from relation (1) by applying the anti-automorphism $\mathrm{rev}_{n+1}$, so it suffices to prove (1) and (3). The proof is by induction on $n$. The case $n=1$ is just the statement of relations (\ref{Tausq}) and (\ref{Taubraid}) in Definition \ref{klr}.
		
		Assume that relation (1) is proved for $\nu$ and consider $\nu'=(j,\nu)$ for some $j\neq i$. We have
		\begin{align*}
			\tau_{[n+1\downarrow 1]}\tau_{[1\uparrow n+1]} 1_{i,\nu'} &= \tau_{n+1}\tau_{[n\downarrow 1]}\tau_{[1\uparrow n]}\tau_{n+1} 1_{i,j,\nu} \\
			&= \tau_{n+1} Q_{i,\nu}(x_{n+1},x_1,\ldots,x_n) \tau_{n+1} 1_{i,j,\nu} 
		\end{align*}
		where the second equality comes from the inductive assumption. Using relation (\ref{Taux}) from Definition \ref{klr}, we can commute the polynomial $Q_{i,\nu}(x_{n+1},x_1,\ldots,x_n)$ to the left of $\tau_{n+1}$ and we get
		\begin{align*}
		\tau_{[n+1\downarrow 1]}\tau_{[1\uparrow n+1]} 1_{i,\nu'} &= Q_{i,\nu}(x_{n+2},x_1,\ldots,x_n) \tau_{n+1}^21_{i,j,\nu} \\
			&= Q_{i,\nu}(x_{n+2},x_1,\ldots,x_n)Q_{i,j}(x_{n+2},x_{n+1}) 1_{i,j,\nu} \\
			&= Q_{i,\nu'}(x_{n+2},x_1,\ldots,x_{n+1})1_{i,\nu'},
		\end{align*}
		where the second equality is relation (\ref{Tausq}) of Definition \ref{klr}. This proves relation (1), and hence also relation (2).
		
		Assume that relation (3) is proved for $\nu$ and consider $\nu'=(j,\nu)$ for some $j\neq i$. We have
		\begin{align*}
			\tau_{[n+2\downarrow2]}\tau_1\tau_{[2\uparrow n+2]} 1_{i,\nu',i} &= \tau_{n+2}\tau_{[n+1\downarrow2]}\tau_1\tau_{[2\uparrow n+1]}\tau_{n+2}  1_{i,\nu',i} \\
			&= \tau_{n+2}\left(\tau_{[1\uparrow n]}\tau_{n+1}\tau_{[n\downarrow 1]} +  \partial_{1,n+2}\left(Q_{i,\nu}(x_{n+2},x_{2},\ldots,x_{n+1})\right) \right)\tau_{n+2}  1_{i,\nu',i}
		\end{align*}
		the second equality coming from the inductive assumption. Let us simplify each of the two terms. For the term $\tau_{n+2}\tau_{[1\uparrow n]}\tau_{n+1}\tau_{[n\downarrow 1]} \tau_{n+2}1_{i,\nu',i}$, both factors $\tau_{n+2}$ can be commuted across the factors $\tau_{[1\uparrow n]}$ and $\tau_{[n\downarrow 1]}$ using relation (\ref{Taux}) from Definition \ref{klr}, which yields 
		\begin{align*}
			\tau_{n+2}\tau_{[1\uparrow n]}\tau_{n+1}\tau_{[n\downarrow 1]} \tau_{n+2}1_{i,\nu',i}& = \tau_{[1\uparrow n]}\tau_{n+2}\tau_{n+1}\tau_{n+2}\tau_{[n\downarrow 1]}1_{i,\nu',i} \\
			&= \tau_{[1\uparrow n]}\left(\tau_{n+1}\tau_{n+2}\tau_{n+1} + \partial_{n+1,n+3}(Q_{i,j}(x_{n+3},x_{n+2})) \right)\tau_{[n\downarrow 1]}1_{i,\nu',i} \\
			&= \tau_{[1\uparrow n+1]}\tau_{n+2} \tau_{[n\downarrow 1]}1_{i,\nu',i} + \partial_{1,n+3}(Q_{i,j}(x_{n+3},x_{n+2})) \tau_{[1\uparrow n]}\tau_{[n\downarrow 1]}1_{i,\nu',i} \\
			&= \tau_{[1\uparrow n+1]}\tau_{n+2} \tau_{[n\downarrow 1]}1_{i,\nu',i} + \partial_{1,n+3}(Q_{i,j}(x_{n+3},x_{n+2}))Q_{i,\nu}(x_1,x_{2},\ldots,x_{n+1})1_{i,\nu',i}
		\end{align*}
		The second equality comes from relation (\ref{Taubraid}) in Definition \ref{klr}. The third is deduced by commuting the polynomial $\partial_{n+1,n+3}(Q_{i,j}(x_{n+3},x_{n+2})$ to the left of $\tau_{[1\uparrow n]}$ using relation (\ref{Taux}) in Definition \ref{klr}. Finally for the last equality we use relation (2) of the lemma which was proved above to compute $\tau_{[1\uparrow n]}\tau_{[n\downarrow 1]}$.
				
		Now for the term $\tau_{n+2}\partial_{1,n+2}\left(Q_{i,\nu}(x_{n+2},x_{2},\ldots,x_{n+1})\right)\tau_{n+2}1_{i,\nu',i}$, we can commute the polynomial to the left of the factor $\tau_{n+2}$ by relation (\ref{Taux}) in Definition \ref{klr} to obtain
		\begin{align*}
			\tau_{n+2}\partial_{1,n+2}\left(Q_{i,\nu}(x_{n+2},x_{2},\ldots,x_{n+1})\right)\tau_{n+2}1_{i,\nu',i} &= s_{n+2}\left(\partial_{1,n+2}\left(Q_{i,\nu}(x_{n+2},x_{2},\ldots,x_{n+1})\right)\right) \tau_{n+2}^2 1_{i,\nu',i}  \\
			&= \partial_{1,n+3}\left(Q_{i,\nu}(x_{n+3},x_{2},\ldots,x_{n+1})\right) Q_{i,j}(x_{n+3},x_{n+2}) 1_{i,\nu',i}.
		\end{align*}
		For the last equality, we have used the conjugation of the Demazure operators to rewrite the polynomial, and relation (\ref{Tausq}) in Definition \ref{klr} to simplify $\tau_{n+2}^2$. With the two terms written as such, we deduce
		\begin{align*}
			\left(\tau_{[n+2\downarrow2]}\tau_1\tau_{[2\uparrow n+2]} - \tau_{[1\uparrow n+1]}\tau_{n+2} \tau_{[n\downarrow 1]}\right)1_{i,\nu',i}	&=\partial_{1,n+3}(Q_{i,j}(x_{n+3},x_{n+2}))Q_{i,\nu}(x_1,x_{2},\ldots,x_{n+1})1_{i,\nu',i} \\ & \quad +  \partial_{1,n+3}\left(Q_{i,\nu}(x_{n+3},x_{2},\ldots,x_{n+1})\right) Q_{i,j}(x_{n+3},x_{n+2}) 1_{i,\nu',i} \\
			&= \partial_{1,n+3} \left( Q_{i,j}(x_{n+3},x_{n+2})Q_{i,\nu}(x_{n+3},x_{2},\ldots,x_{n+1}) \right)1_{i,\nu',i} \\
			&= \partial_{1,n+3} \left( Q_{i,\nu'}(x_{n+3},x_2,\ldots,x_{n+2}\right)1_{i,\nu',i}
		\end{align*}
		where the second equality is the fact that $\partial_{1,n+3}$ is a skewed derivation (\ref{twistedder}). This concludes the proof of (3).
	\end{proof}
	
	\bigskip
	
	\section{Categorification of the adjoint action of $E_i$}\label{cat} We now fix $i \in I$, and explain how to categorify the action of $U_i$ on $U^+[i]$. We start by introducing the category $\mathcal H[i]$, our categorical analogue of $U^+[i]$. Then, we define an endofunctor $\mathrm{ad}_{E_i}$ of $\mathcal H[i]$, which categorifies the action of $\mathrm{ad}_{e_i}$ on $U^+[i]$. In the first two subsections, we prove some elementary properties of these objects. In particular, we prove that the affine nil Hecke algebra acts on powers of $\mathrm{ad}_{E_i}$ in Proposition \ref{adei}. Another important point is the realization of $\mathrm{ad}_{E_i}$ as the cokernel of a natural transformation $\tau_{E_i,(-)}$, which we construct in \ref{tau}. This sets up the necessary preliminaries for Theorem \ref{tauinj} which states that $\tau_{E_i,(-)}$ is injective, providing the key short exact sequence for the study of $\mathrm{ad}_{E_i}$. In Subsection \ref{mainthm}, we give various corollaries of this theorem, such as the exactness of the functor $\mathrm{ad}_{E_i}$ and a compatibility of $\mathrm{ad}_{E_i}$ with the monoidal structure. We give the proof of Theorem \ref{tauinj} in Subsection \ref{proof}.

	\subsection{The category $\mathcal H[i]$} 
	\subsubsection{Definitions}
	\begin{defi} For $\beta \in Q^+$, let $H_{\beta}^i$ be the graded $K$-algebra defined by $H_{\beta}^{i}=H_{\beta}/(1_{\beta-i,i})$, where $(1_{\beta -i,i})$ is the two-sided ideal of $H_{\beta}$ generated by $1_{\beta-i,i}$ (when $\beta-i \notin Q^+$, we put $1_{\beta-i,i}=0$). For $n \geqslant 0$, let $H_{n}^i$ be the graded $K$-algebra defined by
			\[
				H_{n}^i = \bigoplus_{\substack{\beta \in Q^+ \\ \vert\beta\vert =n}} H_{\beta}^i.
			\]
	We define the category $\mathcal H[i]$ by
			\[
				\mathcal H[i] = \bigoplus_{\beta \in Q^+} H_{\beta}^i\mathrm{-mod}.
			\]
	\end{defi}

	Equivalently, for $n \geqslant 1$ we can define $H_n^i$ as the quotient of $H_n$ by the two sided ideal generated by the elements $1_{\nu i}$ where $\nu \in I^{n-1}$. If $I$ is finite and $n,m\geqslant 0$, the right inclusion $r_{n}^{n+m} : H_{n} \rightarrow H_{n+m}$ satisfies
	\[
		r_{n}^{n+m}(1_{\nu i}) = \sum_{\mu \in I^m} 1_{\mu\nu i} \in \left(1_{\rho i}\right)_{\rho \in I^{n+m-1}}.
	\]
	Hence we have an induced morphism $r_{n}^{n+m} : H_{n}^i \rightarrow H_{n+m}^i$. The left inclusions however do not induce maps at the level of the algebras $H_n^i$.
	
	We view $\mathcal H[i]$ as a full subcategory of $\mathcal H$. If $M \in H_{\beta}\mathrm{-mod}$, $M$ is an object of $\mathcal H[i]$ if and only if $1_{\beta-i,i}M=0$.
	
	\begin{ex}
		\begin{enumerate}
			\item  If $j \in I \setminus \lbrace i \rbrace$, we have $H_{i+j}^i \simeq K[x_1,x_2]/Q_{i,j}(x_2,x_1)$. In particular, $H_{i+j}^i$ is either zero (in the case where $i\cdot j=0$) or not finitely generated as a $K$-module. We will see below that this generalizes to every $H_{\beta}^i$ for $\beta\in Q^+$. This contrasts with cyclotomic KLR algebras, which are finitely generated as $K$-modules.
			
			\item If $j \in I \setminus \lbrace i \rbrace$, we have $E_j \in \mathcal H[i]$. Furthermore, if $M \in H_{\beta}^i\mathrm{-mod}$, we have $1_{j,\beta-j}M \in \mathcal H[i]$ since $1_{\beta-i,i}1_{j,\beta-j}M = 1_{j,\beta-j}1_{\beta-i,i}M=0$.
		\end{enumerate}
	\end{ex}	
	
	We say that a subcategory of a graded abelian category is a \textit{Serre subcategory} if it is non-empty, full and closed under subquotients, extensions and degree shifts.
	
	\begin{prop}
		The category $\mathcal{H}[i]$ is a Serre and monoidal subcategory of $\mathcal H$.
	\end{prop}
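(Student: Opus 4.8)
The statement has two parts: that $\mathcal{H}[i]$ is a Serre subcategory of $\mathcal{H}$, and that it is monoidal. I would handle these separately.

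\emph{Serre subcategory.} Recall that $M \in H_\beta\text{-mod}$ lies in $\mathcal{H}[i]$ iff $1_{\beta-i,i}M = 0$, i.e. $M$ is annihilated by the two-sided ideal $(1_{\beta-i,i})$, equivalently $M$ is a module over the quotient $H_\beta^i$. The category $H_\beta^i\text{-mod}$ is visibly nonempty (it contains $0$, and also $E_j$ for $j \neq i$), full in $H_\beta\text{-mod}$, and stable under degree shifts. Closure under subquotients and extensions is the standard fact that, for a fixed two-sided ideal $J \subseteq A$, the full subcategory of $A$-modules killed by $J$ is closed under submodules, quotients, and extensions: if $0 \to M' \to M \to M'' \to 0$ is exact and $JM' = JM'' = 0$, then $JM \subseteq M'$ and $J(JM) \subseteq JM' = 0$; but $J$ is idempotent here since $1_{\beta-i,i}$ is an idempotent and $J = H_\beta 1_{\beta-i,i} H_\beta$ satisfies $J^2 = J$ (as $1_{\beta-i,i} H_\beta 1_{\beta-i,i} \ni 1_{\beta-i,i}$), so $JM = J^2 M \subseteq JM' = 0$. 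Hence $M \in \mathcal{H}[i]$. Finite generation is preserved in all these operations (a quotient or extension of finitely generated modules is finitely generated; a submodule of a finitely generated module over $H_\beta$ is finitely generated because $H_\beta$ is left noetherian — this follows from the PBW theorem, Theorem \ref{pbw}, realizing $H_\beta$ as a finite free module over the noetherian polynomial ring $P_\beta^{\mathfrak{S}_n}$, or one can argue directly). Finally $\mathcal{H}[i] = \bigoplus_\beta H_\beta^i\text{-mod}$ inherits all these properties componentwise, so it is a Serre subcategory of $\mathcal{H} = \bigoplus_\beta H_\beta\text{-mod}$.

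\emph{Monoidal subcategory.} I must show that if $M \in H_\beta^i\text{-mod}$ and $N \in H_\gamma^i\text{-mod}$, then $MN \in H_{\beta+\gamma}^i\text{-mod}$, i.e. $1_{\beta+\gamma-i,i}(MN) = 0$. Unwinding the definition, $MN = H_{\beta+\gamma}1_{\beta,\gamma} \otimes_{H_{\beta,\gamma}} (M \otimes N)$, and I need $1_{\beta+\gamma-i,i}\, H_{\beta+\gamma}\, 1_{\beta,\gamma}\otimes_{H_{\beta,\gamma}}(M\otimes N) = 0$. Using the coset decomposition (\ref{coset}), the bimodule $1_{\beta+\gamma-i,i}H_{\beta+\gamma}1_{\beta,\gamma}$ is spanned by elements $1_{\beta+\gamma-i,i}\tau_{\underline{\omega}}1_{\beta,\gamma}$ for $\underline{\omega}$ ranging over minimal-length coset representatives; writing $1_{\beta,\gamma} = \sum 1_{\mu\nu}$ over $\mu \in I^\beta$, $\nu \in I^\gamma$, and applying the relation $\tau_k 1_\nu = 1_{s_k(\nu)}\tau_k$ repeatedly, each such element equals a sum $\sum 1_{\beta+\gamma-i,i}\, 1_{\rho}\, \tau_{\underline{\omega}}$ where $\rho$ runs over $\mathfrak{S}_{n+m}$-translates of the $\mu\nu$. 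The idempotent $1_{\beta+\gamma-i,i}\cdot 1_\rho$ is nonzero only when $\rho$ ends in $i$ (rightmost component $i$, in the paper's right-to-left convention), i.e. $\rho = \rho' i$; and since $\rho$ is obtained from some $\mu\nu$ by the action of the coset representative $\omega$ which only reshuffles across the $\mathfrak{S}_\beta \times \mathfrak{S}_\gamma$ block boundary — more precisely, the minimal-length coset representative $\omega$ sends the last position to the last position unless it moves it, and in $\rho = \omega(\mu\nu)$ having last component $i$ forces the last component of either $\mu$ or $\nu$ (depending on where $\omega$ drew it from) to be $i$. But $M$ being an $H_\beta^i$-module means $1_{\mu}M = 0$ whenever $\mu$ ends in $i$, and similarly for $N$; so the corresponding summand of $M \otimes N$ is zero. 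Thus $1_{\beta+\gamma-i,i}(MN) = 0$. The cleanest way to package this is: $1_{\beta+\gamma-i,i} H_{\beta+\gamma} 1_{\beta,\gamma}$, as a right $H_{\beta,\gamma}$-module, is generated by elements whose right $1_{\mu\nu}$-components vanish on $M \otimes N$ unless $\mu$ or $\nu$ ends in $i$. The unit object $1_0 \in H_0\text{-mod}$ trivially lies in $\mathcal{H}[i]$.

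\emph{Main obstacle.} The Serre part is essentially formal. The real content is the monoidal closure, and the subtle point there is the combinatorial bookkeeping with the coset representatives: one must check that $1_{\beta+\gamma-i,i}H_{\beta+\gamma}1_{\beta,\gamma}$ can only "see" sequences $\mu\nu$ in which the string labelled $i$ sits at the far right of one of the two blocks. I expect the author to phrase this via the combinatorics of shuffles — an $i$-string in the last position of $\beta+\gamma$ must have come, under a shuffle permutation, from the last position of the $\beta$-block or the $\gamma$-block — and then invoke $1_{\cdots i}M = 0$, $1_{\cdots i}N = 0$. A slicker alternative, which I would try first, is to use the exactness of restriction: $F_i$ (right-adjoint to $E_i$) applied to $MN$ has a Mackey-type filtration with subquotients built from $F_i(M)N$ and $MF_i(N)$ (the standard KLR shuffle lemma), and $M \in \mathcal{H}[i]$ is equivalent to $F_i^{\mathrm{right}}(M) := 1_{\beta-i,i}M = 0$, which is controlled by the same combinatorics; pushing this through gives $1_{\beta+\gamma-i,i}(MN)=0$ directly. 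Either route closes the proof.
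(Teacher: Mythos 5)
Your proposal is correct and takes essentially the same route as the paper. For the Serre-subcategory half, the paper's phrasing is cleaner than yours: since $1_{\beta-i,i}$ is an idempotent, the functor $M\mapsto 1_{\beta-i,i}M$ is exact, and the kernel of an exact functor is automatically a Serre subcategory; your idempotent-ideal computation is an equivalent repackaging, and the noetherianity digression is unnecessary (subobjects are taken inside the fixed category $\mathcal H$, so finite generation is already given). For the monoidal half, you do exactly what the paper does: decompose $H_{\beta+\gamma}1_{\beta,\gamma}$ over minimal-length left coset representatives $\omega$ of $\mathfrak S_m\times\mathfrak S_n$ and observe that $\omega^{-1}(1)$ can only be the rightmost position of the $\gamma$-block or of the $\beta$-block, so that $1_{\beta+\gamma-i,i}\tau_{\underline\omega}1_{\beta,\gamma}$ factors through $1_{\beta,\gamma-i,i}$ or $1_{\beta-i,i,\gamma}$, both of which kill $M\otimes N$; your shuffle discussion is a somewhat looser statement of this one-line position constraint, and the alternative Mackey route you sketch is not needed.
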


	\begin{proof}
		Let $\beta \in Q^+$. The functor
		\[
			\left \{ \begin{array}{rcl}
				H_{\beta}\mathrm{-mod} & \mapsto & 1_{\beta-i,i}H_{\beta}1_{\beta-i,i}\mathrm{-mod} \\
				M & \mapsto & 1_{\beta-i,i}M
			\end{array} \right.
		\]
		is exact. Hence its kernel is a Serre subcategory of $H_{\beta}\mathrm{-mod}$. If follows that $\mathcal{H}[i]$ is a Serre subcategory of $\mathcal H$.
				
		Let $M \in H_{\beta}^i\mathrm{-mod}$ and $N \in H_{\gamma}^i\mathrm{-mod}$, where $\beta,\gamma \in Q^+$ have respective heights $m,n$. Then by (\ref{coset})
		\[
			MN = \bigoplus_{\underline{\omega} \in T} \tau_{\underline{\omega}}1_{\beta,\gamma} \otimes_{H_{\beta,\gamma}} (M\otimes N),
		\]
		where $T$ is a complete set of reduced decompositions for minimal length representatives of left cosets of $\mathfrak S_m\times \mathfrak S_n$ in $\mathfrak S_{m+n}$. If $\underline{\omega} \in T$, then $s_{\underline{\omega}}^{-1}(1)\in \lbrace 1,n\rbrace$. Hence we have
		\[
			1_{\beta+\gamma-i,i}\tau_{\underline{\omega}}1_{\beta,\gamma} = \left \{ \begin{array}{ll}
				\tau_{\underline{\omega}}1_{\beta-i,i,\gamma} & \text{if } s_{\underline{\omega}}^{-1}(1) = n, \\
				\tau_{\underline{\omega}}1_{\beta,\gamma-i,i} & \text{if } s_{\underline{\omega}}^{-1}(1) = 1.
			\end{array} \right.
		\]
		Since both $1_{\beta-i,i,\gamma}$ and $1_{\beta,\gamma-i,i}$ act by zero on $M\otimes N$, we conclude that $1_{\beta+\gamma-i,i}MN=0$, and $MN \in \mathcal H[i]$. Thus $\mathcal H[i]$ is a monoidal subcategory of $\mathcal H$.
	\end{proof}

	The following proposition can be seen as the categorification of the description of $U^+[i]$ in terms of the bilinear form (see Proposition \ref{rad}). It is a first justification of why $\mathcal H[i]$ is a categorical analogue of $U^+[i]$. This will be justified further below when we give generators of $\mathcal{H}[i]$ as a Serre and monoidal category.
	
	\begin{prop}
		Let $M \in \mathcal H$. Then $M \in \mathcal H[i]$ if and only if for all $N\in \mathcal H$, $\mathrm{Hom}_{\mathcal H}(NE_i,M)=0$.
	\end{prop}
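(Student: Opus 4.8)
The plan is to characterize membership in $\mathcal{H}[i]$ through the adjunction $(E_i,F_i)$ and the observation that $F_i$ detects the offending idempotent. First I would reduce to the case where $M$ is homogeneous, say $M \in H_\beta\mathrm{-mod}$ for some $\beta \in Q^+$; both conditions are checked weight-by-weight, so there is no loss. Then I would unwind the right-hand condition: by the adjunction between left $i$-induction $E_i(-)$ and left $i$-restriction $F_i$, we have for any $N \in \mathcal{H}$ a natural isomorphism $\mathrm{Hom}_{\mathcal{H}}(NE_i, M) \simeq \mathrm{Hom}_{\mathcal{H}}(N, F_i(M))$, provided I set up the bookkeeping correctly; here $NE_i$ is right $i$-induction, whose right adjoint is right $i$-restriction $M \mapsto 1_{\beta-i,i}M$ (the mirror of the left-handed functors via $\mathrm{rev}$). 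So the condition ``$\mathrm{Hom}_{\mathcal{H}}(NE_i,M)=0$ for all $N$'' is equivalent to ``$\mathrm{Hom}_{\mathcal{H}}(N, 1_{\beta-i,i}M)=0$ for all $N \in \mathcal{H}$''.

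Next I would argue that a module $L$ in $\mathcal{H}$ (here $L = 1_{\beta-i,i}M$, a graded $H_{\beta-i}$-module, finitely generated by exactness of restriction) satisfies $\mathrm{Hom}_{\mathcal{H}}(N,L)=0$ for every $N \in \mathcal{H}$ if and only if $L = 0$: indeed, taking $N = L$ itself, the identity map forces $L=0$; conversely $0$ receives no nonzero maps. Hence the right-hand condition is equivalent to $1_{\beta-i,i}M = 0$, which by the final remark before the proposition (``$M$ is an object of $\mathcal{H}[i]$ if and only if $1_{\beta-i,i}M=0$'') is exactly the statement $M \in \mathcal{H}[i]$. For the forward direction one can also argue directly: if $M \in \mathcal{H}[i]$ then $NE_i \in \mathcal{H}[i]$ by the monoidality just proved (Proposition above), and $\mathrm{Hom}_{\mathcal{H}}(NE_i,M)$ maps via adjunction to $\mathrm{Hom}(N, 1_{\beta-i,i}M) = \mathrm{Hom}(N,0) = 0$; but the cleanest route is the equivalence above.

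The main obstacle I anticipate is making the adjunction statement precise: one must be careful whether $NE_i$ denotes right $i$-induction and whether its right adjoint is $1_{\beta-i,i}(-)$ with the correct grading shift, since the excerpt develops left $i$-induction/restriction in detail but only alludes to the right-handed versions. Concretely, I would verify that for $N \in H_{\beta-i}\mathrm{-mod}$ one has $NE_i = H_\beta 1_{\beta-i,i} \otimes_{H_{\beta-i}} N$ and that $\mathrm{Hom}_{H_\beta}(H_\beta 1_{\beta-i,i}\otimes_{H_{\beta-i}} N, M) \simeq \mathrm{Hom}_{H_{\beta-i}}(N, 1_{\beta-i,i}M)$ by the standard tensor–hom adjunction, all internal to graded modules and degree-preserving maps. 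A secondary point is the finite-generation hypothesis built into $\mathcal{H}$: since $1_{\beta-i,i}M$ is a summand of the finitely generated module $M$ as a $K$-module up to the action twist, it is a legitimate object of $\mathcal{H}$, so taking $N = 1_{\beta-i,i}M$ in the vanishing hypothesis is allowed. Once these identifications are in place, the proof is a one-line consequence.
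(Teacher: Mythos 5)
Your argument is correct and follows essentially the same route as the paper: reduce to a single weight $\beta$, rewrite $\mathrm{Hom}_{\mathcal H}(NE_i,M)$ via the adjunction between right $i$-induction and right $i$-restriction as $\mathrm{Hom}_{H_{\beta-i}}(N,1_{\beta-i,i}M)$, and observe that this vanishes for all $N$ if and only if $1_{\beta-i,i}M=0$, which is the definition of membership in $\mathcal H[i]$. The extra care you take about which side's induction/restriction is involved and about finite generation of $1_{\beta-i,i}M$ is well placed but does not change the proof.
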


	\begin{proof}
		Assume that $M$ is an $H_{\beta}$-module for $\beta \in Q^+$. Then the condition $1_{\beta-i,i}M=0$ is equivalent to $\mathrm{Hom}_{H_{\beta-i}}(N,1_{\beta-i,i}M)=0$ for all $N \in H_{\beta-i}\mathrm{-mod}$. By adjunction between right $i$-induction and right $i$-restriction, this last condition is equivalent to $\mathrm{Hom}_{H_{\beta}}(NE_i,M)=0$ for all $N \in H_{\beta-i}\mathrm{-mod}$, and the proposition follows.
	\end{proof}
	
	The inclusion functor $\mathcal{H}[i] \hookrightarrow \mathcal H$ has a left adjoint $\pi_i : \mathcal H \rightarrow \mathcal H[i]$. Explicitly, for a module $M \in H_{\beta}-\mathrm{mod}$, we have
	\[
		\pi_i(M) = M/(1_{\beta-i,i}M)
	\]
	where $(1_{\beta-i,i}M)$ denotes the $H_{\beta}$-submodule of $M$ generated by $1_{\beta-i,i}M$. Note that $\pi_i(M)=M$ if $M \in \mathcal H[i]$. We also have $\mathrm{Hom}_{\mathcal H}(\pi_i(M),\pi_i(M)) \simeq \mathrm{Hom}_{\mathcal H}(M,\pi_i(M))$. So at the decategorified level, $\pi_i$ is an orthogonal projection. The following lemma gives compatibilities between the functor $\pi_i$ and the $j$-induction functors. It will be used repeatedly below.
	
	\begin{lem}\label{piiE}
		For all $j\in I$, there is a canonical isomorphism which is natural in $M \in \mathcal H[i]$
		\[
			\pi_i(E_jM) \simeq \left \{ \begin{array}{ll}
				E_j\pi_i(M) & \text{if } j\neq i, \\
				\pi_i(E_i\pi_i(M)) & \text{if } j=i.
			\end{array} \right.
		\]
	\end{lem}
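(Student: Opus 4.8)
The plan is to unwind the definition of $\pi_i$ as the left adjoint to the inclusion $\mathcal H[i]\hookrightarrow\mathcal H$, which by the discussion above is given explicitly by $\pi_i(M)=M/(1_{\gamma-i,i}M)$ for $M\in H_\gamma\text{-mod}$. Since all the functors involved ($E_j$, $\pi_i$, and the inclusion) are additive and the statement is about a natural isomorphism of functors on the abelian category $\mathcal H[i]$, I would fix $M\in H_\beta^i\text{-mod}$ and compute both sides directly from the presentations of the KLR algebras, then check naturality at the end (which will be automatic since every map produced is built from multiplication in $H$).

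First I would treat the case $j\neq i$. Here $E_jM = H_{\beta+j}1_{j,\beta}\otimes_{H_\beta}M$, and I must compute the submodule $(1_{\beta+j-i,i}(E_jM))$ generated by $1_{\beta+j-i,i}(E_jM)$. The key combinatorial fact is the coset decomposition $H_{\beta+j}1_{j,\beta}=\bigoplus_{\underline\omega\in T}\tau_{\underline\omega}1_{j,\beta}$ where $T$ runs over minimal-length left coset representatives of $\mathfrak S_1\times\mathfrak S_{|\beta|}$ in $\mathfrak S_{|\beta|+1}$; for such $\omega$ the preimage $s_{\underline\omega}^{-1}$ sends the rightmost strand either to position $1$ or to position $|\beta|+1$. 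Multiplying by $1_{\beta+j-i,i}$ on the left, the terms where the last strand of $\beta$ is sent to the far right become $\tau_{\underline\omega}1_{j,\beta-i,i}$, which kill $1_{j,\beta}\otimes(M)$ modulo $(1_{\beta-i,i}M)$ already, and the remaining terms (where the $j$-strand is moved past) contribute exactly $E_j$ applied to $(1_{\beta-i,i}M)$. This should give $\pi_i(E_jM)\simeq E_j M/E_j(1_{\beta-i,i}M)\simeq E_j(M/(1_{\beta-i,i}M))=E_j\pi_i(M)$, using exactness of $E_j$ to identify the quotient. Since $M\in\mathcal H[i]$ we in fact have $\pi_i(M)=M$, so really the content is $\pi_i(E_jM)\simeq E_jM$, i.e. $1_{\beta+j-i,i}$ acts trivially on $E_jM$ — this is essentially a special case of the argument in the preceding proposition that $\mathcal H[i]$ is monoidal (take $N=E_j$), and I would cite that to shorten the argument.

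For $j=i$, the inclusion $\mathcal H[i]\hookrightarrow\mathcal H$ is a functor, $E_i$ is an endofunctor of $\mathcal H$, and $\pi_i$ is its left adjoint, so $\pi_i\circ E_i\circ(\text{incl})$ is the left adjoint of $(\text{incl})\circ F_i\circ(\text{incl})$ restricted appropriately; but more simply, $\pi_i(E_i\pi_i(M))$ is just the formula one gets by noting that $E_iM$ need not lie in $\mathcal H[i]$ (the extra $i$-strand can sit next to the rightmost $i$), so one must project again — and since $M\in\mathcal H[i]$ already, $\pi_i(M)=M$ and the right-hand side is $\pi_i(E_iM)$, making the $j=i$ case a tautology once one observes $\pi_i\circ\pi_i\simeq\pi_i$. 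So the genuine work is entirely in the $j\neq i$ case. Naturality in $M$ follows because the isomorphism is induced on quotients by the identity-type map $E_j(-)\to E_j(-)$, which is manifestly natural, and quotient maps are natural. The main obstacle I anticipate is bookkeeping the coset decomposition carefully enough to see that the ``bad'' terms $\tau_{\underline\omega}1_{j,\beta}$ with the $\beta$-part ending in $i$ really do lie in the submodule generated by $1_{\beta+j-i,i}(E_jM)$ (rather than just annihilating something), but this is exactly the computation $1_{\beta+j-i,i}\tau_{\underline\omega}1_{j,\beta}=\tau_{\underline\omega}1_{j,\beta-i,i}$ already isolated in the proof that $\mathcal H[i]$ is monoidal, so it should go through smoothly by invoking that proposition.
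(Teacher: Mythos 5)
Your reduction via $\pi_i(M)=M$ is correct as far as it goes, but what it actually shows is that the statement \emph{as printed} is content-free: for $M\in\mathcal H[i]$ both sides of the $j\neq i$ case equal $E_jM$ (since $\mathcal H[i]$ is monoidal, $E_jM\in\mathcal H[i]$), and in the $j=i$ case both sides read literally $\pi_i(E_iM)$. You notice this yourself, calling the $j=i$ case a tautology and deflecting the $j\neq i$ case to the monoidal proposition. That observation should be a warning sign rather than a shortcut: the hypothesis $M\in\mathcal H[i]$ is a misprint. The paper's own proof opens with ``Assume that $M\in H_{\beta}\mathrm{-mod}$'' with no further restriction, and the later uses of the lemma (in Proposition \ref{adei}, Proposition \ref{vanish}, Theorem \ref{genforhi}) apply it with $M$ replaced by $E_i^nM$ or $E_{j_{m-1}}\ldots E_{j_1}$, which do \emph{not} lie in $\mathcal H[i]$. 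For such $M$ one has $\pi_i(M)\neq M$ and the lemma has real content that your argument does not recover.

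The paper's actual proof is a short Yoneda argument valid for every $M\in\mathcal H$: for $N\in\mathcal H[i]$, the two adjunctions (namely $\pi_i\dashv\mathrm{incl}$ and left $j$-induction $\dashv$ left $j$-restriction) give $\mathrm{Hom}(\pi_i(E_jM),N)\simeq\mathrm{Hom}_{H_\beta}(M,1_{j,\beta}N)$ and $\mathrm{Hom}(\pi_i(E_j\pi_i(M)),N)\simeq\mathrm{Hom}_{H_\beta}(\pi_i(M),1_{j,\beta}N)$. The crucial point is that $1_{j,\beta}N$ again lies in $\mathcal H[i]$, so the latter Hom-group also equals $\mathrm{Hom}_{H_\beta}(M,1_{j,\beta}N)$, and the canonical quotient $\pi_i(E_jM)\twoheadrightarrow\pi_i(E_j\pi_i(M))$ is an isomorphism by Yoneda. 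Your coset-decomposition sketch for $j\neq i$ is aimed at the right combinatorics, but to fill the gap you would have to carry it out for $M$ arbitrary, i.e.\ show directly that the submodule of $E_jM$ generated by $1_{\beta+j-i,i}E_jM$ coincides with $E_j$ applied to the submodule of $M$ generated by $1_{\beta-i,i}M$, rather than collapse everything via $\pi_i(M)=M$.
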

	
	\begin{proof}
		Assume that $M \in H_{\beta}\mathrm{-mod}$. There is a canonical quotient map $\pi_i(E_jM) \twoheadrightarrow \pi_i(E_j\pi_i(M))$, which we prove is an isomorphism. Let $N \in \mathcal{H}[i]$. We have isomorphisms
		\begin{align*}
		\mathrm{Hom}_{H_{\beta+j}}(\pi_i(E_jM),N)
		&\simeq \mathrm{Hom}_{H_{\beta+j}}(E_jM,N) \\
		&\simeq \mathrm{Hom}_{H_{\beta}}(M,1_{j,\beta}N).
		\end{align*}
		the first one coming from the adjunction between $\pi_i$ and the inclusion of $\mathcal H[i]$ in $\mathcal H$, and the second one from the adjunction between left $j$-induction and left $j$-restriction. The same argument applied to $\pi_i(M)$ yields a canonical isomorphism $\mathrm{Hom}_{H_{\beta+j}}(\pi_i(E_j\pi_i(M)),N) \simeq \mathrm{Hom}_{H_{\beta}}(\pi_i(M),1_{j,\beta}N)$. Since $1_{j,\beta}N \in \mathcal H[i]$, we have $\mathrm{Hom}_{H_{\beta}}(\pi_i(M),1_{j,\beta}N) \simeq \mathrm{Hom}_{H_{\beta}}(M,1_{j,\beta}N)$. Hence we have a diagram
		\[
			\xymatrix{
				\mathrm{Hom}_{H_{\beta+j}}(\pi_i(E_j\pi_i(M)),N) \ar[r]^-{\sim} \ar[d]^{\mathrm{can}} & \mathrm{Hom}_{H_{\beta}}(M,1_{j,\beta}N) \\
				\mathrm{Hom}_{H_{\beta+j}}(\pi_i(E_jM),N) \ar[ru]^-{\sim} &
				}
		\]
		in which the isomorphisms are given by adjunctions as explained above. It is straightforward to check that this diagram commutes. Hence the vertical map in the diagram is an isomorphism. So the canonical quotient map $\pi_i(E_jM) \twoheadrightarrow \pi_i(E_j\pi_i(M))$ is an isomorphism by Yoneda's lemma. In the case where $j\neq i$, we have $E_j \in \mathcal H[i]$, so $E_j\pi_i(M) \in \mathcal H[i]$ and $\pi_i(E_j\pi_i(M))=E_j\pi_i(M)$, completing the proof.
	\end{proof}

	\subsubsection{Polynomial annihilators} Let $\beta \in Q^+$. Since $H_{\beta}^i$ is a quotient of $H_{\beta}$, it is naturally an $H_{\beta}$-bimodule, and in particular a $P_{\beta}$-bimodule. A key element to understanding the algebra $H_{\beta}^i$ is to understand which polynomials of $P_{\beta}$ act by zero on $H_{\beta}^i$. The goal of this paragraph is to give explicitly such elements of $P_{\beta}$.
	
	We start with some general results about affine nil Hecke algebras. For a $K$-algebra $A$, consider the $K$-algebra $A \otimes H_{n}^{0}$. It contains the $K$-algebra $A[x_1,\ldots,x_n] = A \otimes P_n$ as a subalgebra. The action of $H_n^{0}$ on $P_n$ given by the isomorphism (\ref{isonil}) induces actions of $H_n^{0}$ and $A \otimes H_{n}^{0}$ on $A[x_1,\ldots,x_n]$.
	
	\begin{lem}\label{annnil}
		Let $J$ be a two-sided ideal of $A\otimes H_n^{0}$. Then $J\cap A[x_1,\ldots,x_n]$ is an $H_n^0$-submodule of $A[x_1,\ldots,x_n]$.	
	\end{lem}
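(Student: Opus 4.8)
The claim is that if $J$ is a two-sided ideal of $A \otimes H_n^0$, then $J \cap A[x_1,\ldots,x_n]$ is stable under the action of $H_n^0$ on $A[x_1,\ldots,x_n]$. Since $H_n^0$ is generated as an algebra by $x_1,\ldots,x_n$ and $\tau_1,\ldots,\tau_{n-1}$, it suffices to check that $J \cap A[x_1,\ldots,x_n]$ is stable under the action of each $x_k$ and each $\tau_k$. Stability under the $x_k$ is immediate: multiplication by $x_k$ on $A[x_1,\ldots,x_n]$ agrees with multiplication by the element $x_k \in A \otimes H_n^0$, and $J$ is in particular a left ideal, so $x_k(J \cap A[x_1,\ldots,x_n]) \subseteq J \cap A[x_1,\ldots,x_n]$. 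The content of the lemma is therefore entirely in the $\tau_k$ part.

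So fix $P \in J \cap A[x_1,\ldots,x_n]$ and consider $\tau_k \cdot P = \partial_k(P) \in A[x_1,\ldots,x_n]$, where $\partial_k$ is the Demazure operator acting on the polynomial coefficients. We certainly have $\partial_k(P) \in A[x_1,\ldots,x_n]$, so the only thing to verify is $\partial_k(P) \in J$. The key computation is the identity relating the \emph{action} of $\tau_k$ on the polynomial to left multiplication by $\tau_k$ in the algebra. By Lemma \ref{compnil}(1), applied inside $A \otimes H_n^0$ (which contains $A \otimes P_n$ with the $\mathfrak{S}_n$-action on the polynomial part), we have
\[
	\tau_k P - s_k(P)\tau_k = \partial_k(P)
\]
as an equality in $A \otimes H_n^0$. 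Now $\tau_k P \in J$ since $J$ is a left ideal, and $s_k(P)\tau_k \in J$ since $J$ is a right ideal (note $s_k(P) \in A[x_1,\ldots,x_n] \subseteq A\otimes H_n^0$). Therefore $\partial_k(P) = \tau_k P - s_k(P)\tau_k \in J$, and since it also lies in $A[x_1,\ldots,x_n]$ we conclude $\partial_k(P) \in J \cap A[x_1,\ldots,x_n]$. This shows $\tau_k$ preserves the subset, and combined with the $x_k$ case we are done.

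The only mild subtlety — and the one point to state carefully in the writeup — is the compatibility of notation: the action of $\tau_k \in H_n^0$ on $f \in A[x_1,\ldots,x_n]$ is, by definition via the isomorphism (\ref{isonil}) extended $A$-linearly, exactly $\partial_k(f)$, and the action of $x_k$ is exactly multiplication by $x_k$; so ``$\tau_k \cdot P$'' literally means ``$\partial_k(P)$'', and the displayed identity above is what converts the module action into a statement about two-sided ideal membership. No essential obstacle arises; the proof is a two-line generator check once Lemma \ref{compnil}(1) is invoked in the enlarged algebra $A \otimes H_n^0$.
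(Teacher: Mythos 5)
Your proof is correct, and it takes a genuinely different route from the paper. The paper proves the lemma by transporting the problem through the isomorphism $A[x_1,\ldots,x_n] \simeq (A\otimes H_n^0)e_n$, $P\mapsto Pe_n$: one first shows that under this identification $J\cap A[x_1,\ldots,x_n]$ maps bijectively onto $Je_n$ (the surjectivity requiring the observation that $1\in P_n\tau_{\omega_0[1,n]}P_n$, so each coefficient $h_\omega$ of an element $h=\sum h_\omega\tau_\omega\in J$ already lies in $J$), and then invokes that $Je_n$ is trivially an $H_n^0$-submodule of $H_n^0e_n$. Your argument instead checks stability directly on the algebra generators $x_k,\tau_k$ of $H_n^0$: multiplication by $x_k$ uses only the left-ideal property, while for $\tau_k$ you rewrite the module action $\partial_k(P)$ as the two-sided-ideal expression $\tau_kP - s_k(P)\tau_k$ via Lemma \ref{compnil}(1) extended $A$-linearly. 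Your approach is shorter and more elementary; the paper's approach has the side benefit of producing the decomposition $J = \bigoplus_{\omega\in\mathfrak{S}_n}(J\cap A[x_1,\ldots,x_n])\tau_\omega$, which the paper reuses verbatim in the proof of Corollary \ref{annilmod}(1). If you adopted your proof in the paper's place, you would need to re-derive that decomposition separately for the corollary (though it follows easily from your computation of $h_\omega\in J$ or from a small extra argument).
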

	
	\begin{proof}
		The isomorphism of $H_n^0$-modules
		\begin{equation}\label{lastminute}
			\left \{ \begin{array}{rcl}
				A[x_1,\ldots,x_n] & \xrightarrow{\sim} & \left(A\otimes H_n^0\right)e_n, \\
				P & \mapsto & Pe_n,
			\end{array} \right.
		\end{equation}
		induces an injection $J\cap A[x_1,\ldots,x_n] \hookrightarrow Je_n$. Let us prove that this is also a surjection. Let $h \in J$. We have $h=\sum_{\omega \in \mathfrak{S}_n} h_{\omega} \tau_{\omega}$ for some $h_{\omega} \in P_n$. Since $J$ is an ideal, we have $h_{\omega}\tau_{\omega_0[1,n]} =h\tau_{\omega^{-1}\omega_0[1,n]} \in J$. However, in $H_n^0$ we have $1\in P_n\tau_{\omega_0[1,n]}P_n$, so $h_{\omega} \in P_n h_{\omega}\tau_{\omega_0[1,n]}P_n \subseteq J$. In particular, $he_n=h_1e_n$ is in the image of $J\cap A[x_1,\ldots,x_n]$. So the isomorphism of $H_n^0$-modules (\ref{lastminute}) maps $J\cap A[x_1,\ldots,x_n]$ bijectively to $Je_n$. Since $Je_n$ is an $H_n^0$-submodule of $H_n^0e_n$, it follows that $J\cap A[x_1,\ldots,x_n]$ is an $H_n^0$-submodule of $A[x_1,\ldots,x_n]$.
	\end{proof}
	
	\begin{cor}\label{annilmod}
		\begin{enumerate}
			\item Let $p \in Z(A)[x_1,\ldots,x_n]$, and let $J$ be the two-sided ideal of $A\otimes H_n^0$ generated by $p$. Then as a left $A[x_1,\ldots,x_n]$-module we have
			\[
				J = \bigoplus_{\omega \in \mathfrak{S}_n} ((A\otimes H_n^0)\cdot p)\tau_{\omega}
			\]
			where $(A\otimes H_n^0)\cdot p$ denotes the $\left(A\otimes H_n^0\right)$-submodule of $A[x_1,\ldots,x_n]$ generated by $p$.
			\item Let $M$ be a left (resp. right) $\left(A\otimes H_n^{0}\right)$-module, and let $P \in A[x_1,\ldots,x_n]$ be such that $PM=0$ (resp. $MP=0$). Then $(H_n^0\cdot P)M=0$ (resp. $M(H_n^0\cdot P)=0$), where $H_n^0\cdot P$ denotes the $H_n^0$ submodule of $A[x_1,\ldots,x_n]$ generated by $P$.
		\end{enumerate}
	\end{cor}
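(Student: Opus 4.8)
### Proof proposal for Corollary \ref{annilmod}

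The plan is to deduce both statements from Lemma \ref{annnil}, which says that for any two-sided ideal $J$ of $A \otimes H_n^0$, the intersection $J \cap A[x_1,\ldots,x_n]$ is an $H_n^0$-submodule of $A[x_1,\ldots,x_n]$. For statement (1), I would first observe that since $p \in Z(A)[x_1,\ldots,x_n]$ is central in $A[x_1,\ldots,x_n]$ and commutes with the $x_k$, the two-sided ideal $J$ generated by $p$ is spanned over $K$ by elements $a \tau_\omega \cdot p \cdot \tau_{\omega'} x^{\underline{b}}$. Using the decomposition $H_n^0 = \bigoplus_{\omega} P_n \tau_\omega$ (free as a left $P_n$-module) together with the relation $\tau_\omega \tau_{\omega'} = \tau_{\omega\omega'}$ or $0$, I would push all polynomial factors to the left past the $\tau$'s (incurring Demazure corrections, which stay inside $A[x_1,\ldots,x_n]$), to rewrite any element of $J$ as $\sum_{\omega} h_\omega \tau_\omega$ with $h_\omega \in (A\otimes H_n^0)\cdot p$. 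The key point is that $h_\omega$ lies in the $(A\otimes H_n^0)$-submodule generated by $p$: this follows because $h_\omega \tau_{\omega_0[1,n]} = h \tau_{\omega^{-1}\omega_0[1,n]} \in J$ (same trick as in the proof of Lemma \ref{annnil}), and from $1 \in P_n \tau_{\omega_0[1,n]} P_n$ in $H_n^0$ one recovers $h_\omega$ itself. The reverse inclusion $\bigoplus_\omega ((A\otimes H_n^0)\cdot p)\tau_\omega \subseteq J$ is clear since each summand consists of products of elements of $J$ with elements of $H_n^0$. The directness of the sum is immediate from the PBW-type freeness of $H_n^0$ over $P_n$ (equivalently, Theorem \ref{pbw}).

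For statement (2), I would apply statement (1) (or directly Lemma \ref{annnil}) to the annihilator situation. Given a left $(A\otimes H_n^0)$-module $M$ and $P \in A[x_1,\ldots,x_n]$ with $PM = 0$, the set $J = \mathrm{Ann}_{A\otimes H_n^0}(M) = \{h \in A\otimes H_n^0 : hM = 0\}$ is a two-sided ideal of $A\otimes H_n^0$ containing $P$. By Lemma \ref{annnil}, $J \cap A[x_1,\ldots,x_n]$ is an $H_n^0$-submodule of $A[x_1,\ldots,x_n]$; since it contains $P$, it contains the $H_n^0$-submodule $H_n^0 \cdot P$ generated by $P$. Therefore $(H_n^0\cdot P) \subseteq J$, i.e.\ $(H_n^0\cdot P)M = 0$. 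The right-module case is symmetric, using that $H_n^0 \simeq (H_n^0)^{\mathrm{op}}$ via the anti-automorphism analogous to $\mathrm{rev}_n$, or by running the same argument with right annihilators (the ideal $\{h : Mh = 0\}$ is again two-sided).

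I expect the main obstacle to be the bookkeeping in statement (1): carefully verifying that when one expands a general element of the two-sided ideal $J$ and normalizes it into the form $\sum_\omega h_\omega \tau_\omega$ using relations (\ref{Taux}) and the commutation of $p$ with polynomials, every coefficient $h_\omega$ genuinely lands in the $(A\otimes H_n^0)$-submodule generated by $p$ — rather than merely in the $A[x_1,\ldots,x_n]$-submodule. The device that makes this work is precisely the identity $1 \in P_n \tau_{\omega_0[1,n]} P_n$ in the affine nil Hecke algebra (a consequence of the isomorphism (\ref{en})): it lets one "undo" the passage $h_\omega \mapsto h_\omega \tau_{\omega_0[1,n]}$ and conclude $h_\omega \in (A\otimes H_n^0)\cdot p$. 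Once that lemma-style argument is in place, statement (2) is a formal consequence and should be short.
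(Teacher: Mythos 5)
Your argument for part (2) is correct and is exactly the paper's: the (left or right) annihilator of $M$ is a two-sided ideal, so Lemma \ref{annnil} applies directly, and the right-module case indeed reduces to the same statement since the right annihilator is also two-sided.

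For part (1) the underlying idea (expand a general $h_1 p h_2$ with $h_1,h_2 \in H_n^0$, push polynomials past the $\tau_k$'s collecting Demazure correction terms, and read off the $\tau_\omega$-coefficients) is the paper's, and your first sentence states it; but you then \emph{mislocate the crux}. The identity $1 \in P_n\tau_{\omega_0[1,n]}P_n$ (together with $h_\omega\tau_{\omega_0[1,n]} = h\tau_{\omega^{-1}\omega_0[1,n]} \in J$) only shows that each $\tau_\omega$-coefficient $h_\omega$ lies in $J$, i.e.\ in $J\cap A[x_1,\ldots,x_n]$; this is precisely the content of Lemma \ref{annnil} and gives the decomposition $J = \bigoplus_\omega (J\cap A[x_1,\ldots,x_n])\tau_\omega$. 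It does \emph{not} show $h_\omega \in (A\otimes H_n^0)\cdot p$, and you cannot ``undo'' the passage to conclude that. The inclusion $J\cap A[x_1,\ldots,x_n] \subseteq (A\otimes H_n^0)\cdot p$ requires the separate, direct observation that every coefficient arising in the normalization of $h_1 p h_2$ stays inside $(A\otimes H_n^0)\cdot p$: this holds because $(A\otimes H_n^0)\cdot p$ is closed under multiplication by polynomials, under $\partial_k$, and therefore under $s_k$ (since $s_k(p) = p - (x_{k+1}-x_k)\partial_k(p)$), which is exactly what relation (\ref{taux}) produces when you commute $\tau$'s past polynomials. The paper handles this by stating directly that the $A[x_1,\ldots,x_n]$-component of $h_1 p h_2$ visibly lies in $(A\otimes H_n^0)\cdot p$, then combines this with Lemma \ref{annnil} for the other containment. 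So your overall route is the same, but you have credited the wrong device for the key step, and as written your justification for $h_\omega \in (A\otimes H_n^0)\cdot p$ is a non-sequitur; replacing it with the short inspection above closes the gap.
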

	
	\begin{proof}
		For (1), we saw in the proof of Lemma \ref{annnil} that
		\[
			J = \bigoplus_{\omega \in \mathfrak{S}_n} (J\cap A[x_1,\ldots,x_n]) \tau_{\omega}.
		\]
		It also follows from Lemma \ref{annnil} that $J\cap A[x_1,\ldots,x_n]$ contains $(A\otimes H_n^0)\cdot p$. Conversely, given $h_1,h_2 in H_n^0$, we see easily that the coefficient of $h_1ph_2$ on $A[x_1,\ldots,x_n]$ is in $(A\otimes H_n^0)\cdot p$. Hence, $J\cap A[x_1,\ldots,x_n] = (A\otimes H_n^0)\cdot p$, which completes the proof.
		
		For (2), let $J$ be the annihilator of $M$ in $A\otimes H_n^{0}$. Then $J$ is a two-sided ideal of $A\otimes H_n^{0}$. Hence $J\cap A[x_1,\ldots,x_n]$ is an $H_n^0$-submodule of $A[x_1,\ldots,x_n]$ by Lemma \ref{annnil}. The result follows.
	\end{proof}
	
	Recall that for $\nu \in I^{n}$, we have defined a polynomial
	\[
		Q_{i,\nu}(u,v_1,\ldots,v_n) = \prod_{\substack{1\leqslant k \leqslant n \\ \nu_k \neq i}} Q_{i,\nu_k}(u,v_k).
	\]

	\begin{prop}\label{annilpol}
		Let $n\geqslant 1$, let $\nu \in I^n$ and let $a \in \lbrace 1,\ldots,n\rbrace$ be such that $\nu_a=i$. Then $Q_{i,\nu}(x_a,x_1,\ldots,x_n)1_{\nu}$ acts by zero on $H_n^i$. 
	\end{prop}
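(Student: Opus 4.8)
The plan is to reformulate the statement and prove it by induction on the height $n=\vert\beta\vert$. Since $Q_{i,\nu}(x_a,x_1,\ldots,x_n)\,1_{\nu}$ lies in $1_{\nu}H_{\beta}^{i}1_{\nu}$ and $H_{\beta}^{i}$ is unital, the claim amounts to the vanishing of $Q_{i,\nu}(x_a,x_1,\ldots,x_n)\,1_{\nu}$ in $H_{\beta}^{i}$, i.e.\ to the assertion that it lies in the two-sided ideal of $H_{\beta}$ generated by the idempotents $1_{\rho i}$, $\rho\in I^{\beta-i}$. First I would dispose of the trivial case: if $\nu_{1}=i$ then $1_{\nu}$ is itself such a generating idempotent, so assume $\nu_{1}\neq i$. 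Next, if $\nu_{n}\neq i$ (which forces $a\leqslant n-1$, as $\nu_{a}=i$), factor
\[
Q_{i,\nu}(x_a,x_1,\ldots,x_n)=Q_{i,\nu_n}(x_a,x_n)\cdot Q_{i,\widehat{\nu}}(x_a,x_1,\ldots,x_{n-1}),\qquad \widehat{\nu}=(\nu_{n-1},\ldots,\nu_{1}),
\]
apply the induction hypothesis to $\widehat{\nu}\in I^{\beta-\nu_{n}}$ at position $a$ (note $\widehat{\nu}_{a}=i$) to get $Q_{i,\widehat{\nu}}(x_a,x_1,\ldots,x_{n-1})\,1_{\widehat\nu}=0$ in $H_{\beta-\nu_{n}}^{i}$, and push this forward along the ``drop the top strand'' ring homomorphism $r\colon H_{\beta-\nu_{n}}^{i}\to H_{\beta}^{i}$, which sends $1_{\widehat\nu}\mapsto 1_{\nu}$ and $x_{k}\mapsto x_{k}$; multiplying by the remaining factor $Q_{i,\nu_{n}}(x_a,x_n)$ gives the claim. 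Running the same argument with $\widehat\nu$ at an arbitrary $i$-position $c\leqslant n-1$ shows that, in the remaining case $\nu_{n}=i$, one already knows $Q_{i,\nu}(x_c,x_1,\ldots,x_n)\,1_{\nu}=0$ for every $c$ with $\nu_{c}=i$ and $c\leqslant n-1$ (here the slot $k=n$ contributes nothing since $\nu_{n}=i$, so that $Q_{i,\nu}(x_c,x_1,\ldots,x_n)=Q_{i,\widehat\nu}(x_c,x_1,\ldots,x_{n-1})$). Thus only the case $\nu_{n}=i$ and $a=n$ remains.

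For that case, set $P=\prod_{1\leqslant k\leqslant n-2,\,\nu_{k}\neq i}Q_{i,\nu_{k}}(x_{n-1},x_{k})$; since neither $x_{n-1}$ nor $x_{n}$ occurs in a $v$-slot of $P$, the transposition $s_{n-1}$ acts on $P$ by $x_{n-1}\mapsto x_{n}$, giving $s_{n-1}(P)=\prod_{1\leqslant k\leqslant n-2,\,\nu_{k}\neq i}Q_{i,\nu_{k}}(x_{n},x_{k})$. I would split according to $\nu_{n-1}$. If $\nu_{n-1}=i$, then by the previous paragraph with $c=n-1$ (using that the slots $n-1$ and $n$ both contribute nothing) $P\,1_{\nu}=Q_{i,\nu}(x_{n-1},x_1,\ldots,x_n)\,1_{\nu}=0$, while $Q_{i,\nu}(x_{n},x_1,\ldots,x_n)\,1_{\nu}=s_{n-1}(P)\,1_{\nu}$; moreover $\tau_{n-1}1_{\nu}$ is square-zero, because $(\tau_{n-1}1_{\nu})^{2}=\tau_{n-1}^{2}1_{\nu}=Q_{i,i}(x_{n-1},x_{n})1_{\nu}=0$ by relation~\ref{Tausq} of Definition~\ref{klr} and $Q_{i,i}=0$. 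Hence $x_{n-1}1_{\nu}$, $x_{n}1_{\nu}$, $\tau_{n-1}1_{\nu}$ satisfy the defining relations of the rank-two affine nil Hecke algebra $H_{2}^{0}$ inside $1_{\nu}H_{\beta}^{i}1_{\nu}$, centralizing the subalgebra $A$ generated by the $x_{k}1_{\nu}$, $k\neq n-1,n$; this realizes the left $1_{\nu}H_{\beta}^{i}1_{\nu}$-module $1_{\nu}H_{\beta}^{i}$ as a module over $A\otimes H_{2}^{0}$ on which $P$ acts by zero, so Corollary~\ref{annilmod}(2) gives $(H_{2}^{0}\cdot P)\cdot 1_{\nu}H_{\beta}^{i}=0$; since $s_{n-1}(P)=P-(x_{n}-x_{n-1})\partial_{n-1}(P)\in H_{2}^{0}\cdot P$, this yields $s_{n-1}(P)\,1_{\nu}=Q_{i,\nu}(x_{n},x_1,\ldots,x_n)\,1_{\nu}=0$. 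If instead $\nu_{n-1}=j\neq i$, I would slide the top strand past the one below it: put $\mu=s_{n-1}(\nu)$, so $\mu_{n-1}=i$, $\mu_{n}=j$, and $\mu_{k}=\nu_{k}$ for $k\leqslant n-2$. Since $\mu_{n}\neq i$, the reductions of the first paragraph applied to $\mu$ (drop the top strand, then the induction hypothesis at position $n-1$) give $P\,1_{\mu}=0$ in $H_{\beta}^{i}$. Conjugating by $\tau_{n-1}$, using relation~\ref{Taux} of Definition~\ref{klr} and the analogue of Lemma~\ref{compnil}(1) (there is no Demazure correction term because $\mu_{n-1}\neq\mu_{n}$), one gets $0=\tau_{n-1}P\,1_{\mu}=s_{n-1}(P)\,\tau_{n-1}1_{\mu}=s_{n-1}(P)\,1_{\nu}\tau_{n-1}$. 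Finally, right-multiplying by $\tau_{n-1}$ and using $\tau_{n-1}^{2}1_{\nu}=Q_{\nu_{n-1},\nu_{n}}(x_{n-1},x_{n})1_{\nu}=Q_{i,j}(x_{n},x_{n-1})1_{\nu}$, which is exactly the one factor of $Q_{i,\nu}(x_{n},x_1,\ldots,x_n)$ not already present in $s_{n-1}(P)$, gives $Q_{i,\nu}(x_{n},x_1,\ldots,x_n)\,1_{\nu}=s_{n-1}(P)\,1_{\nu}\tau_{n-1}^{2}=0$.

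I expect the main obstacle to be precisely this last case $\nu_{n}=i$, $a=n$, and in particular the recognition that it must be split according to whether $\nu_{n-1}$ equals $i$: the two sub-cases call for genuinely different ``bridges'' from the $x_{n-1}$-slot to the $x_{n}$-slot. When $\nu_{n-1}=i$ the relevant crossing is square-zero, so one can import the affine nil Hecke formalism of Corollary~\ref{annilmod}(2), the delicate point being the verification that $1_{\nu}H_{\beta}^{i}$ really is a module over $A\otimes H_{2}^{0}$ (that the proposed generators obey the relations of $H_{2}^{0}$ and centralize $A$, which rests on $Q_{i,i}=0$). When $\nu_{n-1}\neq i$ the crossing is no longer square-zero; instead one exploits that $\tau_{n-1}^{2}1_{\nu}$ equals precisely the missing $Q$-factor, and here the care lies in bookkeeping which factors of $Q_{i,\nu}(x_{n},x_1,\ldots,x_n)$ are accounted for by $s_{n-1}(P)$ after the one-step slide $\nu\rightsquigarrow\mu$. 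The remaining ingredients — the trivial case, and the descent from $n$ to $n-1$ via the top-strand inclusion $r$ — are routine, the only points worth noting being that $r$ is well defined as a map of the $H_{\beta}^{i}$ (the weight $\beta$ pins down the colour of the added strand, so no finiteness of $I$ is needed here) and that $Q_{i,\nu}$ only records contributions from the non-$i$ slots, which is what makes the polynomial identities in the reductions go through.
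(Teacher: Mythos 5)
Your proof is correct and follows essentially the same route as the paper: induction on $n$ with a case split on the top strand $\nu_n$ (and then on $\nu_{n-1}$ when $\nu_n=i$ and $a=n$), using the right inclusion to invoke the induction hypothesis, Corollary~\ref{annilmod} to swap $x_{n-1}\leftrightarrow x_n$ when $\nu_{n-1}=\nu_n=i$, and $\tau_{n-1}^2 1_\nu = Q_{i,\nu_{n-1}}(x_n,x_{n-1})1_\nu$ to supply the missing $Q$-factor when $\nu_{n-1}\neq i$. The upfront disposal of $\nu_1=i$ and the one-sided conjugation in the last subcase (rather than the paper's symmetric $\tau_n(\cdot)\tau_n$ sandwich) are merely cosmetic reorganizations of the same computation.
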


	\begin{proof}
		The proof is by induction on $n$. Assume that $n=1$. If $\nu_1=i$, $1_i$ is zero in $H_1^i$, so the result holds. If $\nu_1 \neq i$, there is no $a$ such that $\nu_a=i$, and the result holds too.
		
		Assume that the result is proved for $n \geqslant 1$. Let $\nu \in I^{n+1}$. We write $\nu$ in the form $\nu = (\nu_{n+1},\nu')$ with $\nu'\in I^n$. We consider two cases depending on whether $\nu_{n+1}\neq i$ or $\nu_{n+1}=i$.
		
		Case 1: if $\nu_{n+1}\neq i$, then we have $a<n+1$ and
		\begin{align*}
			Q_{i,\nu}(x_a,x_1,\ldots,x_{n+1})1_{\nu} &=Q_{i,\nu_{n+1}}(x_a,x_{n+1}) Q_{i,\nu'}(x_a,x_1,\ldots,x_n)1_{\nu} \\
			&= Q_{i,\nu_{n+1}}(x_a,x_{n+1}) r_n^{n+1}(Q_{i,\nu'}(x_a,x_1,\ldots,x_n)1_{\nu'})1_{\nu}.
		\end{align*}
		By induction, $Q_{i,\nu'}(x_a,x_1,\ldots,x_n)1_{\nu'}$ is zero in $H_n^i$. Hence $Q_{i,\nu}(x_a,x_1,\ldots,x_{n+1})1_{\nu}$ is zero in $H_{n+1}^i$.
		
		Case 2: If $\nu_{n+1}=i$, then we consider three subcases: (i) $a<n+1$, (ii) $a=n+1$ and $\nu_n=i$, and (iii) $a=n+1$ and $\nu_n\neq i$. 
		\begin{itemize}
			\item Sub-case (i): if $a<n+1$, we have
		\begin{align*}
			Q_{i,\nu}(x_a,x_1,\ldots,x_{n+1})1_{\nu} &= Q_{i,\nu'}(x_a,x_1,\ldots,x_n)1_{\nu} \\
			&= r_{n}^{n+1}(Q_{i,\nu'}(x_a,x_1,\ldots,x_n)1_{\nu'})1_{\nu}.
		\end{align*}
		By induction, $Q_{i,\nu'}(x_a,x_1,\ldots,x_n)1_{\nu'}$ is zero in $H_n^i$ so $Q_{i,\nu}(x_a,x_1,\ldots,x_{n+1})1_{\nu}$ is zero in $H_{n+1}^i$.
		
			\item Sub-case (ii): if $a=n+1$ and $\nu_n=i$, we have
		\begin{align*}
			Q_{i,\nu}(x_{n+1},x_1,\ldots,x_{n+1})1_{\nu} &= Q_{i,\nu'}(x_{n+1},x_1,\ldots,x_{n-1},x_{n+1})1_{\nu}\\ 
			&= s_{n}(Q_{i,\nu'}(x_n,x_1,\ldots,x_n))1_{\nu}.
		\end{align*}
		By induction $Q_{i,\nu'}(x_n,x_1,\ldots,x_n)1_{\nu}=r_n^{n+1}(Q_{i,\nu'}(x_n,x_1,\ldots,x_n)1_{\nu'})1_{\nu}$ is zero in $H_{n+1}^i$. Since $\nu_{n+1}=\nu_n=i$, there is a natural structure of left $\left(H_2^0\otimes K[x_1,\ldots,x_{n-1}]\right)$-module on $1_{\nu}H_{n+1}^i$, for which the polynomial $Q_{i,\nu'}(x_n,x_1,\ldots,x_n)$ acts by zero. By Corollary \ref{annilmod}, $s_{n}(Q_{i,\nu'}(x_n,x_1,\ldots,x_n))$ also acts by zero, thus $Q_{i,\nu}(x_{n+1},x_1,\ldots,x_{n+1})1_{\nu}$ is zero in $H_{n+1}^i$.
		
			\item Sub-case (iii): if $a=n+1$ and $\nu_n \neq i$, let $\nu''= (i,\nu_{n-1},\ldots,\nu_1)$. Then we have
		\begin{align*}
			Q_{i,\nu}(x_{n+1},x_1,\ldots,x_{n+1})1_{\nu} &= Q_{i,\nu_n}(x_{n+1},x_n) Q_{i,\nu''}(x_{n+1},x_1,\ldots,x_{n-1},x_{n+1})1_{\nu} \\
			&= \tau_n^2Q_{i,\nu''}(x_{n+1},x_1,\ldots,x_{n-1},x_{n+1})1_{\nu} \\
			&= \tau_n Q_{i,\nu''}(x_{n},x_1,\ldots,x_{n-1},x_{n})\tau_n 1_{\nu} \\
			&= \tau_n r_{n}^{n+1}(Q_{i,\nu''}(x_n,x_1,\ldots,x_n)1_{\nu''}) \tau_n 1_{\nu}
		\end{align*}
		By induction, $Q_{i,\nu''}(x_n,x_1,\ldots,x_n)1_{\nu''}$ is zero in $H_n^i$, hence $Q_{i,\nu}(x_{n+1},x_1,\ldots,x_{n+1})1_{\nu}$ is zero in $H_{n+1}^i$.
		\end{itemize}
	\end{proof}

	The polynomials $Q_{i,\nu}(u,v_1,\ldots,v_n)$ have important symmetry properties. For $\beta \in Q^+$ of height $n$, let
	\[
		Q_{i,\beta}(u,x_1,\ldots,x_n) = \sum_{\nu \in I^{\beta}} Q_{i,\nu}(u,x_1,\ldots,x_n)1_{\nu} \in P_{\beta}[u].
	\]
		
	\begin{lem}\label{annilcenter}
		Let $\beta \in Q^+$. As an element of $P_{\beta}[u]$, the polynomial $Q_{i,\beta}$ has coefficients in the center of $H_{\beta}$.	
	\end{lem}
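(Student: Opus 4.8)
The plan is to show that $Q_{i,\beta}(u,x_1,\ldots,x_n)$ commutes with all the generators $x_\ell$ and $\tau_k$ of $H_\beta$, using the characterization of the center given in Proposition \ref{center}, namely $Z(H_\beta)=P_\beta^{\mathfrak S_n}$. Since $Q_{i,\beta}$ already lies in $P_\beta[u]$, by Proposition \ref{center} it suffices to check that each coefficient of $Q_{i,\beta}$, viewed as a polynomial in $u$, is $\mathfrak S_n$-invariant, i.e. that $Q_{i,\beta}$ is fixed by the action of $\mathfrak S_n$ permuting $x_1,\ldots,x_n$ and the idempotents $1_\nu$ simultaneously. Equivalently, I want $s_k(Q_{i,\beta}) = Q_{i,\beta}$ for each simple transposition $s_k$, where $s_k$ acts on $P_\beta = \bigoplus_{\nu\in I^\beta} P_n 1_\nu$ by $s_k(P 1_\nu) = s_k(P) 1_{s_k(\nu)}$.

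First I would unwind the definition: $Q_{i,\beta}(u,x_1,\ldots,x_n) = \sum_{\nu\in I^\beta} Q_{i,\nu}(u,x_1,\ldots,x_n)1_\nu$ where $Q_{i,\nu}(u,v_1,\ldots,v_n) = \prod_{k:\,\nu_k\neq i} Q_{i,\nu_k}(u,v_k)$. Applying $s_k$ sends the summand indexed by $\nu$ to $s_k\big(Q_{i,\nu}(u,x_1,\ldots,x_n)\big)1_{s_k(\nu)}$. Now $s_k$ swaps $x_k$ and $x_{k+1}$, so it transforms the product $\prod_{\ell:\,\nu_\ell\neq i} Q_{i,\nu_\ell}(u,x_\ell)$ into $\prod_{\ell:\,\nu_\ell\neq i} Q_{i,\nu_\ell}(u,x_{s_k(\ell)})$; after reindexing $\ell \mapsto s_k(\ell)$ this is exactly $\prod_{\ell:\,(s_k\nu)_\ell \neq i} Q_{i,(s_k\nu)_\ell}(u,x_\ell) = Q_{i,s_k(\nu)}(u,x_1,\ldots,x_n)$. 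Hence $s_k$ simply permutes the summands of $Q_{i,\beta}$ (sending the $\nu$-term to the $s_k(\nu)$-term), and since the sum runs over all of $I^\beta$ and $s_k$ is a bijection of $I^\beta$, the total sum is unchanged: $s_k(Q_{i,\beta}) = Q_{i,\beta}$.

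Since the simple transpositions $s_1,\ldots,s_{n-1}$ generate $\mathfrak S_n$, this shows $Q_{i,\beta} \in P_\beta^{\mathfrak S_n}[u]$, equivalently each coefficient of $u^m$ in $Q_{i,\beta}$ lies in $P_\beta^{\mathfrak S_n} = Z(H_\beta)$ by Proposition \ref{center}, which is the claim. I expect the only point requiring a little care to be the bookkeeping in the reindexing step — making sure the condition "$\nu_\ell\neq i$" transforms correctly under the relabelling and that one is permuting the index set $I^\beta$ rather than losing or doubling terms — but this is purely a matter of being careful with notation, not a genuine obstacle. No deeper input beyond Proposition \ref{center} and the explicit shape of the $Q_{i,\nu}$ is needed.
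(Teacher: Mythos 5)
Your proposal is correct and takes essentially the same approach as the paper: reduce to showing $\mathfrak S_n$-invariance of $Q_{i,\beta}$ via Proposition \ref{center}, then reindex the product. The only cosmetic difference is that you check invariance on simple transpositions while the paper verifies it directly for an arbitrary $\omega\in\mathfrak S_n$; the reindexing argument is the same.
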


	\begin{proof}
		The symmetric group $\mathfrak S_n$ acts on $P_{\beta}[u]$ by acting on the coefficients. By Proposition \ref{center}, $Z(H_{\beta}) = P_{\beta}^{\mathfrak{S}_n}$ so it suffices to check that that $Q_{i,\beta}(u,x_1,\ldots,x_n)$ is invariant under the action of $\mathfrak{S}_n$. Let $\omega \in \mathfrak S_n$. We have
		\begin{align*}
			\omega(Q_{i,\beta}(u,x_1,\ldots,x_n)) &= \sum_{\nu \in I^{\beta}} \bigg(\prod_{\substack{1\leqslant k \leqslant n \\ \nu_k \neq i}} \omega(Q_{i,\nu_k}(u,x_k)) \bigg) \omega(1_{\nu}) \\
			&= \sum_{\nu \in I^{\beta}} \bigg(\prod_{\substack{1\leqslant k \leqslant n \\ \nu_k \neq i}} Q_{i,\nu_k}(u,x_{\omega(k)}) \bigg) 1_{\omega(\nu)}.
		\end{align*}
		Doing the re-indexation $\nu' = \omega(\nu)$ in this last sum yields
		\begin{align*}
			\omega(Q_{i,\beta}(u,x_1,\ldots,x_n)) &= \sum_{\nu' \in I^{\beta}}\bigg(\prod_{\substack{1\leqslant k \leqslant n \\ \nu'_{\omega(k)} \neq i}} Q_{i,\nu'_{\omega(k)}}(u,x_{\omega(k)}) \bigg) 1_{\nu'} \\
			&= \sum_{\nu' \in I^{\beta}}\bigg(\prod_{\substack{1\leqslant \ell \leqslant n \\ \nu'_{\ell} \neq i}} Q_{i,\nu'_{\ell}}(u,x_{\ell}) \bigg) 1_{\nu'}
		\end{align*}
		where the second equality follows from doing the re-indexation $\ell=\omega(k)$ in the product. Hence $Q_{i,\beta}(u,x_1,\ldots,x_n)$ is invariant under the action of $\mathfrak S_n$ and the result is proved.
	\end{proof}

	As another application of Corollary \ref{annilmod}, we now prove that the algebras $H_{\beta}^i$ are not finitely generated as $K$-modules in general.
	
	\begin{prop}\label{infgen}
		Let $\beta \in Q^+ \setminus \lbrace 0 \rbrace$ be such that $s_i(\beta) \in Q^+$. Then $H_{\beta}^i$ is not finitely generated as a $K$-module.
	\end{prop}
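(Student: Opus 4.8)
The plan is to produce, for a well-chosen word $\nu \in I^\beta$ with $\nu_1 \ne i$, an infinite $K$-linearly independent family $\{x_1^k 1_\nu : k \ge 0\}$ inside $H_\beta^i$; since $x_1 1_\nu$ has strictly positive degree $\nu_1 \cdot \nu_1$, these elements lie in pairwise distinct degrees, so it suffices to show that each of them is nonzero. Write $\beta = \sum_{j \in I} n_j j$. If $n_i = 0$ then $\beta - i \notin Q^+$, so $1_{\beta-i,i} = 0$ and $H_\beta^i = H_\beta$, which by the PBW theorem (Theorem \ref{pbw}) is a free $K$-module of infinite rank; so we may assume $n_i \ge 1$, and then (since $s_i(\beta) = -n_i i \notin Q^+$ if all other $n_j$ vanish) $\gamma := \beta - n_i i \ne 0$, while the hypothesis $s_i(\beta) \in Q^+$ reads exactly $n_i \le N$, where $N := \sum_{j \ne i} n_j(-c_{i,j}) \ge 1$. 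Finally, every graded component of $H_\beta$, hence of $H_\beta^i$, is a finite-rank free $K$-module (Theorem \ref{pbw}, as $I^\beta$ is finite), so "$H_\beta^i$ not finitely generated over $K$" is equivalent to "$H_\beta^i$ is nonzero in infinitely many degrees"; since the presentation of $H_\beta^i$ is defined over $K$ we may base-change to a residue field and assume $K$ is a field.

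Now choose $\nu \in I^\beta$ having its $n_i$ entries equal to $i$ placed consecutively (say as the last $n_i$ letters) and with $\nu_1 = j$ for some $j \ne i$ — possible since $\gamma \ne 0$. By Proposition \ref{annilpol}, for each position $a$ with $\nu_a = i$ the polynomial $Q_{i,\nu}(x_a, x_1, \dots, x_n)1_\nu$ vanishes in $H_\beta^i$; reading off its leading term in $x_a$ from the prescribed shape of the $Q_{i,j}$, it equals $\big(\prod_{k : \nu_k \ne i} t_{i,\nu_k}\big) x_a^N + (\text{strictly lower order in } x_a)$ with invertible leading coefficient, and it involves only $x_a$ together with the variables $x_b$ for which $\nu_b \ne i$. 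Hence the image $\overline P$ of $P_\beta 1_\nu = P_n 1_\nu$ in $H_\beta^i$ is spanned over $K[x_b : \nu_b \ne i]$ by the finitely many monomials $\prod_{a : \nu_a = i} x_a^{e_a}$ with $0 \le e_a < N$. As there are exactly $|\gamma| = n - n_i \ge 1$ variables $x_b$ with $\nu_b \ne i$, the ring $\overline P$ is a quotient of a polynomial ring in at least one variable. What remains is to see that it is not a proper quotient in the $x_1$-direction: one wants $x_1^k 1_\nu \ne 0$ in $H_\beta^i$ for every $k$, equivalently $\overline P$ free of positive rank over $K[x_1]$.

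This last point is the crux, and it amounts to bounding the defining ideal from above. One must show that $(1_{\beta-i,i}) \cap P_n 1_\nu$ coincides with the ideal generated by the polynomials $Q_{i,\nu}(x_a, x_1, \dots, x_n)1_\nu$ (for $\nu_a = i$) together with their images under the divided-difference operators acting on the consecutive block of $i$-colored strands — these latter do lie in $(1_{\beta-i,i})$ by Corollary \ref{annilmod}(2), since on that uniformly colored sub-block $1_\nu H_\beta^i$ still carries a genuine affine nil Hecke action. Granting this identification, each generator is, up to a unit, monic in one of the $i$-colored variables, so $\overline P$ is free over $K[x_b : \nu_b \ne i]$ on the monomial basis above; in particular $1_\nu$ is part of a basis and $\overline P$ is free of positive rank over $K[x_1]$, so $x_1^k 1_\nu \ne 0$ for all $k$, finishing the argument. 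The hypothesis $s_i(\beta) \in Q^+$, i.e.\ $N \ge n_i$, is precisely what makes this work: it guarantees that the relations furnished by Proposition \ref{annilpol} already cut $\overline P$ down to a finitely generated $K[x_b : \nu_b \ne i]$-module, but no further.

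I expect the main obstacle to be exactly this upper bound on $(1_{\beta-i,i})$ — in essence a PBW-type straightening theorem for $H_\beta^i$. It should follow by combining the PBW basis and coset decompositions of $H_\beta$ (Theorem \ref{pbw} and (\ref{coset})) with the multistrand relations of Lemma \ref{multistrand}, which show that the only polynomial contributions produced by a path that enters and leaves the killed idempotent $1_{\beta-i,i}$ are assembled from "$i$-strand bubbles" $Q_{i,\nu}(x_a, \cdot\,)$, and with Corollary \ref{annilmod}(1), which identifies the two-sided ideal generated by a central polynomial and thereby governs the straightening. An equivalent module-theoretic packaging of the same difficulty: form the cyclic $H_\beta^i$-module $M = \pi_i\big(E_i^{n_i}(E_{j_1} \cdots E_{j_{r-1}})\big) \cdot E_j$ for a decomposition $\gamma = j + j_1 + \cdots + j_{r-1}$ into simple roots $\ne i$, where the final right induction by $E_j$ contributes a free $K[x_1]$-summand; then $M$ is not finitely generated over $K$ provided the inner module $\pi_i\big(E_i^{n_i}(E_{j_1} \cdots E_{j_{r-1}})\big)$ is nonzero — which is the same non-vanishing fact in disguise, and must be established here without invoking Proposition \ref{vanish}.
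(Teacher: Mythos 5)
Your plan is conceptually close to the paper's but stops short of the one step that actually carries the proof: the \emph{upper} bound on the ideal $(1_{\beta-i,i})\cap P_n 1_\nu$. You identify this yourself (``I expect the main obstacle to be exactly this upper bound''), and that obstacle is the whole content of the proposition. Absent it, the elements $x_1^k 1_\nu$ could in principle be killed by relations arising from $1_{\beta-i,i}$ that are not consequences of the polynomials $Q_{i,\nu}(x_a,x_1,\ldots,x_n)1_\nu$ and their divided differences, and nothing in the argument rules this out.

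The paper closes this gap by a small but decisive change of setup: instead of a single word $\nu$, it works with the idempotent $1_{ni,\gamma}$, the sum of all $1_{(i,\ldots,i,\mu)}$ over $\mu \in I^\gamma$. Because $\gamma$ has no component on $i$, one gets $1_{ni,\gamma}H_{ni+\gamma}1_{ni,\gamma} \simeq H_{ni}\otimes H_\gamma$ via $\diamond$, and $1_{ni,\gamma}(1_{\beta-i,i})1_{ni,\gamma}$ becomes an honest two-sided ideal of $H_{ni}\otimes H_\gamma$. Using the coset decomposition (\ref{coset}) and Lemma \ref{multistrand}, the paper shows that this ideal is generated by the \emph{single} element $Q_{i,\gamma}(x_{r+1},x_1,\ldots,x_r)$; since its coefficients lie in $Z(H_\gamma)$ by Lemma \ref{annilcenter}, Corollary \ref{annilmod}(1) applies and gives an exact description of the ideal as an $H_\gamma[x_{r+1},\ldots,x_{n+r}]$-module, generated by the divided differences $\partial_{[k\downarrow r+1]}(Q_{i,\gamma})$. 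A degree count on those divided differences (which is where the hypothesis $n \leqslant -\langle i^\vee,\gamma\rangle$ enters) then shows the quotient is free of positive rank over $H_\gamma$, hence not finitely generated over $K$. By fixing a single word $\nu$ you discard precisely the $H_\gamma$-algebra structure on the non-$i$ strands, and with it the tool (a central polynomial in $Z(H_\gamma)[x_{r+1}]$ plus Corollary \ref{annilmod}(1)) that makes the upper bound provable.

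Your module-theoretic reformulation via $\pi_i\big(E_i^{n_i}(E_{j_1}\cdots E_{j_{r-1}})\big)E_j$ has the same difficulty in another guise, as you note: the needed nonvanishing of $\pi_i\big(E_i^{n_i}(E_{j_1}\cdots E_{j_{r-1}})\big)$ is one half of Proposition \ref{vanish}, whose proof in the paper itself invokes the present proposition, so appealing to it here would be circular. Your preliminary reductions (ruling out $n_i=0$, reduction to showing infinitely many nonzero graded pieces) are fine, though the base change to a residue field is unnecessary: the paper's argument works directly over an arbitrary commutative ring $K$, using only the invertibility of the leading coefficients $t_{i,j}$ built into the definition of the $Q_{i,j}$.
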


	\begin{proof}
		Write $\beta$ in the form $\beta = \gamma+ni$ with $\gamma \in \bigoplus_{j\neq i} \mathbb{Z}_{\geqslant 0}j$ of height $r$ and $n\geqslant 0$. Since $s_i(\beta) \in Q^+$, we have $\gamma \neq 0$ and $n \leqslant \left< i^{\vee},\gamma \right>$. We prove that under these conditions, $1_{ni,\gamma}H_{ni+\gamma}^i1_{ni,\gamma}$ is not finitely generated as a $K$-module.
		
		Let $T$ be a set of reduced decompositions for minimal length representatives of left cosets of $\mathfrak S_r\times\mathfrak S_n$ in $\mathfrak S_{n+r}$. The two-sided ideal $1_{ni,\gamma}H_{ni,\gamma}1_{(n-1)i+\gamma,i}H_{ni,\gamma}1_{ni,\gamma}$ of $1_{ni,\gamma}H_{ni,\gamma}1_{ni,\gamma}$ is generated by the elements $\tau_{\underline{\omega}^{-1}}\tau_{\underline{\omega}}1_{ni,\gamma}$ for $\underline{\omega} \in T$ such that $s_{\underline{\omega}}(r+1)=1$, where $\underline{\omega}^{-1}$ denotes the reverse sequence of $\underline{\omega}$. Such $\underline{\omega}$ can be written in the form $\underline{\omega} = \underline{\omega}'\left[1\uparrow r\right]$. Hence $\tau_{\underline{\omega}^{-1}}\tau_{\underline{\omega}}1_{ni,\gamma}$ is an element of $P_{ni+\gamma}1_{ni,\gamma}$, multiple of $\tau_{[r\downarrow 1]}\tau_{[1\uparrow r]}1_{ni,\gamma} = Q_{i,\gamma}(x_{r+1},x_1,\ldots,x_r)$ (Lemma \ref{multistrand}). So $1_{ni,\gamma}H_{ni,\gamma}1_{(n-1)i+\gamma,i}H_{ni,\gamma}1_{ni,\gamma}$ is generated by $Q_{i,\gamma}(x_{r+1},x_1,\ldots,x_r)$ as a two-sided ideal of $1_{ni,\gamma}H_{ni,\gamma}1_{ni,\gamma}$.
		
		Furthermore, since $\gamma$ has no component on $i$, we have $H_{ni}\otimes H_{\gamma} \simeq 1_{ni,\gamma}H_{ni,\gamma}1_{ni,\gamma}$, the isomorphism being given by the $\diamond$ operation. Hence, if we denote by $J$ the two-sided ideal of $H_{ni}\otimes H_{\gamma}$ generated by $Q_{i,\gamma}(x_{r+1},x_1,\ldots,x_r)$, we have $1_{ni,\gamma}H_{ni+\gamma}^i1_{ni,\gamma} \simeq (H_{ni}\otimes H_{\gamma})/J$. By Proposition \ref{annilcenter}, $Q_{i,\gamma}(x_{r+1},x_1,\ldots,x_r) \in Z(H_{\gamma})[x_{r+1}]$. Let $\tilde{J}$ be the the $\left(H_{ni}\otimes H_{\gamma}\right)$-submodule of $H_{\gamma}[x_{r+1},\ldots,x_n]$ generated by $Q_{i,\gamma}(x_{r+1},x_1\ldots,x_r)$. By Corollary \ref{annilmod}, we have
		\[
			1_{ni,\gamma}H^i_{ni+\gamma}1_{ni,\gamma} \simeq \bigoplus_{\omega \in \mathfrak{S}_n} (H_{\gamma}[x_{r+1},\ldots,x_n]/\tilde{J}) (\tau_{\omega}\diamond 1_{\gamma})
		\]
		Since $Q_{i,\gamma}(x_{r+1},x_1,\ldots,x_r)$ does not involve the variables $x_{r+2},\ldots,x_{n+r}$ (and in particular, is symmetric in them), $\tilde{J}$ is generated as a sub-$H_{\gamma}[x_{r+1},\ldots,x_n]$-module by the $\partial_{[k\downarrow r+1]}(Q_{i,\gamma}(x_{r+1},x_1,\ldots,x_r))$ for $k \in \lbrace r+1,\ldots,r+n-1 \rbrace$. However, we have
		\[
			\partial_{[k\downarrow r+1]}(Q_{i,\gamma}(x_{r+1},x_1,\ldots,x_r)) \in tx_{k+1}^{-\left< i^{\vee},\gamma\right>-k+r} + \sum_{\ell < -\left< i^{\vee},\gamma\right>-k+r} x_{k+1}^{\ell}K[x_1,\ldots,x_k]
		\]
		with $t$ an invertible element of $K^{\times}$. Since $n \leqslant -\left< i^{\vee},\gamma\right>$, these polynomials have in particular positive degree. Thus $H_{\gamma}[x_{r+1},\ldots,x_n]/\tilde{J}$ is free of positive rank as an $H_{\gamma}$-module. In particular, it is not finitely generated over $K$.
	\end{proof}

	\medskip

	\subsection{Adjoint action of $E_i$}
	\subsubsection{Definition} Consider the functor $\mathrm{ad}_{E_i} : \mathcal{H}[i] \rightarrow \mathcal{H}[i]$ defined by:
	\[
		\mathrm{ad}_{E_i}(M) = E_iM / (1_{\beta,i} E_iM) = \pi_i(E_iM).
	\]
	for $M \in H_{\beta}^i\mathrm{-mod}$. So $\mathrm{ad}_{E_i}$ is the composition of the left $i$-induction functor with the functor $\pi_i$. Left $i$-induction is an exact endofunctor of $\mathcal H$, and $\pi_i$ is right exact since it is a left adjoint. Hence, the functor $\mathrm{ad}_{E_i}$ is right exact. We will prove below it is actually exact.

	\begin{prop}\label{adei}
		\begin{enumerate}
			\item For $n\geqslant 0$, there is a canonical isomorphism which is natural in $M\in \mathcal H[i]$
			\[
				\mathrm{ad}_{E_i}^n(M) \simeq \pi_i(E_i^nM).
			\]
			\item For all $n\geqslant 0$, there is an algebra morphism
			\[
				H_{ni}^{\mathrm{op}} \rightarrow \mathrm{End}(\mathrm{ad}_{E_i}^n).
			\]
			Hence the affine nil Hecke algebra $H_{ni}$ acts on $\mathrm{ad}_{E_i}^n$.
		\end{enumerate}
	\end{prop}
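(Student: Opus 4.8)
### Proof proposal

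\textbf{Part (1): iterating the description of $\mathrm{ad}_{E_i}$.} The plan is to prove the isomorphism $\mathrm{ad}_{E_i}^n(M)\simeq\pi_i(E_i^nM)$ by induction on $n$, the cases $n=0,1$ being trivial (for $n=0$ note $\pi_i(M)=M$ since $M\in\mathcal H[i]$). Assume the claim for $n$. By definition $\mathrm{ad}_{E_i}^{n+1}(M)=\mathrm{ad}_{E_i}(\mathrm{ad}_{E_i}^n(M))=\pi_i(E_i\,\mathrm{ad}_{E_i}^n(M))$, and by the inductive hypothesis this is $\pi_i(E_i\,\pi_i(E_i^nM))$. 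Now apply Lemma \ref{piiE} in the case $j=i$, which gives the canonical natural isomorphism $\pi_i(E_i\pi_i(N))\simeq\pi_i(E_iN)$ for any $N\in\mathcal H$ — wait, Lemma \ref{piiE} is stated for $N\in\mathcal H[i]$; here $N=E_i^nM$ need not lie in $\mathcal H[i]$. I would instead invoke the more basic fact, already used in the proof of Lemma \ref{piiE}, that the canonical quotient map $\pi_i(E_jP)\twoheadrightarrow\pi_i(E_j\pi_i(P))$ is an isomorphism for \emph{any} $P\in\mathcal H$ — the Yoneda argument there never used $P\in\mathcal H[i]$, only $1_{j,\beta}N\in\mathcal H[i]$ for the target $N$. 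So $\pi_i(E_i\pi_i(E_i^nM))\simeq\pi_i(E_i\cdot E_i^nM)=\pi_i(E_i^{n+1}M)$, completing the induction, and naturality in $M$ is inherited from naturality of all the maps involved.

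\textbf{Part (2): the affine nil Hecke action.} The strategy is to transport the right $H_{ni}$-module structure on $E_i^n$ through the functor $\pi_i(E_i^n(-))\simeq\mathrm{ad}_{E_i}^n$. Recall from the subsection on divided powers that $H_{ni}\simeq H_n^0$ (up to grading dilatation) and that $\mathrm{End}_{H_{ni}}(E_i^n)\cong H_{ni}^{\mathrm{op}}$ acting by right multiplication, since $E_i^n$ is free of rank $1$. For each $\beta\in Q^+$ and $M\in H_\beta^i\text{-mod}$, the module $E_i^nM=H_{\beta+ni}1_{ni,\beta}\otimes_{H_\beta}(E_i^n\otimes M)$ carries, via the left factor, a right action of $H_{ni}$ commuting with the left $H_{\beta+ni}$-action; concretely an element $h\in H_{ni}$ acts on $E_i^nM$ through right multiplication on the first $ni$ strands, i.e. as the endomorphism of $E_i^nM$ induced by the element $l_{ni}^{ni+|\beta|}(h)$ acting on the left — one must check this is a well-defined $H_{\beta+ni}$-module endomorphism, which follows from the fact that the image of $l_{ni}^{n+|\beta|}$ commutes with the image of $r_{|\beta|}^{n+|\beta|}$ inside $H_{\beta+ni}$ and that this action is functorial in $M$. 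This produces an algebra map $H_{ni}^{\mathrm{op}}\to\mathrm{End}(E_i^n(-))$. Composing with $\pi_i$ (which is a functor) gives $H_{ni}^{\mathrm{op}}\to\mathrm{End}(\pi_i(E_i^n(-)))\cong\mathrm{End}(\mathrm{ad}_{E_i}^n)$, using the isomorphism of Part (1).

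\textbf{Main obstacle.} The routine content — freeness of $E_i^n$, the identification $H_{ni}\simeq H_n^0$, functoriality — is all available from the text. The one point requiring genuine care is checking that the endomorphism of $\mathrm{ad}_{E_i}^n$ produced this way is independent of the choices made (reduced expressions defining $\tau_\omega$, the precise bimodule presentation) and genuinely factors through $\pi_i$, i.e. that the right $H_{ni}$-action on $E_i^nM$ descends to the quotient $\pi_i(E_i^nM)=E_i^nM/(1_{\beta,ni}\text{-generated submodule})$; this holds because the right $H_{ni}$-action commutes with the left $H_{\beta+ni}$-action, so it preserves any $H_{\beta+ni}$-submodule defined $H_{\beta+ni}$-canonically, in particular the one generated by $1_{(\beta+ni)-i,i}E_i^nM$. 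Once that compatibility is nailed down, everything else is formal. I would therefore organize the write-up as: (a) induction for Part (1); (b) recall $\mathrm{End}_{H_{ni}}(E_i^n)\cong H_{ni}^{\mathrm{op}}$; (c) spell out the commuting left/right actions on $E_i^nM$ and conclude the action descends along $\pi_i$; (d) assemble the algebra morphism and rephrase via the grading dilatation as an action of the affine nil Hecke algebra.
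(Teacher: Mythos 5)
Your proposal is correct and follows essentially the same route as the paper: induction via Lemma~\ref{piiE} for Part~(1), then whiskering the right $H_{ni}^{\mathrm{op}}$-action on $E_i^n(-)$ with $\pi_i$ for Part~(2). You have in fact caught a small imprecision that the paper glosses over: Lemma~\ref{piiE} is stated for $M\in\mathcal H[i]$ (in which case the $j=i$ clause is trivially $\pi_i(E_iM)\simeq\pi_i(E_iM)$), yet in the induction it is applied with $E_i^nM$ in place of $M$, and $E_i^nM$ need not lie in $\mathcal H[i]$; as you observe, the proof of Lemma~\ref{piiE} never uses that hypothesis and yields the isomorphism $\pi_i(E_jP)\simeq\pi_i(E_j\pi_i(P))$ for arbitrary $P\in\mathcal H$, which is exactly what the induction step needs. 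Your ``main obstacle'' paragraph for Part~(2) is more cautious than necessary---whiskering a natural endomorphism of $E_i^n(-)$ with the functor $\pi_i$ is immediate and needs no separate descent verification, precisely because $\pi_i$ sends each $H_{\beta+ni}$-module map to the induced map on quotients---but your reasoning there is sound.
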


	\begin{proof}
		We construct the isomorphism in (1) by induction on $n$. For the case $n=0$, we have $\pi_i(M) = M$ for $M \in \mathcal H[i]$, and the canonical isomorphism is just the identity. Assume that we have constructed the natural isomorphism $\mathrm{ad}_{E_i}^n(M) \xrightarrow{\sim} \pi_i(E_i^nM)$ for some $n \geqslant 0$. Then we have an isomorphism
		\begin{align*}
			\mathrm{ad}_{E_i}^{n+1}(M) &\xrightarrow{\sim} \mathrm{ad}_{E_i}(\pi_i(E_i^nM)) = \pi_i(E_i\pi_i(E_i^nM)).
		\end{align*}
		By Lemma \ref{piiE}, there is a canonical isomorphism $\pi_i(E_i\pi_i(E_i^nM)) \xrightarrow{\sim} \pi_i(E_i^{n+1}M)$, which completes the construction of (1).
		
		For (2), notice that we have an action of $H_{ni}^{\mathrm{op}}$ on $E_i^n$ by right multiplication, so we get an action of $H_{ni}^{\mathrm{op}}$ on the functor $M \mapsto E_i^nM$. By vertical composition, we get an action of $H_{ni}^{\mathrm{op}}$ on the functor $M \mapsto \pi_i(E_i^nM)$, which is canonically isomorphic to $\mathrm{ad}_{E_i}^n$ by (1), whence the result.
	\end{proof}

	For the rest of this paper, we fix the action of $H_{ni}^{\mathrm{op}}$ on $\mathrm{ad}_{E_i}^n$ to be the one constructed in the proof of Proposition \ref{adei} via the canonical isomorphism $\mathrm{ad}_{E_i}^n(M) \simeq \pi_i(E_i^nM)$. In particular, one can define the \textit{divided power} $\mathrm{ad}_{E_i}^{(n)}$. Recall that the element $e_n=x_2\ldots x_n^{n-1}\tau_{\omega_0[1,n]}$ is a primitive idempotent of $H_{ni}$. Then the functor $\mathrm{ad}_{E_i}^{(n)}$ is defined by
	\[
		\mathrm{ad}_{E_i}^{(n)} = q_i^{-\frac{n(n-1)}{2}}e_n\big(\mathrm{ad}_{E_i}^{n} \big).
	\]
	From the isomorphism (\ref{divpow}), we have an isomorphism
	\[
		\mathrm{ad}^n_{E_i} \simeq [n]_i!\mathrm{ad}_{E_i}^{(n)}.
	\]	
	Using the definition of our chosen action of $H_{ni}$ on $\mathrm{ad}_{E_i}^n$, we see that for all $M \in \mathcal H[i]$ there is an isomorphism
	\[
		\mathrm{ad}_{E_i}^{(n)}(M) \simeq \pi_i\big(E_i^{(n)}M\big)
	\]
	which is natural in $M$.
	
	\subsubsection{Induction} Let us now give another description of the functor $\mathrm{ad}_{E_i}$, as an induction functor. For $\beta \in Q^+$, the right inclusion induces a (non-unital) morphism $H_{\beta}^{i} \rightarrow H_{\beta+i}^{i}$. This endows $H_{\beta+i}^i1_{i,\beta}$ with a structure of $(H_{\beta+i}^i,H_{\beta}^i)$-bimodule, and we get an induction functor
	\[
		\mathrm{ind}_{H_{\beta}^i}^{H_{\beta+i}^i} : \left \{ \begin{array}{rcl}
			H_{\beta}^i\mathrm{-mod} & \rightarrow & H_{\beta+i}^i\mathrm{-mod}, \\
			M & \mapsto & H_{\beta+i}^i1_{i,\beta} \otimes_{H_{\beta}^i} M.
		\end{array} \right.
	\]
	
	\begin{prop}\label{adind}
		There is an isomorphism
		\[
			\mathrm{ad}_{E_i} \simeq \bigoplus_{\beta \in Q^+} \mathrm{ind}_{H_{\beta}^i}^{H_{\beta+i}^i}.
		\]
	\end{prop}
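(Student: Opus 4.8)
The plan is to unwind both sides as functors out of $H_\beta^i\text{-mod}$ for a fixed $\beta \in Q^+$ and exhibit a natural isomorphism. Recall that by definition $\mathrm{ad}_{E_i}(M) = \pi_i(E_iM)$, and that for $M \in H_\beta^i\text{-mod}$ (viewed in $\mathcal H$) we have $E_iM \simeq H_{\beta+i}1_{i,\beta}\otimes_{H_\beta}M$. Since $M$ is already an $H_\beta^i$-module, the action of $H_\beta$ on $M$ factors through $H_\beta^i$, so $E_iM \simeq H_{\beta+i}1_{i,\beta}\otimes_{H_\beta^i}M$, where $H_{\beta+i}1_{i,\beta}$ is regarded as an $(H_{\beta+i},H_\beta^i)$-bimodule via the right inclusion $r_\beta^{\beta+i}\colon H_\beta \to H_{\beta+i}$ (which indeed descends to $H_\beta^i$, since $r_\beta^{\beta+i}(1_{\nu i}) \in (1_{\rho i})_{\rho}$, as recorded in the excerpt). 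Now apply $\pi_i$; since $\pi_i$ is the left adjoint to the inclusion $\mathcal H[i]\hookrightarrow\mathcal H$, it is right exact and commutes with colimits, in particular with the tensor product over $H_\beta^i$. Concretely, $\pi_i(E_iM) = E_iM/(1_{\beta+i-i,i}E_iM) = (H_{\beta+i}1_{i,\beta}\otimes_{H_\beta^i}M)\big/\big((1_{\beta,i})\,(H_{\beta+i}1_{i,\beta}\otimes_{H_\beta^i}M)\big)$.

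The key step is to identify this quotient with $H_{\beta+i}^i 1_{i,\beta}\otimes_{H_\beta^i}M$. Since $M$ is arbitrary, it suffices to check that, as $(H_{\beta+i}^i, H_\beta^i)$-bimodules,
\[
  (H_{\beta+i}1_{i,\beta})\big/\big((1_{\beta,i})H_{\beta+i}1_{i,\beta}\big) \;\simeq\; H_{\beta+i}^i 1_{i,\beta},
\]
and then that tensoring a right $H_\beta^i$-module quotient commutes with $\otimes_{H_\beta^i}M$ in the appropriate sense. For the bimodule identity: the two-sided ideal $(1_{\beta,i})$ of $H_{\beta+i}$ is exactly the kernel of $H_{\beta+i}\twoheadrightarrow H_{\beta+i}^i$, so $(1_{\beta,i})H_{\beta+i}1_{i,\beta}$ is the image of that kernel under right multiplication by $1_{i,\beta}$, i.e. $(1_{\beta,i})H_{\beta+i}1_{i,\beta} = \ker\big(H_{\beta+i}1_{i,\beta}\to H_{\beta+i}^i1_{i,\beta}\big)$. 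Here one uses that $1_{i,\beta}$ is an idempotent, so right multiplication by it is a split (hence exact) projection of right $H_{\beta+i}$-modules, and quotienting commutes with it. This gives the bimodule isomorphism, and then
\[
  \pi_i(E_iM) \simeq \big(H_{\beta+i}^i 1_{i,\beta}\otimes_{H_\beta^i} M\big) = \mathrm{ind}_{H_\beta^i}^{H_{\beta+i}^i}(M),
\]
the outer $\pi_i$ on the right being redundant because $H_{\beta+i}^i1_{i,\beta}\otimes_{H_\beta^i}M$ already lies in $\mathcal H[i]$ (it is an $H_{\beta+i}^i$-module). Summing over $\beta \in Q^+$ and checking naturality in $M$ — which is automatic since every step is a canonical construction — gives $\mathrm{ad}_{E_i}\simeq\bigoplus_{\beta\in Q^+}\mathrm{ind}_{H_\beta^i}^{H_{\beta+i}^i}$.

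The main obstacle, modest though it is, is the careful bookkeeping of which ideal one quotients by and in what order: one must verify that $(1_{\beta+i-i,i}E_iM) = (1_{\beta,i}E_iM)$ as submodules (note $\beta+i-i = \beta$, so the subscripts match, but one should confirm the idempotent appearing in the definition of $\pi_i$ at level $\beta+i$ is indeed $1_{\beta,i}$), and that passing the quotient through the tensor product $-\otimes_{H_\beta^i}M$ is legitimate — i.e. that $(1_{\beta,i})\,(H_{\beta+i}1_{i,\beta}\otimes_{H_\beta^i}M)$ equals $\big((1_{\beta,i})H_{\beta+i}1_{i,\beta}\big)\otimes_{H_\beta^i}M$ inside $E_iM$. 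This is where right-exactness of $-\otimes_{H_\beta^i}M$ applied to the short exact sequence $0 \to (1_{\beta,i})H_{\beta+i}1_{i,\beta}\to H_{\beta+i}1_{i,\beta}\to H_{\beta+i}^i1_{i,\beta}\to 0$ of right $H_\beta^i$-modules does the job, yielding the desired presentation directly. No deeper input (PBW, Mackey filtrations, Theorem \ref{tauinj}) is needed here; the statement is essentially a transport of the ``induction $=$ $\pi_i\circ$ ($i$-induction)'' description through the quotient, exactly parallel to the cyclotomic case of \cite{kk}.
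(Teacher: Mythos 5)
Your overall plan — write $\pi_i(E_iM)$ as a quotient of $E_iM = H_{\beta+i}1_{i,\beta}\otimes M$ and match it with the induced module — is the same as the paper's, but there is a concrete error in the execution. You assert that the right inclusion $r_\beta^{\beta+i}\colon H_\beta\to H_{\beta+i}$ ``descends to $H_\beta^i$'' and use this to regard $H_{\beta+i}1_{i,\beta}$ as a right $H_\beta^i$-module (hence to form $H_{\beta+i}1_{i,\beta}\otimes_{H_\beta^i}M$ and to call the sequence $0\to (1_{\beta,i})H_{\beta+i}1_{i,\beta}\to H_{\beta+i}1_{i,\beta}\to H_{\beta+i}^i1_{i,\beta}\to 0$ one of right $H_\beta^i$-modules). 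This is false: by the PBW theorem $r_\beta^{\beta+i}$ is \emph{injective}, so it cannot factor through $H_\beta^i=H_\beta/(1_{\beta-i,i})$. What the remark in the paper actually says (and what is true) is that the \emph{composite} $H_\beta\xrightarrow{r}H_{\beta+i}\twoheadrightarrow H_{\beta+i}^i$ descends to $H_\beta^i$, because $r(1_{\beta-i,i})$ lies in the two-sided ideal $(1_{\beta,i})$ of $H_{\beta+i}$. Correspondingly, the right $H_\beta$-action on $H_{\beta+i}1_{i,\beta}$ does \emph{not} kill $(1_{\beta-i,i})$ — for instance $1_{i,\beta}\cdot r(1_{\beta-i,i})=1_{i,\beta-i,i}$ is typically nonzero — so $H_{\beta+i}1_{i,\beta}$ is not a right $H_\beta^i$-module and the object $H_{\beta+i}1_{i,\beta}\otimes_{H_\beta^i}M$ is not well-defined as you use it.

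The fix, which is what the paper does, is to keep the tensor over $H_\beta$ throughout: one has $\pi_i(N)\simeq H_{\beta+i}^i\otimes_{H_{\beta+i}}N$ for $N\in H_{\beta+i}\mathrm{-mod}$, hence by associativity
\[
\pi_i(E_iM)\simeq H_{\beta+i}^i\otimes_{H_{\beta+i}}H_{\beta+i}1_{i,\beta}\otimes_{H_\beta}M\simeq H_{\beta+i}^i1_{i,\beta}\otimes_{H_\beta}M,
\]
and only \emph{now} is it legitimate to replace $\otimes_{H_\beta}$ by $\otimes_{H_\beta^i}$, using the general fact that $R\otimes_AL\simeq R\otimes_{A/J}L$ when $JL=0$ and $RJ=0$: here $R=H_{\beta+i}^i1_{i,\beta}$ is annihilated on the right by $J=(1_{\beta-i,i})$ precisely because $r(J)\subseteq (1_{\beta,i})=\ker(H_{\beta+i}\to H_{\beta+i}^i)$. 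Equivalently, if you want to stay closer to your formulation, you should tensor the exact sequence of $(H_{\beta+i},H_\beta)$-bimodules $(1_{\beta,i})H_{\beta+i}1_{i,\beta}\to H_{\beta+i}1_{i,\beta}\to H_{\beta+i}^i1_{i,\beta}\to 0$ with $-\otimes_{H_\beta}M$ (not over $H_\beta^i$), observe that the image of the left term in $E_iM$ is exactly $(1_{\beta,i})E_iM$, and then descend to $\otimes_{H_\beta^i}$ at the very last step. Once this is corrected, your bimodule identification $(H_{\beta+i}1_{i,\beta})/(1_{\beta,i})H_{\beta+i}1_{i,\beta}\simeq H_{\beta+i}^i1_{i,\beta}$ via exactness of right multiplication by the idempotent $1_{i,\beta}$ is fine and the rest goes through.
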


	\begin{proof}
		Let $\beta \in Q^+$. The canonical quotient morphism $H_{\beta+\alpha_i} \twoheadrightarrow H_{\beta+\alpha_i}^{i}$ endows $H_{\beta+\alpha_i}^{i}$ with a structure of $(H_{\beta+\alpha_i}^{i},H_{\beta+\alpha_i})$-bimodule, and we have a natural isomorphism	$\pi_i(E_iM) \simeq H_{\beta+i}^{i} \otimes_{H_{\beta+i}}(E_iM)$ for $M \in H_{\beta}^i\mathrm{-mod}$. Now recall that
		\[
			E_iM = H_{\beta+i}1_{i,\beta} \otimes_{H_{i,\beta}} (E_i\otimes M) \simeq H_{\beta+i}1_{i,\beta} \otimes_{H_{\beta}} M,
		\]
		the second isomorphism coming from the fact that $E_i$ is a free module of rank 1 over $H_{i}$. Hence we have natural isomorphisms
		\begin{align*}
			\mathrm{ad}_{E_i}(M) &= \pi_i(E_{i}M) \\
			&\simeq H_{\beta+i}^{i} \otimes_{H_{\beta+i}} (E_iM) \\
			&\simeq  H_{\beta+i}^{i} \otimes_{H_{\beta+i}} H_{\beta+i}1_{i,\beta} \otimes_{H_{\beta}} M \\
			&\simeq H_{\beta+i}^{i}1_{i,\beta} \otimes_{H_{\beta}} M.
		\end{align*}
		In general, if $A$ is a $K$-algebra, $J$ a 2-sided ideal of $A$, $L$ a left $A$-module, $R$ a right $A$-module, such that $JL=RJ=0$, we have an obvious isomorphism $R\otimes_A L \simeq R\otimes_{A/J} L$. Here, we get
		\[
			H_{\beta+i}^{i}1_{\beta,i} \otimes_{H_{\beta}} M \simeq H_{\beta+i}^i 1_{\beta,i}\otimes_{H_{\beta}^i} M,
		\]
		which concludes the proof.
	\end{proof}
	
	\subsubsection{The morphism $\tau_{E_i,M}$}\label{tau} We now describe of the functor $\mathrm{ad}_{E_i}$ as the cokernel of a natural transformation. For $\beta \in Q^+$ of height $r$ and $M \in H_{\beta}^i-\mathrm{mod}$, we define a morphism $\tau_{E_i,M} :ME_i \rightarrow E_iM$ as follows:
	\[
		\tau_{E_i,M} : \left \{ \begin{array}{rcl}
			ME_i & \rightarrow & E_iM, \\
			h 1_{\beta,i} \otimes_{H_{\beta}} m & \mapsto & h\tau_{[1\uparrow r]} 1_{i,\beta}\otimes_{H_{\beta}} m.
		\end{array} \right.
	\]
	
	\begin{prop}\label{welldef}
		The map $\tau_{E_i,M}$ is a well-defined morphism of $H_{\beta+i}$-modules of degree $-i\cdot\beta$.
	\end{prop}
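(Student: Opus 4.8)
Write $r=|\beta|$. The plan is to recast $\tau_{E_i,M}$ via the adjunction for induced modules and thereby reduce everything to one intertwining identity in $H_{\beta+i}$, checked on algebra generators.

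\emph{Step 1 (reformulation).} I would first recall the bimodule descriptions $ME_i\simeq H_{\beta+i}1_{\beta,i}\otimes_{H_\beta}M$ and $E_iM\simeq H_{\beta+i}1_{i,\beta}\otimes_{H_\beta}M$, where — owing to the right-to-left numbering of strands — $H_\beta$ acts on $H_{\beta+i}1_{\beta,i}$ through the left inclusion $l_r^{r+1}$ and on $H_{\beta+i}1_{i,\beta}$ through the right inclusion $r_r^{r+1}$. By the tensor--hom adjunction for the induced module $ME_i$, giving an $H_{\beta+i}$-linear map $ME_i\to E_iM$ is the same as giving a morphism of left $H_\beta$-modules $M\to 1_{\beta,i}E_iM$ (with $H_\beta$ acting on the target via $l_r^{r+1}$), and under this correspondence $\tau_{E_i,M}$ is the obviously well-defined $K$-linear map $\psi_M\colon m\mapsto \tau_{[1\uparrow r]}1_{i,\beta}\otimes m$. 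So the proposition is equivalent to proving: (a) $\psi_M$ lands in $1_{\beta,i}E_iM$; and (b) $\psi_M$ is $H_\beta$-linear for the $l_r^{r+1}$-action on $E_iM$. Point (a), and the degree statement, are immediate from the defining relations of $H_{\beta+i}$: iterating $\tau_k1_\nu=1_{s_k(\nu)}\tau_k$ gives $\tau_{[1\uparrow r]}1_{i,\beta}=1_{\beta,i}\tau_{[1\uparrow r]}1_{i,\beta}$ (the $i$-strand, after crossing all strands of $M$, ends in position $1$), and since $\tau_\ell1_\nu$ has degree $-\nu_\ell\cdot\nu_{\ell+1}$, the element $\tau_{[1\uparrow r]}1_{i,\nu}$ is homogeneous of degree $-\sum_{k=1}^r i\cdot\nu_k=-i\cdot\beta$ for every $\nu\in I^\beta$; hence $\psi_M$, and therefore $\tau_{E_i,M}$, shifts degrees by $-i\cdot\beta$.

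\emph{Step 2 (the $H_\beta$-linearity).} Condition (b) unwinds to $l_r^{r+1}(y)\psi_M(m)=\psi_M(ym)$ in $E_iM$ for all $y\in H_\beta$ and $m\in M$; since $l_r^{r+1}$ and $r_r^{r+1}$ are algebra maps, it suffices to take $y$ in the generating set $\{1_\nu,\,x_k,\,\tau_\ell\}$ of $H_\beta$. The case $y=1_\nu$ is immediate. For $y=x_k$ ($1\le k\le r$) I would commute $x_k$ leftward through $\tau_{[1\uparrow r]}$ using relation (\ref{Taux}) of Definition \ref{klr} and reorder the commuting blocks $\tau_{[1\uparrow k-1]}$ and $\tau_{[k+1\uparrow r]}$, obtaining
\[
\tau_{[1\uparrow r]}x_k\,1_{i,\beta}=x_{k+1}\,\tau_{[1\uparrow r]}1_{i,\beta}-\sum_{\nu\in I^\beta,\ \nu_k=i}\tau_{[k+1\uparrow r]}\,\tau_{[1\uparrow k-1]}\,1_{i,\nu}.
\]
Each summand of the correction dies in $E_iM$: since $\tau_{[1\uparrow k-1]}\in r_r^{r+1}(H_\beta)$, one has $\tau_{[1\uparrow k-1]}1_{i,\nu}\otimes m=1_{i,\beta}\otimes(\tau_{[1\uparrow k-1]}1_\nu)m$, while $\tau_{[1\uparrow k-1]}1_\nu=1_{\nu'}\tau_{[1\uparrow k-1]}1_\nu$ with $\nu'_1=\nu_k=i$, so $1_{\nu'}M=0$ because $M\in\mathcal H[i]$; hence $\tau_{[1\uparrow k-1]}1_{i,\nu}\otimes m=0$ and, applying $\tau_{[k+1\uparrow r]}$, the correction vanishes. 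This gives $l_r^{r+1}(x_k)\psi_M(m)=x_{k+1}\tau_{[1\uparrow r]}1_{i,\beta}\otimes m=\tau_{[1\uparrow r]}1_{i,\beta}x_k\otimes m=\psi_M(x_km)$. The case $y=\tau_\ell$ I would treat in the same spirit: sliding $\tau_\ell$ past $\tau_{[1\uparrow r]}$ triggers the braid relation (\ref{Taubraid}) of Definition \ref{klr} exactly when the $i$-strand comes to lie between the two strands crossed by $\tau_\ell$, and the resulting correction carries the factor $\delta_{\nu_\ell,i}$ (the $i$-strand being one of the two ``outer'' strands of that braid move); after separating off the powers of $x_{r+1}$ — which commute with $r_r^{r+1}(H_\beta)$ — the remaining coefficient and idempotent can be pushed across the tensor, and one sees as before that the term is killed by the $H_\beta^i$-module structure of $M$. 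This proves (b), hence the proposition. (Note: when $\beta$ has no component on $i$ there are no corrections at all, consistently with the fact that then $\tau_{E_i,M}$ is defined on all $H_\beta$-modules.)

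\emph{Main obstacle.} The only substantive work is the bookkeeping in Step 2 — identifying precisely, inside $H_{\beta+i}$, the correction terms produced by relations (\ref{Taux}) and (\ref{Taubraid}) when $x_k$ or $\tau_\ell$ is dragged through $\tau_{[1\uparrow r]}1_{i,\beta}$ (these are nothing but the lower-order terms of Lemma \ref{xtaucom}, and are generally nonzero in $H_{\beta+i}$), and checking in each case that the term factors as $h\cdot r_r^{r+1}(z)$ with $z$ in the ideal $(1_{\beta-i,i})$ of $H_\beta$, so that it dies after tensoring with $M\in\mathcal H[i]$. The $\tau_\ell$ case in particular needs a judicious choice of which adjacent transpositions to commute first so that the braid correction lands on such a factor; if desired one may reduce to $M=H_\beta^i$ using naturality of $\psi$ and right-exactness of the relevant functors, which makes the computation completely explicit but does not shorten it.
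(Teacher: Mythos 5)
Your Step 1 is the paper's reformulation verbatim: both reduce, via the adjunction between left $i$-induction and left $i$-restriction, to showing that $Z\otimes_{H_\beta}M=0$ where $Z=\bigl((y\diamond 1_i)\tau_{[1\uparrow r]}-\tau_{[1\uparrow r]}(1_i\diamond y)\bigr)1_{i,\beta}$, and the degree count $-\sum_k i\cdot\nu_k=-i\cdot\beta$ is right. The divergence is in Step 2. You propose checking the intertwining relation on the generators $\{1_\nu,x_k,\tau_\ell\}$ of $H_\beta$ and computing each correction explicitly, whereas the paper invokes Lemma \ref{xtaucom} once, uniformly in $y\in H_\beta$: the lemma gives $Z\in\sum_{k\geq 2}\tau_{[k\uparrow r]}H_{i,\beta}$, and since $Z=1_{\beta,i}Z1_{i,\beta}$ by construction, the $H_{i,\beta}$-factor of each term is forced into $1_{i,\beta-i,i}H_{i,\beta}=(1_i\diamond 1_{\beta-i,i})H_{i,\beta}$, which visibly kills $M$; no explicit computation of the lower-order terms is needed. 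Your $x_k$ computation is correct — it is precisely the single nonzero term of the Lemma \ref{xtaucom} expansion in that case, triggered by $\delta_{\nu_k,i}$ when the $i$-strand reaches position $k+1$ — and the ``correction dies because $\tau_{[1\uparrow k-1]}1_\nu=1_{\nu'}\tau_{[1\uparrow k-1]}1_\nu$ with $\nu'_1=i$'' argument is sound. Your $\tau_\ell$ case is, however, undersold. The braid correction is the three-variable polynomial $\partial_{\ell,\ell+2}\bigl(Q_{i,\nu_{\ell+1}}(x_{\ell+2},x_{\ell+1})\bigr)$, a polynomial in $x_\ell,x_{\ell+1},x_{\ell+2}$ and not simply in $x_{r+1}$; to ``separate off the powers of $x_{r+1}$'' one must first transport $x_{\ell+2}$ across $\tau_{[\ell+2\uparrow r]}$, and each $\tau_j$ in that product with $\nu_j=i$ generates yet another correction. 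All of these ultimately land back in the ideal $(1_{\beta-i,i})$ after sandwiching by the idempotents, so your approach can be completed, but the bookkeeping is substantially heavier than your sketch acknowledges, especially when $\beta$ has several components on $i$. The paper's route via Lemma \ref{xtaucom} avoids this entirely and is the better argument; yours is more concrete and shows exactly where the corrections arise, at the price of a $\tau_\ell$ case that you have not actually carried out.
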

	
	\begin{proof}
		By adjunction between left $i$-induction and left $i$-restriction, it suffices to check that the map
		\[
			\left \{ \begin{array}{rcl}
				M & \rightarrow & 1_{\beta,i}H_{\beta+i} 1_{i,\beta} \otimes_{H_{\beta}} M, \\
				 m & \mapsto & \tau_{[1\uparrow r]}1_{i,\beta} \otimes_{H_{\beta}} m,
			\end{array} \right.
		\]
		is a morphism of $H_{\beta}$-modules. To check this, we need to prove that for all $y\in H_{\beta}$ we have
		\[
			\left((y\diamond 1_i)\tau_{[1\uparrow r]}-\tau_{[1\uparrow r]}(1_i \diamond y)\right)1_{i,\beta}\otimes_{H_{\beta}}M=0.
		\]
		Let $Z = \left((y\diamond 1_i)\tau_{[1\uparrow r]}-\tau_{[1\uparrow r]}(1_i \diamond y)\right)1_{i,\beta}$. By Lemma \ref{xtaucom}, the element $Z$ can be written in the form
		\[
			Z = \sum_{k=2}^{r+1} \tau_{[k\uparrow r]} z_k
		\]
		for some $z_k \in H_{i,\beta}$. However, $Z= 1_{\beta,i}Z1_{i,\beta}$ and $1_{\beta,i}\tau_{[k\uparrow r]}=\tau_{[k\uparrow r]}1_{\beta,i}$ when $k>2$. So
		\begin{align*}
			Z &= 1_{\beta,i} Z 1_{i,\beta} \\
			&= \sum_{k=2}^{n+1} \tau_{[k\uparrow r]}1_{\beta,i}z_k 1_{i,\beta}\\
			&= \sum_{k=2}^{n+1} \tau_{[k\uparrow r]} 1_{i,\beta-i,i}z_k 1_{i,\beta}.
		\end{align*}
		Since $1_{\beta-i,i}M=0$, we deduce that $Z \otimes_{H_{\beta}}M=0$. Hence $\tau_{E_i,M}$ is well-defined. The statement about the degree follows from the definition of the grading on the KLR algebras.
	\end{proof}

	Our morphism $\tau_{E_i,M}$ is similar to the morphism $P$ in \cite[Theorem~4.7]{kk}. Similar morphisms also appear in \cite{bkm}, for instance in Lemma 4.9.
	
	Given the definition, it is clear that the morphism $\tau_{E_i,M}$ is natural in $M$, \textit{i.e.} for every arrow $f : M \rightarrow M'$ in $\mathcal{H}[i]$, we have a commutative diagram					
	\[
	\xymatrix{
		ME_i \ar[d]_{fE_i} \ar[r]^{\tau_{E_i,M}} & E_iM \ar[d]^{E_if} \\
		M'E_i \ar[r]_{\tau_{E_i,M'}} & E_iM'
	}
	\]
	Hence, $\tau_{E_i,(-)} : (-)E_i \rightarrow E_i(-)$ is a natural transformation of functors $\mathcal H[i] \rightarrow \mathcal H$.
	
	We now relate $\tau_{E_i,(-)}$ and $\mathrm{ad}_{E_i}$.
	
	\begin{prop}\label{coker}
	For $\beta \in Q^+$ of height $r$ and $M\in H_{\beta}^i\mathrm{-mod}$, we have $\mathrm{ad}_{E_i}(M) = \mathrm{coker}(\tau_{E_i,M})$. Hence, we have an exact sequence which is natural in $M$
			\[
				q_i^{\left<i^{\vee},\beta\right>}ME_i \xrightarrow{\tau_{E_i,M}} E_iM \rightarrow \mathrm{ad}_{E_i}(M) \rightarrow 0.
			\]
	\end{prop}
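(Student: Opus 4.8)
The plan is to prove the statement by identifying the image of $\tau_{E_i,M}$ with the $H_{\beta+i}$-submodule $(1_{\beta,i}E_iM)$ of $E_iM$ generated by $1_{\beta,i}E_iM$. Indeed $\mathrm{ad}_{E_i}(M)=\pi_i(E_iM)$ is by definition the quotient of $E_iM$ by precisely this submodule, so once one knows
\[
\mathrm{im}(\tau_{E_i,M}) = (1_{\beta,i}E_iM),
\]
one gets $\mathrm{coker}(\tau_{E_i,M})=\mathrm{ad}_{E_i}(M)$ and hence the asserted three-term exact sequence. Its naturality in $M$ is then inherited from the naturality of $\tau_{E_i,(-)}$ observed above, and the grading shift $q_i^{\left<i^{\vee},\beta\right>}$ exactly compensates the degree $-i\cdot\beta$ of $\tau_{E_i,M}$ from Proposition \ref{welldef} (using $d_i\left<i^{\vee},\beta\right>=i\cdot\beta$). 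So the whole content is the displayed identity of submodules.

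For the inclusion $\subseteq$, I would use that $ME_i$ is generated over $H_{\beta+i}$ by $1_{\beta,i}\otimes M$, so that, $\tau_{E_i,M}$ being $H_{\beta+i}$-linear, $\mathrm{im}(\tau_{E_i,M})$ is generated by the elements $\tau_{[1\uparrow r]}1_{i,\beta}\otimes m$. The word $\tau_{[1\uparrow r]}$ carries the $i$-strand from the leftmost position to the rightmost one, so $\tau_{[1\uparrow r]}1_{i,\beta}=1_{\beta,i}\tau_{[1\uparrow r]}1_{i,\beta}$; hence each such generator already lies in $1_{\beta,i}E_iM$, and so does the submodule it generates.

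The reverse inclusion is the heart of the matter, and it suffices to show $1_{\beta,i}E_iM\subseteq\mathrm{im}(\tau_{E_i,M})$. The key step is the identity of subspaces of $H_{\beta+i}$
\[
1_{\beta,i}\,H_{\beta+i}\,1_{i,\beta} = \tau_{[1\uparrow r]}\,H_{i,\beta},
\]
$H_{i,\beta}=H_i\otimes H_\beta$ being viewed inside $H_{\beta+i}$ via $\diamond$. This I would deduce from the coset decomposition (\ref{coset}): one has $H_{\beta+i}1_{i,\beta}=\bigoplus_{p=1}^{r+1}\tau_{[p\uparrow r]}H_{i,\beta}$ over the minimal-length representatives of the left cosets of $\mathfrak{S}_1\times\mathfrak{S}_r$ in $\mathfrak{S}_{r+1}$ (the word $\tau_{[p\uparrow r]}$ carrying the $i$-strand to position $p$), and left multiplication by $1_{\beta,i}$ annihilates every summand except $p=1$. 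Granting this, a general element of $H_{i,\beta}$ is a $K$-linear combination of $x_{r+1}^{a}\,r_r^{r+1}(b)$ with $a\geqslant 0$ and $b\in H_\beta$; moving $r_r^{r+1}(b)$ across the tensor product over $H_\beta$ and sliding $x_{r+1}^{a}$ past $\tau_{[1\uparrow r]}$ (no correction terms occur, since the $i$-strand only crosses strands whose label is $\neq i$) rewrites $\tau_{[1\uparrow r]}x_{r+1}^{a}r_r^{r+1}(b)1_{i,\beta}\otimes_{H_\beta}m$ as $x_1^{a}\cdot\bigl(\tau_{[1\uparrow r]}1_{i,\beta}\otimes_{H_\beta}bm\bigr)$, which lies in $\mathrm{im}(\tau_{E_i,M})$. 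Since, by the subspace identity, such elements span $1_{\beta,i}E_iM$, this gives the inclusion.

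The main obstacle I anticipate is that subspace identity $1_{\beta,i}H_{\beta+i}1_{i,\beta}=\tau_{[1\uparrow r]}H_{i,\beta}$: it is elementary but demands careful bookkeeping of the coset combinatorics and of the position of the $i$-strand — somewhat error-prone under the right-to-left numbering convention — and it is at this point that the decomposition (\ref{coset}) and commutation manipulations in the style of Lemma \ref{xtaucom} are used. Finally, it is worth noting that, in contrast with the well-definedness of $\tau_{E_i,M}$ (Proposition \ref{welldef}), this cokernel computation does not use the hypothesis $1_{\beta-i,i}M=0$: once $\tau_{E_i,M}$ is known to be a morphism of $H_{\beta+i}$-modules, the identification of its cokernel with $\mathrm{ad}_{E_i}(M)$ is formal.
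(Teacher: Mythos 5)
Your argument for the inclusion $\mathrm{im}(\tau_{E_i,M}) \subseteq (1_{\beta,i}E_iM)$ is fine and matches the paper. The reverse inclusion, however, rests on a false claim. You assert the subspace identity $1_{\beta,i}H_{\beta+i}1_{i,\beta}=\tau_{[1\uparrow r]}H_{i,\beta}$, on the grounds that left multiplication by $1_{\beta,i}$ annihilates every coset summand $\tau_{[p\uparrow r]}H_{i,\beta}$ with $p>1$. That is not so: since $\tau_{[p\uparrow r]}1_{i,\beta}=1_{\mu}\tau_{[p\uparrow r]}$ where $\mu$ runs over sequences with the $i$-strand in position $p$, one computes $1_{\beta,i}\tau_{[p\uparrow r]}1_{i,\beta}=\tau_{[p\uparrow r]}1_{i,\beta-i,i}$, which is a nonzero element of $H_{\beta+i}$ whenever $\beta-i\in Q^+$. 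So $1_{\beta,i}H_{\beta+i}1_{i,\beta}$ strictly contains $\tau_{[1\uparrow r]}H_{i,\beta}$ in general; the extra pieces are $\tau_{[p\uparrow r]}1_{i,\beta-i,i}H_{i,\beta}$ for $p>1$.

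The paper's proof uses the same coset decomposition $E_iM=\bigoplus_{k=1}^{r+1}\tau_{[k\uparrow r]}1_{i,\beta}\otimes_{H_\beta}M$, but applies $\otimes_{H_\beta}M$ before simplifying: the offending contributions become $\tau_{[p\uparrow r]}1_{i,\beta-i,i}\otimes_{H_\beta}m$ and vanish exactly because $1_{\beta-i,i}M=0$. So, contrary to your closing remark, the hypothesis $M\in H_\beta^i\mathrm{-mod}$ is essential here and not only for the well-definedness of $\tau_{E_i,M}$: without it the submodule generated by $1_{\beta,i}E_iM$ is strictly larger than $\mathrm{im}(\tau_{E_i,M})$, and the cokernel would not compute $\pi_i(E_iM)$. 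Once that is repaired, the commutation manipulations you sketch for the reverse inclusion are unnecessary: one only needs to note that $1_{\beta,i}E_iM$ reduces, inside the coset decomposition, to the single summand $\tau_{[1\uparrow r]}1_{i,\beta}\otimes_{H_\beta}M$, which visibly lies in $\mathrm{im}(\tau_{E_i,M})$.
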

	
	\begin{proof}
		We show that $\mathrm{im}(\tau_{E_i,M})=H_{\beta+i}1_{\beta,i}(E_iM)$. Let $h \in H_{\beta+i}$ and $m \in M$. We have
			\[
				\tau_{E_i,M}(h1_{\beta,i}\otimes_{H_{\beta}} m) = h\tau_{[1\uparrow r]}1_{i,\beta} \otimes_{H_{\beta}} m\in H_{\beta+i}1_{\beta,i}(E_iM).
			\]
			So $\mathrm{im}(\tau_{E_i,M}) \subseteq H_{\beta+i}1_{i,\beta}(E_iM)$. Conversely, by (\ref{coset}) we have
			\[
				E_iM = \bigoplus_{k=1}^{r+1} \tau_{[k\uparrow r]}1_{i,\beta} \otimes_{H_{i,\beta}} (E_i\otimes M).
			\]
			Note that
			\[
				1_{\beta,i}\tau_{[k\uparrow r]}1_{i,\beta} = \left \{ \begin{array}{ll}
					\tau_{[k\uparrow r]}1_{i,\beta-i,i} & \text{if } k>1, \\
					\tau_{[1\uparrow r]}1_{\beta,i} & \text{if } k=1.
				\end{array} \right. 
			\]
			Since $1_{\beta-i,i}M=0$, we deduce that
			\[
				1_{\beta,i}(ME_i)=\tau_{[1\uparrow r]}1_{i,\beta}\otimes_{H_{i,\beta}} (E_i\otimes M) \subseteq \mathrm{im}(\tau_{E_i,M}).
			\]
			Hence $H_{\beta+i}1_{\beta,i}(E_iM) \subseteq \mathrm{im}(\tau_{E_i,M})$. So we have proved that $\mathrm{im}(\tau_{E_i,M})=H_{\beta+i}1_{\beta,i}(E_iM)$, from which we deduce that $\mathrm{ad}_{E_i}(M) = \mathrm{coker}(\tau_{E_i,M})$.
	\end{proof}
	
	A key property of $\tau_{E_i,(-)}$ is that it is compatible with the monoidal structure of $\mathcal H[i]$, in the following sense.
	
	\begin{prop}\label{tauprod}
		For all $M,N \in \mathcal{H}[i]$, we have $\tau_{E_i,MN} = \tau_{E_i,M}N \circ M\tau_{E_i,N}$.
	\end{prop}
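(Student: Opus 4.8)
The plan is to check the identity on cyclic generators, after writing the three modules involved via the $\diamond$-operation. Let $M\in H_{\beta}^i\mathrm{-mod}$ have height $r$ and $N\in H_{\gamma}^i\mathrm{-mod}$ have height $s$, and let $\mathbf{1}$ denote the canonical generator of $E_i$. Using associativity of the monoidal product and the freeness of $E_i$ over $H_i$, one has
\[
	MNE_i \simeq H_{\beta+\gamma+i}1_{\beta,\gamma,i}\otimes_{H_{\beta,\gamma,i}}(M\otimes N\otimes E_i),
\]
and likewise $ME_iN \simeq H_{\beta+\gamma+i}1_{\beta,i,\gamma}\otimes_{H_{\beta,i,\gamma}}(M\otimes E_i\otimes N)$ and $E_iMN \simeq H_{\beta+\gamma+i}1_{i,\beta,\gamma}\otimes_{H_{i,\beta,\gamma}}(E_i\otimes M\otimes N)$. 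Both $\tau_{E_i,MN}$ and $\tau_{E_i,M}N\circ M\tau_{E_i,N}$ are morphisms of $H_{\beta+\gamma+i}$-modules, and $MNE_i$ is generated over $H_{\beta+\gamma+i}$ by the elements $u_{m,n}=1_{\beta,\gamma,i}\otimes(m\otimes n\otimes\mathbf{1})$ for $m\in M$, $n\in N$. So it suffices to show the two maps agree on each $u_{m,n}$.

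Next I would compute both sides on $u_{m,n}$, writing $v_{m,n}=1_{i,\beta,\gamma}\otimes(\mathbf{1}\otimes m\otimes n)$ for the corresponding generator of $E_iMN$ and $w_{m,n}=1_{\beta,i,\gamma}\otimes(m\otimes\mathbf{1}\otimes n)$ for the one of $ME_iN$. Unwinding the definition of $\tau_{E_i,MN}$ (applied to $MN$, of height $r+s$), the left-hand side sends $u_{m,n}$ to $\tau_{[1\uparrow r+s]}\cdot v_{m,n}$: the single word $\tau_{[1\uparrow r+s]}=\tau_1\cdots\tau_{r+s}$ drags the $E_i$-strand from position $1$ past all $r+s$ strands of $N$ and $M$. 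For the right-hand side, $M\tau_{E_i,N}$ is the image of $\tau_{E_i,N}\colon NE_i\to E_iN$ under $M(-)$; since under $M\diamond(-)$ the $NE_i$-tensorand keeps its strand positions $1,\dots,s+1$, this sends $u_{m,n}$ to $\tau_{[1\uparrow s]}\cdot w_{m,n}$. Then $\tau_{E_i,M}N$ is $H_{\beta+\gamma+i}$-linear, hence commutes past the element $\tau_{[1\uparrow s]}\in H_{\beta+\gamma+i}$; and in $ME_iN=(ME_i)\diamond N$ the $ME_i$-tensorand is shifted up by $s$, so the word $\tau_{[1\uparrow r]}$ of $\tau_{E_i,M}$ becomes $\tau_{[s+1\uparrow s+r]}$ and $\tau_{E_i,M}N$ carries $w_{m,n}$ to $\tau_{[s+1\uparrow s+r]}\cdot v_{m,n}$. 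Altogether the right-hand side sends $u_{m,n}$ to $\tau_{[1\uparrow s]}\tau_{[s+1\uparrow s+r]}\cdot v_{m,n}$.

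It then remains only to observe that, by the very definition of the notation $[k\uparrow\ell]$ and of $\tau_{\underline{k}}$, the concatenation of the sequences $[1\uparrow s]$ and $[s+1\uparrow s+r]$ is the sequence $[1\uparrow r+s]$, so that $\tau_{[1\uparrow s]}\tau_{[s+1\uparrow s+r]}=\tau_{[1\uparrow r+s]}$ in $H_{\beta+\gamma+i}$. Hence both sides send $u_{m,n}$ to $\tau_{[1\uparrow r+s]}\cdot v_{m,n}$, and the two morphisms coincide. The only real work is the bookkeeping in the middle step — keeping track of which strands each tensorand occupies after the $\diamond$-inclusions so that the $\tau$-indices come out exactly as claimed — but there is no genuine obstacle here: well-definedness of every map in play is already provided by Proposition \ref{welldef}, and one could alternatively phrase the whole argument at the level of the explicit bimodules to sidestep any associativity subtleties.
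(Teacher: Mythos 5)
Your proof is correct, and it is in fact a cleaner route than the paper's. Both arguments reduce to a generating set and compute, but the paper checks the identity on the larger family of elements $z=(h\diamond 1_i)u_{a,b}$ for arbitrary $h\in H_{\beta+\gamma}$, computing $\tau_{E_i,MN}(z)$ via the defining formula (which yields $\tau_{[1\uparrow r+s]}(1_i\diamond h)1_{i,\beta,\gamma}\otimes(a\otimes b)$) and the right-hand side by pulling $(h\diamond 1_i)$ out via $H_{\beta+\gamma+i}$-linearity (which yields $(h\diamond 1_i)\tau_{[1\uparrow r+s]}1_{i,\beta,\gamma}\otimes(a\otimes b)$). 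Reconciling these two expressions then forces a nontrivial vanishing argument via Lemma \ref{xtaucom} together with the conditions $1_{\beta-i,i}M=1_{\gamma-i,i}N=0$. You avoid this entirely by restricting to the already-sufficient generating set $u_{m,n}$ (i.e.\ $h=1_{\beta,\gamma}$) and applying $H_{\beta+\gamma+i}$-linearity symmetrically to both sides, so that everything collapses to the purely formal concatenation $\tau_{[1\uparrow s]}\tau_{[s+1\uparrow s+r]}=\tau_{[1\uparrow r+s]}$. Your version makes it visible that the content of the proposition is just this combinatorial identity of $\tau$-words once the morphisms are known to be well-defined (Proposition \ref{welldef}); the $\mathcal H[i]$-hypothesis on $M$ and $N$ enters only through that well-definedness, whereas the paper invokes it a second time in the vanishing argument. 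One small remark: your opening claim that $MNE_i\simeq H_{\beta+\gamma+i}1_{\beta,\gamma,i}\otimes_{H_{\beta,\gamma,i}}(M\otimes N\otimes E_i)$ is exactly the ``explicit bimodule'' phrasing you mention at the end, and it is worth leading with it rather than with the associator, since it is what makes the claim that the $u_{m,n}$ generate over $H_{\beta+\gamma+i}$ immediate.
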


	\begin{proof}
		Assume that $M \in H_{\beta}^i\mathrm{-mod}$ with $\beta$ of height $r$, and that $N\in H_{\gamma}^i\mathrm{-mod}$ with $\gamma$ of height $s$. Let \[y = h1_{\beta,\gamma}\otimes_{H_{\beta,\gamma}} (a\otimes b) \in MN,\] where $h\in H_{\beta+\gamma}$, $a \in M$ and $b \in N$, and \[z=1_{\beta,\gamma,i}\otimes_{H_{\beta,\gamma}} y \in MNE_i.\] Such elements $z$ generate $MNE_i$ as an $H_{\beta+\gamma+i}$-module. So it suffices to prove that $\tau_{E_i,MN}(z) = \tau_{E_i,M}N(M\tau_{E_i,N}(z))$. On the one hand, we have
		\begin{align*}
			\tau_{E_i,MN}(z) &= \tau_{[1\uparrow r+s]}1_{i,\beta+\gamma}\otimes_{H_{\beta+\gamma}} y \\
			&= \tau_{[1\uparrow r+s]}(1_i \diamond h)1_{i,\beta,\gamma} \otimes_{H_{\beta,\gamma}} (a\otimes b).
		\end{align*}
		On the other hand, we have
		\begin{align*}
			M\tau_{E_i,N}(z) & = M\tau_{E_i,N}\left((h\diamond 1_i) 1_{\beta,\gamma,i}\otimes_{H_{\beta,\gamma}} (a\otimes b)\right) \\
				&= (h \diamond 1_i)\tau_{[1\uparrow s]}1_{\beta,i,\gamma} \otimes_{H_{\beta,\gamma}} (a \otimes b).
		\end{align*}
		So
		\begin{align*}
			\tau_{E_i,M}N(M\tau_{E_i,N}(z)) &= (h\diamond 1_i) \tau_{[1\uparrow s]}\tau_{[s+1\uparrow r+s]} 1_{i,\beta,\gamma} \otimes_{H_{\beta,\gamma}} (a\otimes b) \\
			&= (h\diamond 1_i) \tau_{[1\uparrow r+s]} 1_{i,\beta,\gamma} \otimes_{H_{\beta,\gamma}} (a\otimes b)
		\end{align*}
		Hence to get the result, it suffices to prove that
		\[
			\left((h\diamond 1_i)\tau_{[1\uparrow r+s]} - \tau_{[1\uparrow r+s]}(1_i\diamond h) \right)1_{i,\beta,\gamma} \otimes_{H_{\beta,\gamma}} (M\otimes N) =0.
		\]		
		Let $Z=\left((h\diamond 1_i)\tau_{[1\uparrow r+s]} - \tau_{[1\uparrow r+s]}(1_i\diamond h) \right)1_{i,\beta,\gamma}$. By Lemma \ref{xtaucom}, we have
		\[
			Z \in \sum_{k=2}^{r+s+1} \tau_{[k\uparrow r+s]} H_{i,\beta+\gamma}1_{i,\beta,\gamma}.
		\]
		Let $S$ be a complete set of reduced expressions for minimal length representatives of left cosets of $\mathfrak S_r\times \mathfrak S_s$ in $\mathfrak S_{r+s}$, then by (\ref{coset}) we have
		\[
			H_{\beta+\gamma} 1_{\beta,\gamma} = \bigoplus_{\underline{\omega} \in S} \tau_{\underline{\omega}}H_{\beta,\gamma}.
		\]
		Hence $Z$ can be written in the form
		\[
			Z=\sum_{\substack{1<k\leqslant r+s+1 \\ \underline{\omega} \in S}} \tau_{[k\uparrow r+s]}(1_i\diamond \tau_{\underline{\omega}} )h_{k,\underline{\omega}}
		\]		
		for some $h_{k,\underline{\omega}} \in H_{i,\beta,\gamma}$. However $Z=1_{\beta+\gamma,i}Z$, so
		\begin{align*}
		Z &= 1_{\beta+\gamma,i}Z \\
		&= \sum_{\substack{1<k\leqslant r+s+1 \\ \underline{\omega} \in S}} 1_{\beta+\gamma,i}\tau_{[k\uparrow r+s]}(1_i \diamond \tau_{\underline{\omega}})h_{k,\underline{\omega}} \\
		&=\sum_{\substack{1<k\leqslant r+s+1 \\ \underline{\omega} \in S}} \tau_{[k\uparrow r+s]} 1_{\beta+\gamma,i}(1_i\diamond\tau_{\underline{\omega}})h_{k,\underline{\omega}} 
		\end{align*}
		For $\underline{\omega} \in S$, we have $s_{\underline{\omega}}^{-1}(1) \in \lbrace 1,s+1\rbrace$. It follows that
		\begin{align*}
		Z = & \sum_{\substack{1<k\leqslant r+s+1 \\ \underline{\omega} \in S, \, s_{\underline{\omega}}^{-1}(1)=1}} \tau_{[k\uparrow r+s]}(1_i \diamond \tau_{\underline{\omega}})h_{k,\underline{\omega}}1_{i,\beta,\gamma-i,i} + \sum_{\substack{1<k\leqslant r+s+1 \\ \underline{\omega} \in S, \, s_{\underline{\omega}}^{-1}(1)=s+1}} \tau_{[k\uparrow r+s]}(1_i \diamond \tau_{\underline{\omega}})h_{k,\underline{\omega}}1_{i,\beta-i,i,\gamma}.
		\end{align*}
		Since both $1_{\beta-i,i,\gamma}$ and $1_{\beta,\gamma-i,i}$ act by zero on $M\otimes N$, we deduce $Z\otimes_{H_{\beta,\gamma}} (M\otimes N)=0$, which completes the proof.
	\end{proof}
	
	\subsubsection{Example: adjoint action on Chevalley generators}\label{chevgen} In this section, we describe the modules $\mathrm{ad}_{E_i}^{(n)}(E_j)$ for $n\geqslant 0$ and $j\in I \setminus \lbrace i \rbrace$.
	
	\begin{prop}
		As left graded $P_{n+1}$-modules, we have
		\[
			H_{j+ni}1_{j+(n-1)i,i}(E_i^nE_j) = \left( \bigoplus_{\omega \in \mathfrak S_n} J_{n+1}(\tau_{\omega}\diamond 1_j)1_{ni,j}\right) \bigoplus \Bigg(\bigoplus_{\substack{\omega \in \mathfrak S_n\\ 1\leqslant k \leqslant n}} P_{n+1}\tau_{[k\downarrow 1]}(\tau_{\omega}\diamond 1_j)1_{ni,j}\Bigg),
		\]
		where $J_{n+1}$ is the ideal of $P_{n+1}$ generated by the $\partial_{[k\downarrow 2]}(Q_{i,j}(x_2,x_1))$ for $k\in \lbrace 1,\ldots,n \rbrace$.
	\end{prop}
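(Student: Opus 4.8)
The plan is to make the left-hand side completely explicit, following the pattern of the proofs of Proposition~\ref{coker} and Proposition~\ref{infgen}. First I would identify $E_i^nE_j$ with the left $H_{j+ni}$-module $H_{j+ni}1_{ni,j}$, where the idempotent $1_{ni,j}=1_{ni}\diamond 1_j$ puts the strand of colour $j$ in position $1$ and the $n$ strands of colour $i$ in positions $2,\dots,n+1$. Since $j+ni$ has a unique component outside $i$, the $n+1$ idempotents of weight $j+ni$ are the $1_{\nu^{(k)}}$ ($0\le k\le n$) placing $j$ in position $k+1$, with $1_{\nu^{(0)}}=1_{ni,j}$ and $\tau_{[k\downarrow 1]}1_{ni,j}=1_{\nu^{(k)}}\tau_{[k\downarrow 1]}1_{ni,j}$. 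Combining the PBW theorem (Theorem~\ref{pbw}) with the decomposition~(\ref{coset}) for the $\diamond$-map $H_{ni}\otimes H_j\to H_{j+ni}$ — whose minimal length representatives of left cosets of $\mathfrak S_n\times\mathfrak S_1$ in $\mathfrak S_{n+1}$ are the $s_{[k\downarrow 1]}$, $0\le k\le n$ — gives a free $P_{n+1}$-basis of $E_i^nE_j$ consisting of the $\tau_{[k\downarrow 1]}(\tau_\omega\diamond 1_j)1_{ni,j}$ with $0\le k\le n$ and $\omega\in\mathfrak S_n$, the $(k,\omega)$-summand lying in $1_{\nu^{(k)}}(E_i^nE_j)$. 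Now $1_{j+(n-1)i,i}$ is the sum of the $1_{\nu^{(k)}}$ with $k\ge 1$, so $1_{j+(n-1)i,i}(E_i^nE_j)$ is the direct sum of the $(k,\omega)$-summands with $k\ge 1$: this is precisely the second summand in the statement, it is a $P_{n+1}$-direct summand of $E_i^nE_j$, and it is contained in the module $V:=H_{j+ni}1_{j+(n-1)i,i}(E_i^nE_j)$ we are computing. An elementary argument (splitting any $v\in V$ along the two direct summands) then gives $V=1_{j+(n-1)i,i}(E_i^nE_j)\oplus V_0$ with $V_0:=V\cap 1_{ni,j}(E_i^nE_j)=1_{ni,j}V$, so the proposition reduces to the identity $V_0=\bigoplus_{\omega\in\mathfrak S_n}J_{n+1}(\tau_\omega\diamond 1_j)1_{ni,j}$.

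The heart of the argument is the computation of $V_0$. Write $A:=1_{ni,j}H_{j+ni}1_{ni,j}$; by the PBW theorem $A=\bigoplus_{\omega\in\mathfrak S_n}P_{n+1}(\tau_\omega\diamond 1_j)1_{ni,j}$, and the $\diamond$-operation identifies $A$ with $K[x_1]\otimes H_n^0$ (up to a grading dilatation), the affine nil Hecke factor acting on the strands in positions $2,\dots,n+1$. Since $V$ is generated over $H_{j+ni}$ by $\sum_{k\ge 1}1_{\nu^{(k)}}(E_i^nE_j)$, one has $V_0=\sum_{k=1}^n\,(1_{ni,j}H_{j+ni}1_{\nu^{(k)}})\,(1_{\nu^{(k)}}H_{j+ni}1_{ni,j})$. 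The key point is that $\tau_{[k\downarrow 1]}$ carries the $j$-strand across strands of colour $i\ne j$ only, so no Demazure correction term appears when a polynomial is commuted past it; this shows that $1_{\nu^{(k)}}H_{j+ni}1_{ni,j}$ is the cyclic right $A$-module generated by $\tau_{[k\downarrow 1]}1_{ni,j}$, and symmetrically that $1_{ni,j}H_{j+ni}1_{\nu^{(k)}}$ is the cyclic left $A$-module generated by $\tau_{[1\uparrow k]}1_{\nu^{(k)}}$. Hence the $k$-th term of $V_0$ equals $A\,(\tau_{[1\uparrow k]}\tau_{[k\downarrow 1]}1_{ni,j})\,A$, and Lemma~\ref{multistrand}(2), applied with $j$ as the moving strand and the block of $i$-strands in positions $2,\dots,k+1$, evaluates $\tau_{[1\uparrow k]}\tau_{[k\downarrow 1]}1_{ni,j}=\prod_{q=2}^{k+1}Q_{j,i}(x_1,x_q)\,1_{ni,j}$. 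For each $k\ge 1$ this polynomial is a multiple of $Q_{j,i}(x_1,x_2)=Q_{i,j}(x_2,x_1)$, so $V_0$ is the two-sided ideal of $A$ generated by $Q_{i,j}(x_2,x_1)$ (viewed inside $E_i^nE_j$).

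To conclude I would apply Corollary~\ref{annilmod}(1) with base ring $K[x_1]$ and $p=Q_{i,j}(x_2,x_1)\in K[x_1][x_2,\dots,x_{n+1}]$: under the identification $A\simeq K[x_1]\otimes H_n^0$ this two-sided ideal is $\bigoplus_{\omega\in\mathfrak S_n}\left((K[x_1]\otimes H_n^0)\cdot Q_{i,j}(x_2,x_1)\right)(\tau_\omega\diamond 1_j)$. It then remains to identify the $(K[x_1]\otimes H_n^0)$-submodule of $P_{n+1}$ generated by $Q_{i,j}(x_2,x_1)$ — that is, the smallest $P_{n+1}$-submodule containing $Q_{i,j}(x_2,x_1)$ and stable under $\partial_2,\dots,\partial_n$ — with $J_{n+1}$; using the braid relations, $\partial_\ell^2=0$, and $\partial_\ell(Q_{i,j}(x_2,x_1))=0$ for $\ell\ge 3$, one checks that this module is the ideal generated by the $\partial_{[k\downarrow 2]}(Q_{i,j}(x_2,x_1))$, $1\le k\le n$, and that this ideal is already $\partial_\ell$- and $s_\ell$-stable, so the two containments match. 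This yields $V_0=\bigoplus_\omega J_{n+1}(\tau_\omega\diamond 1_j)1_{ni,j}$, and with the first paragraph the claimed decomposition follows. I expect the main difficulty to be organisational rather than conceptual: keeping track of idempotents and grading shifts while verifying the cyclic generators of the Peirce components $1_{\nu^{(k)}}H_{j+ni}1_{ni,j}$ and the absence of Demazure corrections. Lemma~\ref{multistrand} and Corollary~\ref{annilmod} are, however, exactly the tools designed for this computation, so no genuinely new input should be needed.
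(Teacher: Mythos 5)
Your proof is correct, and it is organized differently from the paper's. The paper's proof is a double-inclusion argument: it first shows that the right-hand side $L'$ is an $H_{j+ni}$-submodule of $H_{j+ni}1_{ni,j}$ (stability under the $\tau$'s reduces, via the observation that $J_{n+1}$ is the $(K[x_1]\otimes H_n^0)$-submodule of $P_{n+1}$ generated by $Q_{i,j}(x_2,x_1)$, to the equation $\tau_1^21_{ni,j}=Q_{i,j}(x_2,x_1)1_{ni,j}$); it then shows $L\subseteq L'$ by producing generators of $L$ that visibly lie in $L'$, and $L'\subseteq L$ by noting that the ``non-polynomial'' summand is obviously in $L$, that $Q_{i,j}(x_2,x_1)1_{ni,j}=\tau_1^21_{ni,j}\in L$, and then invoking Corollary~\ref{annilmod}. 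You instead decompose $V=H_{j+ni}1_{j+(n-1)i,i}(E_i^nE_j)$ along the Peirce components given by the idempotents $1_{\nu^{(k)}}$, observe that the $k\geqslant 1$ components lie entirely in $V$ and give the second summand directly, and then reduce the whole statement to a single explicit computation of the corner $V_0=1_{ni,j}V$. The computation of $V_0$ as the two-sided ideal of $A=1_{ni,j}H_{j+ni}1_{ni,j}$ generated by the $\tau_{[1\uparrow k]}\tau_{[k\downarrow 1]}1_{ni,j}$ (hence, by Lemma~\ref{multistrand}, by $Q_{i,j}(x_2,x_1)$) is the genuinely new organisational step; after that, Corollary~\ref{annilmod}(1) gives the decomposition, and the identification of the $(K[x_1]\otimes H_n^0)$-submodule generated by $Q_{i,j}(x_2,x_1)$ with $J_{n+1}$ via the relations $\partial_\ell^2=0$, the braid relations, and $\partial_\ell(Q_{i,j}(x_2,x_1))=0$ for $\ell\geqslant 3$ is exactly the observation the paper states but does not justify. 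The two approaches rely on the same two tools (Lemma~\ref{multistrand} and Corollary~\ref{annilmod}); what your approach buys is that you never need to verify that $L'$ is a submodule nor produce an independent generating set for $L$ as a left module, because the Peirce split of $V$ hands you the decomposition and leaves only the corner to compute. The paper's proof is shorter on the page, but leans on several unjustified claims (notably that $L$ is left-generated by the $\tau_{[1\uparrow k]}1_{ni,j}$, which is true but not spelled out, and the $J_{n+1}$ observation). Your proof is therefore more self-contained.
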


	\begin{proof}		
		Call $L$ the left hand-side of the equality, and $L'$ the right-hand side. Let us start by proving that $L'$ is a $H_{j+ni}$-submodule of $H_{j+ni}1_{ni,j}$. It is clear that $L'$ is stable by the left action of $P_{n+1}$. Furthermore, note that $J_{n+1}$ is the $\left(H_n^0 \otimes K[x_1]\right)$-submodule of $P_{n+1}$ generated by $Q_{i,j}(x_2,x_1)$. Using this and the fact that $\tau_1^21_{ni,j}=Q_{i,j}(x_2,x_1)$, we see easily that $L'$ is also stable by left multiplication by $\tau_1,\ldots,\tau_{n-1}$. Hence $L'$ is a sub-$H_{j+ni}$-module.
		
		By the same argument as in the proof of Proposition \ref{coker}, $L$ is the $H_{j+ni}$-submodule of $H_{j+ni}1_{ni,j}$ generated by the $\tau_{[1\uparrow k]}1_{ni,j}$ for $k \in \lbrace 1,\ldots,n\rbrace$. Since $\tau_{[1\uparrow k]}1_{ni,j} \in L'$ for $k \in \lbrace 1,\ldots,n\rbrace$, we have $L\subseteq L'$.
		
		Conversely, it is clear that
		\[
			\bigoplus_{\substack{\omega \in \mathfrak S_n\\ 1\leqslant k \leqslant n}} P_{n+1}\tau_{[k\downarrow 1]}(\tau_{\omega}\diamond 1_j)1_{ni,j} \subseteq L.
		\]
		Furthermore, $Q_{i,j}(x_2,x_1)1_{ni,j} = \tau_1^21_{ni,j} \in L$. By Corollary \ref{annilmod}, we deduce that $J_{n+1}1_{ni,j} \subseteq L$, and it follows that $L'\subseteq L$.
	\end{proof}

	\begin{cor}
		As left graded $P_{n+1}$-modules, we have
		\begin{align*}
			\mathrm{ad}_{E_i}^{n}(E_j) &= \bigoplus_{\omega \in S_n} \left(P_{n+1}/J_{n+1}\right) (\tau_{\omega}\diamond 1_j) 1_{ni,j},\\
			\mathrm{ad}_{E_i}^{(n)}(E_j) &= q_i^{-\frac{n(n-1)}{2}}\left(P_{n+1}/J_{n+1}\right) (\tau_{\omega_0[1,n]}\diamond 1_j) 1_{ni,j}.
		\end{align*}
	\end{cor}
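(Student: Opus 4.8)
The plan is to read off both formulas from the preceding Proposition, which computes $L:=H_{j+ni}1_{j+(n-1)i,i}(E_i^nE_j)$, by comparing it with a PBW decomposition of the ambient module $E_i^nE_j$. First I would record that decomposition. Since $E_i$ and $E_j$ are free of rank one over $H_i$ and $H_j$, the module $E_i^nE_j$ is free of rank one over $H_{j+ni}$ and is identified with $H_{j+ni}1_{ni,j}$; by Theorem \ref{pbw} the latter is free as a left $P_{n+1}$-module with basis $\{\tau_{\underline\omega}1_{ni,j}\}$ for $\underline\omega$ running over any complete set of reduced decompositions of $\mathfrak S_{n+1}$. I would take this set to consist of the concatenations $[k\downarrow 1]\cdot\underline c$ with $0\le k\le n$ and $\underline c$ a reduced word for the image of $\omega\in\mathfrak S_n$ under the embedding $s_\ell\mapsto s_{\ell+1}$: this is legitimate because the $s_{[k\downarrow 1]}$ are precisely the minimal-length representatives of the left cosets in $\mathfrak S_{n+1}$ of the parabolic subgroup $\langle s_2,\dots,s_n\rangle$ (the image of $\mathfrak S_n$), so each such word is reduced and these words exhaust $\mathfrak S_{n+1}$. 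Hence, as graded left $P_{n+1}$-modules,
\[
E_i^nE_j=\bigoplus_{\omega\in\mathfrak S_n}P_{n+1}(\tau_\omega\diamond 1_j)1_{ni,j}\ \oplus\ \bigoplus_{\substack{\omega\in\mathfrak S_n\\ 1\le k\le n}}P_{n+1}\tau_{[k\downarrow 1]}(\tau_\omega\diamond 1_j)1_{ni,j}.
\]

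Comparing this with the formula for $L$ in the preceding Proposition, the summands with $k\ge 1$ agree, while for each $\omega$ the summand $J_{n+1}(\tau_\omega\diamond 1_j)1_{ni,j}$ of $L$ is precisely $J_{n+1}$ times the free generator $(\tau_\omega\diamond 1_j)1_{ni,j}$ of the corresponding rank-one summand of $E_i^nE_j$. Taking quotients summand by summand, and using $\mathrm{ad}_{E_i}^n(E_j)\simeq\pi_i(E_i^nE_j)=(E_i^nE_j)/L$ (Proposition \ref{adei}(1) and the definition of $\pi_i$), yields the first formula. For the divided power I would start from $\mathrm{ad}_{E_i}^{(n)}=q_i^{-n(n-1)/2}\,e_n(\mathrm{ad}_{E_i}^n)$ with $e_n=x_2x_3^2\cdots x_n^{n-1}\tau_{\omega_0[1,n]}$, and observe that, tracing the canonical isomorphisms of Proposition \ref{adei}, the action of $e_n\in H_{ni}^{\mathrm{op}}$ on $\mathrm{ad}_{E_i}^n(E_j)$ — which is induced by right multiplication on the factor $E_i^n$ — corresponds, under $E_i^nE_j\simeq H_{j+ni}1_{ni,j}$, to right multiplication by $e_n\diamond 1_j$. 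So the generator $(\tau_\omega\diamond 1_j)1_{ni,j}$ is sent to $\bigl((\tau_\omega e_n)\diamond 1_j\bigr)1_{ni,j}$, and by Lemma \ref{compnil}(2)
\[
\tau_\omega e_n=\tau_\omega\bigl(x_2x_3^2\cdots x_n^{n-1}\bigr)\tau_{\omega_0[1,n]}=\partial_\omega\bigl(x_2x_3^2\cdots x_n^{n-1}\bigr)\,\tau_{\omega_0[1,n]},
\]
so this image lies in $P_{n+1}\,(\tau_{\omega_0[1,n]}\diamond 1_j)1_{ni,j}$. As $e_n$ commutes with the left $P_{n+1}$-action, the image of $e_n$ on $\mathrm{ad}_{E_i}^n(E_j)$ is contained in $(P_{n+1}/J_{n+1})(\tau_{\omega_0[1,n]}\diamond 1_j)1_{ni,j}$; for the reverse inclusion take $\omega=\omega_0[1,n]$ and use the classical identity $\partial_{\omega_0[1,n]}(x_2x_3^2\cdots x_n^{n-1})=1$ (equivalent to the idempotency of $e_n$, and also a consequence of (\ref{en})), so that $(\tau_{\omega_0[1,n]}\diamond 1_j)1_{ni,j}$ is fixed by $e_n$. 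Multiplying by $q_i^{-n(n-1)/2}$ gives the second formula.

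The bulk of the argument is routine PBW bookkeeping; the step I expect to require the most care is tracing the $H_{ni}^{\mathrm{op}}$-action of $e_n$ through the canonical isomorphisms so as to identify it with right multiplication by $e_n\diamond 1_j$, together with the short computation of $\tau_\omega e_n$ and the identity $\partial_{\omega_0[1,n]}(x_2x_3^2\cdots x_n^{n-1})=1$.
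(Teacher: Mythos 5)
Your proof is correct and follows the natural derivation the paper leaves implicit (no proof is given for this corollary). You correctly identify $E_i^nE_j \simeq H_{j+ni}1_{ni,j}$, pick the PBW basis $\tau_{[k\downarrow 1]}(\tau_\omega\diamond 1_j)$ whose summands line up term-by-term with those in the preceding Proposition's formula for $L$, take the quotient to get the first formula, and then for the divided power correctly trace the $H_{ni}^{\mathrm{op}}$-action of Proposition \ref{adei} to right multiplication by $e_n\diamond 1_j$, apply Lemma \ref{compnil}(2), and use $\partial_{\omega_0[1,n]}(x_2\cdots x_n^{n-1})=1$ (which is indeed what makes $e_n$ idempotent).
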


	In particular, we have a simple formula for the graded ranks of $\mathrm{ad}_{E_i}^{n}(E_j)$ and $\mathrm{ad}_{E_i}^{(n)}(E_j)$. They are given by
	\begin{align*}
		&\mathrm{grk}(\mathrm{ad}_{E_i}^{n}(E_j)) = \frac{1}{(1-q_i^2)^n(1-q_j^2)}\left(\prod_{k=0}^{n-1} \left(1-q_i^{2(-c_{i,j}-k)} \right)\right)\left( \sum_{\omega \in S_n} q_i^{-2l(\omega)}\right),\\
		&\mathrm{grk}(\mathrm{ad}_{E_i}^{(n)}(E_j)) = \frac{q_i^{\frac{n(n-1)}{2}}}{(1-q_i^2)^n(1-q_j^2)}\left(\prod_{k=0}^{n-1} \left(1-q_i^{2(-c_{i,j}-k)} \right)\right).
	\end{align*}
	A consequence of these formulas is the following vanishing criterion:
	\begin{equation}\label{eqvanish}
		\mathrm{ad}_{E_i}^n(E_j) = 0 \, \Leftrightarrow \, n > -c_{i,j}.
	\end{equation}
	We extend that result below.
	
	\medskip

	\subsection{Main theorem and its consequences}\label{mainthm} The first main result is the following.	
	\begin{thm}\label{tauinj}
		For all $\beta \in Q^+$ and $M \in H_{\beta}^i\mathrm{-mod}$, the morphism $\tau_{E_i,M}$ is injective. Hence we have a short exact sequence which is natural in $M$
		\[
			0 \rightarrow q_i^{\left< i^{\vee},\beta\right>} ME_i \xrightarrow{\tau_{E_i,M}} E_iM \rightarrow \mathrm{ad}_{E_i}(M) \rightarrow 0.
		\]
	\end{thm}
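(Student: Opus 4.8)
The cokernel of $\tau_{E_i,M}$ has already been identified with $\mathrm{ad}_{E_i}(M)$ in Proposition~\ref{coker}, and naturality is part of the same statement, so the whole content of the theorem is the \emph{injectivity} of $\tau_{E_i,M}$. Following the spirit of \cite{kk}, the plan is to first cut down the class of modules to be considered, and then carry out an explicit computation inside $H_{\beta+i}$ using the coset decompositions~(\ref{coset}).

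For the reduction, let $\mathcal{C}$ be the class of $M\in\mathcal H[i]$ for which $\tau_{E_i,M}$ is injective. Since $\tau_{E_i,(-)}$ is a natural transformation between the exact functors $(-)E_i$ and $E_i(-)$, the class $\mathcal C$ is closed under grading shifts, finite direct sums, subobjects and extensions: for a monomorphism $\iota\colon N\hookrightarrow M$ with $M\in\mathcal C$, naturality gives $E_i(\iota)\circ\tau_{E_i,N}=\tau_{E_i,M}\circ(\iota E_i)$ with the right-hand composite injective, hence $\tau_{E_i,N}$ injective; closure under extensions follows from the short five lemma applied to the kernels in the map of short exact sequences obtained from $0\to M'\to M\to M''\to 0$ by applying $(-)E_i$ and $E_i(-)$. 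Moreover Proposition~\ref{tauprod} gives $\tau_{E_i,MN}=(\tau_{E_i,M}N)\circ(M\tau_{E_i,N})$, so, the convolution product being biexact, $\mathcal C$ is closed under convolution products. Using that every finitely generated graded module admits a finite filtration with cyclic subquotients and that $\mathcal H[i]$ is a Serre subcategory, it is then enough to prove injectivity when $M$ is cyclic, say a quotient of some $H^i_\beta 1_\nu$.

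I would then argue by induction on the height $r=\vert\beta\vert$, the case $r=0$ being trivial since $\tau_{E_i,M}$ is then the identity. For the induction step, one analyses $\tau_{E_i,M}$ with respect to the filtrations coming from~(\ref{coset}): as right $H_\beta$-modules, $ME_i\cong\bigoplus_{k=0}^{r}\tau_{[k\downarrow 1]}1_{\beta,i}\otimes_{H_\beta}M$ and $E_iM\cong\bigoplus_{k=1}^{r+1}\tau_{[k\uparrow r]}1_{i,\beta}\otimes_{H_\beta}M$, filtered by the length of the coset representative. Sliding $\tau_{[k\downarrow1]}$ past $\tau_{[1\uparrow r]}$ produces the factors $\tau_1^2,\dots,\tau_k^2$, hence (by relation~(\ref{Tausq}) and the commutation relations, cf.\ Lemma~\ref{xtaucom} and Lemma~\ref{multistrand}) a product of polynomials of the form $Q_{i,a}(x_{j+1},x_j)$; the upshot is that $\tau_{E_i,M}$ is ``triangular'' for these filtrations, carrying the length-$k$ piece of $ME_i$ into the length-$(r-k)$ piece of $E_iM$ plus strictly lower terms, the diagonal map being multiplication by the corresponding product of $Q$'s (in particular the bottom piece $1_{\beta,i}\otimes M$ goes isomorphically onto the top piece $\tau_{[1\uparrow r]}1_{i,\beta}\otimes M$). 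Injectivity therefore reduces to injectivity of multiplication by these polynomials on the relevant subquotients of $M$: when the colour $a$ is different from $i$ the polynomial is monic up to a unit and this is automatic, and when $a=i$ the naive diagonal degenerates and one must instead invoke the polynomial-annihilator relations $Q_{i,\nu'}(x_a,x_1,\dots)=0$ on $H^i_\beta$ from Proposition~\ref{annilpol} (together with Corollary~\ref{annilmod}) to see that the missing relations of $M$ are exactly the ones needed for the two sides to match; finally one applies the inductive hypothesis after stripping off the strand in position $1$.

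The hard part is this last step: the bookkeeping of strand positions as the $i$-strand ``returns'' through the strands of $M$, and the case analysis on the colour at the returning strand — parallel to the three-case analysis in the proof of Proposition~\ref{annilpol} — must be carried out carefully to show that no relation of $ME_i$ is lost in $E_iM$. Making this triangularity and the degenerate cases precise, rather than the formal reductions above, is where the real work of Subsection~\ref{proof} lies.
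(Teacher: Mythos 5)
The formal reductions at the start of your proposal are all valid: closure of the class $\mathcal C=\{M : \tau_{E_i,M}\text{ injective}\}$ under grading shifts, direct sums, subobjects, extensions, and (via Proposition~\ref{tauprod} and biexactness of the product) convolution products is exactly right, and these observations do appear, in a somewhat different packaging, as Propositions~\ref{exactweak} and~\ref{derivationweak} in the paper. However, the reduction you draw from them — to cyclic modules, followed by an induction on height with a triangularity analysis — has a genuine gap, in two places.

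First, ``it is enough to prove injectivity when $M$ is cyclic'' is not a useful reduction here. A cyclic $M$ is an arbitrary quotient of $H^i_\beta 1_\nu$, and $\mathcal C$ is \emph{not} closed under quotients (this is essentially the whole difficulty); so you are left with the full problem. The paper instead cuts down to a much smaller class, the subquotients of $\mathrm{ad}_{E_i}^{(n)}(E_{j_r}\cdots E_{j_1})$, and proves separately (Theorem~\ref{genforhi}) that these generate $\mathcal H[i]$ as a Serre subcategory — a nontrivial generation result that uses the weak product compatibility (Proposition~\ref{derivationweak}) and is precisely where the closure-under-products observation is deployed. Second, the proposed induction on height is not set up: it is not said what smaller-height module one reduces to, nor how $\tau_{E_i,M}$ relates to $\tau_{E_i,M'}$ for such an $M'$, and there is no obvious way to ``strip a strand'' from a cyclic $H^i_\beta$-module and land back inside the inductive setup.

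The heart of the argument — the triangularity of $\tau_{E_i,M}$ with respect to the coset filtrations and, especially, the treatment of the case where a strand near the new $i$-strand is itself coloured $i$ — is exactly what you acknowledge is missing, and that is where I would push back on the plan rather than just its incompleteness. When such a colour coincidence occurs, the relevant factor $Q_{i,i}$ is identically zero, so the would-be diagonal entry of your triangular matrix can vanish outright; it is then unclear how sub-diagonal terms are to rescue injectivity, and ``invoking Proposition~\ref{annilpol} to see that the missing relations match'' is not an argument. The paper sidesteps this difficulty entirely: for $M=\mathrm{ad}_{E_i}^{(n)}(E_{j_r}\cdots E_{j_1})$ it constructs an explicit map $\tau_{M,E_i}\colon E_iM\to ME_i$ in the reverse direction using the single polynomial $Q_{i,\beta}(x_{n+r+1},x_1,\dots,x_{n+r})$, checks well-definedness via Proposition~\ref{annilpol}, and shows that $\tau_{M,E_i}\circ\tau_{E_i,M}$ is multiplication by a \emph{central} element $\Theta$ whose expansion in the new variable has a unit leading coefficient; injectivity then follows from the free polynomial structure, uniformly in the colour pattern, and centrality of $\Theta$ makes the argument pass to subquotients at no extra cost — which is precisely the closure under quotients that your abstract reductions cannot supply.
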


	In particular, this theorem gives a categorification of the relation $\mathrm{ad}_{e_i}(y) = e_iy - q_i^{\left< i^{\vee},\beta\right>}ye_i$ for $y \in U^+$ of degree $\beta \in Q^+$. It shows that the functor $\mathrm{ad}_{E_i}$ is indeed a catgeorical lift of the operator $\mathrm{ad}_{e_i}$. The proof of Theorem \ref{tauinj} is done in Subsection \ref{proof}. For the rest of this subsection, we give some corollaries of the theorem.
	
	\smallskip
	
	\subsubsection{Exactness of $\mathrm{ad}_{E_i}$} Theorem \ref{tauinj} is the analogue of \cite[Theorem 4.7]{kk} for cyclotomic KLR algebras. It can be used formally in the same way to prove that the functor $\mathrm{ad}_{E_i}$ is exact.
	
	\begin{cor}\label{proj}
		Let $\beta \in Q^+$. The right $H_{\beta}^{i}$-module $H_{\beta+i}^{i} 1_{i,\beta}$ is projective.
	\end{cor}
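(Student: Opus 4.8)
The plan is to obtain projectivity from flatness together with finite presentation. By Proposition \ref{adind}, the restriction of $\mathrm{ad}_{E_i}$ to $H_\beta^i\mathrm{-mod}$ is the induction functor $M\mapsto H_{\beta+i}^i1_{i,\beta}\otimes_{H_\beta^i}M$, so it suffices to prove that this functor is exact and that $H_{\beta+i}^i1_{i,\beta}$ is finitely presented as a right $H_\beta^i$-module: a finitely presented flat module over an arbitrary ring is projective. (If $\beta$ is a non-negative multiple of $i$ then $H_{\beta+i}^i=0$ and there is nothing to prove, so I may assume $\beta$ has a component on some $j\neq i$.)

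For exactness I would run the same $3\times 3$-lemma argument as in \cite[Theorem~4.7]{kk}. Given a short exact sequence $0\to M'\to M\to M''\to 0$ in $H_\beta^i\mathrm{-mod}$, the exact functors $E_i(-)$ and $(-)E_i$ (the latter exact because $H_{\beta+i}1_{\beta,i}$ is free over $H_\beta$ by Theorem \ref{pbw}) produce two short exact sequences of $H_{\beta+i}$-modules, and Theorem \ref{tauinj} says that the natural transformation $\tau_{E_i,(-)}$ is a monomorphism on each of the three terms, with cokernel $\mathrm{ad}_{E_i}$. Naturality of $\tau_{E_i,(-)}$ assembles these into a commutative diagram with exact columns and exact top two rows, whence the bottom row $0\to\mathrm{ad}_{E_i}(M')\to\mathrm{ad}_{E_i}(M)\to\mathrm{ad}_{E_i}(M'')\to 0$ is exact by the nine lemma. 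In particular $H_{\beta+i}^i1_{i,\beta}$ is flat as a right $H_\beta^i$-module.

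The remaining point, finite presentation, is the genuinely new ingredient, since it is automatic in \cite{kk} where the cyclotomic algebras are finite-dimensional, whereas here $H_\beta^i$ is typically infinite-dimensional by Proposition \ref{infgen}. By the PBW theorem and \eqref{coset}, $H_{\beta+i}1_{i,\beta}$ is free of rank $\vert\beta\vert+1$ over $H_i\otimes H_\beta$, hence free over $H_\beta$ (acting by right inclusion) on the a priori infinite basis $\{\tau_{\underline\omega}x^a\}$, where $x$ denotes the variable carried by the $i$-strand. In the quotient $H_{\beta+i}^i$, Proposition \ref{annilpol} supplies, for each idempotent $1_\nu$ occurring in $1_{i,\beta}$, a relation $Q_{i,\nu}(x,x_1,\ldots)1_\nu=0$ in which $Q_{i,\nu}$ is monic of positive degree in $x$ (using that $\beta$ has a non-$i$ component); reducing the powers of $x$ modulo these relations leaves a finite generating set, and the same relations together with the finitely many coming from the $\tau_{\underline\omega}$ give a finite presentation. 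The main obstacle is precisely this last step: one must check that reducing the $x$-powers on the $i$-strand via Proposition \ref{annilpol} does not reintroduce infinitely many generators through the $\tau_{\underline\omega}$'s, and that the kernel of the resulting surjection from a finite free right $H_\beta^i$-module is itself finitely generated. Once finite presentation is in hand, flatness upgrades to projectivity and the corollary follows.
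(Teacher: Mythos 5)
Your route differs from the paper's: you aim for ``flat plus finitely presented implies projective,'' whereas the paper invokes \cite[Lemma 4.18]{kk} directly: it applies Theorem \ref{tauinj} to $M=H_\beta^i$ to get a short exact sequence of $(H_{\beta+i},H_i\otimes H_\beta^i)$-bimodules exhibiting $H_{\beta+i}^i1_{i,\beta}$ with projective dimension at most $1$ over $H_\beta^i[t]$, then feeds in the monic annihilating polynomial $Q_{i,\beta}(x_{n+1},x_1,\ldots,x_n)1_{i,\beta}$ (Propositions \ref{annilpol}, \ref{annilcenter}). Your flatness step is fine: the nine-lemma argument with the three natural short exact sequences coming from $(-)E_i$, $E_i(-)$, and the column $0\to q^\ast ME_i\to E_iM\to\mathrm{ad}_{E_i}(M)\to 0$ (Theorem \ref{tauinj} plus Proposition \ref{coker}) shows $\mathrm{ad}_{E_i}$ is exact, hence $H_{\beta+i}^i1_{i,\beta}$ is flat as a right $H_\beta^i$-module.

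The gap is the finite presentation, and you correctly flag it. The sketch you give does not establish it: over an arbitrary commutative base ring $K$ (the paper's standing assumption), $H_\beta^i$ need not be Noetherian, so finitely generated flat does not yield projective, and your ``reduce $x$-powers by $Q_{i,\nu}$'' argument at most suggests a finite generating set, with no control over the module of relations — the claim that ``the same relations together with the finitely many coming from the $\tau_{\underline\omega}$ give a finite presentation'' is asserted, not proved. To actually close this gap you would need the same two inputs the paper uses: (i) the length-$1$ free resolution of $H_{\beta+i}^i1_{i,\beta}$ over $H_\beta^i[t]$ coming from Theorem \ref{tauinj} applied to $H_\beta^i$, and (ii) a central monic annihilating polynomial $p(t)$ from Propositions \ref{annilpol} and \ref{annilcenter}. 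Given those, one can obtain a finite presentation over $H_\beta^i$ by reducing the resolution modulo $p$, but at that point one has essentially re-derived the Kang--Kashiwara lemma and the detour through flatness buys nothing. So the high-level strategy is salvageable, but as written the proof is incomplete at exactly the point that distinguishes this situation from the cyclotomic (finite-dimensional) case.
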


	\begin{proof}
		As in \cite{kk}, we use the following lemma.
		\begin{lem*}[{\cite[Lemma 4.18]{kk}}]
			Let $A$ be a $K$-algebra, and $N$ be an $A[t]$-module such that:
			\begin{enumerate}
				\item the projective dimension of $N$ as an $A[t]$-module is at most 1,
				\item there exists $p(t) \in Z(A)[t]$ with invertible leading coefficient such that $p(t)N=0$.
			\end{enumerate}
		Then $N$ is projective as an $A$-module.
		\end{lem*} 
		We use this lemma with $A=H_{\beta}^i$ and $N=H_{\beta+i}^i1_{i,\beta}$. By Theorem \ref{tauinj} applied to $M=H_{\beta}^i$, we have a short exact sequence of $(H_{\beta+i},H_i\otimes H_{\beta}^{i})$-bimodules:
		\[
			0 \rightarrow H_{\beta+i} 1_{\beta,i}\otimes_{H_{\beta}} H_{\beta}^{i} \rightarrow H_{\beta+i} 1_{i,\beta}\otimes_{H_{\beta}} H_{\beta}^{i} \rightarrow H_{\beta+i}^{i} 1_{i,\beta} \rightarrow 0.
		\]
		Note that the compatibility with the right $\big(H_{i} \otimes H_{\beta}^{i}\big)$-module structure comes from the naturality. Since $H_{i} \otimes H_{\beta}^i\simeq H_{\beta}^i[t]$, we can view this sequence as an exact sequence of right $H_{\beta}^i[t]$-modules.
		
		Since $H_{\beta+i} 1_{\beta,i}$ is free as a right $H_{\beta,i}$-module, $H_{\beta+\alpha_i} 1_{\beta,i}\otimes_{H_{\beta}} H_{\beta}^{i}$ is free as a right $H_{\beta}^{i}[t]$-module. Similarly, $H_{\beta+i} 1_{i,\beta}\otimes_{H_{\beta}} H_{\beta}^{i}$ is free as a right $H_{\beta}^{i}[t]$-module. Hence, $H_{\beta+i}^{i} 1_{i,\beta}$ has projective dimension at most 1 as a right $H_{\beta}^{i}[t]$-module. Let
		\[
			p(x_{n+1}) =  Q_{i,\beta}(x_{n+1},x_1,\ldots,x_n) 1_{i,\beta} = \sum_{\nu \in I^{\beta}} \Bigg( \prod_{\substack{1\leqslant k \leqslant n \\ \nu_k \neq i}} Q_{i,\nu_k}(x_{n+1},x_k) \Bigg) 1_{i,\nu} \in P_{\beta+i}.
		\]
		When considered as an element in $P_{\beta}[x_{n+1}]$, $p(x_{n+1})$ has coefficients in $Z(H_{\beta})$ by Proposition \ref{annilcenter}. Furthermore, $H_{\beta+i}^ip(x_{n+1})=0$ by Proposition \ref{annilpol}. Hence the lemma applies and the result follows.
	\end{proof}
	
	\begin{cor}\label{exact}
		The functor $\mathrm{ad}_{E_i}$ is exact.
	\end{cor}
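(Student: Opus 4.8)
The plan is to deduce the exactness of $\mathrm{ad}_{E_i}$ immediately from Proposition \ref{adind} together with Corollary \ref{proj}, since all the real work has already been carried out in the proof of the latter. Recall from the discussion preceding Proposition \ref{adei} that $\mathrm{ad}_{E_i}$ is the composite of the (exact) left $i$-induction functor with the right adjoint $\pi_i$, hence it is automatically right exact; it therefore remains only to prove left exactness.

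First I would invoke Proposition \ref{adind}, which identifies $\mathrm{ad}_{E_i}$ restricted to $H_\beta^i\mathrm{-mod}$ with the induction functor $M \mapsto H_{\beta+i}^i 1_{i,\beta}\otimes_{H_\beta^i} M$ given by tensoring with the $(H_{\beta+i}^i,H_\beta^i)$-bimodule $H_{\beta+i}^i 1_{i,\beta}$. By Corollary \ref{proj}, this bimodule is projective as a right $H_\beta^i$-module, hence in particular flat. Tensoring on the left with a bimodule that is flat on the right is an exact functor, so $M \mapsto H_{\beta+i}^i 1_{i,\beta}\otimes_{H_\beta^i} M$ is exact on $H_\beta^i\mathrm{-mod}$. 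Taking the direct sum over $\beta \in Q^+$, we conclude that $\mathrm{ad}_{E_i}$ is exact on all of $\mathcal H[i]$.

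There is essentially no obstacle here: the content is entirely packaged in Corollary \ref{proj}, whose proof (via the Kang--Kashiwara Lemma 4.18 applied with $A = H_\beta^i$ and the short exact sequence of bimodules coming from Theorem \ref{tauinj}) already establishes the needed projectivity. One could alternatively argue directly from Theorem \ref{tauinj}: given a short exact sequence $0\to M'\to M\to M''\to 0$ in $\mathcal H[i]$, apply the natural short exact sequences of Theorem \ref{tauinj} to each term, use the exactness of $(-)E_i$ and $E_i(-)$, and chase the resulting $3\times 3$ diagram with the snake lemma to get left exactness of $\mathrm{ad}_{E_i}$ from the injectivity of the vertical maps $\tau_{E_i,\bullet}$. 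But the flatness argument via Proposition \ref{adind} and Corollary \ref{proj} is the shortest route, so that is the one I would write down.
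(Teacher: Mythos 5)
Your proof takes exactly the same route as the paper: Proposition \ref{adind} identifies $\mathrm{ad}_{E_i}$ with tensoring against the bimodule $H_{\beta+i}^i 1_{i,\beta}$, and Corollary \ref{proj} gives projectivity (hence flatness) as a right $H_\beta^i$-module, from which exactness follows. The paper states this as a one-line deduction; you have correctly filled in the details, and the snake-lemma alternative you sketch is essentially the content of Proposition \ref{exactweak}.
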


	\begin{proof}
		This follows immediately from Proposition \ref{adind} and Corollary \ref{proj}.
	\end{proof}

	\smallskip

	\subsubsection{Compatibility with products} We now state and prove the compatibility between the monoidal structure of $\mathcal H[i]$ and the action of the functor $\mathrm{ad}_{E_i}$. We can think of the following result as expressing the fact that $\mathrm{ad}_{E_i}$ is a ``categorical derivation". This categorifies the compatibility of the adjoint action of a Hopf algebra with its product.

	\begin{cor}\label{derivation}
		For $M \in H_{\beta}^i\mathrm{-mod}$ and $N\in H_{\gamma}^i\mathrm{-mod}$ with $\beta,\gamma \in Q^+$, there is a short exact sequence which is natural in $M,N$
		\[
			0 \rightarrow q_i^{\left< i^{\vee},\beta\right>}M \ \mathrm{ad}_{E_i}(N) \rightarrow \mathrm{ad}_{E_i}(MN) \rightarrow \mathrm{ad}_{E_i}(M)N \rightarrow 0.
		\]
	\end{cor}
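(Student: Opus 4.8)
The plan is to realize the asserted sequence as the cokernel sequence of a composition of two monomorphisms, exploiting the factorization of $\tau_{E_i,MN}$ from Proposition \ref{tauprod} and the biexactness of the monoidal product. I would first record three instances of Theorem \ref{tauinj}. Applying the exact functor $N'\mapsto MN'$ (biexactness of the product) to the short exact sequence for $N$ gives
\[
0 \rightarrow q_i^{\left<i^{\vee},\gamma\right>}MNE_i \xrightarrow{M\tau_{E_i,N}} ME_iN \rightarrow M\,\mathrm{ad}_{E_i}(N) \rightarrow 0,
\]
applying the exact functor $M'\mapsto M'N$ to the short exact sequence for $M$ gives
\[
0 \rightarrow q_i^{\left<i^{\vee},\beta\right>}ME_iN \xrightarrow{\tau_{E_i,M}N} E_iMN \rightarrow \mathrm{ad}_{E_i}(M)N \rightarrow 0,
\]
and Theorem \ref{tauinj} applied to the $H_{\beta+\gamma}^i$-module $MN$ gives
\[
0 \rightarrow q_i^{\left<i^{\vee},\beta+\gamma\right>}MNE_i \xrightarrow{\tau_{E_i,MN}} E_iMN \rightarrow \mathrm{ad}_{E_i}(MN) \rightarrow 0.
\]
Here associativity of the product makes $MNE_i$, $ME_iN$, $E_iMN$ unambiguous, and exactness of $M(-)$ ensures the cokernel in the first sequence is indeed $M\,\mathrm{ad}_{E_i}(N)$.

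Next I would use Proposition \ref{tauprod}, which says $\tau_{E_i,MN}=(\tau_{E_i,M}N)\circ(M\tau_{E_i,N})$ as a composition of morphisms $MNE_i \rightarrow ME_iN \rightarrow E_iMN$; both factors are monomorphisms, being the images of the monomorphisms $\tau_{E_i,N}$, $\tau_{E_i,M}$ of Theorem \ref{tauinj} under the exact functors above. Shifting the first of the three sequences by $q_i^{\left<i^{\vee},\beta\right>}$ and writing $A=q_i^{\left<i^{\vee},\beta+\gamma\right>}MNE_i$, $B=q_i^{\left<i^{\vee},\beta\right>}ME_iN$, $C=E_iMN$, the maps $f=M\tau_{E_i,N}\colon A\to B$ and $g=\tau_{E_i,M}N\colon B\to C$ become degree-preserving monomorphisms with $g\circ f=\tau_{E_i,MN}$, and with cokernels $\mathrm{coker}(f)=q_i^{\left<i^{\vee},\beta\right>}M\,\mathrm{ad}_{E_i}(N)$, $\mathrm{coker}(g)=\mathrm{ad}_{E_i}(M)N$, $\mathrm{coker}(g\circ f)=\mathrm{ad}_{E_i}(MN)$.

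Finally I would invoke the elementary fact that a composition of monomorphisms $A\xrightarrow{f}B\xrightarrow{g}C$ fits into a short exact sequence $0\to\mathrm{coker}(f)\to\mathrm{coker}(g\circ f)\to\mathrm{coker}(g)\to 0$: the monomorphism $g$ identifies $B/f(A)$ with $g(B)/gf(A)$, and then $0\to g(B)/gf(A)\to C/gf(A)\to C/g(B)\to 0$ is exact. This yields precisely
\[
0 \rightarrow q_i^{\left<i^{\vee},\beta\right>}M\,\mathrm{ad}_{E_i}(N) \rightarrow \mathrm{ad}_{E_i}(MN) \rightarrow \mathrm{ad}_{E_i}(M)N \rightarrow 0.
\]
Naturality in $M$ and $N$ is inherited at each stage: the sequences of Theorem \ref{tauinj} are natural, the exact functors $M(-)$ and $(-)N$ preserve naturality, the factorization of Proposition \ref{tauprod} is natural in both arguments, and the cokernel sequence above is functorial in $f$ and $g$. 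The only genuinely delicate point is the grading bookkeeping, namely checking that the shift $q_i^{\left<i^{\vee},\beta\right>}$ lands on the correct summand and that all three maps are honestly degree $0$ after shifting; this reduces to $\tau_{E_i,M}$ having degree $-i\cdot\beta=-d_i\left<i^{\vee},\beta\right>$ (Proposition \ref{welldef}) together with the additivity $\left<i^{\vee},\beta+\gamma\right>=\left<i^{\vee},\beta\right>+\left<i^{\vee},\gamma\right>$.
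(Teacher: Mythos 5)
Your proof is correct and follows essentially the same route as the paper: factor $\tau_{E_i,MN}$ via Proposition \ref{tauprod}, invoke the injectivity from Theorem \ref{tauinj}, and apply the elementary cokernel-of-a-composition exact sequence. The paper states the elementary fact slightly more generally (only $f_2$ needs to be injective for the left map to be mono), but in this application both factors are monomorphisms anyway, so the two arguments coincide.
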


	\begin{proof}
		This a consequence of the following elementary statement: given arrows $f=f_2\circ f_1$ in an abelian category, we have an exact sequence
		\[
			\mathrm{coker}(f_1) \rightarrow \mathrm{coker}(f) \rightarrow \mathrm{coker}(f_2) \rightarrow 0.
		\]
		The first map is induced by $f_2$, the second map is the canonical quotient map. Furthermore, if $f_2$ is injective, the map $\mathrm{coker}(f_1) \rightarrow \mathrm{coker}(f)$ is injective as well. By Proposition \ref{tauprod} and Theorem \ref{tauinj}, the statement applies to $f=\tau_{E_i,MN}$, $f_1 = M\tau_{E_i,N}$ and $f_2=\tau_{E_i,M}N$, yielding the result.
	\end{proof}

	\begin{rem}
		In general, the short exact sequence in Corollary \ref{derivation} does not split. Let us give an example. Let $j\in I \setminus \lbrace i \rbrace$ and $c=-c_{i,j}$. Assume that $c>0$. Let $S_j$ be the simple $H_j$-module equal to $K$ as a graded $K$-module, with $x_1$ acting by 0. We have
		\[
			\mathrm{ad}_{E_i}(S_j) = K[x_2]/x_2^c 1_{ij},
		\]
		where $x_2$ acts by multiplication and $x_1,\tau_1$ and $\tau_2$ act by zero. In particular, the element $x_2^c1_{ij}$ of $H_{i,j}$ acts by zero. From this we deduce that the central element
		\[
			\chi = x_1^c1_{jji} + x_2^c1_{jij} + x_3^c1_{ijj} \in H_{i+2j}
		\]
		acts by zero on both $S_j\mathrm{ad}_{E_i}(S_j)$ and $\mathrm{ad}_{E_i}(S_j)S_j$. We can also compute $\mathrm{ad}_{E_i}(S_j^2)$, where $S_j^2 = S_j S_j$, and find that it decomposes as a $P_{2j+i}$-module as
		\[
			\mathrm{ad}_{E_i}(S_j^2) = 1_{i,2j}\left( \left(K[x_3]/x_3^{2c}\right) \otimes S_j^2\right) \oplus \left( \tau_21_{i,2j}\otimes_{H_{i,2j}} \left(\left(K[x_3]/x_3^{c}\right) \otimes S_j^2 \right)\right).
		\]
		Since $c>0$ we have $2c>c$, so $\chi$ does not act by zero on the first summand. Hence the short exact sequence
		\[
			0 \rightarrow q_i^{-c}S_j \ \mathrm{ad}_{E_i}(S_j) \rightarrow \mathrm{ad}_{E_i}(S_j^2) \rightarrow \mathrm{ad}_{E_i}(S_j)S_j \rightarrow 0
		\]
		does not split.
	\end{rem}

	More generally, Corollary \ref{derivation} can be applied repeatedly to construct an interesting filtration of $\mathrm{ad}_{E_i}^{n}(MN)$. To describe it, let us set up some notation. Given an integer $k \geqslant 0$, and its binary expansion
	\[
		k = \sum_{\ell \geqslant 0} k_{\ell}2^{\ell}
	\]
	where $k_{\ell} \in \lbrace 0,1\rbrace$ are almost all 0, we define its \textit{binary weight} $\zeta(k)$ as the number of $k_{\ell}$ that are equal to 1. There are inductive relations for the binary weight function: for all $k \geqslant 0$ we have
	\begin{equation}\label{reczeta}
		\left \{ \begin{array}{l}
			\zeta(2k) = \zeta(k), \\
			\zeta(2k+1) = \zeta(k)+1.
		\end{array} \right.
	\end{equation}
	We also define a function $\sigma$ by
	\[
		\sigma(k) = \sum_{\ell\geqslant 0} \ell k_{\ell} - \frac{\zeta(k)(\zeta(k)-1)}{2}.
	\]
	There are simple recursive relations for the function $\sigma$: for all $k \geqslant 0$ we have
	\begin{equation}\label{recsigma}
	\left \{ \begin{array}{l}
		\sigma(2k) = \sigma(k) + \zeta(k), \\
		\sigma(2k+1) = \sigma(k).
	\end{array} \right.
	\end{equation}	
	With these notations we can now state and prove the result.
	
	\begin{prop}\label{nderivation}
		Let $M \in H_{\beta}^i\mathrm{-mod}$, $N\in H_{\gamma}^i\mathrm{-mod}$ with $\beta,\gamma \in Q^+$ and $n \geqslant 0$. Then there is a filtration of $\mathrm{ad}_{E_i}^n(MN)$
		\[
			0 = V_{-1} \subseteq V_0 \subseteq \cdots \subseteq V_{2^n-1} = \mathrm{ad}_{E_i}^n(MN),
		\]
		with quotients given by
		\[
			V_k / V_{k-1} \simeq q_i^{(n-\zeta(k))\left< i^{\vee},\beta\right> + 2\sigma(k)}\mathrm{ad}_{E_i}^{\zeta(k)}(M)\mathrm{ad}_{E_i}^{n-\zeta(k)}(N).
		\]
	\end{prop}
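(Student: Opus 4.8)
The plan is to argue by induction on $n$, transporting the level-$n$ filtration through the exact functor $\mathrm{ad}_{E_i}$ (Corollary \ref{exact}) and then refining each of its slots into two pieces by means of the short exact sequence of Corollary \ref{derivation}. For $n=0$ one takes $V_{-1}=0\subseteq V_0=MN$; since $\zeta(0)=\sigma(0)=0$ the predicted quotient $V_0/V_{-1}$ is $MN$, which is correct.

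For the inductive step, suppose the filtration $0=V^{(n)}_{-1}\subseteq\cdots\subseteq V^{(n)}_{2^n-1}=\mathrm{ad}_{E_i}^n(MN)$ has been built with the stated quotients. Applying the exact functor $\mathrm{ad}_{E_i}$ yields a filtration $0=\mathrm{ad}_{E_i}(V^{(n)}_{-1})\subseteq\cdots\subseteq\mathrm{ad}_{E_i}(V^{(n)}_{2^n-1})=\mathrm{ad}_{E_i}^{n+1}(MN)$ whose $k$-th subquotient is $\mathrm{ad}_{E_i}$ applied to the $k$-th subquotient of $V^{(n)}_\bullet$, that is, a grading shift by $q_i^{(n-\zeta(k))\left<i^{\vee},\beta\right>+2\sigma(k)}$ of $\mathrm{ad}_{E_i}\!\big(\mathrm{ad}_{E_i}^{\zeta(k)}(M)\,\mathrm{ad}_{E_i}^{n-\zeta(k)}(N)\big)$. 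Since $\mathrm{ad}_{E_i}^{\zeta(k)}(M)$ lies in $H^i_{\beta+\zeta(k)i}\mathrm{-mod}$, Corollary \ref{derivation} supplies a short exact sequence
\[
0\to q_i^{\left<i^{\vee},\beta\right>+2\zeta(k)}\,\mathrm{ad}_{E_i}^{\zeta(k)}(M)\,\mathrm{ad}_{E_i}^{n+1-\zeta(k)}(N)\to\mathrm{ad}_{E_i}\!\big(\mathrm{ad}_{E_i}^{\zeta(k)}(M)\,\mathrm{ad}_{E_i}^{n-\zeta(k)}(N)\big)\to\mathrm{ad}_{E_i}^{\zeta(k)+1}(M)\,\mathrm{ad}_{E_i}^{n-\zeta(k)}(N)\to 0,
\]
where I have used $\left<i^{\vee},\beta+\zeta(k)i\right>=\left<i^{\vee},\beta\right>+2\zeta(k)$. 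Pulling the submodule back along the surjection $\mathrm{ad}_{E_i}(V^{(n)}_k)\twoheadrightarrow\mathrm{ad}_{E_i}(V^{(n)}_k/V^{(n)}_{k-1})$ inserts one module $W_k$ with $\mathrm{ad}_{E_i}(V^{(n)}_{k-1})\subseteq W_k\subseteq\mathrm{ad}_{E_i}(V^{(n)}_k)$ into each slot, doubling the length of the filtration. Setting $V^{(n+1)}_{-1}=0$, $V^{(n+1)}_{2k}=W_k$ and $V^{(n+1)}_{2k+1}=\mathrm{ad}_{E_i}(V^{(n)}_k)$, the new slot $2k$ carries the shifted submodule and the new slot $2k+1$ the shifted quotient of the displayed sequence.

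It remains to identify the resulting shifts with those asserted at level $n+1$. Multiplying the slot-$k$ shift $q_i^{(n-\zeta(k))\left<i^{\vee},\beta\right>+2\sigma(k)}$ by the shifts in the short exact sequence, new slot $2k$ acquires the exponent $(n-\zeta(k))\left<i^{\vee},\beta\right>+2\sigma(k)+\left<i^{\vee},\beta\right>+2\zeta(k)$ together with the module $\mathrm{ad}_{E_i}^{\zeta(k)}(M)\,\mathrm{ad}_{E_i}^{n+1-\zeta(k)}(N)$, while new slot $2k+1$ acquires the exponent $(n-\zeta(k))\left<i^{\vee},\beta\right>+2\sigma(k)$ together with $\mathrm{ad}_{E_i}^{\zeta(k)+1}(M)\,\mathrm{ad}_{E_i}^{n-\zeta(k)}(N)$. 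Substituting the recursions $\zeta(2k)=\zeta(k)$, $\sigma(2k)=\sigma(k)+\zeta(k)$, $\zeta(2k+1)=\zeta(k)+1$, $\sigma(2k+1)=\sigma(k)$ from (\ref{reczeta}) and (\ref{recsigma}) turns both lines into exactly the stated description of $V^{(n+1)}_{k'}/V^{(n+1)}_{k'-1}$ for $k'\in\{2k,2k+1\}$; as $k$ ranges over $0,\dots,2^n-1$ this covers all indices $0,\dots,2^{n+1}-1$, which closes the induction.

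We do not expect a genuine obstacle here: the homological input (exactness of $\mathrm{ad}_{E_i}$ and the two-term filtration of $\mathrm{ad}_{E_i}(MN)$ from Corollary \ref{derivation}, together with the injectivity built into it) is already available, so the real content is the combinatorial bookkeeping that matches the twofold refinement of the filtration against the binary expansion of the index — which is precisely what the recursive identities for $\zeta$ and $\sigma$ are designed to encode. The one point meriting care is to fix the orientation of each refinement consistently — the submodule below, the quotient above — so that the submodule (which keeps the power of $\mathrm{ad}_{E_i}$ on $M$ fixed and raises the power on $N$) lands at the even index $2k$ and the quotient (which raises the power on $M$) at the subsequent odd index $2k+1$.
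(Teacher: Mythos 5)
Your proof is correct and follows essentially the same path as the paper's: induct on $n$, apply the exact functor $\mathrm{ad}_{E_i}$ to the level-$n$ filtration, refine each slot via the short exact sequence of Corollary \ref{derivation}, and identify the resulting shifts using the recursions (\ref{reczeta}) and (\ref{recsigma}) for $\zeta$ and $\sigma$.
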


	\begin{proof}
		We proceed inductively on $n$. The case $n=0$ is clear. Assume that for some $n$ we have a filtration of $\mathrm{ad}_{E_i}^n(MN)$
		\[
			0 = V_{-1} \subseteq V_0 \subseteq \cdots \subseteq V_{2^n-1} = \mathrm{ad}_{E_i}^n(MN),
		\]
		with quotients given by
		\[
			V_k / V_{k-1} \simeq q_i^{(n-\zeta(k))\left< i^{\vee},\beta\right> + 2\sigma(k)} \mathrm{ad}_{E_i}^{\zeta(k)}(M)\mathrm{ad}_{E_i}^{n-\zeta(k)}(N).
		\]
		Since $\mathrm{ad}_{E_i}$ is exact by Corollary \ref{exact}, we can apply $\mathrm{ad}_{E_i}$ to the filtration above to get a filtration of $\mathrm{ad}_{E_i}^{n+1}(MN)$
		\[
			0 = \mathrm{ad}_{E_i}(V_{-1}) \subseteq \mathrm{ad}_{E_i}(V_0) \subseteq \cdots \subseteq \mathrm{ad}_{E_i}(V_{2^n-1}) = \mathrm{ad}_{E_i}^{n+1}(MN),
		\]
		with quotients given by
		\[
			\mathrm{ad}_{E_i}(V_k)/ \mathrm{ad}_{E_i}(V_{k-1}) \overset{\sim}{\underset{p_k}\rightarrow} q_i^{(n-\zeta(k))\left< i^{\vee},\beta\right> + 2\sigma(k)} \mathrm{ad}_{E_i}\left(\mathrm{ad}_{E_i}^{\zeta(k)}(M)\mathrm{ad}_{E_i}^{n-\zeta(k)}(N)\right).
		\]
		By Corollary \ref{derivation}, there are short exact sequences
		\[
			0 \rightarrow  q_i^{\left< i^{\vee},\beta\right> + 2\zeta(k)}\mathrm{ad}_{E_i}^{\zeta(k)}(M)\mathrm{ad}_{E_i}^{n+1-\zeta(k)}(N) \xrightarrow{f_k} \mathrm{ad}_{E_i}\left(\mathrm{ad}_{E_i}^{\zeta(k)}(M)\mathrm{ad}_{E_i}^{n-\zeta(k)}(N)\right) \rightarrow \mathrm{ad}_{E_i}^{\zeta(k)+1}(M)\mathrm{ad}_{E_i}^{\zeta(k)}(N) \rightarrow 0.
		\]
		We can use them to refine the filtration of $\mathrm{ad}_{E_i}^{n+1}(MN)$. More precisely, for $k \in \lbrace-1,\ldots 2^{n+1}-1\rbrace$ let
		\[
			W_k = \left \{ \begin{array}{ll}
				\mathrm{ad}_{E_i}(V_{(k-1)/2}) & \text{ if } k \text{ is odd}, \\
				p_{k/2}^{-1}(\mathrm{im}(f_{k/2})) & \text{ if } k \text{ is even}.
			\end{array} \right.
		\]
		We get a filtration of $\mathrm{ad}_{E_i}^{n+1}(MN)$ of the form
		\[
			0 = W_{-1} \subseteq W_0 \subseteq \cdots \subseteq W_{2^{n+1}-1} = \mathrm{ad}_{E_i}^{n+1}(XY),
		\]
		with quotients
		\[
			W_k/W_{k-1} \simeq \left \{ \begin{array}{ll}
				q_i^{(n-\zeta((k-1)/2))\left< i^{\vee},\beta\right> + 2\sigma((k-1)/2)}\mathrm{ad}_{E_i}^{\zeta((k-1)/2)+1}(M)\mathrm{ad}_{E_i}^{n-\zeta((k-1)/2)}(N) & \text{ if } k \text{ is odd}, \\ \\
				q_i^{(n-\zeta(k/2))\left< i^{\vee},\beta\right> + 2\sigma(k/2) +\left<i^{\vee},\beta\right> + 2\zeta(k/2)} \mathrm{ad}_{E_i}^{\zeta(k/2)}(M)\mathrm{ad}_{E_i}^{n+1-\zeta(k/2)}(N) & \text{ if } k \text{ is even}.
			\end{array} \right.
		\]
		The conclusion follows from the recursive relations (\ref{reczeta}) for $\zeta$ and (\ref{recsigma}) for $\sigma$.
	\end{proof}	

 	\smallskip
 		
	\subsubsection{A vanishing criterion} We determine for which $\beta \in Q^+$ the algebra $H_{\beta}^i$ is zero, extending the results of Subsection \ref{chevgen}. Let us introduce some notation first. For $\beta \in Q^+$ and $\nu \in I^{\beta}$, we let
	\[
		E_{\nu} = E_{\nu_{\vert \beta \vert}}\ldots E_{\nu_1} = H_{\beta}1_{\nu}. 
	\]
	Note that $H_{\beta}^i1_{\nu}=\pi_i(E_{\nu})$.
	
	\begin{prop}\label{vanish}
		Let $\beta \in Q^+$. The algebra $H_{\beta}^i$ is zero if and only if $s_i(\beta) \notin Q^+$.
	\end{prop}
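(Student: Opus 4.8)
The plan is to treat the two implications separately, the forward one being very short. If $s_i(\beta) \in Q^+$ then $H_\beta^i \neq 0$: for $\beta = 0$ this is clear, since $\beta - i = -i \notin Q^+$ forces $1_{\beta-i,i} = 0$ and hence $H_0^i = H_0 = K$; for $\beta \neq 0$ it is exactly Proposition \ref{infgen}, which shows $H_\beta^i$ is not even finitely generated over $K$. So the real content is the converse: assuming $s_i(\beta) \notin Q^+$, I must prove $H_\beta^i = 0$.

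Since $H_\beta^i = \bigoplus_{\nu \in I^\beta} H_\beta^i 1_\nu$ and $H_\beta^i 1_\nu = \pi_i(E_\nu)$ with $E_\nu = E_{\nu_{|\beta|}}\cdots E_{\nu_1}$, it suffices to show $\pi_i(E_\nu) = 0$ for every $\nu \in I^\beta$. I will prove the following more flexible statement by induction on the height $|\delta|$: for all $\delta \in Q^+$ with $s_i(\delta) \notin Q^+$ and all $\mu \in I^\delta$, one has $\pi_i(E_\mu) = 0$. Writing $\delta = \gamma + n i$ with $\gamma$ supported away from $i$, the hypothesis $s_i(\delta) \notin Q^+$ is equivalent to $n > -\langle i^\vee, \gamma\rangle$. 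The case where $\mu$ consists only of letters $i$ is immediate: then $\delta = ni$ with $n \geqslant 1$, $1_{(n-1)i,i}$ is the unit of $H_{ni}$, so $H_{ni}^i = 0$; equivalently $\pi_i(E_i) = 0$ and $\mathrm{ad}_{E_i}(0) = 0$ give $\pi_i(E_i^n) \simeq \mathrm{ad}_{E_i}^{n-1}(\pi_i(E_i)) = 0$.

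For the inductive step, suppose $\mu$ has a letter $\neq i$, and write $\mu = (i^a, j, \mu'')$ where $j \in I\setminus\{i\}$ is the leftmost non-$i$ letter (so $a \geqslant 0$). By Lemma \ref{piiE}, which yields $\pi_i(E_j X) \simeq E_j\pi_i(X)$ and $\pi_i(E_i X) \simeq \mathrm{ad}_{E_i}(\pi_i(X))$, one gets $\pi_i(E_\mu) \simeq \mathrm{ad}_{E_i}^a\big(E_j\,\pi_i(E_{\mu''})\big)$. Applying Proposition \ref{nderivation} with $M = E_j$ and $N = \pi_i(E_{\mu''})$ produces a finite filtration of $\pi_i(E_\mu)$ whose subquotients are grading shifts of the modules $\mathrm{ad}_{E_i}^b(E_j)\cdot\mathrm{ad}_{E_i}^{a-b}\big(\pi_i(E_{\mu''})\big)$ for $0 \leqslant b \leqslant a$, so it is enough to see each of these vanishes. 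If $b > -c_{i,j}$, the first factor is zero by the vanishing criterion (\ref{eqvanish}). If $b \leqslant -c_{i,j}$, then $\mathrm{ad}_{E_i}^{a-b}\big(\pi_i(E_{\mu''})\big) \simeq \pi_i\big(E_{(i^{a-b},\mu'')}\big)$ by Proposition \ref{adei} and Lemma \ref{piiE}; the sequence $(i^{a-b},\mu'')$ has $b+1$ fewer strands than $\mu$, and a direct weight computation—using $b \leqslant -c_{i,j}$ and the standing inequality $n > -\langle i^\vee, \gamma\rangle$—shows that $s_i$ of its weight still lies outside $Q^+$, so this factor vanishes by the induction hypothesis. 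In either case the subquotient is zero, hence $\pi_i(E_\mu) = 0$, completing the induction.

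The heart of the argument is the inductive step: one must slide the leftmost non-$i$ strand all the way to the left so that it can be peeled off via the ``$q$-Leibniz'' filtration of Proposition \ref{nderivation}, and then verify that each resulting configuration $(i^{a-b},\mu'')$ still violates $s_i(\cdot)\in Q^+$ while having strictly fewer strands, which is what makes the induction well-founded and terminating. The remaining ingredients---exactness of $\mathrm{ad}_{E_i}$ (Corollary \ref{exact}, built into Proposition \ref{nderivation}) and the elementary equivalence $\mathrm{ad}_{E_i}^n(E_j) = 0 \iff n > -c_{i,j}$---are used as black boxes.
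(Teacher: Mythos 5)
Your proof is correct, and it uses the same forward direction (Proposition \ref{infgen}) and the same core toolkit for the converse (Lemma \ref{piiE} and Proposition \ref{adei} to convert idempotent truncations into nested $\mathrm{ad}_{E_i}$ and left multiplications by $E_j$, then Proposition \ref{nderivation} to filter, then the elementary vanishing $\mathrm{ad}_{E_i}^m(E_j)=0$ for $m>-c_{i,j}$). The difference is organizational: the paper flattens the whole nested expression at once, applying \ref{nderivation} repeatedly to reach a filtration with subquotients $\mathrm{ad}_{E_i}^{m_r}(E_{j_r})\cdots\mathrm{ad}_{E_i}^{m_1}(E_{j_1})$ subject to $\sum m_\ell=n>-\langle i^\vee,\gamma\rangle=\sum(-\langle i^\vee,j_\ell\rangle)$, and then closes with a pigeonhole: some $m_\ell>-c_{i,j_\ell}$. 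You instead run a single induction on the height $|\delta|$, peeling off only the leftmost non-$i$ letter, applying \ref{nderivation} once, and splitting on whether the exponent $b$ landing on $E_j$ is already large enough; when it is not, you check explicitly (the computation $n-b\geqslant n+c_{i,j}>-\langle i^\vee,\gamma''\rangle$) that the shorter sequence $(i^{a-b},\mu'')$ still has $s_i$ of its weight outside $Q^+$, so the induction hypothesis applies. Your version trades the global pigeonhole for local weight bookkeeping and a well-founded recursion; it is somewhat more explicit and self-contained, at the cost of a small case analysis and a bound that has to be verified at each step. Both arguments are valid and of comparable length. One minor presentational note: the case $a=0$ (no leading $i$ strands) is legal in your induction and works fine, since then \ref{nderivation} contributes only $b=0\leqslant -c_{i,j}$ and the bound still holds — it is worth stating explicitly that the recursion strictly decreases height here because the strand $j$ itself is removed.
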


	\begin{proof}
		We proved in Proposition \ref{infgen} that if $s_i(\beta) \in Q^+$, then $H_{\beta}^i$ is not finitely generated as a $K$-module  (in particular non zero).
		
		Conversely, assume $s_i(\beta) \notin Q^+$. Write $\beta$ in the form $\beta = \gamma +ni$, with $\gamma \in \bigoplus_{j\neq i} \mathbb{Z}_{\geqslant 0}j$ and $n \geqslant 0$. Since $s_i(\beta) \notin Q^+$, we have $n > -\left<i^{\vee},\gamma \right>$. Consider an idempotent of $H_{\beta}^i$ of the form $1_{n_1i,\nu_1,\ldots,n_ki,\nu_k}$ with $n_1+\ldots+n_k=n$, and $\nu_{1}\ldots\nu_k \in I^{\gamma}$. Then
		\begin{align*}
			H_{\beta}^i1_{n_1i,\nu_1,\ldots,n_ki,\nu_k} &= \pi_i(E_i^{n_1}E_{\nu_1}\ldots E_i^{n_k}E_{\nu_k}) \\
			&\simeq \pi_i\left(E_i^{n_1} \pi_i(E_{\nu_1}\ldots E_i^{n_k} E_{\nu_k}) \right)
		\end{align*}
		by Proposition \ref{piiE}. Furthermore by Proposition \ref{adei}, we have isomorphisms
		\begin{align*}
			\pi_i\left(E_i^{n_1} \pi_i(E_{\nu_1}\ldots E_i^{n_k} E_{\nu_k})\right) &\simeq \mathrm{ad}_{E_i}^{n_1}\left(\pi_i\left(E_{\nu_1}\ldots E_i^{n_k} E_{\nu_{k}}\right)\right) \\
			 &\simeq \mathrm{ad}_{E_i}^{n_1}\left(E_{\nu_1}\pi_i\left(E_i^{n_2}E_{\nu_2}\ldots E_i^{n_{k}}E_{\nu_k}\right)\right)
		\end{align*}
		where the second isomorphism is obtained by applying Proposition \ref{piiE} to take the $E_{\nu_1}$ outside of the $\pi_i$. Hence there is an isomorphism
		\[
			H_{\beta}^i1_{n_1i,\nu_1,\ldots,n_ki,\nu_k} \simeq \mathrm{ad}_{E_i}^{n_1}\left(E_{\nu_1}\pi_i\left(E_i^{n_2}E_{\nu_2}\ldots E_i^{n_{k}}E_{\nu_k}\right)\right).
		\]
		This procedure can be repeated inductively and we obtain an isomorphism
		\[
		 H_{\beta}^i1_{n_1i,\nu_1,\ldots,n_ki,\nu_k} \simeq \mathrm{ad}_{E_i}^{n_1}\left(E_{\nu_1}\mathrm{ad}_{E_i}^{n_{2}}\left(E_{\nu_2}\ldots\mathrm{ad}_{E_i}^{n_k}(E_{\nu_k})\ldots \right)\right).
		\]
		Using Proposition \ref{nderivation} repeatedly, we see that $H_{\beta}^i1_{n_1i,\nu_1,\ldots,n_ki,\nu_k}$ has a filtration with quotients being shifts of modules of the form $\mathrm{ad}_{E_i}^{m_r}(E_{j_r}) \ldots \mathrm{ad}_{E_i}^{m_1}(E_{j_1})$ for some $m_1,\ldots,m_r$ such that $m_1 +\ldots+m_r = n$ and $(j_r,\ldots,j_1) \in I^{\gamma}$. We now prove that such quotients are zero. Indeed, for such a quotient, since
		\[
			m_1 + \ldots + m_r =n > -\left<i^{\vee},\gamma\right> = -\left<i^{\vee},j_1\right> - \ldots - -\left<i^{\vee},j_r\right>
		\]
		there exists an index $\ell$ such that $m_{\ell} > -\left<i^{\vee},j_{\ell}\right>$. But then by the vanishing criterion (\ref{eqvanish}), we have $\mathrm{ad}_{E_i}^{m_{\ell}}(E_{j_{\ell}})=0$, and $\mathrm{ad}_{E_i}^{m_r}(E_{j_r}) \ldots \mathrm{ad}_{E_i}^{m_1}(E_{j_1})=0$. Hence all the quotients in the filtration of $H_{\beta}^i1_{n_1i,\nu_1,\ldots,n_ki,\nu_k}$ are zero. So $H_{\beta}^i1_{n_1i,\nu_1,\ldots,n_ki,\nu_k}=0$. Since $1_{\beta}$ is the sum of all idempotents of the form $1_{n_1i,\nu_1,\ldots,n_ki,\nu_k}$, we conclude that $H_{\beta}^i=0$.		
	\end{proof}

	In particular, we deduce from Proposition \ref{vanish} that $H_{\beta+ni}^i$ is zero for $n$ large enough. An immediate consequence is the following corollary.

	\begin{cor}\label{nilp}
		The functor $\mathrm{ad}_{E_i}$ is locally nilpotent.
	\end{cor}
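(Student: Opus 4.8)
The plan is to deduce local nilpotence immediately from the vanishing criterion of Proposition \ref{vanish}. Recall that an endofunctor is locally nilpotent precisely when every object is annihilated by some power of it; so I must show that for each $M \in \mathcal H[i]$ there is an integer $n$ with $\mathrm{ad}_{E_i}^n(M) = 0$.

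First, since $\mathcal H[i] = \bigoplus_{\beta \in Q^+} H_{\beta}^i\mathrm{-mod}$, it suffices to treat $M \in H_{\beta}^i\mathrm{-mod}$ for a fixed $\beta \in Q^+$: a general object is a finite direct sum of such modules, and one takes the maximum of the bounds obtained for each summand. By Proposition \ref{adei} we have $\mathrm{ad}_{E_i}^n(M) \simeq \pi_i(E_i^nM)$, which is a module over $H_{\beta+ni}^i$. So it is enough to show that $H_{\beta+ni}^i = 0$ for $n$ large, as every module over the zero ring vanishes.

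Next, write $\beta = \gamma + mi$ with $\gamma \in \bigoplus_{j \neq i}\mathbb Z_{\geqslant 0}j$ and $m \geqslant 0$. Using $\left< i^{\vee},i\right> = 2$ one computes $s_i(\beta + ni) = \gamma - (\left< i^{\vee},\gamma\right> + m + n)i$, and since $\gamma$ has no component on $i$ this lies in $Q^+$ exactly when $n \leqslant -\left< i^{\vee},\gamma\right> - m$. Hence for any $n > -\left< i^{\vee},\gamma\right> - m$ we have $s_i(\beta + ni) \notin Q^+$, so $H_{\beta+ni}^i = 0$ by Proposition \ref{vanish}, and therefore $\mathrm{ad}_{E_i}^n(M) = 0$. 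This proves that $\mathrm{ad}_{E_i}$ is locally nilpotent.

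There is essentially no obstacle: all the real content sits in Proposition \ref{vanish}, and this corollary merely records that a fixed weight $\beta$ can be pushed only finitely far in the $i$-direction before leaving the cone of weights where $H^i$ is nonzero. The only minor point to verify carefully is that $\mathrm{ad}_{E_i}^n(M)$ genuinely lands in $H_{\beta+ni}^i\mathrm{-mod}$, which is immediate from the description of $\mathrm{ad}_{E_i}$ as $\pi_i$ composed with left $i$-induction together with the isomorphism of Proposition \ref{adei}.
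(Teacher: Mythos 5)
Your proof is correct and follows exactly the route the paper takes: deduce from Proposition \ref{vanish} that $H_{\beta+ni}^i = 0$ once $n > -\langle i^{\vee},\gamma\rangle - m$ (writing $\beta = \gamma + mi$), and use the identification $\mathrm{ad}_{E_i}^n(M) \simeq \pi_i(E_i^n M)$ from Proposition \ref{adei} to place $\mathrm{ad}_{E_i}^n(M)$ over that vanishing algebra. The paper leaves the explicit computation of $s_i(\beta+ni)$ implicit, but that is the only difference.
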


	Propositions \ref{infgen} and \ref{vanish} imply in particular that $H_{\beta}^i$ is either zero, or not finitely generated as a $K$-module. This contrasts with the case of cyclotomic KLR algebras, which are always finitely generated as $K$-modules.

	\medskip
	
	\subsection{Proof of the main theorem}\label{proof}
	\subsubsection{Preliminaries} We start by proving weaker versions of Corollary \ref{exact} and Proposition \ref{nderivation} that will be useful in the proof.
	
	\begin{prop}\label{exactweak}
		Let $M \in \mathcal H[i]$ together with a filtration
		\[
			0=M_{-1} \subseteq M_0 \subseteq \cdots \subseteq M_r=M.
		\]
		For all $k\in \lbrace 0,\ldots, r\rbrace$, put $V_k=M_k/M_{k-1}$ and assume that $\tau_{E_i,V_k}$ is injective. Then $\tau_{E_i,M}$ is injective, and we have a filtration of $\mathrm{ad}_{E_i}(M)$
		\[
			0=\mathrm{ad}_{E_i}(M_{-1}) \subseteq \mathrm{ad}_{E_i}(M_0) \subseteq \cdots \subseteq \mathrm{ad}_{E_i}(M_r) = \mathrm{ad}_{E_i}(M),
		\]
		such that for all $k\in \lbrace 0,\ldots, r\rbrace$, $\mathrm{ad}_{E_i}(M_k)/\mathrm{ad}_{E_i}(M_{k-1}) \simeq \mathrm{ad}_{E_i}(V_k)$.
	\end{prop}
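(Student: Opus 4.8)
The plan is to run an induction on the length $r$ of the filtration, with the base case $r=0$ being trivial (then $M=V_0$ and there is nothing to prove) and the case $r=1$ being the hypothesis itself. For the inductive step, consider the short exact sequence $0 \to M_{r-1} \to M \to V_r \to 0$ in $\mathcal H[i]$. Applying the functors $(-)E_i$ and $E_i(-)$, which are both exact, and using the naturality of $\tau_{E_i,(-)}$ (established in \ref{tau}), I obtain a commutative diagram with exact rows whose three vertical arrows are $\tau_{E_i,M_{r-1}}$, $\tau_{E_i,M}$ and $\tau_{E_i,V_r}$. The outer two are injective — the first by the induction hypothesis applied to the filtration $0 = M_{-1} \subseteq \cdots \subseteq M_{r-1}$, the last by assumption. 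A diagram chase (the snake lemma, or just the nine lemma) then gives that the middle map $\tau_{E_i,M}$ is injective as well.

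Once injectivity of all the $\tau_{E_i,M_k}$ is known, Proposition \ref{coker} identifies $\mathrm{ad}_{E_i}(M_k)$ with $\mathrm{coker}(\tau_{E_i,M_k})$, and I get short exact sequences
\[
0 \to q_i^{\langle i^{\vee},\beta_k\rangle} M_k E_i \to E_i M_k \to \mathrm{ad}_{E_i}(M_k) \to 0
\]
natural in $k$. Feeding the short exact sequence $0 \to M_{k-1} \to M_k \to V_k \to 0$ into the two exact functors $(-)E_i$ and $E_i(-)$ and using the snake lemma on the resulting diagram of short exact sequences, the connecting maps vanish precisely because $\tau_{E_i,V_k}$ is injective (so the relevant kernels are zero), and one reads off a short exact sequence $0 \to \mathrm{ad}_{E_i}(M_{k-1}) \to \mathrm{ad}_{E_i}(M_k) \to \mathrm{ad}_{E_i}(V_k) \to 0$. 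In particular $\mathrm{ad}_{E_i}$ sends the inclusions $M_{k-1} \hookrightarrow M_k$ to inclusions, yielding the asserted filtration of $\mathrm{ad}_{E_i}(M)$ with subquotients $\mathrm{ad}_{E_i}(V_k)$.

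The only genuinely delicate point is bookkeeping the grading shifts: the weight of $M_k$ is the same $\beta$ for all $k$ (since all subquotients live over the same $H_\beta^i$), so the shifts $q_i^{\langle i^\vee,\beta\rangle}$ are uniform and the naturality squares in the diagram are honestly commutative on the nose, not just up to shift. Apart from that, everything is formal homological algebra in the abelian category $\mathcal H[i]$ combined with the already-established facts that left $i$-induction and $i$-restriction are exact, that $\tau_{E_i,(-)}$ is natural, and that $\mathrm{ad}_{E_i} = \mathrm{coker}(\tau_{E_i,(-)})$. I do not expect any real obstacle here; this proposition is essentially a lemma extracting the "two out of three implies injective, and cokernels assemble into a filtration" principle for later use in the proof of Theorem \ref{tauinj} itself.
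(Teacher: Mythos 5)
Your proposal is correct and follows essentially the same route as the paper: induction on the length of the filtration, the short exact sequence $0 \to M_{r-1} \to M \to V_r \to 0$ fed into the two exact functors $(-)E_i$ and $E_i(-)$, naturality of $\tau_{E_i,(-)}$, and then a nine-lemma/snake-lemma argument to get both the injectivity of the middle map and the short exact sequence of cokernels. Your side remark about the grading shift being uniform within a given weight block matches the paper's parenthetical note that the shifts play no role in the argument.
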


	\begin{proof}
		We proceed by induction on $r$. The result is clear if $r=0$. In general, the short exact sequence
		\[
			0 \rightarrow M_{r-1} \rightarrow M \rightarrow M_r \rightarrow 0
		\]
		gives rise to the following commutative diagram
		\[
			\xymatrix{
				0 \ar[r] & M_{r-1}E_i \ar[r] \ar[d]_{\tau_{E_i,M_{r-1}}} & ME_i \ar[r] \ar[d]_{\tau_{E_i,M}} & V_rE_i \ar[r] \ar[d]_{\tau_{E_i,V_r}} & 0 \\
				0 \ar[r] & E_iM_{r-1} \ar[r] & E_iM \ar[r] & E_iV_{r} \ar[r] &0
			}
		\]
		with exact rows (we omit keeping tracks of the grading shifts here, since they do not play any role). By assumption, $\tau_{E_i,V_r}$ is injective, and by induction $\tau_{E_i,M_{r-1}}$ is injective as well. Hence $\tau_{E_i,M}$ is injective. Furthermore, we have a commutative diagram
		\[
		\xymatrix{
			& 0 \ar[d] & 0 \ar[d] & 0 \ar[d] & \\
			0 \ar[r] & M_{r-1}E_i \ar[r] \ar[d]_{\tau_{E_i,M_{r-1}}} & ME_i \ar[r] \ar[d]_{\tau_{E_i,M}} & V_rE_i \ar[r] \ar[d]_{\tau_{E_i,V_r}} & 0 \\
			0 \ar[r] & E_iM_{r-1} \ar[r] \ar[d] & E_iM \ar[r] \ar[d] & E_iV_r \ar[r] \ar[d] & 0 \\
			0 \ar[r] & \mathrm{ad}_{E_i}(M_{r-1}) \ar[d] \ar[r] & \mathrm{ad}_{E_i}(M) \ar[d] \ar[r] & \mathrm{ad}_{E_i}(V_r) \ar[d] \ar[r] &0 \\
			& 0 & 0 & 0 &
		}
		\]
		in which the first two rows are exact, and the three columns are exact. By the nine lemma, the last row is also exact. By induction, we have a filtration of $\mathrm{ad}_{E_i}(M_{r-1})$ of the form
		\[
			0=\mathrm{ad}_{E_i}(M_{-1}) \subseteq \mathrm{ad}_{E_i}(M_0) \subseteq \cdots \subseteq \mathrm{ad}_{E_i}(M_{r-1}),
		\]
		and such that for all $k$, $\mathrm{ad}_{E_i}(M_k)/\mathrm{ad}_{E_i}(M_{k-1}) \simeq \mathrm{ad}_{E_i}(V_k)$. This filtration together with the third exact row of the diagram above gives the desired filtration of $\mathrm{ad}_{E_i}(M)$.
	\end{proof}

	\begin{prop}\label{derivationweak}
		Let $M\in H_{\beta}^i\mathrm{-mod}$ and let $N \in H_{\gamma}^i\mathrm{-mod}$ with $\beta,\gamma \in Q^+$. Assume that for all $k \geqslant 0$, $\tau_{E_i,\mathrm{ad}_{E_i}^k(M)}$ and $\tau_{E_i,\mathrm{ad}_{E_i}^k(N)}$ are injective. Then there is a filtration
		\[
			0=V_{-1} \subseteq V_{0} \subseteq V_{1} \subseteq \cdots \subseteq V_{2^n-1} = \mathrm{ad}_{E_i}^n(MN),
		\] 
		such that
		\[
			V_k/V_{k-1} \simeq q_i^{(n-\zeta(k))\left<i^{\vee},\beta\right> +2\sigma(k)} \mathrm{ad}_{E_i}^{\zeta(k)}(M)\mathrm{ad}_{E_i}^{n-\zeta(k)}(N).
		\]
	\end{prop}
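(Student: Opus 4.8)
The plan is to transpose the proof of Proposition \ref{nderivation} essentially verbatim, replacing its two main inputs by weaker analogues that do not presuppose Theorem \ref{tauinj}: in place of the exactness of $\mathrm{ad}_{E_i}$ (Corollary \ref{exact}) I would use Proposition \ref{exactweak}, and in place of the derivation short exact sequence (Corollary \ref{derivation}) I would use a weak version of it, which I establish first.

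First I would prove the following weak derivation sequence: if $X \in H_\delta^i\mathrm{-mod}$ with $\delta \in Q^+$ is such that $\tau_{E_i,X}$ is injective, and $Y \in \mathcal H[i]$ is arbitrary, then there is a short exact sequence, natural in $X$ and $Y$,
\[
	0 \rightarrow q_i^{\left<i^\vee,\delta\right>} X\,\mathrm{ad}_{E_i}(Y) \rightarrow \mathrm{ad}_{E_i}(XY) \rightarrow \mathrm{ad}_{E_i}(X)Y \rightarrow 0.
\]
The argument is the one used for Corollary \ref{derivation}: by Proposition \ref{tauprod} one has $\tau_{E_i,XY} = (\tau_{E_i,X}Y)\circ(X\tau_{E_i,Y})$; since $(-)Y$ is exact, the injectivity of $\tau_{E_i,X}$ makes $\tau_{E_i,X}Y$ injective, so the elementary cokernel lemma recalled in the proof of Corollary \ref{derivation} gives a short exact sequence of cokernels, which one identifies using the exactness of $X(-)$ and $(-)Y$ together with Proposition \ref{coker}.

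Then I would run the induction on $n$. The case $n=0$ is immediate. Assuming the filtration $V_\bullet$ of $\mathrm{ad}_{E_i}^n(MN)$ with the stated quotients, the key verification is that $\tau_{E_i,\,V_k/V_{k-1}}$ is injective for every $k$: each $V_k/V_{k-1}$ is a grading shift of $\mathrm{ad}_{E_i}^{\zeta(k)}(M)\,\mathrm{ad}_{E_i}^{n-\zeta(k)}(N)$, and Proposition \ref{tauprod} factors the corresponding $\tau$ through $\tau_{E_i,\mathrm{ad}_{E_i}^{\zeta(k)}(M)}$ and $\tau_{E_i,\mathrm{ad}_{E_i}^{n-\zeta(k)}(N)}$ — both injective by hypothesis, since $0\leqslant\zeta(k)\leqslant n$ — the induction functors being exact. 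Proposition \ref{exactweak} then applies to $V_\bullet$ and yields a filtration $\mathrm{ad}_{E_i}(V_\bullet)$ of $\mathrm{ad}_{E_i}^{n+1}(MN)$ with quotients $\mathrm{ad}_{E_i}(V_k/V_{k-1})$. To each of these I would apply the weak derivation sequence above with $X = \mathrm{ad}_{E_i}^{\zeta(k)}(M)$ — which has weight $\beta+\zeta(k)i$, so that $\left<i^\vee,\beta+\zeta(k)i\right> = \left<i^\vee,\beta\right>+2\zeta(k)$ — and $Y = \mathrm{ad}_{E_i}^{n-\zeta(k)}(N)$, obtaining injections $f_k$. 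Finally I would refine the filtration exactly as in Proposition \ref{nderivation}, setting $W_k = \mathrm{ad}_{E_i}(V_{(k-1)/2})$ for $k$ odd and $W_k = p_{k/2}^{-1}(\mathrm{im}(f_{k/2}))$ for $k$ even (with $p_j$ the isomorphism produced by Proposition \ref{exactweak}), and read off the quotients using the recursions (\ref{reczeta}) and (\ref{recsigma}).

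I do not expect any genuinely hard step here. The only points requiring care are the bookkeeping with grading shifts and binary weights — which is identical to that of Proposition \ref{nderivation} — and, more substantively, keeping track of which injectivity hypotheses are needed where: at the inductive step one uses injectivity of $\tau_{E_i}$ precisely on the modules $\mathrm{ad}_{E_i}^j(M)$ and $\mathrm{ad}_{E_i}^j(N)$ for $0\leqslant j\leqslant n$, and it is exactly these that the ``for all $k\geqslant 0$'' form of the hypothesis supplies. The one place where the weakened setup genuinely matters rather than just invoking Theorem \ref{tauinj} is the appeal to Proposition \ref{exactweak}: we cannot use that $\mathrm{ad}_{E_i}$ is exact, only that it preserves exactness of filtrations whose subquotients have injective $\tau_{E_i}$, so the factorization check above is essential.
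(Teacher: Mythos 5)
Your proposal is correct and follows essentially the same route as the paper: rerun the induction of Proposition \ref{nderivation}, substituting Proposition \ref{exactweak} for the exactness of $\mathrm{ad}_{E_i}$ and a hypothesis-restricted version of Corollary \ref{derivation} for the full one. You are in fact more careful than the paper's terse proof, which only mentions the substitution of Proposition \ref{exactweak} — you correctly note that the refinement step also implicitly uses the derivation short exact sequence, and that Proposition \ref{tauprod} plus exactness of the induction bifunctor supply the injectivity of $\tau_{E_i}$ on the product subquotients $V_k/V_{k-1}$, not just on the factors.
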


	\begin{proof}
		The proof is the same inductive proof as that of Proposition \ref{nderivation}, with one difference. Instead of using the exactness of $\mathrm{ad}_{E_i}$ to find a filtration of $\mathrm{ad}^{n+1}_{E_i}(MN)$ from that of $\mathrm{ad}_{E_i}^n(MN)$, we use Proposition \ref{exactweak}. The assumptions of Proposition \ref{exactweak} are satisfied since for all $k \geqslant 0$, $\tau_{E_i,\mathrm{ad}_{E_i}^k(M)}$ and $\tau_{E_i,\mathrm{ad}_{E_i}^k(N)}$ are injective by assumption.
	\end{proof}
	
	\subsubsection{Proof of Theorem \ref{tauinj}} The strategy of the proof is as follows. We start by proving that Theorem \ref{tauinj} holds for modules of the form $\mathrm{ad}_{E_i}^{(n)}(E_{j_r}\ldots E_{j_1})$, where $j_1,\ldots,j_r \in I \setminus \lbrace i \rbrace$ and $n \geqslant 0$, and their subquotients. This part of the proof is similar to the argument of \cite{kk}: it is done by constructing a quasi left inverse to $\tau_{E_i,(-)}$. Then, we prove that as a Serre subcategory of $\mathcal H$, the subcategory $\mathcal H[i]$ is generated by such modules. This is done using the compatibility with the monoidal structure proved in Proposition \ref{derivationweak}. Once these two points are proved, the conclusion follows from Proposition \ref{exactweak}.
	
	\begin{thm}\label{injforgen}
		Let $j_1,\ldots,j_r \in I \setminus \lbrace i \rbrace$ and let $n \geqslant 0$. Let $N$ be a subquotient of $M = \mathrm{ad}_{E_i}^{(n)}(E_{j_r}\ldots E_{j_1})$. Then $\tau_{E_i,N}$ is injective.
	\end{thm}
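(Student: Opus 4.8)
The plan is to follow the Kang--Kashiwara strategy of constructing a \emph{quasi left inverse} to $\tau_{E_i,N}$, reduced to the case $N=M=\mathrm{ad}_{E_i}^{(n)}(E_{j_r}\ldots E_{j_1})$ first, and then handling subquotients by a formal argument. Since $\tau_{E_i,(-)}$ is a natural transformation, any subquotient $N$ of $M$ fits into $\tau_{E_i,N}$ being sandwiched between pieces of $\tau_{E_i,M}$; more precisely, if one shows $\tau_{E_i,M}$ is injective for every $M$ in our distinguished family, then for a submodule $N' \subseteq M$ the restriction of $\tau_{E_i,M}$ exhibits $\tau_{E_i,N'}$ as injective (both $(-)E_i$ and $E_i(-)$ are exact), and then for a quotient $N = M/N'$ one uses the snake lemma on the commutative diagram with rows $0\to N'E_i \to ME_i \to NE_i \to 0$ and $0 \to E_iN' \to E_iM \to E_iN \to 0$: the connecting map $\ker(\tau_{E_i,N}) \to \mathrm{coker}(\tau_{E_i,N'})$ would have to be injective, but $\mathrm{coker}(\tau_{E_i,N'}) = \mathrm{ad}_{E_i}(N')$ has a known grading bounded below while $\ker(\tau_{E_i,N})$ sits inside $NE_i$; a degree/leading-term count forces it to vanish. (Alternatively, and more cleanly, one first treats $M$ itself, then notes the family of modules for which $\tau_{E_i,(-)}$ is injective is closed under subquotients by the same two-diagram chase, which is the cleanest route.)

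So the heart is the case $N = M$. First I would use the explicit description of $E_i^{(n)}(E_{j_r}\ldots E_{j_1})$ and of $\mathrm{ad}_{E_i}^{(n)}(E_{j_r}\ldots E_{j_1})$ via Proposition~\ref{adei} and the computational lemmas of Section~3 (in particular Lemma~\ref{multistrand}, which computes $\tau_{[n\downarrow 1]}\tau_{[1\uparrow n]}1_{i,\nu}$ as multiplication by $Q_{i,\nu}$). The point is that $ME_i$, $E_iM$, and $\mathrm{ad}_{E_i}(M)$ all admit concrete bases as free graded $P$-modules, built from cosets $\tau_{\underline\omega}$ of $\mathfrak S$ acting on tensor products of polynomial rings modulo the annihilator ideals $J$ controlled by the $Q_{i,j}$. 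I would then define a candidate left inverse $\sigma_{M} : E_iM \to q_i^{-\langle i^\vee,\beta\rangle}ME_i$ (or a map which, composed with $\tau_{E_i,M}$, is multiplication by a polynomial with invertible leading coefficient — this is what ``quasi left inverse'' means in \cite{kk}), on these bases, using a $\tau_{[r\to 1]}$-type ``pull the $i$-strand back to the right'' operator together with an appropriate central polynomial in the $x$'s coming from $Q_{i,\beta}$. Checking that $\sigma_M$ is a well-defined module map is a bimodule-relation computation analogous to Proposition~\ref{welldef}, and checking $\sigma_M \circ \tau_{E_i,M}$ is multiplication by a monic (up to unit) polynomial is where Lemma~\ref{multistrand} and the explicit structure of $J_{n+1}$ from Subsection~\ref{chevgen} enter; injectivity of $\tau_{E_i,M}$ then follows because $ME_i$ is a torsion-free module over the relevant polynomial ring (it is free as a graded $P$-module by the PBW theorem), so multiplication by a nonzerodivisor is injective, hence $\tau_{E_i,M}$ is too.

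The main obstacle, I expect, is the bookkeeping in the case $M=\mathrm{ad}_{E_i}^{(n)}(E_{j_r}\ldots E_{j_1})$ with $r>1$ and $n>1$ simultaneously: one must track $n$ parallel $i$-strands (governed by the affine nil Hecke idempotent $e_n$ and the symmetrizer $\tau_{\omega_0[1,n]}$) crossing past $r$ distinct $j$-strands, and verify that the ``quasi-inverse'' operator interacts correctly with the divided-power idempotent $e_n$ — i.e.\ that one may choose $\sigma_M$ compatibly with the $H_{ni}$-action fixed in Proposition~\ref{adei}, so that it descends to $E_i^{(n)}$. I would handle this by an induction on $n$ using the factorization $\mathrm{ad}_{E_i}^{n}\simeq [n]_i!\,\mathrm{ad}_{E_i}^{(n)}$ and on $r$ using the Leibniz-type filtration of Proposition~\ref{derivationweak} (whose hypotheses are exactly ``$\tau_{E_i,\mathrm{ad}_{E_i}^k(-)}$ injective'', available for $E_j$ by the base case which reduces to the single-strand computation of Subsection~\ref{chevgen}), combined with Proposition~\ref{exactweak} to pass from the associated-graded pieces back to $M$ itself. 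Thus the logical skeleton is: (i) base case $M=E_j$ by direct computation; (ii) $M=\mathrm{ad}_{E_i}^{(n)}(E_j)$ by the quasi-inverse construction with the $e_n$-compatibility check; (iii) $M=\mathrm{ad}_{E_i}^{(n)}(E_{j_r}\ldots E_{j_1})$ by Propositions~\ref{derivationweak} and~\ref{exactweak}; (iv) subquotients by the two-diagram snake-lemma chase.
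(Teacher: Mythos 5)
Your core strategy for the case $N=M$ is the right one, and it matches the paper's: construct a map $\tau_{M,E_i}\colon E_iM \to ME_i$ (using a $\tau_{[n+r\downarrow 1]}$ ``pull the $i$-strand across'' operator together with the central polynomial $Q_{i,\beta}(x_{n+r+1},x_1,\ldots,x_{n+r})$), verify well-definedness via Proposition~\ref{annilpol}, show the composite $\tau_{M,E_i}\circ\tau_{E_i,M}$ is multiplication by a central element $\Theta$ with an $x_1$-monic leading term, and deduce injectivity because $ME_i$ is, via the coset decomposition $\bigoplus_k \tau_{[k\downarrow 1]}1_{\beta,i}\otimes_{H_{\beta,i}}(M\otimes E_i)$, a direct sum of copies of $M\otimes K[x_1]$ on which $\Theta$ acts with unit leading coefficient. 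However, a genuine difference from the paper is that the paper performs this computation \emph{directly for all $n$ and $r$ simultaneously}, using the cyclic generator $\tau_{\omega_0[r+2,n+r+1]}1_{ni,j_r,\ldots,j_1,i}$ of $ME_i$, whereas you propose to establish only $r=1$ by explicit computation and then reach general $r$ via Propositions~\ref{derivationweak} and~\ref{exactweak}. That route is in principle salvageable --- one also needs the (true but unstated) observation that injectivity of $\tau_{E_i,(-)}$ is closed under tensor products by Proposition~\ref{tauprod} and biexactness of $\diamond$, since the filtration quotients in Proposition~\ref{derivationweak} are products $\mathrm{ad}_{E_i}^{a}(M)\,\mathrm{ad}_{E_i}^{b}(N)$ rather than single $\mathrm{ad}_{E_i}^m$ of a product --- but as written it is not airtight, and the paper's uniform computation is the cleaner route.

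The genuine gap is step (iv), your treatment of subquotients. You claim ``the family of modules for which $\tau_{E_i,(-)}$ is injective is closed under subquotients by the same two-diagram chase,'' but this is false. For a submodule $N'\subseteq M$, naturality plus exactness of $(-)E_i$ and $E_i(-)$ does give injectivity of $\tau_{E_i,N'}$ as the restriction of $\tau_{E_i,M}$; that part is fine. But for a quotient $N=M/N'$, the snake lemma only produces an exact sequence
\[
0 \longrightarrow \ker(\tau_{E_i,N}) \longrightarrow \mathrm{ad}_{E_i}(N') \longrightarrow \mathrm{ad}_{E_i}(M) \longrightarrow \mathrm{ad}_{E_i}(N) \longrightarrow 0,
\]
so $\ker(\tau_{E_i,N})$ is identified with the kernel of $\mathrm{ad}_{E_i}(N')\to\mathrm{ad}_{E_i}(M)$, and there is no reason for this to vanish unless one already knows $\mathrm{ad}_{E_i}$ is exact --- precisely the corollary one is trying to reach. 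Your ``degree/leading-term count'' is not an argument here: both modules are graded, bounded below in each weight, and this imposes no constraint. What actually makes subquotients work in the paper is that the quasi-inverse $\tau_{M,E_i}$ \emph{descends}: given $f\colon M\twoheadrightarrow N'$, the map $\tau_{M,E_i}$ carries $E_i\ker(f)$ into $\ker(f)E_i$ (this is a consequence of its explicit formula and the fact that $P$ is central), so it induces $\tau_{N',E_i}\colon E_iN'\to N'E_i$ with $\tau_{N',E_i}\circ\tau_{E_i,N'}$ again equal to multiplication by the same central $\Theta$; injectivity on $N'E_i$ then follows by the same free-module argument, and injectivity on a submodule $N\subseteq N'$ follows by restriction. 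In short: you need to run the quasi-inverse mechanism on the subquotient itself, not attempt a formal diagram chase.
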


	\begin{proof}		
		Let $\beta = ni +j_1+ \ldots + j_r$, so that $M$ is an $H_{\beta}^i$-module. We construct a map $\tau_{M,E_i} : E_iM \rightarrow ME_i$ such that the composition $\tau_{M,E_i}\circ \tau_{E_i,M}$ is injective. The map $\tau_{M,E_i}$ is defined by
		\[
			\tau_{M,E_i} : \left \{
			\begin{array}{rcl}
				E_iM & \rightarrow & ME_i \\
				h1_{i,\beta} \otimes_{H_{\beta}} m & \mapsto & hP\tau_{[n+r\downarrow 1]} 1_{\beta,i} \otimes_{H_{\beta}} m
			\end{array}	
			\right.		
		\]
		where
		\[
			P = Q_{i,\beta}(x_{n+r+1},x_1,\ldots,x_{n+r}) = \sum_{\nu \in I^{\beta}} \Bigg(\prod_{\substack{1\leqslant k \leqslant n+r \\ \nu_k \neq i}} Q_{i,\nu_k}(x_{n+r+1},x_{k})\Bigg)1_{i,\nu} \in P_{\beta+i}.
		\]
		To check that this is well-defined, we need to prove that for all $y \in H_{\beta}$, we have
		\[
			\left( (1_i \diamond y)P\tau_{[n+r\downarrow 1]} - P\tau_{[n+r\downarrow 1]}(y\diamond 1_i) \right)1_{\beta,i} \otimes_{H_{\beta}}M=0.
		\]
		Let $Z=\left( (1_i \diamond y)P\tau_{[n+r\downarrow 1]} - P\tau_{[n+r\downarrow 1]}(y\diamond 1_i) \right)1_{\beta,i}$. The element $P$ is central in $H_{i,\beta}$ by Proposition \ref{annilcenter}, so we have $Z= P\left( (1_i\diamond y)\tau_{[n+r\downarrow 1]} - \tau_{[n+r\downarrow 1]}(y\diamond 1_i) \right)1_{\beta,i}$. By Proposition \ref{xtaucom}, there are elements $z_k \in H_{\beta,i}$ such that
		\[
			Z = P\sum_{k=0}^{n+r-1} \tau_{[k\downarrow 1]}z_k 1_{\beta,i}= \sum_{k=0}^{n+r-1} \tau_{[k\downarrow 1]}P z_k1_{\beta,i}.
		\]
		However
		\begin{align*}
			P1_{\beta,i} &= \sum_{\nu \in I^{\beta}} \Bigg(\prod_{\substack{1\leqslant k \leqslant n+r \\ \nu_k \neq i}} Q_{i,\nu_k}(x_{n+r+1},x_{k})\Bigg)1_{i,\nu} 1_{\beta,i} \\
			&= \sum_{\mu \in I^{\beta-i}} \Bigg(\prod_{\substack{1\leqslant k \leqslant n+r-1 \\ \mu_k \neq i}} Q_{i,\mu_k}(x_{n+r+1},x_{k+1})\Bigg)1_{i,\mu,i} \\
			&= \sum_{\substack{\rho \in I^{\beta} \\ \rho_{n+r}=i}} Q_{i,\rho}(x_{n+r+1},x_{2},\ldots,x_{n+r+1})1_{\rho,i} \\
			&=  \Bigg(\sum_{\substack{\rho \in I^{\beta} \\ \rho_{n+r}=i}} Q_{i,\rho}(x_{n+r},x_1,\ldots,x_{n+r})1_{\rho} \Bigg) \diamond 1_i.
		\end{align*}
		By Proposition \ref{annilpol}, $Q_{i,\rho}(x_{n+r},x_1,\ldots,x_{n+r})1_{\rho}$ acts by zero on $M$ for all $\rho \in I^{\beta}$ such that $\rho_{n+r}=i$. So $Z\otimes_{H_{\beta}} M=0$, and the morphism $\tau_{M,E_i}$ is well defined.
		
		Next, we compute the composition $\tau_{M,E_i}\circ \tau_{E_i,M}$. It is given by
		\[
			\tau_{M,E_i}\circ \tau_{E_i,M} : \left \{ \begin{array}{rcl}
				ME_i & \rightarrow & ME_i \\
				h1_{\beta,i} \otimes_{H_{\beta}} m & \mapsto & h\tau_{[1\uparrow n+r]}P\tau_{[n+r\downarrow 1]}1_{\beta,i} \otimes_{H_{\beta}} m
			\end{array} \right.
		\]		
		Note that $ME_i$ is cyclic, generated by the class $c$ of $\tau_{\omega_0[r+2,n+r+1]}1_{ni,j_r,\ldots,j_1,i}$ in $ME_i$. We start by computing the image of $c$ under $\tau_{M,E_i}\circ \tau_{E_i,M}$. In the following computations, we do not write the idempotent $1_{ni,j_r,\ldots,j_1,i}$ on the right to help readability, but all the elements are considered in $H_{\beta+i}1_{ni,j_r,\ldots,j_1,i}$. The detailed explanations are below the computation. We have
		\begin{align*}
			\tau_{[1\uparrow n+r]}P\tau_{[n+r\downarrow 1]}\tau_{\omega_0[r+2,n+r+1]}&= \tau_{[1\uparrow n+r]}P\tau_{\omega_0[r+1,n+r+1]} \tau_{[r\downarrow 1]} \\
			&= \tau_{[1\uparrow r]} \partial_{[r+1\uparrow n+r]}(P) \tau_{\omega_0[r+1,n+r+1]} \tau_{[r\downarrow 1]} \\
			&= s_{[1\uparrow r]}\left(\partial_{[r+1\uparrow n+r]}(P)\right) \tau_{[1\uparrow r]} \tau_{[n+r\downarrow 1]}\tau_{\omega_0[r+2,n+r+1]} \\
			&= s_{[1\uparrow r]}\left(\partial_{[r+1\uparrow n+r]}(P)\right) \tau_{[n+r\downarrow r+2]}\tau_{[1\uparrow r]}\tau_{r+1}\tau_{[r\downarrow 1]}\tau_{\omega_0[r+2,n+r+1]}.
		\end{align*}
		For the first equality, we have commuted $\tau_{[r\downarrow 1]}$ to the right of $\tau_{\omega_0[r+2,n+r+1]}$, and the factor $\tau_{[n+r\downarrow r+1]}\tau_{\omega_0[r+2,n+r+1]}$ that appears is equal to $\tau_{\omega_0[r+1,n+r+1]}$. The second equality comes from applying part (2) of Lemma \ref{compnil}. For the third equality, we have commuted $\partial_{[r+1\uparrow n+r]}(P)$ to the left of $\tau_{[1\uparrow r]}$ using relation (\ref{Taux}) from Definition \ref{klr}, and we have undone the first step. Finally, the fourth equality is obtained by commuting  $\tau_{[1\uparrow r]}$ to the right of $\tau_{[n+r\downarrow r+2]}$. Now by relation (3) in Lemma \ref{multistrand} we have
		\begin{align*}
			\tau_{[1\uparrow r]}\tau_{r+1}\tau_{[r\downarrow 1]} &=\left(\tau_{[r+1\downarrow2]}\tau_1\tau_{[2\uparrow r+1]} + \tilde{P}\right) =\tilde{P} \quad \text{ mod } (1_{\beta-i,i,i})
		\end{align*}
		where $\tilde{P} = \partial_{r+2,1}\left(Q_{i,(j_1,\ldots,j_r)}(x_{r+2},x_{2},\ldots,x_{r+1})\right)$. By part (2) of Lemma \ref{compnil} we have
		\[
			\tau_{[n+r\downarrow r+2]}\tilde{P}\tau_{\omega_0[r+2,n+r+1]} = \partial_{[n+r\downarrow r+2]}(\tilde{P}) \tau_{\omega_0[r+2,n+r+1]}.
		\]
		Hence if we let
		\[
			\theta= s_{[1\uparrow r]}\left(\partial_{[r+1\uparrow n+r]}(P)\right)\partial_{[n+r\downarrow r+2]}(\tilde{P})1_{ni,j_r,\ldots,j_1,i} \in P_{\beta+i}
		\]
		we have proved that
		\[
			\tau_{[1,\uparrow n+r]}P\tau_{[n+r\downarrow 1]}\tau_{\omega_0[r+2,n+r+1]} = \theta \tau_{\omega_0[r+2,n+r+1]} \quad \text{ mod } \left(1_{\beta-i,i,i}\right).
		\]
		This proves that $\tau_{M,E_i}\circ \tau_{E_i,M}(c) = \theta c$. Now let
		\[
			\Theta = \sum_{\omega \in \mathfrak S_{n+r+1}/\mathrm{Stab}(ni,j_r,\ldots,j_1,i)} \omega(\theta) \in P_{\beta+i},
		\]
		where $\mathrm{Stab}(ni,j_r,\ldots,j_1,i)$ denotes the stabilizer of the sequence $(ni,j_r,\ldots,j_1,i)$ in $\mathfrak S_{n+r+1}$. Let us check that $\Theta$ is in the center of $H_{\beta+i}$. By Proposition \ref{center}, this amounts to checking that $\Theta$ is fixed by $\mathfrak S_{n+r+1}$. Given the definition of $\Theta$, it suffices to check that for $\omega \in \mathrm{Stab}(ni,j_r,\ldots,j_1,i)$ we have $\omega(\theta) = \theta$. By Proposition \ref{annilcenter}, we already know that $P,\tilde{P}$ are fixed under those $\omega$ who permute the variables labeled by $j_1,\ldots,j_r$, thus so is $\theta$. But then the Demazure operators in $\partial_{[r+1\uparrow n+r]}(P),\partial_{[n+r\downarrow r+2]}(\tilde{P})$ make these polynomials symmetric in the variables labeled by $i$. Hence $\theta$ is indeed fixed by $\mathrm{Stab}(ni,j_r,\ldots,j_1,i)$, and $\Theta$ is central. Furthermore, we have
		\[
			\Theta c = \Theta 1_{ni,j_r,\ldots,j_1,i} c= \theta c=	\tau_{M,E_i}\circ \tau_{E_i,M}(c).
		\]
		Hence $\tau_{M,E_i}\circ \tau_{E_i,M}$ and multiplication by $\Theta$ (which is a morphism of $H_{\beta+i}$-modules since $\Theta$ is central) agree on $c$. Since $c$ generates $ME_i$, we conclude that $\tau_{M,E_i}\circ \tau_{E_i,M}(x) = \Theta x$ for all $x\in ME_i$. Now remark that $\Theta 1_{\beta,i}$ has the form
		\[
			\Theta 1_{\beta,i} \in tx_{1}^{\ell}1_{\beta,i} + \sum_{k<\ell} x_{1}^k(P_{\beta}\diamond 1_i)1_{\beta,i} 
		\]
		for some integer $\ell$ and $t \in K^{\times}$. It follows that multiplication by $\Theta 1_{\beta,i}$ is an injective endomorphism of $M\otimes E_i$. But since
		\[
			ME_i = \bigoplus_{k=0}^{n+r} \tau_{[k\downarrow 1]} 1_{\beta,i} \otimes_{H_{\beta,i}} (M\otimes E_i),
		\]
		multiplication by $\Theta$ acts diagonally along this decomposition, by multiplication by $\Theta 1_{\beta,i}$ on each summand $M\otimes E_i$. We conclude that multiplication by $\Theta$ is an injective endomorphism of $ME_i$. Hence $\tau_{E_i,M}$ is injective.
		
		We now prove that this implies the result for the subquotient $N$ of $M$. We can fit $N$ in a diagram
		\[
			N \hookrightarrow N' \overset{f}{\twoheadleftarrow} M
		\]
		for some $H_{\beta}$-module $N'$. Given its definition, the map $\tau_{M,E_i}$ restricts to a map $E_i\mathrm{ker}(f) \rightarrow \mathrm{ker}(f)E_i$. So we have an induced map $\tau_{N',E_i} : E_iN' \rightarrow N'E_i$ such that $\tau_{N',E_i}\circ \tau_{E_i,N'}$ is multiplication by $\Theta$. By the same argument as above, multiplication by $\Theta$ is injective on $N'E_i$. Hence $\tau_{E_i,N'}$ is injective. But by naturality $\tau_{E_i,N}$ is the restriction of $\tau_{E_i,N'}$ to $NE_i$, so it is injective as well.
	\end{proof}

	\begin{cor}\label{injforserre}
		Let $N$ be in the Serre subcategory of $\mathcal H[i]$ generated by the modules $\mathrm{ad}_{E_i}^{n}(E_{j_r}\ldots E_{j_1})$ for $n \geqslant 0$, $r \geqslant 1$ and $j_1,\ldots,j_r \in I \setminus \lbrace i \rbrace$. Then $\tau_{E_i,N}$ is injective.
	\end{cor}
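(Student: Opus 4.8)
The plan is to deduce this from Theorem~\ref{injforgen} together with the weak exactness statement of Proposition~\ref{exactweak}. First I would recall the standard description of the Serre subcategory of a graded abelian category generated by a family $\mathcal C$ of objects: an object $N$ belongs to it precisely when it admits a finite filtration $0 = N_0 \subseteq N_1 \subseteq \cdots \subseteq N_m = N$ whose successive quotients $N_k/N_{k-1}$ are each isomorphic to a subquotient of a grading shift of an object of $\mathcal C$. (That the class of such objects is closed under extensions is seen by concatenating filtrations, closure under subquotients by intersecting and projecting a filtration, and closure under degree shifts by shifting a filtration; and it is visibly contained in every Serre subcategory containing $\mathcal C$.) Applying this with $\mathcal C = \{\mathrm{ad}_{E_i}^{n}(E_{j_r}\ldots E_{j_1})\}$, our module $N$ admits such a filtration.

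Next I would refine this filtration so that the divided power modules appear. Because $\mathrm{ad}_{E_i}^{n}(E_{j_r}\ldots E_{j_1}) \simeq [n]_i!\,\mathrm{ad}_{E_i}^{(n)}(E_{j_r}\ldots E_{j_1})$ is a finite direct sum of grading shifts of the single module $\mathrm{ad}_{E_i}^{(n)}(E_{j_r}\ldots E_{j_1})$, and because any subquotient of a finite direct sum $A_1\oplus\cdots\oplus A_s$ itself carries a finite filtration with successive quotients that are subquotients of the $A_t$ (filter a submodule $B$ by the $B\cap(A_1\oplus\cdots\oplus A_t)$, then pass to quotients), I can refine the filtration of $N$ to one whose successive quotients are subquotients of grading shifts of modules of the form $\mathrm{ad}_{E_i}^{(n)}(E_{j_r}\ldots E_{j_1})$.

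For each such successive quotient $V$, write $V \simeq q^{a}V'$ with $V'$ a subquotient of $\mathrm{ad}_{E_i}^{(n)}(E_{j_r}\ldots E_{j_1})$. Theorem~\ref{injforgen} gives that $\tau_{E_i,V'}$ is injective, and since $\tau_{E_i,q^{a}V'} = q^{a}\tau_{E_i,V'}$ under the canonical identifications $(q^{a}V')E_i = q^{a}(V'E_i)$ and $E_i(q^{a}V') = q^{a}(E_iV')$, the morphism $\tau_{E_i,V}$ is injective as well. Applying Proposition~\ref{exactweak} to the refined filtration of $N$ then shows that $\tau_{E_i,N}$ is injective. The only point requiring a little care is this reduction from the generators $\mathrm{ad}_{E_i}^{n}$ to subquotients of a single divided power $\mathrm{ad}_{E_i}^{(n)}(E_{j_r}\ldots E_{j_1})$, for which one uses that a subquotient of a finite direct sum is filtered by subquotients of the summands so that Theorem~\ref{injforgen} becomes applicable; everything else is routine bookkeeping with filtrations.
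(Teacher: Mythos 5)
Your proof follows essentially the same route as the paper: replace $\mathrm{ad}_{E_i}^n$ by $[n]_i!\,\mathrm{ad}_{E_i}^{(n)}$ to obtain a filtration of $N$ whose successive quotients are subquotients of grading shifts of divided-power modules, apply Theorem~\ref{injforgen} to each such quotient, and conclude with Proposition~\ref{exactweak}. The only difference is that you spell out the refinement of the filtration and the compatibility of $\tau_{E_i,(-)}$ with grading shifts, which the paper leaves implicit.
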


	\begin{proof}
		Since $\mathrm{ad}_{E_i}^{n} \simeq [n]_i!\mathrm{ad}_{E_i}^{(n)}$, we know that $N$ is in the Serre subcategory of $\mathcal H[i]$ generated by the modules $\mathrm{ad}_{E_i}^{(n)}(E_{j_r}\ldots E_{j_1})$ for $n \geqslant 0$, $r \geqslant 1$ and $j_1,\ldots,j_r \in I \setminus \lbrace i \rbrace$. There is a filtration of $N$
		\[
			0 \subseteq N_0 \subseteq \cdots \subseteq N_{\ell}=N
		\]
		such that each $N_k/N_{k-1}$ is a subquotient of some $\mathrm{ad}_{E_i}^{(n)}(E_{j_r}\ldots E_{j_1})$. Hence $\tau_{E_i,N_k/N_{k-1}}$ is injective by Theorem \ref{injforgen}. By Proposition \ref{exactweak}, we conclude that $\tau_{E_i,N}$ is injective.
	\end{proof}
	
	\begin{thm}\label{genforhi}
		As a Serre subcategory of $\mathcal H$, the subcategory $\mathcal H[i]$ is generated by the modules $\mathrm{ad}_{E_i}^{n}(E_{j_r}\ldots E_{j_1})$ for $n\geqslant 0$, $r\geqslant 1$ and $j_1,\ldots,j_r \in I \setminus \lbrace i \rbrace$.
	\end{thm}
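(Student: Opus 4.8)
The plan is to let $\mathcal C$ denote the Serre subcategory of $\mathcal H$ generated by the modules $\mathrm{ad}_{E_i}^{n}(E_{\nu})$ for $n\geqslant 0$ and $\nu$ a non-empty sequence in $I\setminus\lbrace i\rbrace$ (in the notation $E_\nu=H_\gamma 1_\nu$), and to prove $\mathcal C=\mathcal H[i]$. One inclusion is immediate: each $E_j$ with $j\neq i$ lies in $\mathcal H[i]$, which is a monoidal Serre subcategory of $\mathcal H$ and satisfies $\mathrm{ad}_{E_i}(\mathcal H[i])\subseteq\mathcal H[i]$, so every generator of $\mathcal C$ lies in $\mathcal H[i]$ and hence $\mathcal C\subseteq\mathcal H[i]$. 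All the work is in the reverse inclusion. Since at this point of the paper Corollary \ref{exact} and Proposition \ref{nderivation} are not yet available (they rest on Theorem \ref{tauinj}), I would only use their ``weak'' counterparts, Propositions \ref{exactweak} and \ref{derivationweak}, together with Corollary \ref{injforserre}, which already provides that $\tau_{E_i,N}$ is injective for every $N\in\mathcal C$.

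First I would show $\mathcal C$ is stable under $\mathrm{ad}_{E_i}$. For any $M\in\mathcal C$ with a subobject $A$, both $A$ and $M/A$ lie in $\mathcal C$, so $\tau_{E_i,A}$ and $\tau_{E_i,M/A}$ are injective by Corollary \ref{injforserre}, and Proposition \ref{exactweak} applied to the two-step filtration $0\subseteq A\subseteq M$ yields a short exact sequence $0\to\mathrm{ad}_{E_i}(A)\to\mathrm{ad}_{E_i}(M)\to\mathrm{ad}_{E_i}(M/A)\to 0$. Using this (and the fact that $\mathrm{ad}_{E_i}$ is always right exact), the class $\lbrace M\in\mathcal C:\mathrm{ad}_{E_i}(M)\in\mathcal C\rbrace$ is seen to be closed under subobjects, quotients, extensions and grading shifts, hence is a Serre subcategory; and it contains every generator because $\mathrm{ad}_{E_i}\bigl(\mathrm{ad}_{E_i}^{n}(E_\nu)\bigr)=\mathrm{ad}_{E_i}^{n+1}(E_\nu)$. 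Therefore it equals $\mathcal C$, and by iteration $\mathcal C$ is stable under all powers of $\mathrm{ad}_{E_i}$.

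Next I would prove that $\mathcal C$ is a monoidal subcategory. The crucial step is that a product of two generators lies in $\mathcal C$. Given non-empty sequences $\mu,\mu'$ in $I\setminus\lbrace i\rbrace$ and $m,n\geqslant 0$, apply Proposition \ref{derivationweak} to the modules $E_\mu$ and $E_{\mu'}$ with exponent $m+n$; the injectivity hypotheses hold because $\mathrm{ad}_{E_i}^{t}(E_\mu)$ and $\mathrm{ad}_{E_i}^{t}(E_{\mu'})$ are generators of $\mathcal C$ and Corollary \ref{injforserre} applies. This produces a filtration of $\mathrm{ad}_{E_i}^{m+n}(E_\mu E_{\mu'})=\mathrm{ad}_{E_i}^{m+n}(E_{\mu\mu'})$ one of whose successive quotients is, up to a grading shift, $\mathrm{ad}_{E_i}^{m}(E_\mu)\,\mathrm{ad}_{E_i}^{n}(E_{\mu'})$ (take the index $k=2^{m}-1$, for which $\zeta(k)=m$). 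Since $E_{\mu\mu'}$ is again a product of $E_j$'s with $j\neq i$, the module $\mathrm{ad}_{E_i}^{m+n}(E_{\mu\mu'})$ is a generator of $\mathcal C$, so that subquotient lies in $\mathcal C$. To bootstrap: for a fixed generator $G$, the class $\lbrace B\in\mathcal H:GB\in\mathcal C\rbrace$ is a Serre subcategory (the monoidal product being exact in each variable) and contains all generators, hence contains $\mathcal C$; then for a fixed $B\in\mathcal C$, the class $\lbrace A\in\mathcal H:AB\in\mathcal C\rbrace$ is Serre and contains all generators, hence contains $\mathcal C$. Thus $MN\in\mathcal C$ whenever $M,N\in\mathcal C$.

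Finally I would deduce $\mathcal H[i]\subseteq\mathcal C$. Any $M\in H_{\beta}^{i}\mathrm{-mod}$ is a quotient of a finite direct sum of grading shifts of the cyclic modules $H_{\beta}^{i}1_{\nu}=\pi_i(E_\nu)$, $\nu\in I^{\beta}$, so by the Serre property it suffices to show $\pi_i(E_\nu)\in\mathcal C$ for every $\nu$. Grouping the maximal runs of $i$'s in $\nu$, write the corresponding idempotent as $1_{n_1 i,\mu_1,\ldots,n_k i,\mu_k}$ with the $\mu_l$ (possibly empty) sequences avoiding $i$; iterating Lemma \ref{piiE} and the natural isomorphism $\mathrm{ad}_{E_i}^{n}(-)\simeq\pi_i\bigl(E_i^{n}(-)\bigr)$ of Proposition \ref{adei} gives
\[
\pi_i(E_\nu)\ \simeq\ \mathrm{ad}_{E_i}^{n_1}\Bigl(E_{\mu_1}\,\mathrm{ad}_{E_i}^{n_2}\bigl(E_{\mu_2}\cdots\mathrm{ad}_{E_i}^{n_k}(E_{\mu_k})\cdots\bigr)\Bigr).
\]
Reading this from the inside out: the innermost term $\mathrm{ad}_{E_i}^{n_k}(E_{\mu_k})$ is a generator of $\mathcal C$ if $\mu_k\neq\varnothing$, is the unit $\mathbf 1$ if $\mu_k=\varnothing$ and $n_k=0$, and is $0$ if $\mu_k=\varnothing$ and $n_k\geqslant 1$ (because $1_{(n_k-1)i,i}$ is then the unit of $H_{n_k i}$, so $\pi_i(E_i^{n_k})=H_{n_k i}^{i}=0$); in all cases it lies in $\mathcal C$. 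Inductively, if $X\in\mathcal C$ then $E_{\mu_l}$ is a generator (or $\mathbf 1$), so $E_{\mu_l}X\in\mathcal C$ by the monoidality just established, and then $\mathrm{ad}_{E_i}^{n_l}(E_{\mu_l}X)\in\mathcal C$ by stability under $\mathrm{ad}_{E_i}$; hence the whole nested module lies in $\mathcal C$. (The weight-$0$ part $H_0^{i}\mathrm{-mod}$ of finitely generated graded $K$-modules is generated by the unit $\mathbf 1=\mathrm{ad}_{E_i}^0(E_\varnothing)$ and is understood to be included among the generators.) The main obstacle is the apparent circularity in the two middle steps: stability of $\mathcal C$ under $\mathrm{ad}_{E_i}$ and under the monoidal product both rest on the injectivity of $\tau_{E_i,-}$ on $\mathcal C$, which is precisely why Theorem \ref{injforgen} and Corollary \ref{injforserre} must be proved first; the decisive trick is to run Proposition \ref{derivationweak} in reverse, realizing a product of two generators as a subquotient of a single $\mathrm{ad}_{E_i}^{m+n}$ of a generator.
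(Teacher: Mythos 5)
Your proof is correct, and it takes a genuinely different route from the paper's. The paper argues by a direct induction on the length $m$ of $\nu$, peeling off one generator $E_{j_m}$ from the left at each step: if $j_m \neq i$ it uses Lemma \ref{piiE} to reduce to showing $E_j\mathcal H'[i]\subseteq\mathcal H'[i]$, which in turn it gets by extracting the \emph{bottom} subobject $E_j\,\mathrm{ad}_{E_i}^{n_k}(N_k)\hookrightarrow\mathrm{ad}_{E_i}^{n_k}(E_jN_k)$ from the filtration of Proposition \ref{derivationweak}; if $j_m=i$ it reduces to stability of $\mathcal H'[i]$ under $\mathrm{ad}_{E_i}$ via Proposition \ref{exactweak} and Corollary \ref{injforserre}. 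You instead first prove two structural closure properties of $\mathcal C$ outright — stability under $\mathrm{ad}_{E_i}$ (same argument as the paper's second case) and full monoidality — and only then run a single induction using the nested decomposition $\pi_i(E_\nu)\simeq\mathrm{ad}_{E_i}^{n_1}\bigl(E_{\mu_1}\,\mathrm{ad}_{E_i}^{n_2}(E_{\mu_2}\cdots)\bigr)$ that groups the maximal runs of $i$'s (the same decomposition the paper uses later in the proof of Proposition \ref{vanish}). Your monoidality step is the genuinely new ingredient: you realize $\mathrm{ad}_{E_i}^{m}(E_\mu)\,\mathrm{ad}_{E_i}^{n}(E_{\mu'})$, up to a shift, as the subquotient $V_{2^m-1}/V_{2^m-2}$ of the filtration of $\mathrm{ad}_{E_i}^{m+n}(E_{\mu\mu'})$ from Proposition \ref{derivationweak}, and then bootstrap by exactness of the tensor product. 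This is strictly more than the paper needs (it only needs left multiplication by a single $E_j$) but is a cleaner statement and in effect gives Corollary \ref{gen2} directly, without first passing through Theorem \ref{tauinj}, Corollary \ref{exact}, and Proposition \ref{nderivation}; the cost is having to check both bootstrap steps of the Serre-closure argument. You also correctly identify and dodge the circularity — at this point in the paper only the weak versions of exactness and the Leibniz filtration are available, and your argument uses exactly those. The only caveat, which you flag yourself, is the weight-zero corner case: with the generators as stated ($r\geq 1$), none of them lives in $H_0^i\mathrm{-mod}$, so $\mathbf 1$ has to be thrown in by convention; this imprecision is present in the paper's statement as well.
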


	\begin{proof}
		Let $\mathcal H'[i]$ be the Serre full subcategory of $\mathcal H$ generated by the modules $\mathrm{ad}_{E_i}^{n}(E_{j_r}\ldots E_{j_1})$ for $n\geqslant 0$, $r\geqslant 1$ and $j_1,\ldots,j_r \in I \setminus \lbrace i \rbrace$. It is clear that $\mathcal H'[i] \subseteq \mathcal H[i]$. We prove the converse inclusion.
		
		Let $M\in \mathcal H[i]$. There exists a projective module $P \in \mathcal H$ that surjects onto $M$. Since $\pi_i$ is right exact, $\pi_i(P)$ surjects onto $\pi_i(M)=M$. Hence it suffices to show that $\pi_i(P) \in \mathcal H'[i]$ for every projective module $P$. Since every projective module is a direct sum of summands of modules of the form $E_{j_m}\ldots E_{j_1}$, it suffices to show $\pi_i(E_{j_m}\ldots E_{j_1}) \in \mathcal H'[i]$ for all $m$ and $j_1,\ldots,j_m \in I$. We proceed inductively on $m$.
		
		For $m=1$, we have $\pi_i(E_{j_1})=E_{j_1}$ if $j_1\neq i$ and $\pi_i(E_i)=0$. In all cases, we have $\pi_i(E_{j_1}) \in \mathcal H'[i]$. Assume now $\pi_i(E_{j_{m-1}}\ldots E_{j_{1}}) \in \mathcal{H}'[i]$, we consider two cases:
		\begin{itemize}
			\item if $j_m=j \neq i$, then $\pi_i(E_{j_m}\ldots E_{j_1}) \simeq E_j\pi_i(E_{j_{m-1}}\ldots E_{j_{1}})$ by Lemma \ref{piiE}. So it suffices to show that $E_j\mathcal H'[i] \subseteq \mathcal{H}'[i]$. If $M \in \mathcal H'[i]$, then we have a filtration
			\[
				0 = M_{-1} \subseteq M_0 \subseteq \cdots \subseteq M_s=M,
			\]
			such that $M_k/M_{k-1}$ is a subquotient of $\mathrm{ad}_{E_i}^{n_k}(N_k)$ for some $n_k \geqslant 0$ and $N_k$ a product of $E_{a}$, $a \in I \setminus \lbrace i \rbrace$. By exactness of multiplication by $E_j$, we have a filtration of $E_jME$
			\[
				0 = E_jM_{-1} \subseteq E_jM_0 \subseteq \cdots \subseteq E_jM_s=E_jM,
			\]
			such that $E_jM_k/E_jM_{k-1}$ is a subquotient of $E_j\mathrm{ad}_{E_i}^{n_k}(N_k)$. By Theorem \ref{injforgen}, $N_k$ and $E_j$ satisfy the conditions of Proposition \ref{derivationweak}, so we have an injection (up to a grading shift)
			\[
				E_j\mathrm{ad}_{E_i}^{n_k}(N_k) \hookrightarrow \mathrm{ad}_{E_i}^{n_k}(E_jN_k).
			\]
			Hence $E_jM$ is also in $\mathcal H'[i]$.
			
			\item if $j_m=i$, we have $\pi_i(E_{j_m}\ldots E_{j_1}) \simeq \mathrm{ad}_{E_i}(\pi_i(E_{j_{m-1}}\ldots E_{j_{1}}))$ by Lemma \ref{piiE}. So it suffices to prove that $\mathcal H'[i]$ is stable under $\mathrm{ad}_{E_i}$. If $M \in \mathcal H'[i]$, then we have a filtration
			\[
				0 = M_{-1} \subseteq M_0 \subseteq \cdots \subseteq M_s=M
			\]
			such that $M_k/M_{k-1}$ is a subquotient of some $\mathrm{ad}_{E_i}^{n_k}(N_k)$ for $N_k$ a product of $E_a$, $a \in I \setminus \lbrace i \rbrace$. By Corollary \ref{injforserre}, $\tau_{E_i,M_k/M_{k-1}}$ is injective for all $k$. So we can apply Proposition \ref{exactweak} to get a filtration of $\mathrm{ad}_{E_i}(M)$
			\[
				0 = \mathrm{ad}_{E_i}(M_{-1}) \subseteq \mathrm{ad}_{E_i}(M_0) \subseteq \cdots \subseteq \mathrm{ad}_{E_i}(M_s)=\mathrm{ad}_{E_i}(M),
			\]
			such that
			\[
				\mathrm{ad}_{E_i}(M_k)/\mathrm{ad}_{E_i}(M_{k-1}) \simeq \mathrm{ad}_{E_i}(M_k/M_{k-1}).
			\]
			By assumption, $M_k/M_{k-1}$ fits into a diagram
			\[
				M_k/M_{k-1}  \hookrightarrow N \twoheadleftarrow \mathrm{ad}_{E_i}^{n_k}(N_k).
			\]
			Since $\tau_{E_i,(-)}$ is injective for the three modules in this diagram, by Proposition \ref{exactweak} we can apply $\mathrm{ad}_{E_i}$ to this diagram to get
			\[
				\mathrm{ad}_{E_i}(M_k/M_{k-1})  \hookrightarrow \mathrm{ad}_{E_i}(N) \twoheadleftarrow \mathrm{ad}_{E_i}^{n_k+1}(N_k).
			\]
			So $\mathrm{ad}_{E_i}(M_k)/\mathrm{ad}_{E_i}(M_{k-1})$ is a subquotient of $\mathrm{ad}_{E_i}^{n_k+1}(N_k)$. Hence $\mathrm{ad}_{E_i}(M) \in \mathcal{H}'[i]$, and the proof is complete.
		\end{itemize} 
	\end{proof}

	Now Corollary \ref{injforserre} and Theorem \ref{genforhi} imply Theorem \ref{tauinj}.
	
	\subsection{Generating objects} As an immediate consequence of Theorem \ref{genforhi} and Proposition \ref{nderivation}, we get the following generating result for $\mathcal H[i]$.
	
	\begin{cor}\label{gen2}
		As a Serre and monoidal full subcategory of $\mathcal H$, $\mathcal H[i]$ is generated by the objects $\mathrm{ad}_{E_i}^{(n)}(E_j)$ for $n\geqslant 0$ and $j \in I \setminus \lbrace i \rbrace$.
	\end{cor}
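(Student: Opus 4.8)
The plan is to combine the Serre-generation statement of Theorem \ref{genforhi} with the monoidal filtration of Proposition \ref{nderivation}. Write $\mathcal{C}$ for the Serre and monoidal full subcategory of $\mathcal{H}$ generated by the objects $\mathrm{ad}_{E_i}^{(n)}(E_j)$ with $n\geqslant 0$ and $j\in I\setminus\lbrace i\rbrace$. The inclusion $\mathcal{C}\subseteq \mathcal{H}[i]$ is immediate: each $\mathrm{ad}_{E_i}^{(n)}(E_j)$ lies in $\mathcal{H}[i]$, which is itself a Serre and monoidal subcategory of $\mathcal{H}$, hence contains the subcategory these objects generate. For the reverse inclusion it suffices, by Theorem \ref{genforhi}, to show that every module $\mathrm{ad}_{E_i}^n(E_{j_r}\ldots E_{j_1})$ with $n\geqslant 0$, $r\geqslant 1$ and $j_1,\ldots,j_r\in I\setminus\lbrace i\rbrace$ lies in $\mathcal{C}$; then $\mathcal{C}$ contains a family generating $\mathcal{H}[i]$ as a Serre subcategory, and being Serre it contains all of $\mathcal{H}[i]$.

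First I would record two elementary closure properties of $\mathcal{C}$. Since $\mathrm{ad}_{E_i}^n\simeq [n]_i!\,\mathrm{ad}_{E_i}^{(n)}$, the module $\mathrm{ad}_{E_i}^n(X)$ is a finite direct sum of grading shifts of $\mathrm{ad}_{E_i}^{(n)}(X)$, so, $\mathcal{C}$ being Serre (hence closed under finite direct sums, degree shifts and direct summands), one has $\mathrm{ad}_{E_i}^n(X)\in\mathcal{C}$ if and only if $\mathrm{ad}_{E_i}^{(n)}(X)\in\mathcal{C}$; in particular $\mathrm{ad}_{E_i}^a(E_j)\in\mathcal{C}$ for all $a\geqslant 0$ and $j\neq i$. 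And if a module $X$ carries a finite filtration all of whose successive quotients lie in $\mathcal{C}$, then $X\in\mathcal{C}$, by induction on the length of the filtration using closure under extensions.

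Then I would argue by induction on $r$ that $\mathrm{ad}_{E_i}^n(E_{j_r}\ldots E_{j_1})\in\mathcal{C}$ for all $n\geqslant 0$. The case $r=1$ is the first remark above. For $r\geqslant 2$, put $M=E_{j_r}$ and $N=E_{j_{r-1}}\ldots E_{j_1}$; Proposition \ref{nderivation} supplies a finite filtration of $\mathrm{ad}_{E_i}^n(MN)$ whose successive quotients are grading shifts of $\mathrm{ad}_{E_i}^{\zeta(k)}(M)\,\mathrm{ad}_{E_i}^{n-\zeta(k)}(N)$. By the first remark $\mathrm{ad}_{E_i}^{\zeta(k)}(M)=\mathrm{ad}_{E_i}^{\zeta(k)}(E_{j_r})\in\mathcal{C}$, and by the inductive hypothesis $\mathrm{ad}_{E_i}^{n-\zeta(k)}(N)=\mathrm{ad}_{E_i}^{n-\zeta(k)}(E_{j_{r-1}}\ldots E_{j_1})\in\mathcal{C}$; since $\mathcal{C}$ is monoidal and closed under degree shifts, each successive quotient lies in $\mathcal{C}$, and hence $\mathrm{ad}_{E_i}^n(E_{j_r}\ldots E_{j_1})=\mathrm{ad}_{E_i}^n(MN)\in\mathcal{C}$ by the second remark. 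This closes the induction and yields $\mathcal{H}[i]\subseteq\mathcal{C}$.

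There is no serious obstacle here: the statement is a formal consequence of Theorem \ref{genforhi} and Proposition \ref{nderivation}. The only delicate point is keeping track of ordinary versus divided powers, so that the divided-power generators of $\mathcal{C}$ really do suffice to build the ordinary powers occurring in Proposition \ref{nderivation}; this is handled by the direct-summand observation in the second paragraph.
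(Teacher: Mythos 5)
Your proof is correct, and it spells out exactly the argument the paper compresses into the phrase "immediate consequence of Theorem \ref{genforhi} and Proposition \ref{nderivation}": deduce membership in the Serre-and-monoidal subcategory $\mathcal{C}$ for each generator $\mathrm{ad}_{E_i}^{n}(E_{j_r}\ldots E_{j_1})$ by induction on $r$ via the filtration of Proposition \ref{nderivation}, then invoke Theorem \ref{genforhi}. Your care with the divided-power/ordinary-power bookkeeping (using that a Serre subcategory is closed under finite direct sums and direct summands) fills in the one point the paper leaves implicit.
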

	
	Corollary \ref{gen2} is a categorification of the definition of $U^+[i]$ as the subalgebra generated by the $\mathrm{ad}_{e_i}^{(n)}(e_j)$ for $n\geqslant 0$ and $j \in I \setminus \lbrace i \rbrace$. Lusztig also proves that $U^+ = \sum_{n \geqslant 0} U^+[i]e_i^{(n)}$ \cite[Lemma 38.1.2]{Lu}. We finish this section by proving a similar result for $\mathcal H$.
	
	\begin{prop} The Serre subcategory of $\mathcal H$ generated by the subcategories $\mathcal H[i]E_i^{(n)}$ for $n\geqslant 0$ is equal to $\mathcal H$.
	\end{prop}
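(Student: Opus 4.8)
The plan is to show that the Serre subcategory $\mathcal S$ of $\mathcal H$ generated by the $\mathcal H[i]E_i^{(n)}$, $n\geqslant 0$, is stable under left $j$-induction $E_j(-)$ for every $j\in I$, and then to conclude by the standard reduction to the modules $E_\nu=H_\beta 1_\nu$. First I would note that, since $E_i^n\simeq [n]_i!E_i^{(n)}$ and $E_i^{(n)}$ is a direct summand of $E_i^n$, the subcategory $\mathcal S$ equals the Serre subcategory generated by the objects $ME_i^n$ for $M\in\mathcal H[i]$ and $n\geqslant 0$; it is marginally more convenient to use the ordinary powers $E_i^n$. Observe also that $\mathcal S$ contains the monoidal unit $H_0\in\mathcal H[i]=\mathcal H[i]E_i^0$.

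The core step is: for $M\in\mathcal H[i]$, $n\geqslant 0$ and $j\in I$, the module $E_j(ME_i^n)\simeq (E_jM)E_i^n$ (by associativity of the monoidal product) lies in $\mathcal S$. If $j\neq i$ this is immediate, since $\mathcal H[i]$ is a monoidal subcategory of $\mathcal H$ containing $E_j$, so $E_jM\in\mathcal H[i]$ and $(E_jM)E_i^n\in\mathcal H[i]E_i^n\subseteq\mathcal S$. If $j=i$, I would apply Theorem \ref{tauinj} to $M$ and multiply the short exact sequence on the right by $E_i^n$; the monoidal product being biexact, and using $(ME_i)E_i^n\simeq ME_i^{n+1}$, one gets a short exact sequence
\[
0\rightarrow q_i^{\left\langle i^{\vee},\beta\right\rangle}ME_i^{n+1}\rightarrow (E_iM)E_i^n\rightarrow \mathrm{ad}_{E_i}(M)E_i^n\rightarrow 0.
\]
Here $ME_i^{n+1}\in\mathcal H[i]E_i^{n+1}\subseteq\mathcal S$, and $\mathrm{ad}_{E_i}(M)E_i^n\in\mathcal H[i]E_i^n\subseteq\mathcal S$ because $\mathrm{ad}_{E_i}(M)\in\mathcal H[i]$; hence the middle term $(E_iM)E_i^n=E_i(ME_i^n)$ lies in $\mathcal S$ as well.

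Since $E_j(-)$ is exact, the full subcategory of $\mathcal H$ of objects $X$ with $E_jX\in\mathcal S$ is a Serre subcategory; by the previous step it contains all the generators $ME_i^n$ of $\mathcal S$, hence contains $\mathcal S$. Thus $\mathcal S$ is stable under $E_j(-)$ for every $j\in I$. Starting from $H_0\in\mathcal S$ and iterating, $E_\nu=E_{\nu_{|\beta|}}\cdots E_{\nu_1}=H_\beta 1_\nu\in\mathcal S$ for all $\beta\in Q^+$ and $\nu\in I^\beta$; therefore $H_\beta=\bigoplus_{\nu\in I^\beta}H_\beta 1_\nu\in\mathcal S$ as a finite direct sum. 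Finally, any finitely generated graded $H_\beta$-module is a quotient of a finite direct sum of degree shifts of $H_\beta$, hence lies in $\mathcal S$; so $\mathcal S=\mathcal H$.

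I expect the only real difficulty to be bookkeeping: justifying the identification of $\mathcal S$ with the Serre subcategory generated by the $ME_i^n$, invoking associativity and biexactness of the monoidal product to manipulate $(ME_i)E_i^n$ and to tensor the sequence of Theorem \ref{tauinj}, and recalling that $\mathcal H[i]$ is closed under $E_j(-)$ for $j\neq i$ — all of which are established earlier. There is no genuinely hard point: this statement is the categorical counterpart of Lusztig's identity $U^+=\sum_{n\geqslant 0}U^+[i]e_i^{(n)}$, and the exact sequence of Theorem \ref{tauinj} carries all the weight.
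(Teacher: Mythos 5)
Your proof is correct and rests on the same two ingredients as the paper's: Theorem \ref{tauinj} applied to an object of $\mathcal H[i]$ and then tensored on the right by $E_i^n$, together with the monoidality of $\mathcal H[i]$. The paper takes a slightly longer route: it shows that the Serre subcategory $\mathcal H'$ generated by the $\mathcal H[i]E_i^n$ is in fact a \emph{monoidal} subcategory of $\mathcal H$ (by a double filtration argument reducing to products of subquotients of $M'E_i^m$ and $N'E_i^n$), and then observes that this finishes the proof since the $E_j$ generate $\mathcal H$ as a Serre and monoidal subcategory. You instead prove only the weaker closure of $\mathcal S$ under each left $j$-induction $E_j(-)$, and then conclude directly by iterating from $H_0$ to build $E_\nu$, $H_\beta$, and finally arbitrary finitely generated modules. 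Your version is a bit more economical (no double filtration), while the paper's version yields the stronger byproduct that $\mathcal H'$ is monoidal. Two small points worth being explicit about in a write-up: the observation that replacing $E_i^{(n)}$ by $E_i^n$ does not change the Serre subcategory (you handle this correctly via $E_i^n\simeq [n]_i!E_i^{(n)}$ and direct summands); and the verification that $\{X : E_jX\in\mathcal S\}$ is Serre, which uses only the exactness of $E_j(-)$ and the Serre property of $\mathcal S$. Both are fine as you argue them.
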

	
	\begin{proof}
		Let $\mathcal H'$ be the Serre subcategory of $\mathcal H$ generated by the subcategories $\mathcal H[i]E_i^{n}$ for $n\geqslant0$. We have $E_j \in \mathcal H'$ for all $j \in I$. So to prove that $\mathcal H = \mathcal H'$, it suffices to prove that $\mathcal H'$ is also a monoidal subcategory.		
		We start by proving that
		\begin{equation}\label{eim}
			\text{if } M \in \mathcal{H}',  \text{ then } E_iM \in \mathcal H'.
		\end{equation}
		Given $M \in \mathcal H'$, there is a filtration $0=M_0 \subseteq M_1 \subseteq \cdots \subseteq M_r = M$ such that each quotient $M_{k}/M_{k-1}$ is a subquotient of an object of $\mathcal H[i]E_i^{n_k}$. We want to prove $E_iM \in \mathcal H'$. By induction on $r$, it suffices to prove the result for $M=M'E_i^n$, with $M' \in \mathcal H[i]$. But then by Theorem \ref{tauinj} we have a short exact sequence
		\[
			0 \rightarrow M'E_i^{n+1} \rightarrow E_iM \rightarrow \mathrm{ad}_{E_i}(M')E_i^n \rightarrow 0. 
		\]
		Since $M'E_i^{n+1},\mathrm{ad}_{E_i}(M')E_i^n \in \mathcal{H}'$, we conclude that  $E_iM \in \mathcal H'$. Hence statement (\ref{eim}) is established. Since $\mathcal H[i]$ is monoidal, it is also clear that \begin{equation}\label{mn}\text{if } M\in \mathcal H' \text{ and }N \in \mathcal H[i], \text{ then } MN \in \mathcal H'.\end{equation}
		
		Now let $M,N \in \mathcal H'$, we want to prove that $MN \in \mathcal H'$. We have filtrations
		\[
			0=M_0 \subseteq M_1 \subseteq \cdots \subseteq M_{r-1} \subseteq M_r = M,
		\]
		\[
			0=N_0 \subseteq N_1 \subseteq \cdots \subseteq N_{s-1} \subseteq N_s = N,
		\]
		such that $M_k/M_{k-1}$ is a subquotient of an object of $\mathcal H[i]E_i^{m_k}$ and $N_k/N_{k-1}$ is a subquotient of an object of $\mathcal H[i]E_i^{n_k}$. There is an exact sequence
		\[
			0 \rightarrow MN_{s-1} \rightarrow MN \rightarrow M(N/N_{s-1}) \rightarrow 0,
		\]
		so by induction on $s$, it suffices to show the result for $s=1$. Similarly there is an exact sequence
		\[
			0 \rightarrow M_{r-1}N \rightarrow MN \rightarrow (M/M_{r-1})N \rightarrow 0,
		\]
		so by induction on $r$, it suffices to show the result for $r=1$. In that case, $M$ is a subquotient of $M'E_i^m$ and $N$ is a subquotient of $N'E_i^n$ with $M',N' \in \mathcal H[i]$. Then by exactness of the tensor product, $MN$ is a subquotient of $M'E_i^mN'E_i^n$. But since $N'E_i^n \in \mathcal H'$, applying statement (\ref{eim}) $m$ times gives $E_i^mN'E_i^n \in \mathcal H'$. Then statement (\ref{mn}) gives $M'E_i^mN'E_i^n \in \mathcal H'$. Thus $MN \in \mathcal H'$.
	\end{proof}
	
	\bigskip
	
	\section{Categorical $\mathfrak{sl}_2$ action} In this section we finish proving that there is a categorical action of $\mathfrak{sl}_2$ on $\mathcal H[i]$. We start by recalling the definition of categorical $\mathfrak{sl}_2$ actions. With what we have already proved, the only axiom left to check is the categorical $[e_i,f_i]=h_i$ relation. The proof mirrors that of \cite{kk} for cyclotomic KLR algebras, and is based on two ingredients: the short exact sequence of Theorem \ref{tauinj} and the Mackey decompositions for KLR algebras. One difference, as in the proof of Theorem \ref{tauinj}, is that we only do explicit computations for modules of the form $\mathrm{ad}_{E_i}^{n}(E_{j_r}\ldots E_{j_1})$, which is enough as they generate $\mathcal{H}[i]$ by Theorem \ref{genforhi}.
	
	\subsection{2-representations}
	There are various setups for 2-representations (on additive, abelian or triangulated categories). In our framework, we use a version for abelian categories. Since it is not important for our purpose, we will not introduce the 2-category attached to $\mathfrak{sl}_2$, but rather just define what a representation of it is, as in the original approach of \cite{ChR}.
	
	\begin{defi}\label{2repsl2}A \textit{2-representation} of $\mathfrak{sl}_2$ is the data of
	\begin{itemize}
		\item a graded, abelian, $K$-linear category $\mathcal V$ together with a decomposition into \textit{weight subcategories} $\mathcal V = \oplus_{w \in \mathbb Z} \mathcal V_w$,
		\item endofunctors $\mathcal E, \mathcal F$ of $\mathcal V$ restricting to $\mathcal E1_w: \mathcal V_w \rightarrow \mathcal V_{w+2}$ and $\mathcal F1_{w}:V_w\rightarrow V_{w-2}$ for all $w \in \mathbb Z$,
		\item natural transformations $x : \mathcal{E} \rightarrow \mathcal E$ of degree 2, $\tau : \mathcal E^2 \rightarrow \mathcal E^2$ of degree -2, $\eta : 1_w \rightarrow \mathcal F\mathcal E1_w$ of degree $1+w$ and $\varepsilon : \mathcal E\mathcal F1_w \rightarrow 1_w$ of degree $1-w$ for all $w \in \mathbb{Z}$,
	\end{itemize}

	subject to the following conditions
	\begin{enumerate}
		\item the functors $\mathcal E$, $\mathcal F$ are exact,
		\item the natural transformation $\varepsilon, \eta$ are the counit and unit respectively of adjunctions $(\mathcal E1_w,q^{w+1}\mathcal F1_{w+2})$ for all $w\in \mathbb{Z}$,
		\item the natural transformations $x$ and $\tau$ induce actions of affine nil Hecke algebras on powers of $\mathcal E$; which means that the following equalities hold:
		\[
		\left \{\begin{array}{l}
			\tau^2=0,\\
			\tau \circ x\mathcal E - \mathcal Ex\circ \tau = x\mathcal E\circ \tau - \tau\circ \mathcal Ex = \mathcal E^2, \\
			\tau \mathcal E\circ \mathcal E\tau \circ \tau \mathcal E - \mathcal E\tau \circ \tau \mathcal E \circ \mathcal E\tau =0,
		\end{array} \right.
		\]
		\item for all $w \in \mathbb Z$, there are isomorphisms
		\[
		\left \{ \begin{array}{l}
			\text{if} \ w \geqslant 0, \ \mathcal E
			\mathcal F 1_w \xrightarrow[\sim]{\rho_w} \mathcal F\mathcal E1_w \oplus [w]1_w, \\
			\text{if} \ w \leqslant 0, \ \mathcal E \mathcal F1_w \oplus [-w]1_w \xrightarrow[\sim]{\rho_w} \mathcal F \mathcal E1_w,
		\end{array} \right.
		\]
			where the $\rho_w$ are some explicit natural transformations defined below.
	\end{enumerate}
	To complete the definition, we need to define the natural transformations $\rho_w$. We start by defining a map $\varphi : EF \rightarrow FE$ as the composition
	\begin{equation}\label{varphi}
		\varphi = \left(\mathcal E\mathcal F \xrightarrow{\eta \mathcal E\mathcal F} \mathcal F\mathcal E\mathcal E\mathcal F \xrightarrow{\mathcal F\tau \mathcal F} \mathcal F\mathcal E\mathcal E\mathcal F \xrightarrow{\mathcal F\mathcal E\varepsilon} \mathcal F\mathcal E\right).
	\end{equation}
	If $w\geqslant 0$, we define the natural transformation $\rho_w$ as a column vector as follows
	\[
		 \rho_w = \left[ \begin{array}{c} 
			\varphi \\
			\varepsilon \\
			\varepsilon \circ x\mathcal F \\
			\vdots \\
			\varepsilon\circ x^{w-1}\mathcal F
		\end{array} \right].
	\]
	If $w \geqslant 0$, we define the natural transformation $\rho_w$ as a row vector as follows
	\[
		\rho_w = \left[\varphi \quad \eta \quad \mathcal Fx \circ \eta \quad \ldots \quad \mathcal Fx^{w-1}\circ \eta \right].
	\]
	This completes the definition of a categorical $\mathfrak{sl}_2$-action.
	\end{defi}
	
	\begin{rem}
		The degrees of the natural transformations can also be dilated by some factor. In our case, the $\mathfrak{sl}_2$ categorical action arises from an action of the subalgebra $U_i$, hence all the degrees will be multiplied by $d_i$.
	\end{rem}

	We want to prove that there is a structure of 2-representation of $\mathfrak{sl}_2$ on the abelian category $\mathcal H[i]$, induced by the functor $\mathrm{ad}_{E_i}$. Let us start by giving all the necessary pieces of data. The weight subcategories of $\mathcal H[i]$ are the subcategories $\mathcal H[i]_w$, $w \in \mathbb Z$, defined by
	\[
		\mathcal H[i]_w = \bigoplus_{\substack{\beta \in Q^+ \\ \left< i^{\vee},\beta\right>=w}} H_{\beta}^i\mathrm{-mod}.
	\]
	The adjoint pair of endofunctors of $\mathcal H[i]$ that we consider is $(\mathrm{ad}_{E_i},\mathrm{ad}_{F_i})$. The functor $\mathrm{ad}_{E_i}$ is the one we studied in Section \ref{cat}, the functor $\mathrm{ad}_{F_i}$ is defined to be its right adjoint, up to a grading shift chosen so that the unit $\eta$ and counit $\varepsilon$ of adjunction have the desired degree. Explicitly, $\mathrm{ad}_{F_i}$ is given by
	\begin{align*}
		\mathrm{ad}_{F_i} = \bigoplus_{\beta \in Q^+} q_i^{1-\left<i^{\vee},\beta\right>}\mathrm{res}_{H_{\beta-i}^i}^{H_{\beta}^i}.
	\end{align*}
	More precisely, if $M \in H_{\beta}^i\mathrm{-mod}$, we have $\mathrm{ad}_{F_i}(M) = q_i^{1-\left< i^{\vee},\beta\right>}1_{i,\beta-i}M$, viewed as an $H_{\beta-i}^i$-module via the right inclusion $H_{\beta-i}^i\rightarrow 1_{i,\beta-i}H_{\beta}^i1_{i,\beta-i}$. The fact that $\mathrm{ad}_{E_i}$ and $\mathrm{ad}_{F_i}$ restrict as desired on the weight subcategories follows simply from $\left< i^{\vee},i\right>=2$. Finally, we have to define $x \in \mathrm{End}(\mathrm{ad}_{E_i})$ and $\tau \in \mathrm{End}(\mathrm{ad}_{E_i}^2)$. These natural transformations are defined by Proposition \ref{adei}.
	
	\begin{thm}\label{2rep}
		This data defines a 2-representation of $\mathfrak{sl}_2$ on $\mathcal H[i]$.
	\end{thm}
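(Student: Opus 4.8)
The plan is to verify, one at a time, the four axioms of Definition \ref{2repsl2} for the data $(\mathcal{H}[i], \mathrm{ad}_{E_i}, \mathrm{ad}_{F_i}, x, \tau, \eta, \varepsilon)$ described above; three of them are already in hand. Axiom (1): the exactness of $\mathrm{ad}_{E_i}$ is Corollary \ref{exact}, and $\mathrm{ad}_{F_i}$ is exact because, up to the invertible grading shift $q_i^{1-\langle i^\vee,\beta\rangle}$, it is the idempotent truncation $M \mapsto 1_{i,\beta-i}M$ followed by restriction of scalars along the right inclusion, both of which are exact. Axiom (2) holds by construction: $\mathrm{ad}_{F_i}$ is \emph{defined} as the right adjoint of $\mathrm{ad}_{E_i}$, and the shift $q_i^{1-\langle i^\vee,\beta\rangle}$ is chosen precisely so that the adjunction data $(\mathrm{ad}_{E_i}1_w,\, q_i^{w+1}\mathrm{ad}_{F_i}1_{w+2})$ and its unit and counit carry the degrees $1+w$ and $1-w$ prescribed in the definition; that the functors shift the weight grading as required is the identity $\langle i^\vee,i\rangle = 2$. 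Axiom (3) is exactly Proposition \ref{adei}: the natural transformations $x$ and $\tau$ arising from the right action of the affine nil Hecke algebra on $\mathrm{ad}_{E_i}^n \simeq \pi_i(E_i^n(-))$ satisfy the nil Hecke relations. (All of $\mathcal{H}[i]$, $\mathrm{ad}_{E_i}$, $\mathrm{ad}_{F_i}$ are $K$-linear, abelian and graded, as needed.)

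So the substantive point is Axiom (4), the categorification of $[e_i,f_i]=h_i$, namely that the explicit natural transformations $\rho_w$ --- built from $\varphi$ (the composite in (\ref{varphi})) together with the $\varepsilon\circ x^k\mathrm{ad}_{F_i}$, resp.\ $\mathrm{ad}_{F_i}x^k\circ\eta$, for $0\le k<|w|$ --- are isomorphisms. Following \cite{kk}, the plan is to reduce this to a Mackey-type decomposition of the $(H_\beta^i,H_\beta^i)$-bimodule $1_{i,\beta}H_{\beta+i}^i1_{i,\beta}$ governing $\mathrm{ad}_{F_i}\mathrm{ad}_{E_i}$, compared against the bimodule $H_\beta^i1_{i,\beta-i}\otimes_{H_{\beta-i}^i}1_{i,\beta-i}H_\beta^i$ governing $\mathrm{ad}_{E_i}\mathrm{ad}_{F_i}$. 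The crucial lever is the short exact sequence of Theorem \ref{tauinj}, $0\to q_i^{\langle i^\vee,\beta\rangle}ME_i \xrightarrow{\tau_{E_i,M}} E_iM \to \mathrm{ad}_{E_i}(M)\to 0$: applying the exact functor $1_{i,\beta}(-)$ to it isolates, on the restriction side, exactly the ``extra'' summand, which one must match with $[w]1_w$ (resp.\ $[-w]1_w$). The coset combinatorics in $\mathfrak{S}_{n+1}$ decomposes $1_{i,\beta}H_{\beta+i}1_{i,\beta}$ into a term recovering the diagonal $H_\beta$ (the $\varphi$-component) plus a string of polynomial terms $x_1^k1_{i,\beta}$; passing to the quotient $H_\beta^i=H_\beta/(1_{\beta-i,i})$ and invoking the vanishing of $Q_{i,\beta}$ on $H_{\beta+i}^i$ (Proposition \ref{annilpol}) truncates this string to precisely $|w|=|\langle i^\vee,\beta\rangle|$ terms, which after accounting for grading shifts assemble into $[w]1_w$.

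To carry these computations out manageably --- bearing in mind that, unlike the cyclotomic algebras of \cite{kk}, the $H_\beta^i$ are infinite dimensional (Proposition \ref{infgen}), so finite-dimensionality is unavailable --- I would invoke Theorem \ref{genforhi} to reduce to checking that $\rho_w$ is an isomorphism on the generating modules $M=\mathrm{ad}_{E_i}^n(E_{j_r}\cdots E_{j_1})$ with $j_1,\ldots,j_r\in I\setminus\{i\}$, and then propagate: $\rho_w$ is a natural transformation between exact functors, so if it is an isomorphism on the subquotients of a filtration it is an isomorphism on the whole module (five lemma plus induction on length), and by Theorem \ref{genforhi} every object of $\mathcal{H}[i]$ is built from such generators by subquotients and extensions. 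On the generators the relevant modules are explicit as graded $P_{n+1}$-modules --- essentially as in Subsection \ref{chevgen}, refined by the filtration of Proposition \ref{nderivation} --- so the error term in the $\mathrm{ad}_{E_i}\mathrm{ad}_{F_i}$ versus $\mathrm{ad}_{F_i}\mathrm{ad}_{E_i}$ comparison can be written down by hand and identified with the required sum of shifted copies of $1_w$. The main obstacle is thus the Mackey decomposition for $H_\beta^i$ itself: pinning down the exact $(H_\beta^i,H_\beta^i)$-bimodule structure of $1_{i,\beta}H_{\beta+i}^i1_{i,\beta}$ and verifying that $\varphi$ and the powers $\varepsilon\circ x^k\mathrm{ad}_{F_i}$, $\mathrm{ad}_{F_i}x^k\circ\eta$ match its summands in the right degrees. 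Once that is done, $\rho_w$ is an isomorphism, Axiom (4) holds, and the theorem follows.
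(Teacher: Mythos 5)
Your proposal matches the paper's proof in structure and substance: Axioms (1)–(3) are discharged by Corollary \ref{exact} (and the analogous fact for $\mathrm{ad}_{F_i}$, which as you note is just an idempotent truncation composed with restriction of scalars), Corollary \ref{proj}, and Proposition \ref{adei}, and Axiom (4) is exactly Theorem \ref{weyl}, which the paper proves by the route you sketch — the Mackey decompositions for the KLR algebras $H_\beta$ (Propositions \ref{ll}, \ref{lr}), the snake lemma applied to a diagram built from these and the $\tau_{E_i,(-)}$ short exact sequences of Theorem \ref{tauinj}, reduction to the generators $\mathrm{ad}_{E_i}^{(n)}(E_{j_r}\cdots E_{j_1})$ and their subquotients via Theorem \ref{genforhi}, and an explicit degree computation with $Q_{i,\beta}$ yielding the $[|w|]_i$ error term. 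One small clarification: the paper does not directly exhibit a split Mackey decomposition of the $(H_\beta^i,H_\beta^i)$-bimodule $1_{i,\beta}H_{\beta+i}^i1_{i,\beta}$; rather, it derives the invertibility of $\rho_w$ from the snake-lemma comparison with the $H_\beta$-Mackey sequences and then extends from (subquotients of) generators to all of $\mathcal H[i]$ by the naturality of $\Phi_M$ — which is precisely the ``propagation'' step you describe.
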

	
	\begin{proof}
		We must prove that the conditions in Definition \ref{2repsl2} are satisfied. The fact that $\mathrm{ad}_{E_i}$ is exact was proved in Corollary \ref{exact}, and the exactness of $\mathrm{ad}_{F_i}$ follows similarly from Corollary \ref{proj}. The action of the affine nil Hecke algebra on powers of $\mathrm{ad}_{E_i}$ was proved in Proposition \ref{adei}. Hence only the invertibility of the maps $\rho_w$, $w \in \mathbb{Z}$, remains to be proved, which we do in Theorem \ref{weyl} below.
	\end{proof}
	
	\subsection{Mackey formulas}
	\subsubsection{The KLR algebra case} We follow \cite[Section 3]{kk}. Let $\beta \in Q^+$ and $i \in I$. Recall the left $i$-induction functor
	\[
		\left \{ \begin{array}{rcl}
			H_{\beta}\mathrm{-mod} & \rightarrow & H_{\beta+i}\mathrm{-mod}, \\
			M & \mapsto & E_iM = H_{\beta+i}1_{i,\beta} \otimes_{H_{i,\beta}} (E_i\otimes M).
		\end{array} \right.
	\]
	Since $E_i$ is a free module of rank 1 over $H_{i}$, we have $E_iM \simeq H_{\beta+i}1_{i,\beta} \otimes_{H_{\beta}} M$. We also defined the left $i$-restriction functor $F_i$ as the right adjoint of the left $i$-induction functor. It is given by
	\[
	\left \{ \begin{array}{rcl}
	H_{\beta+i}\mathrm{-mod} & \rightarrow & H_{\beta}, \\
	N & \mapsto & F_i(N) = 1_{i,\beta}N.
	\end{array} \right.
	\]
	There is a Mackey type result for left $i$-induction and left $i$-restriction.
	
	\begin{prop}[{\cite[Theorem 3.6]{kk}}]\label{mackey1}\label{ll}
		There is an isomorphism of graded $H_{\beta}$-bimodules
		\[
		\left \{ \begin{array}{rcl}
		q_i^{2}\left(H_{\beta}1_{i,\beta-i} \otimes_{H_{\beta-i}} 1_{i,\beta-i} H_{\beta} \right) \oplus H_{i,\beta} & \rightarrow & 1_{i,\beta} H_{\beta+i} 1_{i,\beta}, \\
		\left((y\otimes y'),z\right) & \mapsto &  (1_i\diamond y)\tau_n(1_i\diamond y') + z.
		\end{array} \right.
		\]
		It induces a natural isomorphism in $M \in H_{\beta}\mathrm{-mod}$
		\[
			q_i^{2}E_iF_i(M) \oplus M[X_i] \xrightarrow{\sim} F_i(E_iM)
		\]
		where $X_i$ is a variable of degree $2d_i$. More explicitly,
		\[
			M[X_i] = M\otimes K[X_i] \simeq \bigoplus_{k \geqslant 0} q_i^{-2k}M.
		\]
	\end{prop}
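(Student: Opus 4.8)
\medskip

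The plan is to construct the bimodule isomorphism explicitly, exhibit the two summands on the left as the images of two natural maps into $1_{i,\beta}H_{\beta+i}1_{i,\beta}$, and then check bijectivity by a PBW-basis count. First I would set $n = |\beta|+1$, so that $1_{i,\beta}H_{\beta+i}1_{i,\beta}$ sits inside $H_{\beta+i}$ with the distinguished element $\tau_n$ (the transposition moving the leftmost $i$-strand past the $\beta$-block). The decomposition $H_{\beta+i} = \bigoplus_{\underline\omega \in T} \tau_{\underline\omega} H_{i,\beta}$ from (\ref{coset}), where $T$ runs over minimal length representatives of left cosets of $\mathfrak S_1 \times \mathfrak S_{|\beta|}$ in $\mathfrak S_n$, has exactly two ``types'' of cosets after we multiply on the left by $1_{i,\beta}$: the coset of the identity (contributing the summand $H_{i,\beta}$, via $z \mapsto z$) and the cosets whose representative starts by moving the new strand to position $2$, i.e. those factoring through $\tau_n$ composed with a permutation of the $\beta$-strands only. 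More precisely I would use the relation $1_{i,\beta}\tau_{\underline\omega} 1_{i,\beta} \neq 0$ forces $s_{\underline\omega}^{-1}(1) \in \{1, n\}$, exactly as in the proof that $\mathcal H[i]$ is monoidal, and the case $s_{\underline\omega}^{-1}(1)=n$ is what produces the term $\tau_n$ sandwiched between two copies of $H_{i,\beta-i}$, i.e. $H_\beta 1_{i,\beta-i}\otimes_{H_{\beta-i}} 1_{i,\beta-i}H_\beta$.

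\medskip

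Next I would verify that the proposed map is a morphism of graded $H_\beta$-bimodules. The summand map $z \mapsto z$ is obviously a bimodule map $H_{i,\beta} \to 1_{i,\beta}H_{\beta+i}1_{i,\beta}$ since $H_{i,\beta} = 1_{i,\beta}H_{i,\beta}1_{i,\beta} \subseteq 1_{i,\beta}H_{\beta+i}1_{i,\beta}$. For the other summand, $((y\otimes y') \mapsto (1_i\diamond y)\tau_n(1_i\diamond y')$ is a bimodule map provided the middle factor $\tau_n$ commutes appropriately with $1_{i,\beta-i,i}$ — this is the point where one checks that $\tau_n$ intertwines the two $H_{\beta-i}$-actions, which follows from relations (\ref{Taux}) and the commutation of $\tau_n$ with $\tau_k$ for $k$ far from $n$. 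The grading shift $q_i^2$ is forced because $\deg(\tau_n 1_{i,\dots}) = -i\cdot i = -2d_i$ contributes to the balance when we pass the $i$-strand across another $i$-strand; I would double-check the sign of the shift against the conventions in Section 3.

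\medskip

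The bijectivity is the crux, and I expect the main obstacle to be organizing the PBW bookkeeping cleanly rather than any conceptual difficulty. Using Theorem \ref{pbw}, both sides are free over $P_\beta$ (or over $H_\beta$ on one side) and I would compare graded ranks: $1_{i,\beta}H_{\beta+i}1_{i,\beta}$ is free over $H_{i,\beta}$ on the coset representatives $\underline\omega$ with $s_{\underline\omega}^{-1}(1)\in\{1,n\}$, and the left-hand side decomposes as $H_{i,\beta}$ (one coset, the identity) plus $q_i^2(H_\beta 1_{i,\beta-i}\otimes_{H_{\beta-i}}1_{i,\beta-i}H_\beta)$, which by PBW for $H_\beta$ over $H_{i,\beta-i}$ has exactly the right rank to account for the remaining cosets. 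To upgrade the rank equality to an isomorphism, I would show the map is surjective — every $1_{i,\beta}\tau_{\underline\omega}1_{i,\beta}$ with $s_{\underline\omega}^{-1}(1)=n$ lies in the image because such $\underline\omega$ factors as (move strand $1$ to position $n$) followed by a permutation of the remaining $i,\beta$-strands, and the ``move to position $n$'' part is realized by repeatedly applying $\tau_k$'s which, pushed past the idempotents, assemble into an element of the form $(1_i\diamond y)\tau_n(1_i\diamond y')$ modulo lower terms — then surjectivity plus equal (finite in each degree) graded ranks over a field, or freeness over $P_\beta$ in general, gives injectivity. Finally, the natural isomorphism $q_i^2 E_iF_i(M)\oplus M[X_i]\xrightarrow{\sim} F_i(E_iM)$ is obtained by applying $-\otimes_{H_\beta}M$ (equivalently $1_{i,\beta}H_{\beta+i}1_{i,\beta}\otimes_{H_\beta}M$) to the bimodule isomorphism and identifying $H_{i,\beta}\otimes_{H_\beta}M \simeq M\otimes K[X_i] = M[X_i]$ via the PBW basis $\{1, x_1, x_1^2, \dots\}$ of $H_{i,\beta}$ over $H_\beta$, where $x_1 = X_i$ is the new $i$-strand variable of degree $2d_i$; the second displayed formula $M[X_i] \simeq \bigoplus_{k\geqslant 0} q_i^{-2k}M$ is then just reading off this free module structure with its grading.
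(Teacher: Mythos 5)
The paper does not prove Proposition \ref{mackey1}: it is stated with the bracketed citation \cite[Theorem~3.6]{kk}, which marks it as a result imported from Kang--Kashiwara, with no internal argument given. There is therefore no proof in the paper for your sketch to be compared against, only the cited source.

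On its own terms, your outline is consistent with the standard approach (and with \cite{kk}): decompose $1_{i,\beta}H_{\beta+i}1_{i,\beta}$ using the coset decomposition (\ref{coset}), separate the identity coset (which contributes $H_{i,\beta}$ and, after $-\otimes_{H_\beta}M$, the polynomial summand $M[X_i]$) from the remaining $|\beta|$ cosets (which assemble into $(1_i\diamond y)\tau_n(1_i\diamond y')$), verify the bimodule property, and finish with a PBW argument. A few details need to be tightened, however. First, you set $n=|\beta|+1$, but $\tau_{|\beta|+1}$ does not exist in $H_{\beta+i}$; the crossing in the bimodule map is $\tau_{|\beta|}$, a single swap of the two leftmost strands. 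It is also not ``the transposition moving the leftmost $i$-strand past the $\beta$-block'' --- that full move is $\tau_{[1\uparrow |\beta|]}$, which is not what appears here. Second, the constraint $s_{\underline{\omega}}^{-1}(1)\in\{1,n\}$ you import from the monoidal-subcategory argument is for the wrong idempotent: there the obstruction is $1_{\beta+\gamma-i,i}$, which concerns the rightmost position; here the relevant idempotent is $1_{i,\beta}$, whose constraint concerns position $|\beta|+1$, so one should analyze $s_{\underline{\omega}}^{-1}(|\beta|+1)$ instead. Third, ``compare graded ranks over a field'' is not available here since $K$ is an arbitrary commutative ring; your hedged alternative (freeness over $P_\beta$) is the correct route, but the implication ``surjective between free modules $\Rightarrow$ isomorphism'' requires finite rank, so you should exhibit that the map carries a free basis to a free basis, e.g.\ by showing it is triangular with unit diagonal with respect to the filtration by length of coset representatives. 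These are repairable bookkeeping issues rather than conceptual gaps, but as written the sketch is not airtight.
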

	
	Right $i$-induction and right $i$-restriction functors are defined similarly, and satisfy an analogous Mackey type decomposition. Finally, there is a Mackey type result for right $i$-induction and left $i$-restriction.
	
	\begin{prop}[{\cite[Theorem 3.9]{kk}}]\label{lr}
		For all $\beta \in Q^+$, there is a short exact sequence of graded $H_{\beta}$-bimodules
		\[
		0 \rightarrow H_{\beta}1_{\beta-i,i} \otimes_{H_{\beta-i}} 1_{i,\beta-i} H_{\beta} \xrightarrow{R} 1_{i,\beta}H_{\beta+i}1_{\beta,i} \xrightarrow{S} q_i^{\left<i^{\vee},\beta\right>}H_{\beta,i} \rightarrow 0.
		\]
		It induces a short exact sequence which is natural in $M \in H_{\beta}\mathrm{-mod}$
		\[
		0 \rightarrow F_i(M)E_i \xrightarrow{R_M} F_i(ME_i) \xrightarrow{S_M} q_i^{\left< i^{\vee},\beta\right>} M[X_i] \rightarrow 0.
		\]
	\end{prop}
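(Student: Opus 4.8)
The plan is to establish the bimodule statement first and then obtain the module statement by applying $-\otimes_{H_\beta}M$; the second step is automatic once we know that each of the three $H_\beta$-bimodules in the sequence is free as a \emph{right} $H_\beta$-module, which is immediate from Theorem~\ref{pbw} and the coset decomposition (\ref{coset}). So the real content is a ``Mackey filtration'' of the $H_\beta$-bimodule $1_{i,\beta}H_{\beta+i}1_{\beta,i}$, which is exactly $F_i(ME_i)$ before tensoring with $M$.

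First I would pin down the two maps. The map $R$ is the ``insert an $i$-strand'' map: an elementary tensor $y\otimes y'$ with $y\in H_\beta 1_{\beta-i,i}$ and $y'\in 1_{i,\beta-i}H_\beta$ goes to the element of $1_{i,\beta}H_{\beta+i}1_{\beta,i}$ built from a left shift of $y'$ and a right shift of $y$, the leftmost strand of $1_{i,\beta}$ being the distinguished $i$-strand of $1_{i,\beta-i}H_\beta$ and the rightmost strand of $1_{\beta,i}$ the distinguished $i$-strand of $H_\beta 1_{\beta-i,i}$. That $R$ is a well-defined $H_\beta$-bimodule map of degree $0$ is a direct check against relations (\ref{Tausq})--(\ref{Taubraid}) of Definition~\ref{klr}. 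The map $S$ is, up to the grading shift, the projection onto the ``through-strand'' part: one can either write it as an explicit diagram-closing formula built from the $n$-fold crossing $\tau_{[1\uparrow n]}$ (here $n=\vert\beta\vert$), or characterise it as the cokernel projection of $R$ followed by the identification established below. Its degree is governed by that crossing, whose degree on the relevant idempotents is $-i\cdot\beta=-d_i\langle i^\vee,\beta\rangle$, which is precisely the shift $q_i^{\langle i^\vee,\beta\rangle}$ on the target.

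The heart of the argument is the double-coset analysis. By (\ref{coset}), $1_{i,\beta}H_{\beta+i}1_{\beta,i}$ is a free right $H_\beta$-module with basis indexed by the minimal-length representatives (compatible with the idempotents $1_{i,\beta}$ and $1_{\beta,i}$) of the double cosets $\mathfrak S_{\{1,\dots,n\}}\backslash \mathfrak S_{n+1}/\mathfrak S_{\{2,\dots,n+1\}}$, the two parabolics being those through which the left and right $H_\beta$-actions factor. There are exactly two such double cosets: that of the identity, and that of the cycle carrying $1$ to $n+1$, whose minimal representative is $s_{[1\uparrow n]}$, of length $n$. Because in $H_{\beta+i}$ every straightening step coming from relations (\ref{Tausq}), (\ref{Taux}), (\ref{Taubraid}) either preserves the number of crossings or strictly lowers it, the span $N_0$ of the basis elements lying over the \emph{identity} double coset is a sub-$H_\beta$-bimodule. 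One then checks, comparing PBW bases, that $R$ is an isomorphism onto $N_0$ (both sides free right $H_\beta$-modules of the same graded rank, with $R$ matching generators), and that the quotient $1_{i,\beta}H_{\beta+i}1_{\beta,i}/N_0$, freely spanned by the classes of $\tau_{[1\uparrow n]}P1_\nu$ with $P\in K[x_{n+1}]$, is carried by $S$ isomorphically onto $q_i^{\langle i^\vee,\beta\rangle}H_{\beta,i}$. This gives the short exact sequence of bimodules, with $S\circ R=0$ automatic since $\operatorname{im}R=N_0=\ker S$; applying $-\otimes_{H_\beta}M$ and using freeness over $H_\beta$ yields the stated natural sequence of $H_\beta$-modules, with $M[X_i]=M\otimes K[X_i]$.

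The main obstacle is the combinatorial bookkeeping in the double-coset step: verifying precisely that $N_0$ is stable under the two $H_\beta$-actions (i.e.\ that the relations never move a term from the identity coset ``up'' to the cycle coset — true, but it must be read off from the shape of relations (\ref{Tausq})--(\ref{Taubraid})), exhibiting the explicit PBW matching that makes $R$ an isomorphism onto $N_0$ and identifies the quotient, and tracking the grading shift through the crossing $\tau_{[1\uparrow n]}$. Everything else — well-definedness of $R$ and $S$, exactness once the filtration is in place, and the passage to modules — is then formal.
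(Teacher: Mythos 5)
The paper does not prove this statement: it is quoted verbatim from Kang--Kashiwara (\cite[Theorem~3.9]{kk}), and the text following the proposition only records the explicit formulas for $R$ and $S$. So there is no in-paper proof to compare against, and your sketch has to be judged on its own merits. It is, as far as I can tell, a correct outline of the standard Mackey-type double-coset argument. Three small corrections are in order. (i) The long minimal double-coset representative is $s_{[n\downarrow 1]}=s_n\cdots s_1$ (the cycle sending $1\mapsto n+1$), not $s_{[1\uparrow n]}$; this matches the paper's explicit description $S(z)=z_n$ for $z=\sum_{k}\tau_{[k\downarrow 1]}z_k$, and the quotient is spanned by $\tau_{[n\downarrow 1]}H_{\beta,i}$, not by $\tau_{[1\uparrow n]}K[x_{n+1}]$-terms as you write. (ii) The three terms of the bimodule sequence are projective but not all free as right $H_{\beta}$-modules: the first term contains the factor $1_{i,\beta-i}H_{\beta}$, which is a direct summand of $H_{\beta}$ and in general not free. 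What you actually need is only that the cokernel $q_i^{\langle i^{\vee},\beta\rangle}H_{\beta,i}$ be projective as a right $H_{\beta}$-module (true, since $H_{\beta,i}\cong H_{\beta}\otimes K[X_i]$), so that the short exact sequence of bimodules splits as right $H_{\beta}$-modules and $-\otimes_{H_{\beta}}M$ remains exact. (iii) The observation that straightening never increases the number of crossings is exactly the length filtration exploited in Lemma~\ref{xtaucom}, but by itself it controls the filtration by $\tau$-length, not the double-coset stratification; to conclude that $N_0$ is a sub-bimodule you also need that $W_J W_{J'}$ is stable under left multiplication by $W_J$, and that its elements $\sigma$ have $\sigma(1)\neq n+1$ so their minimal left-$W_{J'}$-coset representatives remain among the $s_{[k\downarrow 1]}$ with $k<n$. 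Alternatively, this step can be skipped: once you show $R$ is a bimodule map whose image is exactly $\bigoplus_{k<n}\tau_{[k\downarrow 1]}1_{i,\beta-i,i}H_{\beta,i}$ (your graded-rank/basis-matching argument, done summand by summand), the sub-bimodule property of that span is automatic.
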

	
	Let us describe explicitly what the morphisms $R$ and $S$ are. The morphism $R$ is given by
	\[
	\left \{ \begin{array}{rcl} 
	H_{\beta}1_{\beta-i,i} \otimes_{H_{\beta-i}} 1_{i,\beta-i} H_{\beta} & \rightarrow & 1_{i,\beta}H_{\beta+i}1_{\beta,i}, \\
	y\otimes y' & \mapsto & (1_i \diamond y)(y' \diamond 1_i).
	\end{array} \right.
	\]	
	To describe the morphism $S$, we use decomposition (\ref{coset}) to write
	\[
		H_{\beta+i}1_{\beta,i} = \bigoplus_{k=0}^n \tau_{[k\downarrow 1]} H_{\beta,i}.
	\]
	If $z \in 1_{i,\beta} H_{\beta+i}1_{\beta,i}$, we can thus decompose $z$ as
	\[
	z=\sum_{k=0}^n \tau_{[k\downarrow 1]} z_k
	\]
	for some unique $z_0,\ldots,z_n \in H_{\beta,i}$ Then $S(z)=z_n$. Equivalently, we can also compute $S(z)$ by decomposing along
	\[
	1_{i,\beta}H_{\beta+i} = \bigoplus_{k=1}^{n+1} H_{i,\beta} \tau_{[n\downarrow k]}
	\]
	and taking the coefficient of $\tau_{[n\downarrow 1]}$ (see \cite[Corollary 3.8]{kk}).
	
	\subsubsection{Mackey formulas in $\mathcal H[i]$} We can now prove the categorical $[e_i,f_i]=h_i$ relation.
	
	\begin{thm}\label{weyl}
		Let $M \in H_{\beta}^i\mathrm{-mod}$ with $\beta \in Q^+$, and let $w = \left<i^{\vee},\beta\right>$. Then we have isomorphisms
		\[
		\left \{ \begin{array}{l}
			\text{if} \ w\geqslant 0, \ \mathrm{ad}_{E_i}\mathrm{ad}_{F_i}(M) \xrightarrow[\sim]{\rho_w} \mathrm{ad}_{F_i}\mathrm{ad}_{E_i}(M) \oplus [w]_iM, \\
			\text{if} \, w\leqslant 0, \ \mathrm{ad}_{E_i}\mathrm{ad}_{F_i}(M) \oplus [-w]_iM \xrightarrow[\sim]{\rho_w} \mathrm{ad}_{F_i}\mathrm{ad}_{E_i}(M).
		\end{array} \right.
		\]
	\end{thm}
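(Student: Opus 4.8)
The strategy mirrors \cite[Section 5]{kk}, using the two Mackey decompositions (Propositions \ref{ll} and \ref{lr}) together with the short exact sequence of Theorem \ref{tauinj}. The key point is that everything must be carried out inside $\mathcal H[i]$, i.e.\ after applying $\pi_i$ to the KLR-algebra statements; but since $\pi_i$ is only right exact, we cannot simply apply $\pi_i$ to the exact sequence of Proposition \ref{lr}. So first I would reduce to the generating modules. By Theorem \ref{genforhi}, $\mathcal H[i]$ is generated as a Serre subcategory by the modules $\mathrm{ad}_{E_i}^{n}(E_{j_r}\ldots E_{j_1})$ with $j_1,\ldots,j_r \neq i$, and for those modules $\tau_{E_i,(-)}$ and all its iterates along $\mathrm{ad}_{E_i}$ are injective (Theorem \ref{injforgen}, Corollary \ref{injforserre}). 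Using Proposition \ref{exactweak} and the five lemma, both sides of the claimed isomorphism and the map $\rho_w$ are additive in short exact sequences of such modules (with the appropriate grading shifts tracked by $[w]_i$ versus $[w\mp 2]_i$ and a term $M$), so it suffices to establish the isomorphism for $M$ of the form $\mathrm{ad}_{E_i}^{(n)}(E_{j_r}\ldots E_{j_1})$, where computations can be done explicitly as in Theorem \ref{injforgen}.

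Next I would compute the two composites. Write $\mathcal E=\mathrm{ad}_{E_i}$, $\mathcal F=\mathrm{ad}_{F_i}$, so $\mathcal E(M)=\pi_i(E_iM)$ and $\mathcal F(M)=q_i^{1-w}1_{i,\beta-i}M$ for $M$ an $H_\beta^i$-module with $w=\langle i^\vee,\beta\rangle$. The composite $\mathcal F\mathcal E(M)$ is, up to shift, $1_{i,\beta}\,\pi_i(E_iM)$, which is $\pi_i(F_i(E_iM))$ by the compatibility of $F_i$ with $\pi_i$ (as in Lemma \ref{piiE}: since $j=i$ here, one has $\pi_i(1_{i,\beta}(-))\simeq \pi_i(F_i\pi_i(-))$, but $E_iM$ is already being divided by $1_{\beta,i}E_iM$). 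Applying $\pi_i$ to the Mackey isomorphism $q_i^2 E_iF_i(M)\oplus M[X_i]\xrightarrow{\sim}F_i(E_iM)$ of Proposition \ref{ll} — which is legitimate since it is an honest isomorphism, not just an exact sequence — gives $q_i^2\,\pi_i(E_iF_iM)\oplus \pi_i(M[X_i])$, and since $F_iM=q_i^{w-1}\mathcal F M$ lives in $\mathcal H[i]$ one gets $\pi_i(E_iF_iM)=\mathcal E\mathcal F(M)$ up to the shift, while $\pi_i(M[X_i])=M[X_i]$ since $M\in\mathcal H[i]$. On the other hand, $\mathcal E\mathcal F(M)$ unwound is $\pi_i(E_i\,1_{i,\beta-i}M)$, which I would compute via the Mackey sequence of Proposition \ref{lr} applied to $F_i(M)$: $0\to F_i(M)E_i\to F_i(ME_i)\to q_i^w M[X_i]\to 0$. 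Rearranging these identities and matching the explicit forms of $R$, $S$, $\varphi$ from the definitions of $\rho_w$ yields the desired isomorphism, with the polynomial part $M[X_i]$ truncating to $[w]_i M$ or $[-w]_i M$ according to the sign of $w$ — exactly because $H_{\beta\pm i}^i$ kills the polynomials $Q_{i,\beta}$ by Proposition \ref{annilpol}, which forces $M[X_i]$ (an infinite object in $\mathcal H$) to become a finite quotient in $\mathcal H[i]$.

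The main obstacle, and the place where our setting genuinely differs from \cite{kk}, is controlling the polynomial part: in the cyclotomic case $M[X_i]$ is automatically a finite-length module because the cyclotomic relation bounds the powers of $X_i$, whereas here $M[X_i]$ is infinite-dimensional and only its image $\pi_i(M[X_i])$ in $\mathcal H[i]$ is the finite object $[w]_i M$ (resp.\ $[-w]_iM$). So the delicate step is to show that after applying $\pi_i$, the truncation behaves exactly as in the $\mathfrak{sl}_2$ combinatorics: the relevant polynomial acting on $M[X_i]$ is $Q_{i,\beta}(X_i,x_1,\ldots,x_n)$ (or its relative $Q_{i,\beta-i}$), whose leading term in $X_i$ has degree $w$ up to a sign by the explicit form of the $Q_{i,j}$'s, and which acts by zero on $\pi_i$ of the relevant module by Propositions \ref{annilpol} and \ref{annilcenter}. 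Once this leading-term computation is in hand, tracking the grading shifts $q_i$ carefully (using $w=\langle i^\vee,\beta\rangle$ and $\langle i^\vee,i\rangle=2$) and invoking the explicit description of $R$ and $S$ from Proposition \ref{lr} to identify the connecting maps with the components of $\rho_w$, the isomorphism follows. I would finish by noting this verifies the last axiom of Definition \ref{2repsl2}, completing the proof of Theorem \ref{2rep} as claimed.
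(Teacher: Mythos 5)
The overall plan — reduce to the generating modules $\mathrm{ad}_{E_i}^{(n)}(E_{j_r}\ldots E_{j_1})$ via Theorem \ref{genforhi} and combine the two Mackey decompositions with Theorem \ref{tauinj} — is the right direction and matches the paper. However, the central step of your proposal does not work, and the mechanism by which the infinite module $M[X_i]$ is replaced by the finite object $[w]_iM$ is misidentified.

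The main gap is your assertion that $\mathcal F\mathcal E(M)\simeq \pi_i(F_i(E_iM))$, obtained by "applying $\pi_i$ to the Mackey isomorphism" of Proposition \ref{ll}. The functors $F_i$ and $\pi_i$ do not commute: $\mathcal F\mathcal E(M)=q_i^{-w-1}F_i\bigl(\pi_i(E_iM)\bigr)$, i.e.\ restrict the \emph{quotient}, which is a quotient of $F_i(E_iM)$ but not by the submodule that $\pi_i$ would kill. Lemma \ref{piiE} only handles left $j$-induction $E_j$, not the restriction functor $F_i$, and no such compatibility holds for restriction — this is precisely where the interesting interaction lies. So the proposed identification is false, and the conclusion you would draw from it (a direct-sum decomposition involving $M[X_i]$ rather than $[w]_iM$) is wrong. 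Note also that $M[X_i]$ is already an $H_\beta^i$-module (since $1_{\beta-i,i}M=0$), so $\pi_i$ does nothing to it; the truncation to $[w]_iM$ cannot come from $\pi_i$.

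The paper instead builds a commutative diagram with exact rows from Propositions \ref{ll} and \ref{lr}, whose vertical maps are $\tau_{E_i,F_i(M)}$, $F_i(\tau_{E_i,M})$, and a map $\Phi_M$ induced on the polynomial parts. Since $\tau_{E_i,(-)}$ is injective (Theorem \ref{tauinj}) and $F_i$ is exact, the cokernels of the first two vertical maps are precisely $\mathrm{ad}_{E_i}\mathrm{ad}_{F_i}(M)$ and $\mathrm{ad}_{F_i}\mathrm{ad}_{E_i}(M)$ (up to shift), and the snake lemma produces the four-term exact sequence whose ends are $\ker\Phi_M$ and $\mathrm{coker}\,\Phi_M$. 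The finite truncation then comes from an explicit "triangular" computation (Proposition \ref{phi}): for $M=\mathrm{ad}_{E_i}^{(n)}(E_{j_r}\ldots E_{j_1})$ one shows $\Phi_M(mX_i^k)\in bmX_i^{k-w}+\sum_{\ell<k-w}MX_i^\ell$ with $b\in K^\times$, which forces $\ker\Phi_M=[w]_iM$ and $\mathrm{coker}\,\Phi_M=0$ when $w\geqslant 0$ (and dually for $w\leqslant 0$). You gesture at this via Propositions \ref{annilpol} and \ref{annilcenter}, but the actual content is the invertibility of the leading coefficient and the degree shift by $-w$. Finally, the map in the snake sequence has to be identified with $\varphi$ from the definition of $\rho_w$, and the remaining components of $\rho_w$ have to be matched against an explicit splitting (for $w\leqslant 0$) or against the connecting map $\delta$ (for $w\geqslant 0$); none of this appears in your sketch. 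In short: the reduction step is correct, but the core functorial identification you rely on is false, and the structural diagram, the snake-lemma argument, the explicit leading-term computation, and the match with $\rho_w$ are all missing.
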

	
	We follow the approach of \cite[Theorem 5.2]{kk}, and using jointly Theorem \ref{tauinj} and the Mackey decompositions for KLR algebras.
	\begin{lem}
	Let $M \in H_{\beta}^i\mathrm{-mod}$ with $\beta \in Q^+$, and let $w = \left<i^{\vee},\beta\right>$. There is a commutative diagram with exact rows which is natural in $M$
		\begin{equation}\label{comdia}
			\xymatrix{
				0 \ar[r] & q_i^{-1}F_i(M)E_i \ar[r]^{R_M} \ar[d]_{\tau_{E_i,F_i(M)}} & q_i^{-1}F_i(ME_i) \ar[r]^{S_M} \ar[d]_{F_i(\tau_{E_i,M})} & q_i^{w-1}M[X_i] \ar[d]^{\Phi_M} \ar[r] & 0 \\
				0 \ar[r] & q_i^{1-w}E_iF_i(M) \ar[r]_{T_M} & q_i^{-1-w}F_i(E_iM) \ar[r]_{W_M} & q_i^{-1-w}M[X_i] \ar[r] & 0
			}
		\end{equation}	
	\end{lem}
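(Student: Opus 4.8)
The plan is to build the square from three ingredients that are already available: the Mackey isomorphism for left-restriction of left-induction (Proposition~\ref{mackey1}), the Mackey short exact sequence for left-restriction of right-induction (Proposition~\ref{lr}), and the natural transformation $\tau_{E_i,(-)}$ of \ref{tau}. Write $n=|\beta|$. The top row is the sequence of Proposition~\ref{lr} applied to $M$ and shifted by $q_i^{-1}$, so it is exact and natural in $M$ by that proposition. For the bottom row I would take the \emph{split} short exact sequence coming from the isomorphism $q_i^2E_iF_i(M)\oplus M[X_i]\xrightarrow{\sim}F_i(E_iM)$ of Proposition~\ref{mackey1}, shifted by $q_i^{-1-w}$: here $T_M$ is the inclusion of the first summand composed with this isomorphism, and $W_M$ is its inverse composed with the projection onto the second summand. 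The middle vertical arrow is $F_i$ applied to $\tau_{E_i,M}\colon q_i^wME_i\to E_iM$ from Theorem~\ref{tauinj}, and the left vertical arrow is $\tau_{E_i,F_i(M)}$; this is legitimate because a direct check, along the lines of the example following the definition of $\mathcal H[i]$ (the idempotent $1_{i,\beta-2i,i}$ is a sub-idempotent of $1_{\beta-i,i}$), shows that $F_i(M)=1_{i,\beta-i}M$ is again an object of $\mathcal H[i]$, and a short comparison with the grading conventions of Proposition~\ref{welldef} and Theorem~\ref{tauinj} shows that the displayed shifts are the right ones. Since all four maps in the left half of the diagram are explicit, the entire content of the lemma is the commutativity of the left square; the map $\Phi_M$ and the right square will then come for free.

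To prove $F_i(\tau_{E_i,M})\circ R_M=T_M\circ\tau_{E_i,F_i(M)}$ I would evaluate both composites on the generators $1_{\beta-i,i}\otimes m$, $m\in F_i(M)$, of $F_i(M)E_i$ and substitute the explicit formulas: $R_M$ is induced by $y\otimes y'\mapsto(1_i\diamond y)(y'\diamond 1_i)$ (Proposition~\ref{lr}), the transformation $\tau_{E_i,(-)}$ is ``left multiplication by $\tau_{[1\uparrow n]}$'' (resp.\ by $\tau_{[1\uparrow n-1]}$) on the canonical presentation of $ME_i$ (resp.\ of $F_i(M)E_i$) as in \ref{tau}, and $T_M$ is induced by $(y\otimes y')\mapsto(1_i\diamond y)\tau_n(1_i\diamond y')$ (Proposition~\ref{mackey1}). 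After the substitution, the equality becomes an identity between two $\tau$-words in $1_{i,\beta}H_{\beta+i}1_{i,\beta}$, which I would verify using the defining relations (\ref{Taux}) and (\ref{Taubraid}) of Definition~\ref{klr}, the commutation of distant $\tau$'s, and a reduction modulo the ideals $(1_{\beta-i,i,\ast})$ that annihilate $M$ --- the same kind of manipulation (and the same appeal to Lemma~\ref{xtaucom}) that is used in the proof of Proposition~\ref{tauprod}. Conceptually the equality is clear from the strand picture: $R_M$ realizes $F_i(M)E_i$ as the part of $F_i(ME_i)$ in which the new $i$-strand introduced by $E_i$ does not cross the top $i$-strand of $M$, while $\tau_{E_i,M}$ drags that new strand past all of $M$ to the very top; hence its image lies exactly in the summand $q_i^2E_iF_i(M)=\mathrm{im}(T_M)$ of $F_i(E_iM)$, and the induced map $F_i(M)E_i\to E_iF_i(M)$ is precisely $\tau_{E_i,F_i(M)}$.

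The remainder is formal and requires no further computation. Since the bottom row is split exact, $W_M\circ T_M=0$, hence $W_M\circ F_i(\tau_{E_i,M})\circ R_M=W_M\circ T_M\circ\tau_{E_i,F_i(M)}=0$; as $S_M$ is an epimorphism with kernel $\mathrm{im}(R_M)$ (exactness of the top row), the morphism $W_M\circ F_i(\tau_{E_i,M})$ factors uniquely through $S_M$, and I \emph{define} $\Phi_M$ to be that factorization, so the right square commutes by construction and both rows are exact. Naturality of the whole diagram in $M$ follows from naturality of the two Mackey sequences, naturality of the natural transformation $\tau_{E_i,(-)}$, functoriality of $F_i$, and --- for $\Phi$ --- the uniqueness in its defining property (precompose two competing naturality squares with the epimorphism $S_M$). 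The one genuine obstacle is thus the verification of the left square described in the previous paragraph: the argument is elementary, but it demands careful bookkeeping of the idempotents $1_\nu$, of the exact $\tau$-words output by $R_M$, $T_M$ and the two instances of $\tau_{E_i,(-)}$, and of the fact that all computations take place modulo the ideals killing $M$. If one additionally wants the explicit shape of $\Phi_M$ --- which is what is actually needed afterwards in the proof of Theorem~\ref{weyl} --- pushing the same computation to the quotient identifies $\Phi_M$, up to sign and shift, with a power of multiplication by $X_i$; but for the lemma as stated the abstract description of $\Phi_M$ above is enough.
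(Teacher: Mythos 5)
Your proposal follows the paper's proof essentially verbatim: both take the top and bottom rows from Propositions~\ref{lr} and~\ref{ll}, reduce the lemma to the commutativity of the left square, verify it by evaluating the two composites on the cyclic generators $1_{\beta-i,i}\otimes 1_{i,\beta-i}m$ of $F_i(M)E_i$, and then define $\Phi_M$ as the induced map on cokernels. The only (small) refinement you add is the explicit observation that $F_i(M)\in\mathcal H[i]$ so that $\tau_{E_i,F_i(M)}$ is even defined, and your description of the generator computation as requiring appeals to Lemma~\ref{xtaucom} and reduction modulo ideals somewhat overstates its difficulty --- the paper's check is a four-line direct substitution in which both composites immediately come out equal to $\tau_{[1\uparrow|\beta|]}1_{2i,\beta-i}\otimes m$ with no cancellation needed.
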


	\begin{proof}
		The maps $R_M$ and $S_M$ are those of Proposition \ref{lr}, and the maps $T_M$ and $W_M$ are those induced by the isomorphism of Proposition \ref{ll}. The exactness of the two rows also follows from Propositions \ref{ll} and \ref{lr}. The map $\Phi_M$ is defined by commutativity of the diagram. So the only thing to prove is that the leftmost square of the diagram commutes.
		
		Let $y = 1_{\beta-i,i}\otimes_{H_{\beta-i}} 1_{i,\beta-i} m \in F_i(M)E_i$, with $m\in M$. We have
		\begin{align*}
			&\tau_{E_i,F_i(M)}(y) = \tau_{[1\uparrow\vert\beta\vert-1]}1_{i,\beta-i} \otimes_{H_{\beta-i}} 1_{i,\beta-i}m,\\
			&T_M(\tau_{E_i,F_i(M)}(y)) = \tau_{[1\uparrow\vert\beta\vert]}1_{2i,\beta-i} \otimes_{H_{\beta}} m,
		\end{align*}
		and
		\begin{align*}
			&R_M(y) = 1_{i,\beta-i,i} \otimes_{H_{\beta}} m, \\
			& F_i(\tau_{E_i,M})(f_M(y)) = \tau_{[1\uparrow \vert\beta\vert]} 1_{2i,\beta} \otimes_{H_{\beta}} m.
		\end{align*}
		Hence $T_M(\tau_{E_i,F_i(M)}(y))=F_i(\tau_{E_i,M})(R_M(y))$. Since elements $y\in F_i(M)E_i$ of this form generate $F_i(M)E_i$ as an $H_{\beta}$-module, we deduce that $T_M \circ\tau_{E_i,F_i(M)} = F_i(\tau_{E_i,M})\circ R_M$, and the proof is complete.
	\end{proof}

	For $M \in H_{\beta}^i\mathrm{-mod}$, the morphism $\tau_{E_i,F_i(M)}$ is injective by Theorem \ref{tauinj}, and its cokernel is $\mathrm{ad}_{E_i}(F_i(M)) = q_i^{\left< i^{\vee},\beta\right> -1}\mathrm{ad}_{E_i}(\mathrm{ad}_{F_i}(M))$. Since the left $i$-restriction functor $F_i$ is exact, we also know that $F_i(\tau_{E_i,M})$ is injective, and its cokernel is $F_i(\mathrm{ad}_{E_i}(M)) = q_i^{\left<i^{\vee},\beta\right>+1}\mathrm{ad}_{F_i}(\mathrm{ad}_{E_i}(M))$. Hence by the snake lemma there is a short exact sequence which is natural in $M \in H_{\beta}^i\mathrm{-mod}$
	\begin{equation}\label{snake}
		0 \rightarrow \mathrm{ker}(\Phi_M) \rightarrow \mathrm{ad}_{E_i}\mathrm{ad}_{F_i}(M) \xrightarrow{\overline{T}_M} \mathrm{ad}_{F_i}\mathrm{ad}_{E_i}(M) \rightarrow \mathrm{coker}(\Phi_M) \rightarrow 0.
	\end{equation}
	
	\begin{lem}\label{g=phi}
		For all $M \in H_{\beta}^i\mathrm{-mod}$ with $\beta \in Q^+$, the map $\overline{T}_M : \mathrm{ad}_{E_i}\mathrm{ad}_{F_i}(M) \rightarrow \mathrm{ad}_{F_i}\mathrm{ad}_{E_i}(M)$ in the sequence (\ref{snake}) is equal to the map $\varphi$ defined in equation (\ref{varphi}).
	\end{lem}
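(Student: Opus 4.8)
The plan is to compute the two morphisms on a generating set and compare. Write $w=\langle i^{\vee},\beta\rangle$ and $n=\vert\beta\vert$. Both $\overline T_M$ and the component $\varphi_M$ of the natural transformation (\ref{varphi}) are maps $\mathrm{ad}_{E_i}\mathrm{ad}_{F_i}(M)\to\mathrm{ad}_{F_i}\mathrm{ad}_{E_i}(M)$ in $\mathcal H[i]$, natural in $M$. Since the canonical surjection $q\colon E_i\mathrm{ad}_{F_i}(M)=q_i^{1-w}E_iF_i(M)\twoheadrightarrow\mathrm{ad}_{E_i}\mathrm{ad}_{F_i}(M)=\pi_i\big(E_i\mathrm{ad}_{F_i}(M)\big)$ is epi, it suffices to prove $\overline T_M\circ q=\varphi_M\circ q$, and for that it suffices to evaluate on the standard generators $1_{i,\beta-i}\otimes u$, $u$ ranging over $F_i(M)=1_{i,\beta-i}M$, of $E_iF_i(M)$. (Thanks to Theorem \ref{genforhi} and naturality one could further reduce to $M=\mathrm{ad}_{E_i}^{k}(E_{j_r}\cdots E_{j_1})$, but this will not be needed.)

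First I would unwind $\overline T_M$. By construction of the exact sequence (\ref{snake}) via the snake lemma applied to (\ref{comdia}), $\overline T_M$ is the map induced on cokernels by the bottom-row monomorphism $T_M$. Since $F_i$ is exact we have $\mathrm{ad}_{F_i}\mathrm{ad}_{E_i}(M)=F_i(\mathrm{ad}_{E_i}M)=\mathrm{coker}\big(F_i(\tau_{E_i,M})\big)$ (with the shift recorded in (\ref{comdia})), so $\overline T_M\circ q$ is the composite $E_i\mathrm{ad}_{F_i}(M)\xrightarrow{\,T_M\,}q_i^{-1-w}F_i(E_iM)\longrightarrow\mathrm{ad}_{F_i}\mathrm{ad}_{E_i}(M)$, the second arrow being $F_i$ applied to the quotient $E_iM\twoheadrightarrow\pi_i(E_iM)$. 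Here $T_M$ is the first component of the Mackey isomorphism of Proposition \ref{ll} and is explicit on elements; the computation already performed in the proof of the lemma that establishes the diagram (\ref{comdia}) shows how the crossing $\tau_{[1\uparrow n]}$ is produced, so that $\overline T_M\circ q$ sends the generator to a completely explicit class in $\mathrm{ad}_{F_i}\mathrm{ad}_{E_i}(M)$.

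Next I would unwind $\varphi_M\circ q$, where $\varphi_M=\mathrm{ad}_{F_i}\mathrm{ad}_{E_i}(\varepsilon_M)\circ\mathrm{ad}_{F_i}\big(\tau_{\mathrm{ad}_{F_i}(M)}\big)\circ\eta_{\mathrm{ad}_{E_i}\mathrm{ad}_{F_i}(M)}$. The point is to rewrite the structural $2$-morphisms in KLR terms. The functor $\mathrm{ad}_{F_i}$ is a grading shift of the exact functor $F_i$, and since $F_i$ preserves $\mathcal H[i]$ it is right adjoint to $\mathrm{ad}_{E_i}=\pi_i\circ E_i$; hence the unit $\eta$ and counit $\varepsilon$ of this adjunction are obtained from those of the $(E_i,F_i)$-adjunction by composing with the localisation maps of Lemma \ref{piiE}, and $\varepsilon_M$ in particular is induced by the evaluation $E_iF_i(M)\to M$. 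The nil Hecke generator $\tau$ acts on $\mathrm{ad}_{E_i}^{2}\simeq\pi_i\big(E_i^{2}(-)\big)$ by right multiplication by $\tau_1\in H_{2i}^{\mathrm{op}}$, via Proposition \ref{adei}. Substituting these descriptions, using naturality of the $(E_i,F_i)$-unit to pull $q$ to the front of the composite, and simplifying with the affine nil Hecke relations (\ref{Taux}) and (\ref{taubraid}), the generator $1_{i,\beta-i}\otimes u$ is carried — modulo $\mathrm{im}\big(F_i(\tau_{E_i,M})\big)$ — to the same element of $F_i(E_iM)$ produced in the previous step, which finishes the proof.

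The step I expect to be the main obstacle is this last computation. One must keep careful track of the interaction of the adjunction unit and counit with the localisation functor $\pi_i$ (this is exactly what Lemma \ref{piiE} and the identifications of Proposition \ref{adei} make manageable), keep the grading shifts consistent with (\ref{comdia}), and, most delicately, recognise which element of $H_{n+1}$ the abstract generator $\tau$ becomes after being transported through the isomorphism $\mathrm{ad}_{E_i}^{2}\simeq\pi_i\big(E_i^{2}(-)\big)$. Once both sides are written as explicit elements of $F_i(E_iM)$ modulo $\mathrm{im}\big(F_i(\tau_{E_i,M})\big)$, the identity is a short manipulation of crossings in $H_{n+1}$, entirely parallel to the one in the proof of \cite[Theorem 5.2]{kk}.
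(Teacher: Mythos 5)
Your proposal follows essentially the same approach as the paper: both arguments check the identity on the generating elements $1_{i,\beta-i}\otimes 1_{i,\beta-i}m$ of $\mathrm{ad}_{E_i}\mathrm{ad}_{F_i}(M)$, unwind $\overline T_M$ by lifting through the snake-lemma diagram and applying the explicit $T_M$ from the Mackey isomorphism, and then observe that the resulting class $\tau_{|\beta|}1_{2i,\beta-i}\otimes m$ coincides with $\varphi$ evaluated on the generator. The paper's proof is terser (it records the computation of $T_M(z)$ and immediately identifies it with $\varphi(y)$, without spelling out the unwinding of $\eta$, $\tau$, $\varepsilon$ through $\pi_i$ the way you do), but the structure and the key ideas are identical.
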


	\begin{proof}
		We prove this by chasing the diagram giving rise to the exact sequence (\ref{snake}). Let $y$ be an element of $\mathrm{ad}_{E_i}\mathrm{ad}_{F_i}(M)$ of the form $y=1_{i,\beta-i}\otimes_{H_{\beta-i}^i} 1_{i,\beta-i}m$, for $m \in M$. We can write $y$ as the image of the element $z=1_{i,\beta-i}\otimes_{H_{\beta-i}} 1_{i,\beta-i}m \in E_iF_i(M)$ by the canonical quotient morphism. Then we have
		\[
			T_M(z) = \tau_{\vert\beta\vert} 1_{2i,\beta-i}\otimes_{H_{\beta}} m.
		\]
		Hence
		\[
			\overline{T}_M(y) = \tau_{\vert\beta\vert} 1_{2i,\beta-i} \otimes_{H_{\beta}^i} m = \varphi(y).
		\]
		So $\varphi$ and $\overline{T}_M$ coincide on elements $y$ of the form given above. Since these generate $\mathrm{ad}_{E_i}\mathrm{ad}_{F_i}(M)$ as an $H_{\beta}^i$-module, we deduce that $\varphi=\overline{T}_M$.
	\end{proof}
	
	We now want to compute explicitly the kernel and cokernel of the map $\Phi_M$. An element $y$ of $M[X_i]$ will be written formally in the form
	\[
		y= \sum_{k=0}^{\ell} m_kX_i^{k}
	\]
	where $\ell\geqslant 0$, and $m_0,\ldots,m_{\ell} \in M$. Our discussion up to this point is valid for any module in $\mathcal H[i]$, but we now restrict to the generators of $\mathcal H[i]$ to simplify the computations.
	
	\begin{prop}\label{phi}
		Let $M=\mathrm{ad}_{E_i}^{(n)}(E_{j_r}\ldots E_{j_1})$ for some $n\geqslant 0$ and $j_1,\ldots, j_r \in I \setminus \lbrace i \rbrace$. Let $\beta = ni+j_1+\ldots+j_r$ and let $w=\left< i^{\vee},\beta\right>$. Then there exists an invertible element $b$ of $K$ such that for all $m \in M$ and $k \geqslant 0$ we have
		\[
			\Phi_M(mX_i^{k}) \in bmX_i^{k-w} + \sum_{\ell < k-w} MX_i^{\ell},
		\]
		where we put $X_i^{\ell}=0$ when $\ell<0$.
	\end{prop}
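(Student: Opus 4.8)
\emph{The plan} is to compute $\Phi_M$ directly from its defining property: by construction of the diagram (\ref{comdia}), $\Phi_M$ is the unique map making the right-hand square commute, i.e. $\Phi_M\circ S_M = W_M\circ F_i(\tau_{E_i,M})$ after accounting for the grading shifts. So it suffices to evaluate $W_M\circ F_i(\tau_{E_i,M})$ on a family of elements of $F_i(ME_i)$ which $S_M$ carries onto all of $M[X_i]$. Here I would use the same structural input as in the proof of Theorem \ref{injforgen}: writing $\beta=ni+j_1+\cdots+j_r$ of height $n+r$ and using the PBW decomposition $ME_i=\bigoplus_{k=0}^{n+r}\tau_{[k\downarrow 1]}1_{\beta,i}\otimes(M\otimes E_i)$ together with the explicit description of $S_M$ recalled after Proposition \ref{lr} (read off the coefficient of $\tau_{[n+r\downarrow 1]}$), one checks that $1_{i,\beta}\tau_{[n+r\downarrow 1]}1_{\beta,i}=\tau_{[n+r\downarrow 1]}1_{\beta,i}$, so the element $\tilde y_{m,k}=\tau_{[n+r\downarrow 1]}x_1^{k}1_{\beta,i}\otimes_{H_\beta}m$ lies in $F_i(ME_i)$ and satisfies $S_M(\tilde y_{m,k})=mX_i^{k}$. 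Since $ME_i$ (hence $F_i(ME_i)$) is cyclic, it is enough to determine $\Phi_M(mX_i^k)$ for the distinguished generator $m$ of $M$.

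For the computation, applying $F_i(\tau_{E_i,M})$ to $\tilde y_{m,k}$ inserts the braid $\tau_{[1\uparrow n+r]}$ on the right (by the very definition of $\tau_{E_i,M}$), producing the class of $\tau_{[n+r\downarrow 1]}\,x_1^{k}\,\tau_{[1\uparrow n+r]}1_{i,\beta}\otimes_{H_\beta}m$; then applying $W_M$ means extracting the $M[X_i]$-component in the left–left Mackey decomposition of Proposition \ref{ll}, i.e. rewriting this element in the form $(1_i\diamond y)\tau_{n+r}(1_i\diamond y')+z$ with $z\in H_{\beta,i}$ and keeping $z$. This is exactly the kind of expression manipulated in the proof of Theorem \ref{injforgen}: one collapses $\tau_{[n+r\downarrow 1]}x_1^k\tau_{[1\uparrow n+r]}$ using relations (\ref{Taux}) and (\ref{Taubraid}) of Definition \ref{klr} and Lemma \ref{multistrand} to produce $Q_{i,\beta}$-type polynomials, then uses Lemma \ref{compnil} to absorb the surviving $\tau_{\omega_0}$-factors into Demazure operators; every polynomial of the shape $Q_{i,\rho}(x_{n+r},x_1,\ldots,x_{n+r})1_\rho$ with $\rho_{n+r}=i$ that appears annihilates $M$ by Proposition \ref{annilpol}. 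The upshot is that $\Phi_M$ acts on $M[X_i]$ as multiplication by a fixed polynomial $g(X_i)\in K[X_i]$; since $\Phi_M$ has degree $-2d_i w$ (read off the shifts $q_i^{w-1}$ and $q_i^{-1-w}$ in (\ref{comdia})) and $X_i$ has degree $2d_i$, the top-$X_i$-degree term of $g$ is a scalar multiple of $X_i^{-w}$, which gives the stated shape $\Phi_M(mX_i^k)\in bmX_i^{k-w}+\sum_{\ell<k-w}MX_i^{\ell}$.

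The main obstacle is not the existence of this description but the verification that the leading coefficient $b$ is a \emph{unit} of $K$, and that all the by-products of the manipulations genuinely fall into the lower-order part $\sum_{\ell<k-w}MX_i^{\ell}$. For the first point I would track the leading coefficient exactly as at the end of the proof of Theorem \ref{injforgen}: the only non-unit coefficients entering the $Q_{i,j}$'s are the middle terms $Ku^sv^t$ with $s,t>0$, which strictly lower the $X_i$-degree, while the extremal coefficients $t_{i,j}$ are invertible by hypothesis, and the Demazure operators together with the idempotent interactions preserve the invertibility of the top coefficient. For the second point, the hypothesis that $M=\mathrm{ad}_{E_i}^{(n)}(E_{j_r}\cdots E_{j_1})$ is precisely what is needed: by Proposition \ref{annilpol} the relevant polynomials $Q_{i,\rho}(x_{n+r},x_1,\ldots,x_{n+r})1_\rho$ act by zero on $M$, so the otherwise troublesome lower-degree corrections are killed on $M$ and the bookkeeping closes up. Carrying out this leading-term analysis through the chain of KLR relations is the one genuinely computational — though routine — step.
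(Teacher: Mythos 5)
Your overall blueprint matches the paper's: you compute $\Phi_M$ by pushing a lift of $mX_i^k$ through the right-hand square of (\ref{comdia}), and your chosen lift $\tilde y_{m,k}=\tau_{[n+r\downarrow 1]}x_1^k 1_{\beta,i}\otimes m$ is a legitimate (if cosmetically different) variant of the paper's $c_k=x_{n+r+1}^k\tau_{[n+r\downarrow 1]}1_{\beta,i}\otimes c$, and both satisfy $S_M(\cdot)=mX_i^k$. You also correctly identify that the invertibility of $b$ traces back to the invertibility of the leading coefficients $t_{i,j}$ of the $Q_{i,j}$. But the middle of the sketch contains two claims that are not merely imprecise but wrong, and a key step is missing.

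First, the assertion that ``$\Phi_M$ acts on $M[X_i]$ as multiplication by a fixed polynomial $g(X_i)\in K[X_i]$'' cannot be right. The map $\Phi_M$ has degree $-2d_iw$ as a map $M[X_i]\to M[X_i]$, so when $w>0$ it strictly lowers $X_i$-degree; multiplication by a nonzero polynomial in $X_i$ never does this. (You even write that ``the top-$X_i$-degree term of $g$ is a scalar multiple of $X_i^{-w}$'', which is not a polynomial if $w>0$.) Even when $w\leqslant 0$ the claim over-reaches: $\Phi_M$ is $H_\beta[X_i]$-linear so it is determined by $\Phi_M(c)\in M[X_i]$, but the lower-order coefficients of $\Phi_M(c)$ land in $M$ and need not be scalar multiples of $c$, so $\Phi_M$ need not be scalar-polynomial multiplication. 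The proposition asks only for the shape $bmX_i^{k-w}+\sum_{\ell<k-w}MX_i^\ell$, not for anything this strong.

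Second, Proposition \ref{annilpol} does not enter the paper's proof of this statement and would not close the argument as you describe: the lower-degree corrections are not annihilated, they are simply recognised to lie in $\sum_{\ell<k-w}MX_i^\ell$. What actually makes the computation collapse is the divided-power structure of $M$: one writes the cyclic generator as $c=\tau_{\omega_0[r+1,n+r]}1_{ni,j_r,\ldots,j_1}$ and exploits the identity $c=\tau_{\omega_0[r+1,n+r]}x_{r+2}\cdots x_{n+r}^{n-1}c$ to extract an explicit $\tau_{\omega_0[r+1,n+r+1]}$ factor. This is what lets Lemma \ref{compnil} convert the surviving $\tau$'s into Demazure operators $\partial_{[n+r\downarrow r+1]}$, after Lemma \ref{multistrand} has been applied only to the middle block $\tau_{[r\downarrow 1]}\tau_{[1\uparrow r]}$ where all strands are labelled by $j_\ell\neq i$ (Lemma \ref{multistrand} does not apply across the $i$-strands). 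The leading-term analysis then reduces to the elementary identity $\partial_{[r+1\uparrow n+r-1]}(x_{n+r}^d)=0$ for $d<n-1$ and $=1$ for $d=n-1$, which is what produces both the exponent $k-w$ and the unit coefficient $(-1)^n t$. Your sketch alludes to ``absorbing $\tau_{\omega_0}$-factors'' but never explains where they come from; without the divided-power trick there is no $\tau_{\omega_0}$ to absorb, and the computation stalls.
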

		
	\begin{proof}
		The module $M$ is cyclic, generated by the class $c$ of $\tau_{\omega_0\left[r+1,n+r\right]}1_{ni,j_r,\ldots,j_1}$ in $\pi_i(E_i^{(n)}E_{j_r}\ldots E_{j_1})$. So it suffices to prove the result for $m=c$.
		
		Let $c_k=x_{n+r+1}^k\tau_{[n+r\downarrow 1]}1_{\beta,i} \otimes_{H_{\beta}}c \in F_i(ME_i)$. Then $cX_i^{k} = S_M(c_k)$ by definition of $S$. Since the rightmost square in the commutative diagram (\ref{comdia}) commutes, $\Phi_M(cX_i^{k}) = W_M(\tau_{E_i,M}(c_k))$. We have
		\begin{align*}
			\tau_{E_i,M}(c_k) &=  x_{n+r+1}^k\tau_{[n+r\downarrow 1]}\tau_{[1\uparrow n+r]} 1_{i,\beta} \otimes_{H_{\beta}} c \\
			&= x_{n+r+1}^k\tau_{[n+r\downarrow 1]}\tau_{[1\uparrow n+r]}\tau_{\omega_0[r+1,n+r]} 1_{(n+1)i,j_r\ldots j_1} \otimes_{H_{\beta}} x_{r+2}\ldots x_{n+r}^{n-1}c
		\end{align*}		
		the second equality coming from the fact that $c=\tau_{\omega_0[r+1,n+r]}x_{r+2}\ldots x_{n+r}^{n-1}c$. Let
		\[
			c'_k =x_{n+r+1}^k\tau_{[n+r\downarrow 1]}\tau_{[1\uparrow n+r]}\tau_{\omega_0[r+1,n+r]} 1_{(n+1)i,j_r\ldots j_1}.
		\]
		To compute $W_M(\tau_{E_i,M}(c_k))$, we must find the component of $c'_k$ on $H_{i,\beta}$ in the direct sum decomposition
		\[
			1_{i,\beta}H_{\beta+i}1_{i,\beta} = H_{i,\beta} \oplus q_i^{2}(H_{\beta}1_{i,\beta-i}\otimes_{H_{\beta- i}}1_{i,\beta-i}H_{\beta})
		\]
		given by Proposition \ref{ll}. The following computations are done in $H_{\beta+i}1_{(n+1)i,j_r,\ldots,j_1}$, but we omit the idempotent $1_{(n+1)i,j_r,\ldots,j_1}$ on the right to help readability. The explanation of each equality is written below the computation. Let
		\[
			P = \left(\prod_{\ell=1}^{r} Q_{i,j_{\ell}}(x_{r+1},x_{\ell})\right)1_{(n+1)i,j_r,\ldots,j_1} \in P_{\beta+i}.
		\]
		We have
		\begin{align*}
			c'_k &= x_{n+r+1}^k\tau_{[n+r\downarrow 1]}\tau_{[1\uparrow n+r]}\tau_{\omega_0[r+1,n+r]}\\
			&=x_{n+r+1}^k\tau_{[n+r\downarrow r+1]} P\tau_{[r+1\uparrow n+r]}\tau_{\omega_0[r+1,n+r]}\\
			&= x_{n+r+1}^k\tau_{[n+r\downarrow r+1]} P\tau_{\omega_0[r+1,n+r+1]}\\
			&= x_{n+r+1}^k\partial_{[n+r\downarrow r+1]}(P)\tau_{\omega_0[r+1,n+r+1]}\\
			&= x_{n+r+1}^k\partial_{[n+r\downarrow r+1]}(P)\tau_{[r+1\uparrow n+r]}\tau_{\omega_0[r+1,n+r]}.
		\end{align*}		
		In the second equality, we have applied Lemma \ref{multistrand} to replace $\tau_{[r\downarrow 1]}\tau_{[1\uparrow r]}$ by $P$. For the third equality, we have used the fact that $\omega_0[r+1,n+r+1] = s_{[r+1\uparrow n+r]}\omega_0[r+1,n+r]$ to replace $\tau_{[r+1\uparrow n+r]}\tau_{\omega_0[r+1,n+r]}$ by $\tau_{\omega_0[r+1,n+r+1]}$. The fourth equality comes from the relations of the affine nil Hecke algebra proved in Lemma \ref{compnil}. Finally in the fifth equality, we have just reversed the third step.
		
		Before we continue the computation, we write $\partial_{[n+r\downarrow r+1]}(P)$ in a more explicit way. First, given the assumptions on the form of the polynomials $Q_{i,j}$, we can write $P$ in the form
		\[
			P \in tx_{r+1}^{2n-w} + \sum_{\ell<2n-w} x_{r+1}^{\ell}K[x_1,\ldots,x_r].
		\]
		where $t$ is an invertible element of $K$. Hence
		\[
			\partial_{[n+r\downarrow r+1]}(P) = \sum_{\ell \leqslant n-w} x_{n+r+1}^{\ell}p_{\ell}
		\]
		for some $p_{\ell} \in K[x_1,\ldots,x_{n+r}]$, with $p_{n-w} = (-1)^nt$. We can now resume the computation of $c'_k$. With this form for $\partial_{[n+r\downarrow r+1]}(P)$, we have 
		\begin{align*}
			c'_k &= \sum_{\ell \leqslant n-w}x_{n+r+1}^{k+\ell}p_{\ell}\tau_{[r+1\uparrow n+r]}\tau_{\omega_0[r+1,n+r]} \\
			&= \sum_{\ell \leqslant n-w}p_{\ell}\tau_{[r+1\uparrow n+r-1]}x_{n+r+1}^{k+l}\tau_{n+r}\tau_{\omega_0[r+1,n+r]} \\
			&= \sum_{\ell \leqslant n-w}p_l\tau_{[r+1\uparrow n+r]}x_{n+r}^{k+{\ell}}\tau_{\omega_0[r+1,n+r]} \\
			& \quad + \sum_{\substack{\ell \leqslant n-w \\ m \leqslant k+\ell-1}} p_{\ell}\tau_{[r+1\uparrow n+r-1]}x_{n+r+1}^{m}x_{n+r}^{k+\ell-1-m}\tau_{\omega_0[r+1,n+r]}.
		\end{align*}
		
		For the second equality, we have used relation (\ref{Taux}) of Definition \ref{klr} from to move the factor $x_{n+r+1}^{k+\ell}$ all the way to the left of $\tau_{n+r}$. The third equality is what we get from the commutation relation of $x_{n+r+1}^{k+\ell}$ and $\tau_{n+r}$ from Lemma \ref{compnil}. However
		\[
			\sum_{\ell \leqslant n-w}p_{\ell}\tau_{[r+1\uparrow n+r]}x_{n+r}^{k+\ell}\tau_{\omega_0[r+1,n+r]} \in H_{\beta}1_{i,\beta-i}\otimes_{H_{\beta- i}}1_{i,\beta-i}H_{\beta}.
		\]
		Hence, modulo $H_{\beta}1_{i,\beta-i}\otimes_{H_{\beta- i}}1_{i,\beta-i}H_{\beta}$ we have
		\begin{align*}
			c'_k &= \sum_{\substack{\ell \leqslant n-w \\ m \leqslant k+\ell-1}} p_{\ell}\tau_{[r+1\uparrow n+r-1]}x_{n+r+1}^{m}x_{n+r}^{k+\ell-1-m}\tau_{\omega_0[r+1,n+r]} \\
			&= \sum_{\substack{\ell \leqslant n-w \\ m \leqslant k+\ell-1}} p_{\ell}x_{n+r+1}^{m}\partial_{[r+1\uparrow n+r-1]}(x_{n+r}^{k+{\ell}-1-m})\tau_{\omega_0[r+1,n+r]}.\\
		\end{align*}
		We have
		\[
			\partial_{[r+1\uparrow n+r-1]}(x_{n+r}^{k+\ell-1-m}) = \left \{ \begin{array}{ll}
				0 & \text{if} \ k+\ell-1-m<n-1, \\
				1 & \text{if} \ k+\ell-1-m=n-1.
			\end{array} \right.
		\]
		Thus the previous equation can be written
		\begin{align*}
			c'_k &= \sum_{\substack{\ell \leqslant n-w \\ m \leqslant k+\ell-n}} p_{\ell}x_{n+r+1}^{m}\partial_{[r+1\uparrow n+r-1]}(x_{n+r}^{\ell-1-m})\tau_{\omega_0[r+1,n+r]} \\
			& \in (-1)^ntx_{n+r+1}^{k-w}\tau_{\omega_0[r+1,n+r]} + \sum_{\ell<k-w} x_{n+r+1}^{\ell} (1_i\diamond H_{\beta}).
		\end{align*}
		We can now conclude that
		\begin{align*}
			\Phi_M(cX_i^{k}) &= W_M( c'_k \otimes_{\beta} x^{r+2}\ldots x_{r+n}^{n-1}c)\\
			&\in (-1)^nt cX_i^{k-w} + \sum_{\ell<k-w} MX_i^{\ell}.
		\end{align*}
		If we let $b=(-1)^nt \in K^{\times}$, then the proof is complete.
	\end{proof}

	\begin{rem} Since the map $\Phi_M$ is natural in $M$, Proposition \ref{phi} actually holds for $M$ any subquotient of a module of the form $\mathrm{ad}_{E_i}^{(n)}(E_{j_r}\ldots E_{j_1})$.
	\end{rem}
		
	As an immediate consequence of Proposition \ref{phi}, we get an explicit description of the kernel and cokernel of $\Phi_M$.
	
	\begin{cor}\label{ckphi} Let $M$ be a subquotient of $\mathrm{ad}_{E_i}^{(n)}(E_{j_r}\ldots E_{j_1})$ for $n\geqslant 0$ and $j_1,\ldots,j_r \in I \setminus \lbrace i \rbrace$ and $w$ defined as above. Then we have
		\begin{equation*}
		\left \{ \begin{array}{l}
			\text{if } w \geqslant 0, \ \mathrm{ker}(\Phi_M) = q_i^{w-1}\left(\displaystyle\bigoplus_{k=0}^{w-1}MX_i^k\right) = [w]_iM \quad \text{and} \quad \mathrm{coker}(\Phi_M)= 0, \\
			\\
			\text{if } w \leqslant 0, \ \mathrm{ker}(\Phi_M) = 0 \quad \text{and} \quad \mathrm{coker}(\Phi_M) = q_i^{-w-1}\left(\displaystyle\bigoplus_{k=0}^{-w-1}MX_i^k\right)=[-w]_iM.
		\end{array} \right.
	\end{equation*}
	\end{cor}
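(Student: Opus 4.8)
The plan is to read off everything from Proposition \ref{phi}, which applies to our $M$ by the Remark following it. That proposition says that the map $\Phi_M\colon q_i^{w-1}M[X_i]\to q_i^{-1-w}M[X_i]$ of diagram (\ref{comdia}) is triangular with invertible leading term for the increasing filtration of $M[X_i]=\bigoplus_{k\geqslant 0}MX_i^k$ by degree in $X_i$: there is $b\in K^\times$ with $\Phi_M(mX_i^k)\in bmX_i^{k-w}+\sum_{\ell<k-w}MX_i^\ell$, where $X_i^\ell$ is $0$ for $\ell<0$. Since $\Phi_M$ is $H_\beta^i$-linear and each $MX_i^k$ is the shifted copy $q_i^{-2k}M$ of $M$ (Proposition \ref{ll}), all the sub- and quotient modules produced below are automatically direct sums of grading shifts of $M$, and the only remaining work is a triangularity argument together with the bookkeeping identities $q_i^{w-1}\sum_{k=0}^{w-1}q_i^{-2k}=[w]_i$ and $q_i^{-1-w}\sum_{k=0}^{-w-1}q_i^{-2k}=[-w]_i$.

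First I would treat the case $w\geqslant 0$. Directly from the displayed formula, $\Phi_M$ annihilates $MX_i^k$ for every $k<w$ (both the leading term $bmX_i^{k-w}$ and all terms $MX_i^\ell$ with $\ell<k-w$ carry a negative power of $X_i$), so $\bigoplus_{k=0}^{w-1}MX_i^k\subseteq\mathrm{ker}(\Phi_M)$. Conversely, for $0\neq y=\sum_{k=0}^{\ell}m_kX_i^k$ with $m_\ell\neq 0$ and $\ell\geqslant w$, the coefficient of $X_i^{\ell-w}$ in $\Phi_M(y)$ is exactly $bm_\ell\neq 0$ (lower-degree terms of $y$ contribute $0$ or strictly lower degrees), so $y\notin\mathrm{ker}(\Phi_M)$; since every element outside $\bigoplus_{k=0}^{w-1}MX_i^k$ is of this form, $\mathrm{ker}(\Phi_M)=\bigoplus_{k=0}^{w-1}MX_i^k$, which becomes $[w]_iM$ after applying the shift $q_i^{w-1}$ on the source. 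Surjectivity of $\Phi_M$, hence $\mathrm{coker}(\Phi_M)=0$, follows by downward induction on the top $X_i$-degree: given $z=\sum_{q=0}^{\ell}m_qX_i^q$, the element $z-\Phi_M(b^{-1}m_\ell X_i^{\ell+w})$ has strictly smaller top degree, and $\ell+w\geqslant 0$ holds automatically since $w\geqslant 0$, so the induction terminates at $z=0$.

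For $w\leqslant 0$ the roles of kernel and cokernel are exchanged: now $\Phi_M$ raises the $X_i$-degree by $-w$ and its leading component is multiplication by the unit $b$, so the same top-degree argument shows $\mathrm{ker}(\Phi_M)=0$. For the cokernel, the downward induction now only reduces degrees $\geqslant -w$, since $\Phi_M(b^{-1}m_\ell X_i^{\ell+w})$ requires $\ell+w\geqslant 0$, i.e. $\ell\geqslant -w$; it therefore gives $M[X_i]=\mathrm{im}(\Phi_M)+\bigoplus_{q=0}^{-w-1}MX_i^q$, and this sum is direct because any nonzero element of $\mathrm{im}(\Phi_M)$ has top $X_i$-degree $\geqslant -w$. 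Hence $\mathrm{coker}(\Phi_M)\simeq\bigoplus_{q=0}^{-w-1}MX_i^q=[-w]_iM$ after applying the shift $q_i^{-1-w}$ on the target.

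There is no genuine obstacle here: the entire content sits in Proposition \ref{phi}. The only points requiring a little care are the two grading-shift identities above (valid for every integer $w$, including $w=0$, where both sides are empty sums equal to $0$) and the legitimacy of the degree-by-degree induction, which is fine because every element of $M[X_i]$ is a polynomial of finite degree in $X_i$ even though $M$ itself may be infinite-dimensional over $K$.
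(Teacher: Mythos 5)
Your proof is correct and matches the paper's intent: the paper simply states the corollary as an ``immediate consequence'' of Proposition~\ref{phi} with no further argument, and your triangularity-by-$X_i$-degree argument, together with the two grading-shift bookkeeping identities, is exactly the computation that assertion leaves implicit.
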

	
	With this proved, we can now show that the maps $\rho_w$ are isomorphisms. We separate two cases depending on whether the weight is positive or negative.
	
	\begin{prop}
		Let $M$ be a subquotient of $\mathrm{ad}_{E_i}^{(n)}(E_{j_r}\ldots E_{j_1})$ for some $n\geqslant 0$ and $j_1,\ldots, j_r \in I \setminus \lbrace i \rbrace$. Let $\beta = ni+j_1+\ldots+j_r$ and let $w=\left< i^{\vee},\beta\right>$. Assume that $w \leqslant 0$. Then $\rho_w$ is an isomorphism.
	\end{prop}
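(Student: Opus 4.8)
The plan is to read off the statement from the snake‑lemma sequence \eqref{snake} via a short five lemma. Since $w\leqslant 0$, Corollary \ref{ckphi} gives $\ker(\Phi_M)=0$ and $\mathrm{coker}(\Phi_M)=[-w]_iM$, so by Lemma \ref{g=phi} the sequence \eqref{snake} is a short exact sequence
\[
0\rightarrow \mathrm{ad}_{E_i}\mathrm{ad}_{F_i}(M)\xrightarrow{\ \varphi\ }\mathrm{ad}_{F_i}\mathrm{ad}_{E_i}(M)\xrightarrow{\ \pi\ }[-w]_iM\rightarrow 0,
\]
where $\pi$ is the composite of the identification $\mathrm{ad}_{F_i}\mathrm{ad}_{E_i}(M)\simeq\mathrm{coker}(F_i(\tau_{E_i,M}))$ with the connecting isomorphism $\mathrm{coker}(F_i(\tau_{E_i,M}))\xrightarrow{\sim}\mathrm{coker}(\Phi_M)=[-w]_iM$ of the snake lemma. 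Writing $\rho_w$ as the row vector $[\,\varphi\ \ \psi\,]$, with $\psi=[\,\eta\ \ \mathcal{F}x\circ\eta\ \ \cdots\ \ \mathcal{F}x^{-w-1}\circ\eta\,]\colon[-w]_iM\rightarrow\mathrm{ad}_{F_i}\mathrm{ad}_{E_i}(M)$, and using $\pi\circ\varphi=0$, one gets a morphism of short exact sequences from the split sequence $0\to\mathrm{ad}_{E_i}\mathrm{ad}_{F_i}(M)\to\mathrm{ad}_{E_i}\mathrm{ad}_{F_i}(M)\oplus[-w]_iM\to[-w]_iM\to 0$ to the sequence above, whose left vertical map is the identity, middle map is $\rho_w$, and right vertical map is $\pi\circ\psi\in\mathrm{End}([-w]_iM)$. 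By the short five lemma, it thus suffices to prove that $\pi\circ\psi$ is invertible.

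The next step is to make $\pi\circ\psi$ explicit by tracing $\eta$, $x$ and $\pi$ through the commutative diagram \eqref{comdia} and the Mackey isomorphism of Proposition \ref{ll}. Concretely, one checks that through $F_i(E_iM)\simeq q_i^2E_iF_i(M)\oplus M[X_i]$ the unit $\eta_M$ is represented by $m\mapsto mX_i^0$ in the $M[X_i]$‑summand, that postcomposing with $\mathcal{F}x^k$ replaces $mX_i^0$ by $mX_i^k$, and that $W_M$ restricted to the $M[X_i]$‑summand is the identity; hence $\mathcal{F}x^k\circ\eta_M$ corresponds, through $W_M$, to the element $mX_i^k$ of $M[X_i]$, and $\pi$ then reads its class modulo $\mathrm{im}(\Phi_M)$. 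By Proposition \ref{phi} the map $\Phi_M$ is ``triangular'' in the variable $X_i$ with invertible leading coefficients, so the classes of $mX_i^0,\ldots,mX_i^{-w-1}$ form a basis of $\mathrm{coker}(\Phi_M)=[-w]_iM$ and the matrix of $\pi\circ\psi$ in this basis is lower unitriangular (after matching the grading shifts in $[-w]_iM$). Therefore $\pi\circ\psi$ is invertible, and $\rho_w$ is an isomorphism.

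The main obstacle is the bookkeeping in the second step: one must pin down completely the chain of identifications between $\mathrm{ad}_{F_i}\mathrm{ad}_{E_i}(M)$, $\mathrm{coker}(F_i(\tau_{E_i,M}))$ and $\mathrm{coker}(\Phi_M)$, extract the precise description of the unit $\eta_M$ through Proposition \ref{ll}, and verify that the triangular structure of $\Phi_M$ coming from Proposition \ref{phi} is compatible with the grading shifts in $[-w]_iM$. Once these explicit identifications are in place, the remainder is a formal diagram chase. (Here one uses, via the Remark following Proposition \ref{phi}, that Propositions \ref{phi} and \ref{ckphi} apply to any subquotient $M$ of a module $\mathrm{ad}_{E_i}^{(n)}(E_{j_r}\ldots E_{j_1})$.)
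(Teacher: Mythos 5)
Your proposal is correct and, at its core, is the same proof as the paper's. Both arguments start from the snake-lemma short exact sequence (\ref{snake}) with $\ker(\Phi_M)=0$ and $\mathrm{coker}(\Phi_M)\simeq[-w]_iM$, and both rest on the same key computation: tracing $\mathcal{F}x^k\circ\eta_M$ through the Mackey isomorphism of Proposition \ref{ll} and observing that it lands on $mX_i^k$ in the $M[X_i]$-summand. The packaging differs slightly. The paper takes the canonical right inverse $A_M$ of $W_M$, shows the induced map $\overline{A}_M:[-w]_iM\to\mathrm{ad}_{F_i}\mathrm{ad}_{E_i}(M)$ is a section, and then checks that $\overline{A}_M$ coincides with $\psi$; this immediately splits the sequence with $\psi$ as the section. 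You instead invoke the short five lemma and reduce to showing $\pi\circ\psi$ invertible, then compute it. These are two sides of the same observation, since $\overline{A}_M=\psi$ is equivalent to $\pi\circ\psi=\mathrm{id}$. One small remark: when you conclude that $\pi\circ\psi$ is ``lower unitriangular'' you are being unnecessarily cautious — the computation actually shows that for $0\leqslant k\leqslant -w-1$ the element $mX_i^k$ has $X_i$-degree strictly below $-w$, so it is untouched by $\mathrm{im}(\Phi_M)$ (whose elements, by Proposition \ref{phi}, have leading $X_i$-degree $\geqslant -w$), and hence $\pi\circ\psi$ is exactly the identity.
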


	\begin{proof}
		Given Lemma \ref{g=phi} and Corollary \ref{ckphi}, the short exact sequence (\ref{snake}) becomes
		\[
			0\rightarrow \mathrm{ad}_{E_i}\mathrm{ad}_{F_i}(M) \xrightarrow{\varphi} \mathrm{ad}_{F_i}\mathrm{ad}_{E_i}(M) \rightarrow [-w]_iM \rightarrow 0.
		\]
		We also have a commutative diagram
		\[
			\xymatrix{
				q_i^{-1-w}F_i(E_iM) \ar[r]^{W_M} \ar[d] & q_i^{-1-w}M[X_i] \ar[d] & [-w]_iM \ar[l]_-{\supseteq} \ar[ld]_{\sim} \\
				\mathrm{ad}_{F_i}\mathrm{ad}_{E_i}(M) \ar[r] & \mathrm{coker}(\Phi_M) &
			}
		\]
		The map $W_M$ admits a right inverse $A_M$ given by the direct sum decomposition
		\[
			1_{i,\beta}H_{i+\beta} 1_{i,\beta} = H_{i,\beta} \oplus q_i^2(H_{\beta}1_{i,\beta-i}\otimes_{H_{\beta-i}}1_{i,\beta-i}H_{\beta}).
		\]
		This right inverse provides a map $\overline{A}_M : [-w]_iM \rightarrow \mathrm{ad}_{F_i}\mathrm{ad}_{E_i}(M)$ that splits the short exact sequence above. Let us compute the map $\overline{A}_M$. Let $y \in [-w]_iM$, we view $y$ as an element of $q_i^{-1-w}M[X_i]$ that we write in the form
		\[
			y = \sum_{k=0}^{-w-1} m_kX_i^{k}
		\]
		where $m_{0},\ldots,m_{-w-1} \in M$. Then we have
		\[
			A_M(y) = \sum_{k=0}^{-w-1} x_{n+r+1}^k1_{i,\beta}\otimes_{H_{\beta}} m_k
		\]
		So
		\begin{align*}
			\overline{A}_M(y) & = \sum_{k=0}^{-w-1} x_{n+r+1}^k1_{i,\beta}\otimes_{H_{\beta}^i} m_k \\
			&= \sum_{k=0}^{-w-1} x_{n+r+1}^k\eta_M(m_k).
		\end{align*}
		Hence $\overline{A}_M$ is exactly the component of $\rho_w$ on $[-w]_iM$, so $\rho_w$ is an isomorphism.
	\end{proof}

	\begin{prop}
		Let $M$ be a subquotient of $\mathrm{ad}_{E_i}^{(n)}(E_{j_r}\ldots E_{j_1})$ for some $n\geqslant 0$ and $j_1,\ldots, j_r \in I \setminus \lbrace i \rbrace$. Let $\beta = ni+j_1+\ldots+j_r$ and let $w=\left< i^{\vee},\beta\right>$. Assume that $w \geqslant 0$. Then $\rho_w$ is an isomorphism.
	\end{prop}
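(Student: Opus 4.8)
The plan is to run the mirror image of the argument just given for the case $w\leqslant 0$. For $w\geqslant 0$, Corollary \ref{ckphi} and Lemma \ref{g=phi} turn the four term exact sequence (\ref{snake}) into a short exact sequence
\[
0\longrightarrow[w]_iM\xrightarrow{\;\iota\;}\mathrm{ad}_{E_i}\mathrm{ad}_{F_i}(M)\xrightarrow{\;\varphi\;}\mathrm{ad}_{F_i}\mathrm{ad}_{E_i}(M)\longrightarrow 0,
\]
where $\iota$ is the connecting homomorphism of the snake lemma and $\varphi$ is the natural transformation (\ref{varphi}), which is the first component of the column vector $\rho_w$. Since $\varphi$ is exactly the cokernel projection of $\iota$, the map $\rho_w$ is an isomorphism if and only if the remaining components $\varepsilon\circ x^{j}\mathrm{ad}_{F_i}$ for $j=0,\dots,w-1$ restrict to an isomorphism $\mathrm{im}(\iota)\xrightarrow{\sim}[w]_iM$. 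By naturality of $\rho_w$ it is enough to treat $M=\mathrm{ad}_{E_i}^{(n)}(E_{j_r}\ldots E_{j_1})$, the case of a general subquotient then following as in Proposition \ref{phi}; and since such an $M$ is cyclic, generated by the element $c$ of Proposition \ref{phi}, everything reduces to evaluating the $w\times w$ matrix $B=(B_{jk})_{0\leqslant j,k\leqslant w-1}$ with $B_{jk}=(\varepsilon\circ x^{j}\mathrm{ad}_{F_i})(\iota(cX_i^{k}))\in M$, and showing that $B$ is invertible.

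To make $\iota$ explicit I would apply the construction of the snake connecting map to the diagram (\ref{comdia}): for $k<w$ one has $cX_i^{k}\in\ker\Phi_M$, and lifting $cX_i^{k}$ along $S_M$ to $c_k=x_{n+r+1}^{k}\tau_{[n+r\downarrow 1]}1_{\beta,i}\otimes_{H_\beta}c$, the element $\iota(cX_i^{k})$ is the class, in $\mathrm{ad}_{E_i}\mathrm{ad}_{F_i}(M)$, of the component of $F_i(\tau_{E_i,M})(c_k)$ lying in the summand $q_i^{2}E_iF_i(M)$ of the Mackey isomorphism of Proposition \ref{ll} (equivalently, the preimage of that component under $T_M$). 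This component has in fact already been computed in the proof of Proposition \ref{phi}: it is the sum $\sum_{\ell\leqslant n-w}p_\ell\,\tau_{[r+1\uparrow n+r]}\,x_{n+r}^{\,k+\ell}\,\tau_{\omega_0[r+1,n+r]}$ that was discarded there modulo $H_\beta 1_{i,\beta-i}\otimes_{H_{\beta-i}}1_{i,\beta-i}H_\beta$, with $p_{n-w}\in K^{\times}$. Rewriting this sum through the explicit formula of Proposition \ref{ll} produces an explicit element of $E_iF_i(M)=H_\beta 1_{i,\beta-i}\otimes_{H_{\beta-i}}(1_{i,\beta-i}M)$, hence of $\mathrm{ad}_{E_i}\mathrm{ad}_{F_i}(M)$.

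It then remains to apply $\varepsilon\circ x^{j}\mathrm{ad}_{F_i}$ to this element. Since $x$ acts on $\mathrm{ad}_{E_i}$ by multiplication by the polynomial generator labelling the freshly adjoined $i$-strand and $\varepsilon$ is the counit, collapsing the $H_\beta$-factor into $M$, this amounts to moving the additional power $x_{n+r+1}^{j}$ past the various $\tau$'s using relation (\ref{Taux}) of Definition \ref{klr} and Lemma \ref{compnil}, and then using the vanishing relations of Proposition \ref{annilpol} inside $H_\beta^i$ to discard the remaining terms — precisely the computation of Proposition \ref{phi} run backwards. The expected outcome is that $B$ is triangular with respect to the ordering of the basis dictated by the grading (the degree of $B_{jk}$ forcing $B_{jk}=0$ for one half of the pairs $(j,k)$, which kills the corresponding map since $M$ is cyclic) and that its diagonal entries are units of $K$ times $\mathrm{id}_M$, so that $B$ is invertible and $\rho_w$ is an isomorphism. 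I expect the main obstacle to be exactly this last computation: pushing the powers of $x_{n+r+1}$ through and pinning down the diagonal coefficients as units, which requires the same bookkeeping with Propositions \ref{annilcenter} and \ref{annilpol} and Lemma \ref{multistrand} as in the proof of Proposition \ref{phi}.
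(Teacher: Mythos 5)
Your plan follows the paper's proof essentially step for step: reduce $\rho_w$ being an isomorphism to the composite $\Gamma\delta$ of the remaining components of $\rho_w$ with the connecting map being an automorphism of $[w]_iM$, reduce by naturality and cyclicity to evaluating on the generator $c$ of $\mathrm{ad}_{E_i}^{(n)}(E_{j_r}\ldots E_{j_1})$, recycle the computation of $\tau_{E_i,M}(c_k)$ from Proposition~\ref{phi} to make $\delta(cX_i^k)$ explicit, and conclude from the triangularity of the resulting matrix with unit diagonal. This is exactly the paper's argument, and your expected outcome is correct (the paper's formula~(\ref{gamdel}) gives the matrix in anti-triangular form with unit anti-diagonal).

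The one inaccuracy is your proposed mechanism for the triangularity, namely that ``the degree of $B_{jk}$ forc[es] $B_{jk}=0$ for one half of the pairs.'' After incorporating the shifts, $\Gamma\delta$ is a degree-zero endomorphism of $[w]_iM=\bigoplus_{k}q_i^{\,w-1-2k}M$, so the entry $B_{jk}$ is a self-map of $M$ of degree $2d_i(k-j)$; these degrees are of both signs across the block that needs to vanish, and there is no a priori reason that such nonzero-degree self-maps of $M$ must be zero (central polynomials already provide plenty of them). The vanishing is not a degree constraint on $M$ but a polynomial identity in the KLR algebra: the coefficient of $cX_i^u$ in $\Gamma\delta(cX_i^k)$ is $\sum_\ell p_\ell\,\partial_{[r+1\uparrow n+r-1]}(x_{n+r}^{\,k+\ell+u})$, and the length-$(n-1)$ iterated Demazure operator $\partial_{[r+1\uparrow n+r-1]}$ kills $x_{n+r}^{\,d}$ whenever $d<n-1$; since $\ell\leqslant n-w$, this forces vanishing unless $u\geqslant w-1-k$, and the surviving leading term comes with the unit $p_{n-w}$. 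You rightly flag this as the remaining work, and it is the content of the paper's computation; but the degree heuristic would not produce it, so be sure to run the Demazure computation rather than trying to argue from gradings.
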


	\begin{proof}
		Given Lemma \ref{g=phi} and Corollary \ref{ckphi}, the short exact sequence (\ref{snake}) becomes
		\[
			0\rightarrow [w]_iM \xrightarrow{\delta} \mathrm{ad}_{E_i}\mathrm{ad}_{F_i}(M) \xrightarrow{\varphi} \mathrm{ad}_{F_i}\mathrm{ad}_{E_i}(M) \rightarrow 0
		\]
		where $\delta$ is the connecting map given by the snake lemma. Let $\Gamma$ be the component of $\rho_w$ which maps in $[w]_iM$. To prove that $\rho_w$ is an isomorphism, it suffices to prove that $\Gamma\delta$ is an isomorphism. To this end, we prove that there exists an invertible element $b\in K$ such that for all $m \in M$ and $k\in \left \{ 0,\ldots,w-1\right \}$, we have
		\begin{equation}\label{gamdel}
			\Gamma\delta(mX_i^{k})=bmX_i^{w-1-k} + \sum_{w-1-k<\ell\leqslant w-1} MX_{i}^{\ell}.
		\end{equation}
		By naturality, it suffices to prove this for $M=\mathrm{ad}_{E_i}^{(n)}(E_{j_r}\ldots E_{j_1})$. In this case, $M$ is cyclic, generated by the class $c$ of $\tau_{\omega_0[r+1,n+r]}1_{ni,j_r\ldots,j_1}$ in $\pi_i(E_i^{(n)}E_{j_r}\ldots E_{j_1})$, and it suffices to treat the case $m=c$. We start by computing $\delta(cX_i^{k})$, which we do by chasing the diagram (\ref{comdia}). We resume the computations done in the proof of Proposition \ref{phi}. Recall that $cX_i^{k} = S_M(c_k)$ with $c_k=x_{n+r+1}^k\tau_{[n+r\downarrow 1]}1_{\beta,i} \otimes_{H_{\beta}} x_{r+2}\ldots x_{r+n}^{n-1}c$. We proved that $\tau_{E_i,M}(c_k) =c'_k\otimes_{H_{\beta}}x_{r+2}\ldots x_{r+n}^{n-1}c$, where $c'_k$ is an element proved to be equal to
		\begin{align*}
			c'_k &= \sum_{\ell \leqslant n-w}p_{\ell}\tau_{[r+1\uparrow n+r]}x_{n+r}^{k+\ell}\tau_{\omega_0[r+1,n+r]} \\
			& \quad + \sum_{\substack{\ell \leqslant n-w \\ m \leqslant k+\ell-1}} p_{\ell}x_{n+r+1}^{m}\partial_{[r+1\uparrow n+r-1]}(x_{n+r}^{k+\ell-1-m})\tau_{\omega_0[r+1,n+r]}
		\end{align*}
		where $p_{\ell} \in K[x_1,\ldots,x_{n+r}]$, with $p_{n-w} \in K^{\times}$. Since $cX_i^k \in \mathrm{ker}(\Phi_M)$, the second sum vanishes (this can also be checked directly from the fact that $k\leqslant w-1$). Thus we have
		\[
			\tau_{E_i,M}(c_k) = T_M\left( \sum_{\ell\leqslant n-w} p_{\ell}\tau_{[r+1\uparrow n+r-1]}1_{i,\beta-i}\otimes_{H_{\beta-i}}x_{n+r}^{k+{\ell}}c\right),
		\]
		and
		\[
			\delta(cX_i^k) = \sum_{\ell\leqslant n-w} p_{\ell}\tau_{[r+1\uparrow n+r-1]}1_{i,\beta-i}\otimes_{H_{\beta-i}^i}x_{n+r}^{k+\ell}c.
		\]
		So
		\begin{align*}
			\Gamma\delta(cX_i^k) &= \sum_{\substack{\ell \leqslant n-w \\ u \leqslant w-1}} p_{\ell}\partial_{[r+1\uparrow n+r-1]}(x_{n+r}^{k+\ell+u})cX_i^u.
		\end{align*}
		We have $\partial_{[r+1\uparrow n+r-1]}(x_{n+r}^{k+\ell+u})=0$ unless $u \geqslant w-1-k$, and we get
		\[
			\Gamma\delta(cX_i^k) = p_{n-w}cX_i^{w-1-k} + \sum_{w-1-k<\ell\leqslant w-1} MX_{i}^{\ell}.
		\]
		Since $p_{n-w} \in K^{\times}$, we have established equation (\ref{gamdel}). So $\Gamma\delta$ is an isomorphism and the proof is complete.
	\end{proof}
	
	 Hence, we have proved that Theorem \ref{weyl} holds for all the subquotients of modules of the form $\mathrm{ad}_{E_i}^{(n)}(E_{j_r}\ldots E_{j_1})$. To conclude that Theorem \ref{weyl} holds for every module in $\mathcal H[i]$, we use the following obvious lemma.
	 
	\begin{lem}
		Let $\mathcal C, \mathcal D$ be abelian categories, $G,G':\mathcal C \rightarrow \mathcal D$ be exact functors, and $\alpha: G \rightarrow G'$ be a natural transformation. Assume that we have a collection $\mathcal M$ of objects of $\mathcal C$ such that $\mathcal C$ is generated by $\mathcal M$ as a Serre subcategory, and $\alpha_M$ is an isomorphism for any subquotient $M$ of an object of $\mathcal M$. Then $\alpha$ is an isomorphism.   
	\end{lem}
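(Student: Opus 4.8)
The plan is to exhibit a Serre subcategory of $\mathcal C$ that contains $\mathcal M$ and on which $\alpha$ is invertible; since by hypothesis the Serre subcategory of $\mathcal C$ generated by $\mathcal M$ is $\mathcal C$ itself, such a subcategory must be all of $\mathcal C$, and then $\alpha_M$ is an isomorphism for every $M\in\mathcal C$ (each object being a subquotient of itself), i.e. $\alpha$ is an isomorphism of functors. The one point that requires care is that the naive candidate --- the full subcategory of those $M$ with $\alpha_M$ an isomorphism --- is closed under extensions but \emph{not} under subobjects in general, so it need not be a Serre subcategory. To fix this I would instead let $\mathcal N$ be the full subcategory of objects $M\in\mathcal C$ such that $\alpha_N$ is an isomorphism for \emph{every} subquotient $N$ of $M$.

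First, $\mathcal M\subseteq\mathcal N$: this is exactly the hypothesis placed on $\mathcal M$. Closure of $\mathcal N$ under subquotients is immediate, since a subquotient of a subquotient of $M$ is again a subquotient of $M$; in the graded setting, closure under the shift $q$ is likewise immediate, because $q$ is an autoequivalence and $\alpha$, being natural, commutes with $q$. The substantive step is closure under extensions. Given a short exact sequence $0\to M'\to M\to M''\to 0$ with $M',M''\in\mathcal N$, and an arbitrary subquotient $N$ of $M$, I would use the standard fact that $N$ fits into a short exact sequence $0\to N'\to N\to N''\to 0$ in which $N'$ is a subquotient of $M'$ and $N''$ a subquotient of $M''$. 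Then $\alpha_{N'}$ and $\alpha_{N''}$ are isomorphisms, and applying the exact functors $G,G'$ and the natural transformation $\alpha$ produces a commutative diagram with exact rows
\[
\xymatrix{
0 \ar[r] & G(N') \ar[r] \ar[d]_{\alpha_{N'}} & G(N) \ar[r] \ar[d]_{\alpha_{N}} & G(N'') \ar[r] \ar[d]_{\alpha_{N''}} & 0 \\
0 \ar[r] & G'(N') \ar[r] & G'(N) \ar[r] & G'(N'') \ar[r] & 0
}
\]
to which the five lemma applies, giving that $\alpha_N$ is an isomorphism. As $N$ ranges over all subquotients of $M$, this shows $M\in\mathcal N$, so $\mathcal N$ is closed under extensions.

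Hence $\mathcal N$ is a Serre subcategory of $\mathcal C$ containing $\mathcal M$; it therefore contains the Serre subcategory generated by $\mathcal M$, which is $\mathcal C$, so $\mathcal N=\mathcal C$ and the lemma follows. I expect the only real obstacle to be this bit of bookkeeping: recognizing that one should pass from ``objects where $\alpha$ is an isomorphism'' to the hereditary subcategory $\mathcal N$, and invoking the ``a subquotient of an extension is an extension of subquotients'' fact together with the five lemma. Everything else is formal, which is presumably why the statement is labelled obvious.
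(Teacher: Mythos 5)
Your proof is correct. The paper states this lemma without proof (labelling it ``obvious''), so there is no argument to compare against; the argument you give is the standard one, and your key observation — that the naive subcategory $\{M : \alpha_M \text{ iso}\}$ is closed under extensions but not subquotients, which is why one must pass to the hereditary subcategory $\mathcal N$ of objects all of whose subquotients have $\alpha$ invertible — is precisely the point that makes this ``obvious'' lemma require a moment's thought. The remaining ingredients (that a subquotient of a middle term of a short exact sequence sits in a short exact sequence with a subquotient of each end, and the five lemma) are routine and you invoke them correctly.
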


	By Theorem \ref{genforhi}, modules of the form $\mathrm{ad}_{E_i}^{(n)}(E_{j_r}\ldots E_{j_1})$ generate $\mathcal H[i]$ as a Serre subcategory. Since Theorem \ref{weyl} holds for their subquotients, we conclude that it holds for every module in $\mathcal H[i]$. 

	\bigskip
	
	\section{Projective resolutions}
	
	In $U$, there are simple formulas for $\mathrm{ad}_{e_i}^n$ and $\mathrm{ad}_{e_i}^{(n)}$ that can be obtained by induction. For all $y \in U$ of weight $\beta \in Q^+$ we have
	\begin{equation}\label{sum}\begin{split}
		&\mathrm{ad}_{e_i}^n(y) = \sum_{k=0}^{n} (-1)^kq_i^{k(n-1+\left<i^{\vee},\beta\right>)} \left[ \begin{array}{c} n \\ k \end{array} \right]_i e_i^{n-k}ye_i^k, \\
		&\mathrm{ad}_{e_i}^{(n)}(y) = \sum_{k=0}^{n} (-1)^kq_i^{k(n-1+\left<i^{\vee},\beta\right>)} e_i^{(n-k)}ye_i^{(k)},
	\end{split}\end{equation}
	where \[\left[ \begin{array}{c} n \\ k \end{array} \right]_i = \frac{[n]_i!}{[k]_i![n-k]_i!}\] is the $q_i$-binomial coefficient. In this section, we introduce complexes $\mathrm{Ad}_{E_i}^n(M)$ and $\mathrm{Ad}_{E_i}^{(n)}(M)$ for $M \in \mathcal H[i]$. These complexes categorify the alternating sums (\ref{sum}). We prove that their cohomology is concentrated in top degree, and is equal to $\mathrm{ad}_{E_i}^n(M)$ and $\mathrm{ad}_{E_i}^{(n)}(M)$ respectively. From this we obtain projective resolutions for the generators of $\mathcal H[i]$.
	
	The higher order quantum Serre relations in $U^+$ state that for $j \in I \setminus \lbrace i \rbrace$, $m\geqslant 0$ and $n>-mc_{i,j}$ we have
	\[
		\mathrm{ad}_{e_i}^{(n)}(e_j^m)=0.
	\]
	We can categorify this in two (equivalent) ways. Firstly, the module $\mathrm{ad}_{E_i}^{(n)}(E_j^m)$ is zero when $n>-mc_{i,j}$, as follows from Proposition \ref{vanish}. We also prove that the complex $\mathrm{Ad}_{E_i}^{(n)}(E_j^{m})$ is null-homotopic when $n>-mc_{i,j}$, which categorifies the vanishing of the alternating sum.
	
	\begin{conv}
		All the complexes we write are cochain complexes (so they have degree +1 differential). Given a complex $M = (\ldots \rightarrow M^r \xrightarrow{d^r} M^{r+1} \rightarrow \ldots)$ we denote by $H^r(M)= \mathrm{ker}(d^r)/\mathrm{Im}(d^{r-1})$ its $r^{\mathrm{th}}$ cohomology group. Given complexes $M,N$ with respective differentials $d_M,d_N$, and a morphism of complexes $f : M \rightarrow N$, its cone is the complex $\mathrm{Cone}(f)$ defined by
		\[
			\mathrm{Cone}(f)^r = M^{r+1} \oplus N^r,
		\] 
		with differential
		\[
			\left[\begin{array}{cc}
				-d_M^{r+1} & 0 \\
				f_r & d_N^{r}
			\end{array}\right] : M^{r+1} \oplus N^r \rightarrow M^{r+2} \oplus N^{r+1}.
		\]
		Then there is a long exact sequence in cohomology
		\[
			\cdots \rightarrow H^k(M) \xrightarrow{H^k(f)} H^k(N) \rightarrow H^k(\mathrm{Cone}(f)) \rightarrow H^{k+1}(M) \rightarrow \cdots.
		\]
	\end{conv}

	\medskip
	
	\subsection{Adjoint action as a complex} We start by constructing the complex $\mathrm{Ad}_{E_i}^n(M)$ for $M \in \mathcal H[i]$. To do this, we need to express the $q_i$-binomial coefficients in a combinatorial way. Let $\mathcal P_n^k$ be the set of subsets of $\lbrace1,\ldots,n\rbrace$ containing $k$ elements. For $S \in \mathcal P_n^k$, we let $\Sigma(S)$ be the sum of the elements of $S$. Then we have (see \cite[Theorem 6.1]{qcal})
	\[
		\left[ \begin{array}{c} n \\ k \end{array} \right]_i =q_i^{-k(n+1)} \sum_{S \in \mathcal P_n^k} q_i^{2\Sigma(S)}.
	\]
	With this expression, we can rewrite $\mathrm{ad}_{e_i}^n(y)$ as
	\begin{equation}
	\mathrm{ad}_{e_i}^n(y) = \sum_{k=0}^{n} (-1)^kq_i^{k\left< i^{\vee},\beta\right>}\left(\sum_{S \in \mathcal P_n^k} q_i^{2(\Sigma(S) -k)}\right)e_i^{n-k}ye_i^k.
	\end{equation}
	We construct a complex which categorifies this alternating sum. The construction is based on the following elementary lemma.
	
	\begin{lem}\label{braid}
		Let $\beta \in Q^+$ of height $n$. In $H_{n+2}$, the following relations hold
		\begin{align*}
			& \tau_{[2\uparrow n+1]}\tau_{[1\uparrow n]}\tau_{n+1}1_{2i,\beta} = \tau_1\tau_{[2\uparrow n+1]}\tau_{[1\uparrow n]}1_{2i,\beta}, \\
			& \tau_{n+1} \tau_{[n\downarrow 1]}\tau_{[n+1\downarrow 2]}1_{\beta,2i} = \tau_{[n\downarrow 1]}\tau_{[n+1\downarrow 2]}\tau_11_{\beta,2i}.
		\end{align*}
	\end{lem}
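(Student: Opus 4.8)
The plan is to reduce relation (2) to relation (1) and then prove (1) by induction on $n=\vert\beta\vert$. For the reduction I would use the evident left--right symmetry of the situation: $H_{n+2}$ carries the anti\mbox{-}automorphism that reverses the order of factors and fixes each generator $1_\nu,x_k,\tau_\ell$ (this is an anti\mbox{-}automorphism because the defining relations of Definition \ref{klr} are invariant under reversing products), and under it the word $\tau_{[2\uparrow n+1]}\tau_{[1\uparrow n]}\tau_{n+1}$ of (1) is carried to $\tau_{n+1}\tau_{[n\downarrow 1]}\tau_{[n+1\downarrow 2]}$ and the idempotent $1_{2i,\beta}$ to $1_{\beta,2i}$ (after conjugating the relevant $1_\nu$'s through); hence (2) follows from (1). (Alternatively, (2) is proved by the same induction, mutatis mutandis.) For relation (1) I would first record the purely combinatorial identities $\tau_{[1\uparrow n]}\tau_{n+1}=\tau_{[1\uparrow n+1]}$ and $\tau_1\tau_{[2\uparrow n+1]}=\tau_{[1\uparrow n+1]}$, so that (1) is equivalent to
\[
\tau_{[2\uparrow n+1]}\tau_{[1\uparrow n+1]}\,1_{2i,\beta}=\tau_{[1\uparrow n+1]}\tau_{[1\uparrow n]}\,1_{2i,\beta}.
\]
Both sides are reduced expressions for one and the same element of $\mathfrak S_{n+2}$, but, as already for the relations in Lemma \ref{multistrand}, this is not formal in $H_{n+2}$, which is why an induction is needed.

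The base case $n=0$ is trivial: both sides equal $\tau_11_{2i}$. For the inductive step, fix $\beta'$ of height $n+1$. Peeling off the strand of $\beta'$ sitting immediately to the right of the $1_{2i}$\mbox{-}block, and using only that $\tau_k$ and $\tau_\ell$ commute for $\vert k-\ell\vert>1$, I would rewrite
\[
\tau_{[2\uparrow n+2]}\tau_{[1\uparrow n+2]}\,1_{2i,\beta'}=\tau_{[2\uparrow n+1]}\tau_{[1\uparrow n]}\;\tau_{n+2}\tau_{n+1}\tau_{n+2}\;1_{2i,\beta'}.
\]
Then I would apply relation \ref{Taubraid} of Definition \ref{klr} to the single triple $\tau_{n+2}\tau_{n+1}\tau_{n+2}$: its correction term is $\delta_{\nu_{n+1},\nu_{n+3}}\,\partial_{n+1,n+3}\bigl(Q_{\nu_{n+1},\nu_{n+2}}(x_{n+3},x_{n+2})\bigr)1_\nu$, and for every $\nu$ occurring in $1_{2i,\beta'}$ one has $\nu_{n+3}=\nu_{n+2}=i$; so the Kronecker delta vanishes unless $\nu_{n+1}=i$, in which case $Q_{\nu_{n+1},\nu_{n+2}}=Q_{i,i}=0$. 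Hence this braid move is correction\mbox{-}free, and after it the front factor $\tau_{[2\uparrow n+1]}\tau_{[1\uparrow n]}\tau_{n+1}=\tau_{[2\uparrow n+1]}\tau_{[1\uparrow n+1]}$ is exactly the height-$n$ word. Moving the idempotent to the left of $\tau_{n+2}\tau_{n+1}$ exposes idempotents of the form $1_{a,2i,\gamma}$ with $\gamma$ of height $n$, so the inductive hypothesis applies (extended to carry the spectator strand $a$ at the top via the right inclusion $r_{n+2}^{n+3}$, which fixes the relevant $\tau_\ell$'s) and replaces $\tau_{[2\uparrow n+1]}\tau_{[1\uparrow n+1]}$ by $\tau_{[1\uparrow n+1]}\tau_{[1\uparrow n]}$. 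The step is then closed by the combinatorial identity $\tau_{[1\uparrow n+1]}\tau_{[1\uparrow n]}\tau_{n+2}\tau_{n+1}=\tau_{[1\uparrow n+2]}\tau_{[1\uparrow n+1]}$.

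The only genuine mathematical point is the vanishing of that lone braid\mbox{-}relation correction, and it is immediate once one observes that the middle strand of the triple in question necessarily carries the colour $i$ coming from the $1_{2i}$\mbox{-}block, so that $Q_{i,i}=0$ kills the correction whenever the delta is nonzero. Everything else is bookkeeping with the commutation relations and with the compatibility of the inclusion maps with the generators, in the exact spirit of the proof of Lemma \ref{multistrand}. The main thing to be careful about is keeping track of which idempotent is active after each commutation, so as to be sure that relation \ref{Taubraid} is invoked only at that one correction\mbox{-}free triple; this is where I expect the bulk of the (routine) work to lie.
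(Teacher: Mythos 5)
Your proof is correct and rests on the same key observation as the paper's one‑sentence argument: when rewriting one reduced word into the other, every braid move $\tau_{k+1}\tau_k\tau_{k+1}\leadsto\tau_k\tau_{k+1}\tau_k$ has a polynomial correction that vanishes, either because the outer colours differ (Kronecker delta is zero) or because they are both $i$, in which case $Q_{i,i}=0$. You simply make this precise by peeling off one strand of $\beta$ at a time, so that exactly one braid move is performed at each inductive step, with the active idempotent tracked through the commutations and the inductive hypothesis imported via the right inclusion $r_{n+2}^{n+3}$. The reduction of (2) to (1) via the anti-automorphism of $H_{n+2}$ that fixes all generators and reverses products is also correct — note this is \emph{not} the $\mathrm{rev}_n$ defined in the paper, but the other standard anti-involution of KLR algebras (diagrammatically the vertical flip), and its existence does need the quick check that each relation in Definition \ref{klr} is self-opposite; the conjugation of $1_{2i,\beta}$ through the word to $1_{\beta,2i}$, which you gloss over with ``after conjugating the relevant $1_\nu$'s through,'' is a short computation with $1_\nu\tau_\omega=\tau_\omega1_{\omega^{-1}(\nu)}$ that does work out.
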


	\begin{proof}
		This just follows from applying the braid relation (\ref{Taubraid}) of Definition \ref{klr} repeatedly, noticing that we are always in the case where the polynomial error term is zero.
	\end{proof}

	\begin{defi}
		Let $M \in H_{\beta}^i\mathrm{-mod}$, $n \geqslant 0$ and put $w=\left< i^{\vee},\beta\right>$. We define a complex $\mathrm{Ad}_{E_i}^n(M)$ of $H_{\beta+ni}$-modules of the form
		\[
			0 \rightarrow q_i^{n(w+n-1)}ME_i^n \rightarrow \cdots \rightarrow \bigoplus_{S \in \mathcal{P}_n^k} q_i^{kw+2(\Sigma(S)-k)} E_i^{n-k}ME_i^k \rightarrow \cdots \rightarrow E_i^nM \rightarrow 0
		\]
		where $E_i^nM$ is in cohomological degree 0. To define the differential of $\mathrm{Ad}_{E_i}^n(M)$, we first define a map $d_{S,S'} : q_i^{kw+2(\Sigma(S)-k)} E_i^{n-k}ME_i^k \rightarrow q_i^{(k-1)w+2(\Sigma(S')-k+1)} E_i^{n+1-k}ME_i^{k-1}$ for any $S \in \mathcal{P}_n^k$ and $S' \in \mathcal P_n^{k-1}$ as follows:
		\begin{itemize}
			\item if $S'$ is not a subset of $S$, then $d_{S,S'}=0$,
			\item if $S'=S\setminus \lbrace \ell \rbrace$, let $a$ be the number of elements of $S$ which are $>\ell$. Then we define
			\begin{align*}
				d_{S,S'} &= (-1)^a\left(\tau_{[1\uparrow \ell-1-a]}ME_i^{k-1}\right) \circ \left(E_i^{n-k}\tau_{E_i,M}E_i^{k-1} \right) \circ \left(E_i^{n-k}M\tau_{[k-a\uparrow k-1]} \right) \\
				&= \left \{ \begin{array}{rcl}
					E_i^{n-k}ME_i^k & \rightarrow & E_i^{n-k+1}ME_i^{k-1}, \\
					1_{(n-k)i,\beta,i} \otimes_{H_{\beta}} m & \mapsto & (-1)^a \tau_{[k-a+1\uparrow k-a+\ell+\vert \beta \vert]} 1_{(n-k+1)i,\beta,(k-1)i} \otimes_{H_{\beta}} m.
				\end{array} \right.
			\end{align*}
			Notice that $d_{S,S'}$ has indeed the required degree $-d_i(w+2(l-1))$. 
		\end{itemize}
		Then, the differential of $\mathrm{Ad}_{E_i}^n(M)$ is defined to be the direct sum of all maps $d_{S,S'}$. This ends the definition of $\mathrm{Ad}_{E_i}^n(M)$.
	\end{defi}

	It can be checked directly using Lemma \ref{braid} that the differential we defined on $\mathrm{Ad}_{E_i}^n(M)$ indeed squares to 0. Alternatively, we can also notice that $\mathrm{Ad}_{E_i}^n(M)$ can be constructed inductively, as the following proposition explains.
	
	\begin{prop}\label{Adind}
		Let $M \in H_{\beta}^i\mathrm{-mod}$ and let $n \geqslant 0$. There is a morphism of complexes
		\[
			\tau_{E_i,\mathrm{Ad}_{E_i}^n(M)} : q_i^{\left<i^{\vee},\beta\right>+2n}\mathrm{Ad}_{E_i}^n(M)E_i \rightarrow E_i\mathrm{Ad}_{E_i}^n(M)
		\]
		defined on the component of cohomological degree $-k$ as the direct sum over $S \in \mathcal P_n^k$ of the maps
		\begin{align*}
		\tau_S &= \left(\tau_{[1\uparrow n-k]}ME_i^k\right) \circ \left(E_i^{n-k}\tau_{E_i,M}E_i^{k}\right) \circ \left(E_i^{n-k}M\tau_{[1\uparrow k]}\right)\\
		 &=\left \{ \begin{array}{rcl}
				E_i^{n-k}ME_i^{k+1} & \rightarrow & E_i^{n-k+1}ME_i^k,\\
				1_{(n-k)i,\beta,(k+1)i} \otimes_{H_{\beta}} m & \mapsto & \tau_{[1\uparrow n+\vert\beta\vert]}1_{(n-k+1)i,\beta,ki}\otimes_{H_{\beta}} m.
			\end{array} \right.
		\end{align*}
		The cone of this morphism is $\mathrm{Ad}_{E_i}^{n+1}(M)$.	
	\end{prop}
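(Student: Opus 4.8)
The plan is to verify directly that $\tau_{E_i,\mathrm{Ad}_{E_i}^n(M)}$ is a well-defined morphism of complexes, and then to compute its cone and match the result, term by term and differential by differential, with $\mathrm{Ad}_{E_i}^{n+1}(M)$. First I would check that each $\tau_S$ is a well-defined morphism of $H_{\beta+(n+1)i}$-modules: this is the same kind of computation as in Proposition \ref{welldef} and Proposition \ref{tauprod}, using that $\tau_{E_i,(-)}$ is a natural transformation and that the multi-strand commutation (Lemma \ref{xtaucom}) together with the vanishing of the idempotents $1_{\beta-i,i,\ldots}$ on $M$ kills the unwanted error terms. The explicit formula $1_{(n-k)i,\beta,(k+1)i}\otimes m \mapsto \tau_{[1\uparrow n+|\beta|]} 1_{(n-k+1)i,\beta,ki}\otimes m$ should be read as the concatenation $\tau_{[1\uparrow n-k]}\circ (E_i^{n-k}\tau_{E_i,M}E_i^k)\circ(E_i^{n-k}M\tau_{[1\uparrow k]})$, and the decomposition $\tau_{[1\uparrow n+|\beta|]} = \tau_{[1\uparrow n-k]}\tau_{[n-k+1\uparrow n-k+|\beta|]}\tau_{[n-k+|\beta|+1 \uparrow n+|\beta|]}$ (after reindexing) makes this factorization transparent, in the same spirit as Proposition \ref{tauprod}.

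Next I would verify that $\tau_{E_i,\mathrm{Ad}_{E_i}^n(M)}$ commutes with the differentials, i.e.\ that on the cohomological-degree $-k$ component, for $S \in \mathcal{P}_n^k$ and $S' \in \mathcal{P}_n^{k-1}$ we have $\tau_{S'}\circ(d_{S,S'}E_i) = (E_i d_{S,S'})\circ \tau_S$ up to the sign and grading-shift bookkeeping. Both sides, when applied to a generator $1_{(n-k)i,\beta,(k+1)i}\otimes m$, reduce to a product of $\tau$'s acting on $1_{(n-k+1)i,\beta,ki}\otimes m$; the only subtlety is that moving the ``new'' strand (the $\tau_{[1\uparrow\cdot]}$ coming from $\tau_S$) past the strands already being permuted by $d_{S,S'}$ requires the braid relations, and here Lemma \ref{braid} is exactly what guarantees the polynomial error terms vanish (all strands involved are colored $i$). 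The sign $(-1)^a$ in the definition of $d_{S,S'}$ is designed precisely so that, after tracking how $a$ changes when one passes from $(n,k,S)$ to $(n+1,k,S)$ versus to $(n+1,k+1,S\cup\{\text{new top element}\})$, the Koszul signs in the cone differential
\[
\begin{bmatrix} -d_{\mathrm{Ad}^n E_i} & 0 \\ \tau_{E_i,\mathrm{Ad}^n} & d_{\mathrm{Ad}^n} \end{bmatrix}
\]
match those prescribed for $\mathrm{Ad}_{E_i}^{n+1}(M)$.

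Finally I would identify the cone. By construction $\mathrm{Cone}(\tau_{E_i,\mathrm{Ad}_{E_i}^n(M)})^{-k} = \big(q_i^{w+2n}\mathrm{Ad}_{E_i}^n(M)E_i\big)^{-k+1} \oplus \big(E_i\mathrm{Ad}_{E_i}^n(M)\big)^{-k}$, which unwinds to $\bigoplus_{S\in\mathcal P_n^{k-1}} q_i^{w+2n}q_i^{(k-1)w+2(\Sigma(S)-k+1)}E_i^{n-k+1}ME_i^k \ \oplus\ \bigoplus_{S\in\mathcal P_n^{k}} q_i^{kw+2(\Sigma(S)-k)}E_i^{n+1-k}ME_i^k$. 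The map $S \mapsto S\cup\{n+1\}$ identifies $\mathcal P_n^{k-1}$ with the subsets of $\mathcal P_{n+1}^k$ containing $n+1$, and this set-bijection shifts $\Sigma$ by $n+1$; a short check shows $q_i^{w+2n}q_i^{(k-1)w + 2(\Sigma(S)-k+1)} = q_i^{kw + 2(\Sigma(S\cup\{n+1\})-k)}$, so the two summands together give exactly $\bigoplus_{S'\in\mathcal P_{n+1}^k} q_i^{kw+2(\Sigma(S')-k)}E_i^{n+1-k}ME_i^{k}$, the degree $-k$ term of $\mathrm{Ad}_{E_i}^{n+1}(M)$. The main obstacle I anticipate is not any single conceptual point but the careful sign/index bookkeeping in the previous paragraph: getting the definition of $a$, the grading shifts, and the Koszul sign of the cone to align exactly, so that the cone differential restricted to each pair of summands literally reproduces the maps $d_{S',S''}$ of $\mathrm{Ad}_{E_i}^{n+1}(M)$. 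Once the diagrammatic computations (all reducing to Lemmas \ref{compnil}, \ref{xtaucom}, \ref{multistrand}, \ref{braid}) are in place, the identification of the cone is essentially combinatorial.
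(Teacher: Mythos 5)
Your proposal follows essentially the same route as the paper: verify that $\tau_{E_i,\mathrm{Ad}_{E_i}^n(M)}$ commutes with the differentials via Lemma \ref{braid}, then match the terms of the cone with those of $\mathrm{Ad}_{E_i}^{n+1}(M)$ using the bijection $S \mapsto S \cup \{n+1\}$ (and its complement), and finally check that the cone differential reproduces the maps $d^{n+1}_{S,S'}$ after tracking the sign $(-1)^a$ across the shift from $n$ to $n+1$. The only step you flag as delicate—the sign/index bookkeeping—is exactly where the paper spends its effort (a short explicit case analysis on whether $n+1$ lies in $S$ and in $S'$), and the step you spend the most words on (well-definedness of each $\tau_S$) is actually automatic, since $\tau_S$ is by construction a composition of maps already known to be module homomorphisms.
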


	\begin{proof}
		We will denote the differential of $\mathrm{Ad}_{E_i}^n(M)$ by $d^n$. Let us check that the defined map is indeed a morphism of complexes. We need to prove that for $S \in \mathcal P_n^k$ and $S' \in \mathcal P_n^{k-1}$ the diagram
		\[
			\xymatrix{
			E_i^{n-k}ME_i^{k+1} \ar[r]^{d^n_{S,S'}E_i} \ar[d]_{\tau_S} &  E_i^{n-k+1}ME_i^k \ar[d]^{\tau_{S'}} \\
			E_i^{n-k+1}ME_i^k \ar[r]_{E_id^n_{S,S'}} & E_i^{n-k+2}ME_i^{k-1}	
		}
		\]
		commutes. It is clear if $S'$ is not a subset of $S$, because then $d^n_{S,S'}=0$. If $S'=S\setminus \lbrace \ell \rbrace$, let $a$ be the number of elements in $S$ that are $>\ell$. We have
		\begin{align*}
			&\tau_{S'}\circ d^n_{S,S'}E_i = \left \{ \begin{array}{rcl}
			 E_i^{n-k}ME_i^{k+1} & \rightarrow & E_i^{n-k+2}ME_i^{k-1} \\
			 1_{(n-k)i,\beta,(k+1)i}\otimes_{H_{\beta}} m & \mapsto & (-1)^a \tau_{[k-a+2\uparrow k-a+\ell+\vert \beta \vert +1]} \tau_{[1\uparrow n+\beta]} 1_{(n-k+2)i,\beta,ki} \otimes_{H_{\beta}} m
			\end{array} \right. \\
			& E_id^n_{S,S'} \circ \tau_S = \left \{ \begin{array}{rcl}
			E_i^{n-k}ME_i^{k+1} & \rightarrow & E_i^{n-k+2}ME_i^{k-1} \\
			1_{(n-k)i,\beta,(k+1)i}\otimes_{H_{\beta}} m & \mapsto & (-1)^a \tau_{[1\uparrow n+\beta]}\tau_{[k-a+1\uparrow k-a+\ell+\vert \beta \vert]} 1_{(n-k+2)i,\beta,ki} \otimes_{H_{\beta}} m
			\end{array} \right.
		\end{align*}
		By Lemma \ref{braid}, we have
		\[
			\tau_{[k-a+2\uparrow k-a+\ell+\vert \beta \vert +1]} \tau_{[1\uparrow n+\beta]} 1_{(n-k+2)i,\beta,ki} = \tau_{[1\uparrow n+\beta]} \tau_{[1\uparrow n+\beta]}\tau_{[k-a+1\uparrow k-a+\ell+\vert \beta \vert]} 1_{(n-k+2)i,\beta,ki}.
		\]
		Hence the diagram commutes, and the morphism of complexes $\tau_{E_i,\mathrm{Ad}_{E_i}^n(M)}$ is well-defined.
		
		We now need to check that the cone of this morphism is $\mathrm{Ad}_{E_i}^{n+1}(M)$. Let $w=\left< i^{\vee},\beta\right>$. The term of the cone in cohomological degree $-k$ is given by
		\begin{align*}
			&q_i^{w+2n}\left(\bigoplus_{S \in \mathcal{P}_n^{k-1}} q_i^{(k-1)w+2(\Sigma(S)-k+1)}E_i^{n+1-k}ME_i^k \right) \bigoplus \left( \bigoplus_{S \in \mathcal{P}_n^k} q_i^{kw+2(\Sigma(S)-k)} E_i^{n-k}ME_i^k \right) \\
			=& \left(\bigoplus_{S \in \mathcal{P}_n^{k-1}} q_i^{kw+2(\Sigma(S)+n+1-k)}E_i^{n+1-k}ME_i^k \right) \bigoplus \left( \bigoplus_{S \in \mathcal{P}_n^k} q_i^{kw+2(\Sigma(S)-k)} E_i^{n-k}ME_i^k \right).
		\end{align*}
		The terms of the first sum can be indexed by the $S \in \mathcal{P}_{n+1}^k$ such that $n+1 \in S$, and the terms of the second sum by the $S \in \mathcal{P}_{n+1}^k$ such that $n+1 \notin S$, and we see that we indeed get the term of cohomological degree $-k$ of $\mathrm{Ad}_{E_i}^{n+1}(M)$. Finally, we check that the differential of the cone matches that of $\mathrm{Ad}_{E_i}^{n+1}(M)$. Let $S \in \mathcal P_{n+1}^k$ and $S' \in \mathcal P_{n+1}^{k-1}$. With the parametrization of the terms of the cone given above, we let $d'_{S,S'}$ be the differential of the cone from the term indexed by $S$ to the term indexed by $S'$. By definition of the cone, we have
		\[
			d'_{S,S'} = \left \{ \begin{array}{ll}
				\tau_{S} = d^{n+1}_{S,S\setminus \lbrace n+1\rbrace}& \text{if } n+1 \in S \text{ and } S'=S\setminus \lbrace n+1 \rbrace, \\
				0 & \text{if } n+1 \in S \text{ and } S'\neq S\setminus \lbrace n+1 \rbrace, \\
				E_id^n_{S,S'} & \text{if } n+1 \notin S,S', \\
				-d^n_{S\setminus \lbrace n+1 \rbrace, S' \setminus \lbrace n+1 \rbrace}E_i & \text{if } n+1 \in S,S'.
			\end{array} \right.
		\]
		From this, we see easily that $d'_{S,S'}=d^{n+1}_{S,S'}$.
	\end{proof}

	We can now prove the main result regarding the cohomology of $\mathrm{Ad}_{E_i}^n$.

	\begin{thm}\label{coho}
		Let $M \in H_{\beta}^i\mathrm{-mod}$ and let $n \geqslant 0$. Then
		\[
			H^k(\mathrm{Ad}_{E_i}^n(M)) = \left \{ \begin{array}{ll}
				\mathrm{ad}_{E_i}^n(M) & \text{ if } k=0, \\
				0 & \text{ otherwise.}
			\end{array} \right.
		\]
	\end{thm}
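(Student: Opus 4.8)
The plan is to prove Theorem \ref{coho} by induction on $n$, using the inductive presentation of $\mathrm{Ad}_{E_i}^n(M)$ as a cone established in Proposition \ref{Adind}. For $n=0$ the complex is just $M$ concentrated in degree $0$ and the statement is trivial. For the inductive step, I would use the distinguished triangle coming from $\mathrm{Ad}_{E_i}^{n+1}(M) = \mathrm{Cone}(\tau_{E_i,\mathrm{Ad}_{E_i}^n(M)})$, which yields a long exact sequence in cohomology
\[
	\cdots \rightarrow H^k\big(\mathrm{Ad}_{E_i}^n(M)E_i\big) \xrightarrow{H^k(\tau)} H^k\big(E_i\mathrm{Ad}_{E_i}^n(M)\big) \rightarrow H^k\big(\mathrm{Ad}_{E_i}^{n+1}(M)\big) \rightarrow H^{k+1}\big(\mathrm{Ad}_{E_i}^n(M)E_i\big) \rightarrow \cdots
\]
(up to the grading shift $q_i^{\langle i^\vee,\beta\rangle+2n}$). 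Since left $i$-induction $E_i(-)$ is exact and multiplication by $E_i$ on the right is exact, both functors commute with taking cohomology, so by the inductive hypothesis $H^k(\mathrm{Ad}_{E_i}^n(M)E_i)$ and $H^k(E_i\mathrm{Ad}_{E_i}^n(M))$ vanish for $k\neq 0$ and equal $\mathrm{ad}_{E_i}^n(M)E_i$ and $E_i\,\mathrm{ad}_{E_i}^n(M)$ respectively for $k=0$. This immediately forces $H^k(\mathrm{Ad}_{E_i}^{n+1}(M))=0$ for $k\neq 0,-1$, and reduces everything to understanding the single map $H^0(\tau) : \mathrm{ad}_{E_i}^n(M)E_i \rightarrow E_i\,\mathrm{ad}_{E_i}^n(M)$.

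The crucial point is then to identify this induced map $H^0(\tau)$ with the natural transformation $\tau_{E_i,\mathrm{ad}_{E_i}^n(M)}$ of Subsection \ref{tau}. Granting that identification, Theorem \ref{tauinj} tells us $\tau_{E_i,\mathrm{ad}_{E_i}^n(M)}$ is injective with cokernel $\mathrm{ad}_{E_i}(\mathrm{ad}_{E_i}^n(M)) = \mathrm{ad}_{E_i}^{n+1}(M)$. Feeding this into the long exact sequence: injectivity of $H^0(\tau)$ gives $H^{-1}(\mathrm{Ad}_{E_i}^{n+1}(M))=0$ (the connecting map from $H^{-1}$ lands in $\ker H^0(\tau)=0$), and the cokernel computation gives $H^0(\mathrm{Ad}_{E_i}^{n+1}(M)) \cong \mathrm{coker}\,H^0(\tau) = \mathrm{ad}_{E_i}^{n+1}(M)$. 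Combined with the vanishing for $k\neq 0,-1$ observed above, this completes the induction. I would verify the identification of $H^0(\tau)$ with $\tau_{E_i,\mathrm{ad}_{E_i}^n(M)}$ by unwinding the definition of $\tau_S$ for $S=\varnothing$ in cohomological degree $0$: the map $\tau_\varnothing$ is $(\tau_{[1\uparrow n]}M)\circ(E_i^n\tau_{E_i,M})\circ(\text{id})$ on $E_i^nME_i$, i.e. exactly the string of $\tau$'s defining $\tau_{E_i,E_i^nM}$ after passing to $\pi_i$, and one checks (using Proposition \ref{coker} and the compatibility Proposition \ref{tauprod}, plus the isomorphism $\mathrm{ad}_{E_i}^n(M)\simeq \pi_i(E_i^nM)$ of Proposition \ref{adei}) that it descends on $H^0$ to the claimed natural transformation.

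The main obstacle I anticipate is precisely this last identification: one must be careful that the cohomology $H^0(\mathrm{Ad}_{E_i}^n(M))$ was itself identified with $\mathrm{ad}_{E_i}^n(M)=\pi_i(E_i^nM)$ by the inductive hypothesis, that this identification is compatible with the $E_i$-induction functors on both sides, and that under it the degree-$0$ piece of the cone morphism $\tau_{E_i,\mathrm{Ad}_{E_i}^n(M)}$ becomes $\tau_{E_i,\mathrm{ad}_{E_i}^n(M)}$ rather than merely some map with the same source and target. This is really a naturality/bookkeeping argument: it amounts to tracking the explicit generators (the classes of the relevant $\tau_{\omega_0}$-elements) through the quotient maps $E_i^nM \twoheadrightarrow \pi_i(E_i^nM)$ and checking the formulas for $\tau_S$ against the formula for $\tau_{E_i,(-)}$. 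An alternative, perhaps cleaner, route would be to first prove by a parallel induction that the whole truncation $\sigma^{\geqslant -k+1}$ of $\mathrm{Ad}_{E_i}^n(M)$ has cohomology concentrated in top degree equal to $\mathrm{ad}_{E_i}^{\,?}$ of appropriate modules, but the cone description already packages this, so I would stick with the argument above. Everything else — exactness of the induction functors, the long exact sequence of a cone, Theorem \ref{tauinj} — is available from earlier in the paper and requires no new computation.
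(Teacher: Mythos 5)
Your proposal is correct and follows essentially the same route as the paper's proof: induction on $n$, the cone presentation from Proposition \ref{Adind}, the long exact sequence in cohomology, exactness of the $E_i$-induction functors, identification of the degree-$0$ map of the cone with $\tau_{E_i,\mathrm{ad}_{E_i}^n(M)}$, and then Theorem \ref{tauinj}. The "bookkeeping" identification you flag as the main obstacle is precisely the step the paper asserts in one sentence ("From the definition of the map\ldots we see that the induced map\ldots is simply $\tau_{E_i,\mathrm{ad}_{E_i}^n(M)}$"), and your sketch of how to verify it via $\tau_\varnothing$ and Proposition \ref{coker} is the right way to make that assertion precise.
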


	\begin{proof}
		We proceed by induction on $n$. The result is clear if $n=0$. Assume that the theorem is proved for $\mathrm{Ad}_{E_i}^n(M)$. By Proposition \ref{Adind} there is a distinguished triangle
		\[
			q_i^{\left<i^{\vee},\beta\right>+2n}\mathrm{Ad}_{E_i}^n(M)E_i \rightarrow E_i\mathrm{Ad}_{E_i}^n(M) \rightarrow \mathrm{Ad}_{E_i}^{n+1}(M) \xrightarrow{[1]}.
		\]
		It gives rise to a long exact sequence in cohomology
		\[
			\cdots \rightarrow H^{k-1}(\mathrm{Ad}_{E_i}^{n+1}(M))\rightarrow q_i^{\left<i^{\vee},\beta\right>+2n}H^k(\mathrm{Ad}_{E_i}^n(M))E_i \rightarrow E_iH^k(\mathrm{Ad}_{E_i}^n(M)) \rightarrow H^k(\mathrm{Ad}_{E_i}^{n+1}(M)) \rightarrow \cdots
		\]
		We deduce from this exact sequence that $H^k(\mathrm{Ad}_{E_i}^{n+1}(M))=0$ if $k\neq0,-1$, and that there is an exact sequence
		\[
			0\rightarrow H^{-1}(\mathrm{Ad}_{E_i}^{n+1}(M))\rightarrow q_i^{\left<i^{\vee},\beta\right>+2n}\mathrm{ad}_{E_i}^n(M)E_i \rightarrow E_i\mathrm{ad}_{E_i}^n(M) \rightarrow H^0(\mathrm{Ad}_{E_i}^{n+1}(M)) \rightarrow 0.
		\]
		From the definition of the map $q_i^{\left<i^{\vee},\beta\right>+2n}\mathrm{Ad}_{E_i}^n(M)E_i \rightarrow E_i\mathrm{Ad}_{E_i}^n(M)$ in Proposition \ref{Adind}, we see that the induced map $q_i^{\left<i^{\vee},\beta\right>+2n}\mathrm{ad}_{E_i}^n(M)E_i \rightarrow E_i\mathrm{ad}_{E_i}^n(M)$ is simply $\tau_{E_i, \mathrm{ad}_{E_i}^n(M)}$. By Theorem \ref{tauinj}, this map is injective with cokernel $\mathrm{ad}_{E_i}^{n+1}(M)$. The result follows.
	\end{proof}

	\medskip

	\subsection{Divided powers complex} We now introduce a similar complex for the divided powers.
	
	\begin{defi}
		Let $M \in H_{\beta}^i\mathrm{-mod}$ with $\beta$ of height $r$, and $n \geqslant 0$. We define a complex $\mathrm{Ad}_{E_i}^{(n)}(M)$ of the form
		\[
			0 \rightarrow q_i^{n(n+w-1)} ME_i^{(n)} \rightarrow \cdots \rightarrow q_i^{k(n+w-1)} E_i^{(n-k)}ME_i^{(k)} \rightarrow \cdots \rightarrow E_i^{(n)}M \rightarrow 0
		\]
		where $E_i^{(n)}M$ is in cohomological degree 0 and $w=\left< i^{\vee},\beta\right>$. The differential of $\mathrm{Ad}_{E_i}^{(n)}(M)$ is defined by
		\begin{align*}
				d_k&= (-1)^{k-1}\left(\tau_{[1 \uparrow n-k]}ME_i^{k-1}\right)\circ\left(E_i^{n-k}\tau_{E_i,M} E_i^{k-1}\right)\\
				& =\left \{ \begin{array}{rcl} q_i^{k(n+w-1)} E_i^{(n-k)}ME_i^{(k)}&\rightarrow&q_i^{(k-1)(n+w-1)} E_i^{(n-k+1)}ME_i^{(k-1)} \\
				\tau_{\omega_0[k+r,n+r]}\tau_{\omega_0[1,k]} 1_{(n-k)i,\beta,ki} \otimes_{H_{\beta}}m &\mapsto& (-1)^{k-1} \tau_{\omega_0[k+r,n+r]}\tau_{\omega_0[1,k]}\tau_{[k\uparrow n+r]} 1_{(n-k+1)i,\beta,(k-1)i} \otimes_{H_{\beta}}m
				\end{array} \right.
		\end{align*}
	\end{defi}

	Let us explain why this is well-defined. First, by Lemma \ref{braid}, we have
	\[
		\tau_{\omega_0[k+r,n+r]}\tau_{\omega_0[1,k]}\tau_{[k\uparrow n+r]} 1_{(n-k+1)i,\beta,(k-1)i} = \tau_{[1\uparrow k+r-1]}\tau_{\omega_0[k-1+r,n+n]}\tau_{\omega_0[1,k-1]}1_{(n-k+1)i,\beta,(k-1)i}
	\]
	which proves that $d_k$ indeed takes values in the summand $E_i^{(n-k+1)}ME_i^{(k-1)}$ of $E_i^{n-k+1}ME_i^{k-1}$. Furthermore, in $H_{\beta+ni}1_{(n-k+2)i,\beta,(k-2)i}$ we have
	\begin{align*}
		\tau_{\omega_0[k+r,n+r]}\tau_{\omega_0[1,k]}\tau_{[k\uparrow n+r]}\tau_{[k-1\uparrow n+r]} &= \tau_{\omega_0[k+r,n+r]}\tau_{\omega_0[1,k]}\tau_{k-1}\tau_{[k\uparrow n+r]}\tau_{[k-1\uparrow n+r-1]} \\
		&=0
	\end{align*}
	where the first equality is obtained by using Lemma \ref{braid}. Hence $d_{k-1}d_k=0$, and we have indeed defined a complex.
	
	We now show, as in Theorem \ref{coho}, that the cohomology of $\mathrm{Ad}_{E_i}^{(n)}(M)$ is concentrated in degree zero. 
	\begin{thm}\label{cohodp}
		Let $M \in \mathcal H[i]$ and let $n\geqslant 0$. Then
		\[
			H^k(\mathrm{Ad}_{E_i}^{(n)}(M)) = \left \{ \begin{array}{ll}
				\mathrm{ad}_{E_i}^{(n)}(M) & \text{ if } k=0, \\
				0 & \text{ otherwise.}
			\end{array} \right.
		\]
	\end{thm}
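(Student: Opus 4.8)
The plan is to reduce Theorem~\ref{cohodp} to Theorem~\ref{coho}, which we have already proved for the non-divided-power complex $\mathrm{Ad}_{E_i}^n(M)$. The key observation is that the divided power complex $\mathrm{Ad}_{E_i}^{(n)}(M)$ should be a direct summand of $\mathrm{Ad}_{E_i}^{n}(M)$, up to a grading shift by $[n]_i!$ applied termwise: indeed, in cohomological degree $-k$ the term of $\mathrm{Ad}_{E_i}^n(M)$ is $\bigoplus_{S \in \mathcal{P}_n^k} q_i^{kw+2(\Sigma(S)-k)}E_i^{n-k}ME_i^k$, and using the structure theory of the affine nil Hecke algebra (isomorphism~(\ref{divpow}), i.e.\ $E_i^{m} \simeq [m]_i! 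E_i^{(m)}$) together with the combinatorial identity $\left[\begin{smallmatrix} n \\ k \end{smallmatrix}\right]_i = q_i^{-k(n+1)}\sum_{S \in \mathcal{P}_n^k} q_i^{2\Sigma(S)}$ already recalled in the text, one sees that this term is isomorphic to $[n-k]_i![k]_i!\left[\begin{smallmatrix} n \\ k \end{smallmatrix}\right]_i q_i^{k(n+w-1)} E_i^{(n-k)}ME_i^{(k)}$, which after simplification equals $[n]_i!$ copies (with appropriate shifts) of $q_i^{k(n+w-1)}E_i^{(n-k)}ME_i^{(k)}$, the term in $\mathrm{Ad}_{E_i}^{(n)}(M)$.

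First I would make this precise by constructing an explicit morphism of complexes $\mathrm{Ad}_{E_i}^{(n)}(M) \to \mathrm{Ad}_{E_i}^n(M)$ and a splitting $\mathrm{Ad}_{E_i}^n(M) \to \mathrm{Ad}_{E_i}^{(n)}(M)$ realizing $\mathrm{Ad}_{E_i}^{(n)}(M)$ as a direct summand after inverting nothing — i.e.\ genuinely over $K$ — using the idempotents of the affine nil Hecke algebras $H_{(n-k)i}$ and $H_{ki}$ acting on the right on $E_i^{n-k}$ and $E_i^k$. Concretely, the term $E_i^{(n-k)}ME_i^{(k)}$ sits inside $E_i^{n-k}ME_i^k$ as $(e_{n-k} \diamond 1_\beta \diamond e_k)$-times the module, and one checks that the differentials $d_k$ of the divided power complex are precisely the restrictions of (a suitable alternating sum / single representative of) the differentials $d_{S,S'}$ of $\mathrm{Ad}_{E_i}^n(M)$ to these summands — this compatibility is exactly what the computations with Lemma~\ref{braid} preceding the statement are arranged to guarantee. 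Alternatively, and perhaps more cleanly, I would argue inductively exactly as in the proof of Theorem~\ref{coho}: the divided-power analogue of Proposition~\ref{Adind} should hold, exhibiting $\mathrm{Ad}_{E_i}^{(n+1)}(M)$ as the cone of a morphism built from $\tau_{E_i,(-)}$ between shifted copies of $\mathrm{Ad}_{E_i}^{(n)}(M)$ — here one uses the relation $E_i^m \simeq [m]_i! E_i^{(m)}$ and the fact that $\mathrm{ad}_{E_i}^{(n)}(M) \simeq \pi_i(E_i^{(n)}M)$ to identify the relevant induction. Then the long exact sequence in cohomology, combined with Theorem~\ref{tauinj} (injectivity of $\tau_{E_i,(-)}$ with cokernel $\mathrm{ad}_{E_i}$) and the identification $\mathrm{ad}_{E_i}^{(n)}(M) \simeq \pi_i(E_i^{(n)}M)$, yields that $H^{-1}$ vanishes and $H^0 = \mathrm{ad}_{E_i}^{(n+1)}(M)$, with all other cohomology zero by the inductive hypothesis.

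The main obstacle I anticipate is bookkeeping: getting the grading shifts and the signs in the differential $d_k$ to match exactly those coming from either the summand decomposition of $\mathrm{Ad}_{E_i}^n(M)$ or from the inductive cone construction. In particular, verifying that $d_{k-1} d_k = 0$ for the divided power complex — which the text checks via Lemma~\ref{braid} and the vanishing $\tau_{k-1}\tau_{[k\uparrow n+r]}\tau_{[k-1\uparrow n+r-1]}\cdots = 0$ — must be seen to be consistent with the corresponding identity in $\mathrm{Ad}_{E_i}^n(M)$; and one must confirm that the primitive idempotents $e_m$ interact correctly with the strand-crossing morphisms $\tau_{[a\uparrow b]}$, which requires the commutation relations of Lemma~\ref{compnil}. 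Once the structural statement ``$\mathrm{Ad}_{E_i}^{(n)}(M)$ is a direct summand of $\mathrm{Ad}_{E_i}^n(M)$ compatible with the isomorphism $E_i^m \simeq [m]_i! E_i^{(m)}$'' is established, Theorem~\ref{cohodp} is immediate from Theorem~\ref{coho}, since cohomology commutes with taking direct summands (idempotents split) and $H^k(\mathrm{Ad}_{E_i}^n(M))$ is $\mathrm{ad}_{E_i}^n(M) \simeq [n]_i!\,\mathrm{ad}_{E_i}^{(n)}(M)$ in degree $0$ and zero otherwise.
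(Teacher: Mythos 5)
There is a genuine gap, and it sits exactly where you flag ``bookkeeping'': what you dismiss as routine is in fact the substantive content of the proof. Your ``cleaner'' alternative asserts a divided-power analogue of Proposition~\ref{Adind}, namely that $\mathrm{Ad}_{E_i}^{(n+1)}(M)$ is the cone of a morphism $\tau_{E_i,\mathrm{Ad}_{E_i}^{(n)}(M)}$ built from $\tau_{E_i,(-)}$. This is false. The cone of $q_i^{\left<i^{\vee},\beta\right>+2n}\mathrm{Ad}_{E_i}^{(n)}(M)E_i \to E_i\mathrm{Ad}_{E_i}^{(n)}(M)$ has degree $-k$ term $E_i^{(n+1-k)}ME_i^{(k-1)}E_i \oplus E_iE_i^{(n-k)}ME_i^{(k)}$, which (via $E_i^{(m)}E_i \simeq [m+1]_iE_i^{(m+1)}$, etc.) is a sum of $[n+1]_i$ shifted copies of $E_i^{(n+1-k)}ME_i^{(k)}$, whereas $\mathrm{Ad}_{E_i}^{(n+1)}(M)$ in degree $-k$ is a single copy $q_i^{k(n+w)}E_i^{(n+1-k)}ME_i^{(k)}$. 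The paper denotes this cone $\varUpsilon$ and only then proves that $\mathrm{Ad}_{E_i}^{(n+1)}(M)$ is a direct \emph{summand} of $\varUpsilon$, by constructing an explicit chain map $G$ and retraction $F$ with $FG=1$. Constructing $F$ requires introducing idempotents $f_k$, $f_k'$ in $H_{\beta+(n+1)i}$ involving the complete symmetric polynomials $h_{k-1}(x_1,x_{k+r+1},\ldots,x_{n+r+1})$, and verifying that $F$ is a chain map rests on a non-obvious identity $\tau_{\omega_0[2,k]}\tau_{[1\uparrow k-1]}x_k^a = \tau_{\omega_0[2,k]}x_1^a\tau_{[1\uparrow k-1]} + \tau_{\omega_0[2,k]}h_{a-1}(x_1,x_k)\tau_{[1\uparrow k-2]}$ in $H_k^0$, together with the relation $\tau_{[k\uparrow k+r-1]}x_{k+r}\equiv x_k\tau_{[k\uparrow k+r-1]}$ modulo the ideal $(1_{\beta-i,i})$. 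None of this is mere sign-tracking; it is the proof.

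Your first alternative --- realizing $\mathrm{Ad}_{E_i}^{(n)}(M)$ as a direct summand of $\mathrm{Ad}_{E_i}^{n}(M)$ and then quoting Theorem~\ref{coho} --- is a legitimate idea in principle (the graded rank count you give is correct), but it is also not constructed, and it is of essentially the same difficulty as the paper's $F,G$ argument. A naive term-by-term inclusion picking a single $S_0\in\mathcal P_n^k$ does not even get the shifts right (e.g.\ $kw + 2(\Sigma(S_0)-k) = k(n+w-1)$ forces $\Sigma(S_0)=k(n+1)/2$, which is not attained by any one subset when $0<k<n$ unless $n=k$), so the split injection must mix the summands indexed by different $S$ together with the thick basis of $E_i^{n-k}$ and $E_i^{k}$, and one must then check compatibility with a differential that is a sum of $n\binom{n}{k}/(n-k)$ maps $d_{S,S'}$ --- again, this is a real construction, not a check. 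In short: your outline correctly identifies the inductive shape and the ingredients (Theorem~\ref{tauinj}, Lemma~\ref{braid}, the action of the nil Hecke idempotents), but the central step --- producing the explicit splitting that makes the divided-power complex a direct summand of something whose cohomology is already known --- is missing, and the ``cone is the next divided-power complex'' shortcut does not exist.
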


	\begin{proof}
		Assume that $M \in H_{\beta}\mathrm{-mod}$ with $\beta \in Q^+$ of height $r$. The proof is by induction on $n$. The result is clear if $n=0$. Assume the result proved for some $n \geqslant 0$. There is a morphism
		\[
			\tau_{E_i,\mathrm{Ad}_{E_i}^{(n)}(M)} : q_i^{\left<i^{\vee},\beta\right> +2n}\mathrm{Ad}_{E_i}^{(n)}(M) \rightarrow E_i\mathrm{Ad}_{E_i}^{(n)}(M)
		\]
		defined on the component of cohomological degree $-k$ to be the map
		\begin{align*}
			\tau_S &= \left(\tau_{[1\uparrow n-k]}ME_i^k\right) \circ \left(E_i^{n-k}\tau_{E_i,M}E_i^{k}\right) \circ \left(E_i^{n-k}M\tau_{[1\uparrow k]}\right)\\
			&=\left \{ \begin{array}{rcl}
			E_i^{(n-k)}ME_i^{(k)}E_i & \rightarrow & E_iE_i^{(n-k)}ME_i^{(k)},\\
			\tau_{\omega_0[k+1+r,n+r+1]}\tau_{\omega_0[2,k+1]}1_{(n-k)i,\beta,(k+1)i} \otimes_{H_{\beta}} m & \mapsto & \tau_{\omega_0[k+1+r,n+r+1]}\tau_{\omega_0[2,k+1]}\tau_{[1\uparrow n+r]}1_{(n-k+1)i,\beta,ki}\otimes_{H_{\beta}} m.
			\end{array} \right.
		\end{align*}
		Note that this just the restriction of the morphism defined in Proposition \ref{Adind}, and we prove similarly that this is well-defined. Let $\varUpsilon$ be the cone of $\tau_{E_i,\mathrm{Ad}_{E_i}^{(n)}(M)}$. By the same argument as the proof of Theorem \ref{coho}, $\varUpsilon$ has its cohomology concentrated in degree zero and equal to $\mathrm{ad}_{E_i}(\mathrm{ad}_{E_i}^{(n)}(M))$.
		
		We now prove that $\mathrm{Ad}_{E_i}^{(n+1)}(M)$ is a direct summand of $\varUpsilon$, up to a shift. First, we define a map $G : \mathrm{Ad}_{E_i}^{(n+1)}(M) \rightarrow \varUpsilon$ as follows: on the component of cohomological degree $-k$, $G$ is the canonical inclusion of $E_i^{(n+1-k)}ME_i^{(k)}$ in the first summand of $E_i^{(n+1-k)}ME_i^{(k-1)}E_i \oplus E_iE_i^{(n-k)}ME_i^{(k)}$. It follows easily from the definitions that $G$ is a morphism of complexes.
		
		We need to define a left inverse to $G$. We will denote by $h_d$ the complete symmetric polynomial of degree $d$. Recall the idempotent $e_n = x_2\ldots x_n^{n-1}\tau_{\omega_0[1,n]} \in H_{ni}$. We define new idempotents $f_k,f_k'$ in $H_{\beta+(n+1)i}$ by
		\begin{align*}
			&f_k = (e_{n+1-k} \diamond 1_{\beta} \diamond e_k), \\
			& f'_k = (-1)^{k-1} h_{k-1}(x_1,x_{k+r+1},\ldots,x_{n+r+1}) (e_{n+1-k} \diamond 1_{\beta} \diamond ((e_{k-1} \diamond 1_i)\tau_{[1\uparrow k-1]})).
		\end{align*}
		Note that $f_kf_k'=f_k$ and $f_k'f_k=f_k'$. We have a morphism
		\[
			\left \{ \begin{array}{rcl}
				E_i^{n+1-k}ME_i^k & \rightarrow & E_i^{(n+1-k)}ME_i^{(k)} \\
				1_{(n+1-k)i,\beta,ki}\otimes_{H_{\beta}} m & \mapsto & f_k\otimes_{H_{\beta}} m
			\end{array} \right.
		\]
		that we will just denote $f_k$, and similarly for $f_k'$. Then we define a morphism $F :\varUpsilon \rightarrow \mathrm{Ad}_{E_i}^{(n+1)}(M)$ as follows: on the component of cohomological degree $-k$, is defined by the row matrix
		\[
			\left[f_k' \quad x_{n+r}^kf_k \right] : E_i^{(n+1-k)}ME_i^{(k-1)}E_i \bigoplus E_iE_i^{(n-k)}ME_i^{(k)} \rightarrow E_i^{(n+1-k)}ME_i^{(k)}.
		\]
		It is clear that $FG=1$, we just need to check that $F$ is indeed a morphism of complexes. This amounts to checking the following relations in $H_{\beta+(n+1)i}1_{(n+2-k)i,\beta,(k-1)i}\otimes_{H_{\beta}}H_{\beta}^i$:
		\begin{align*}
			&x_{n+r+1}^kf_k\tau_{[k\uparrow n+r]} = (1_i \circ e_{n-k}\diamond 1_{\beta} \circ e_k) \tau_{[k\uparrow n+r-1]} x_{n+r+1}^{k-1}f_{k-1},\\
			&f_k'\tau_{[k\uparrow n+r]} = (e_{n+1-k}\diamond 1_{\beta} \diamond e_{k-1} \circ 1_i)(\tau_{[1\uparrow n+r]}x_{n+r+1}^{k-1}f_{k-1} - \tau_{[k\uparrow n+r]}f_{k-1}').
		\end{align*}
		The first relation follows from the following computation
		\begin{align*}
			&(1_i\diamond e_{n-k} \diamond 1_{\beta} \diamond e_k) \tau_{[k\uparrow n+r-1]}x_{n+r+1}^{k-1}f_{k-1} \\
			&=x_{n+r+1}^{k-1}(x_{k+r+2}\ldots x_{n+r}^{n-k-1})(1_{(n+1-k)i} \diamond 1_{\beta} \diamond e_k) \tau_{\omega_0[k+r+1,n+r]} \tau_{[k\uparrow n+r-1]} (e_{n+2-k}\diamond 1_{\beta} \diamond e_{k-1}) \\
			&=x_{n+r+1}^{k-1}(x_{k+r+2}\ldots x_{n+r}^{n-k-1})(1_{(n+1-k)i} \diamond 1_{\beta} \diamond e_k) \tau_{[k\uparrow k+r-1]} \tau_{\omega_0[k+r,n+r]} (e_{n+2-k}\diamond 1_{\beta} \diamond e_{k-1}) \\
			&=x_{n+r+1}^{k-1}(x_{k+r+2}\ldots x_{n+r}^{n-k-1})(1_{(n+1-k)i} \diamond 1_{\beta} \diamond e_k) \tau_{[k\uparrow k+r-1]} x_{n+r+1}^{n+1-k} (\tau_{\omega_0[1,n+2-k]}\diamond 1_{\beta} \diamond e_{k-1}) \\
			&=x_{n+r+1}^{n}(x_{k+r+2}\ldots x_{n+r}^{n-k-1})(\tau_{\omega_0[1,n+1-k]} \diamond 1_{\beta} \diamond e_k) \tau_{[k\uparrow n+r]} \\
			&= x_{n+r+1}^k(e_{n+1-k}\diamond 1_{\beta} \diamond e_k) \tau_{[k\uparrow n+r]}.
		\end{align*}
		Note that this computation actually holds in $H_{\beta+(n+1)i}1_{(n+2-k)i,\beta,(k-1)i}$, without needing to tensor by $H_{\beta}^i$ on the right.
		
		For the second equality, we start by computing $(e_{n+1-k}\diamond 1_{\beta} \diamond e_{k-1} \circ 1_i)\tau_{[1\uparrow n+r]}x_{n+r+1}^{k-1}f_{k-1}$ and  $(e_{n+1-k}\diamond 1_{\beta} \diamond e_{k-1} \diamond 1_i)\tau_{[k\uparrow n+r]}f_{k-1}'$ separately. We have
		\begin{align*}
			&(e_{n+1-k}\diamond 1_{\beta} \diamond e_{k-1} \diamond 1_i)\tau_{[1\uparrow n+r]}x_{n+r+1}^{k-1}f_{k-1} \\
			&=(e_{n+1-k}\diamond 1_{\beta} \diamond e_{k-1} \diamond 1_i)\tau_{[1\uparrow n+r]}x_{n+r+1}^{k-1}(e_{n+2-k}\diamond 1_{\beta} \diamond e_{k-1}) \\
			&=((x_2\ldots x_{n+1-k}^{n-k})\diamond 1_{\beta} \diamond e_{k-1} \diamond 1_i)\tau_{[1\uparrow k+r-1]}h_{k-1}(x_{k+r},\ldots,x_{r+n+1})(\tau_{\omega_0[1,n+2-k]}\diamond 1_{\beta} \diamond e_{k-1}) \\
			&=(e_{n+1-k}\diamond 1_{\beta} \diamond e_k \circ 1_i) \tau_{[1\uparrow k+r-1]}h_{k-1}(x_{k+r},\ldots,x_{r+n+1}) \tau_{[k+r\uparrow n+r]}.
		\end{align*}
		Now in $H_{\beta+(n+1)i}1_{(n+2-k)i,\beta,(k-1)i}\otimes_{H_{\beta}}H_{\beta}^i$ we have
		\[
			\tau_{[k\uparrow k+r-1]}x_{k+r} = x_{k}\tau_{[k\uparrow k+r-1]}
		\]
		 because $(\tau_{[k\uparrow k+r-1]}x_{k+r} - x_{k}\tau_{[k\uparrow k+r-1]})1_{i,\beta,(k-1)i} \in (1_{i,\beta-i,i,(k-1)i})$ (this is the same argument that we used to prove that $\tau_{E_i,M}$ is well-defined in Proposition \ref{welldef}). Hence we conclude that
		 \[
		 	(e_{n+1-k}\diamond 1_{\beta} \diamond e_{k-1} \diamond 1_i)\tau_{[1\uparrow n+r]}x_{n+r+1}^{k-1}f_{k-1} = (e_{n+1-k}\diamond 1_{\beta} \diamond e_k \diamond 1_i) \tau_{[1\uparrow k-1]}h_{k-1}(x_{k},x_{k+r+1}\ldots,x_{n+r+1}) \tau_{[k\uparrow n+r]}.
		 \]
		 Similarly, we have
		 \[
		 	(e_{n+1-k}\diamond 1_{\beta} \diamond e_{k-1} \diamond 1_i)\tau_{[k\uparrow n+r]}f_{k-1}' = (e_{n+1-k}\diamond 1_{\beta} \diamond e_{k-1} \diamond 1_i) h_{k-2}(x_1,x_k,x_{k+r+1},\ldots,x_{n+r+1}) \tau_{[1\uparrow k-2]}\tau_{[k\uparrow n+r]}
		 \]
		 To conclude, we use the following formula in the affine nil Hecke algebra $H_k^0$:
		 \[
		 	\tau_{\omega_0[2,k]}\tau_{[1\uparrow k-1]}x_k^a = \tau_{\omega_0[2,k]}x_1^a\tau_{[1\uparrow k-1]} +\tau_{\omega_0[2,k]}f_{a-1}(x_1,x_k)\tau_{[1\uparrow k-2]}.
		 \]
		 This is easily derived by induction on $a$. Then we have
		 \begin{align*}
			 &(e_{n+1-k}\diamond 1_{\beta} \diamond e_{k-1} \diamond 1_i)\left(\tau_{[1\uparrow n+r]}x_{n+r+1}^{k-1}f_{k-1} - \tau_{[k\uparrow n+r]}f_{k-1}'\right)\\
			 &=(e_{n+1-k}\diamond 1_{\beta} \diamond e_{k-1} \diamond 1_i)\left(\tau_{[1\uparrow k-1]}h_{k-1}(x_{k},x_{k+r+1}\ldots,x_{n+r+1})  - h_{k-2}(x_1,x_k,x_{k+r+1},\ldots,x_{n+r+1}\right) \tau_{[1\uparrow k-2]})\tau_{[k\uparrow n+r]} \\
			 &= (e_{n+1-k}\diamond 1_{\beta} \diamond e_{k-1} \diamond 1_i)h_{k-1}(x_1,x_{k+r+1}\ldots,x_{n+r+1})\tau_{[1\uparrow n+r]} \\
			 &=f_{k}'\tau_{[k\uparrow n+r]}.
		 \end{align*}
		 This completes the proof that $FG=1$.
		 
		 So $\mathrm{Ad}_{E_i}^{(n+1)}(M)$ is a direct factor of $\varUpsilon$, hence has cohomology only in degree 0. Furthermore, on degree zero cohomology, the morphism $GF$ of $\varUpsilon$ induces the idempotent projecting on the summand $\mathrm{ad}_{E_i}^{(n+1)}(M)$ of $\mathrm{ad}_{E_i}(\mathrm{ad}_{E_i}^{(n)}(M))$. Hence $H^0(\mathrm{Ad}_{E_i}^{(n+1)}(M))=\mathrm{ad}_{E_i}^{(n+1)}(M)$.
	\end{proof}

	\medskip
	
	\subsection{Projective resolutions and Serre relations} Using Theorems \ref{coho} and \ref{cohodp}, we can construct projective resolutions.
	
	\begin{prop}\label{projres}
		Let $M$ be a module in $\mathcal H[i]$ which is projective as a module in $\mathcal H$. Then $\mathrm{Ad}_{E_i}^n(M)$ (resp. $\mathrm{Ad}_{E_i}^{(n)}(M)$) is a projective resolution of $\mathrm{ad}_{E_i}^n(M)$ (resp. $\mathrm{ad}_{E_i}^{(n)}(M)$) in $\mathcal H$.
	\end{prop}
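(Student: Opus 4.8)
Since Theorems \ref{coho} and \ref{cohodp} identify the cohomology of $\mathrm{Ad}_{E_i}^n(M)$ (resp.\ $\mathrm{Ad}_{E_i}^{(n)}(M)$) as $\mathrm{ad}_{E_i}^n(M)$ (resp.\ $\mathrm{ad}_{E_i}^{(n)}(M)$) in cohomological degree $0$ and zero elsewhere, the proposition reduces to a projectivity statement, and the plan is simply to verify it. Writing $M\in H_\beta^i\mathrm{-mod}$ and setting $w=\left< i^{\vee},\beta\right>$, the degree-$0$ cohomology of each complex is by construction the cokernel of the differential landing in its degree-$0$ term, so appending the canonical quotient maps $E_i^nM\twoheadrightarrow\mathrm{ad}_{E_i}^n(M)$ and $E_i^{(n)}M\twoheadrightarrow\mathrm{ad}_{E_i}^{(n)}(M)$ turns the two complexes into exact sequences
\[
0\to q_i^{n(w+n-1)}ME_i^n\to\cdots\to E_i^nM\to\mathrm{ad}_{E_i}^n(M)\to 0,\qquad
0\to q_i^{n(w+n-1)}ME_i^{(n)}\to\cdots\to E_i^{(n)}M\to\mathrm{ad}_{E_i}^{(n)}(M)\to 0.
\]
It therefore suffices to show that every term of these complexes is a projective object of $\mathcal H$.

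To do this I would first note that projectivity in $\mathcal H$ is stable under grading shifts and finite direct sums, and that $E_i^{(m)}$ is a direct summand of $E_i^m$ by the definition of the divided power via the idempotent $e_m\in H_{mi}$; hence it is enough to prove that $E_i^{n-k}ME_i^k$ is projective in $\mathcal H$ whenever $M$ is. Now $E_i(-)$ is the left adjoint of the exact functor $F_i$ (left $i$-restriction) and $(-)E_i$ is the left adjoint of the exact right $i$-restriction functor; a left adjoint of an exact functor preserves projectives, so both $E_i(-)$ and $(-)E_i$ send projective objects of $\mathcal H$ to projective objects of $\mathcal H$. Iterating these $n$ times on the projective module $M$ shows $E_i^{n-k}ME_i^k$ is projective, and the same applies after a grading shift. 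Finite generation is preserved at each step since $E_i$ is free of rank $1$ over $H_i$, so the displayed exact sequences are honest projective resolutions inside $\mathcal H$.

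There is no genuine obstacle left here: the substantive work, namely the acyclicity of $\mathrm{Ad}_{E_i}^n(M)$ and $\mathrm{Ad}_{E_i}^{(n)}(M)$, is exactly the content of Theorems \ref{coho} and \ref{cohodp}, which in turn rest on Theorem \ref{tauinj}. The only point requiring a little care is the bookkeeping identifying $H^0$ of each complex with the cokernel of the last differential — hence with $\mathrm{ad}_{E_i}^n(M)$ resp.\ $\mathrm{ad}_{E_i}^{(n)}(M)$ — but this is immediate from the explicit description of the differentials. An alternative to the adjunction argument above is to combine the fact that $E_i$ is itself a projective object of $\mathcal H$ with the observation that the induction product of projectives is projective, which follows from the coset decompositions (\ref{coset}): the relevant bimodules $H_{\beta+\gamma}1_{\beta,\gamma}$ are free, in particular flat and projective, on both sides.
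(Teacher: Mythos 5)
Your proposal is correct and follows the route the paper implicitly takes: Theorems \ref{coho} and \ref{cohodp} supply the acyclicity and the identification of the degree-$0$ cohomology, and the only thing left to check is that the terms of the complexes are projective objects of $\mathcal H$, which you justify correctly by observing that $E_i(-)$ and $(-)E_i$ are left adjoints of exact restriction functors (equivalently, that the induction bimodules $H_{\beta+\gamma}1_{\beta,\gamma}$ are free, so the monoidal product preserves projectives) and that $E_i^{(m)}$ is a summand of $E_i^m$ cut out by the idempotent $e_m$. The paper gives no explicit proof for this proposition, so your write-up fills in exactly the missing bookkeeping, and it does so correctly.
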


	Typically, this applies to $M=E_{j_1}\ldots E_{j_r}$ for $j_1,\ldots,j_r \neq i$. In particular, we have obtained projective resolutions for the generators of $\mathcal{H}[i]$.
	
	We now state the categorical version of the quantum higher Serre relations.

	\begin{thm}\label{Serre}
		Let $j\in I \setminus \lbrace i \rbrace$ and let $m\geqslant 0$. If $n>-mc_{i,j}$, then $\mathrm{Ad}_{E_i}^{(n)}(E_j^m)$ is null-homotopic.
	\end{thm}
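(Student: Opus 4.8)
The plan is to deduce the statement from the projective resolution result of Proposition \ref{projres} together with the vanishing criterion of Proposition \ref{vanish}, which reduces everything to a standard homological observation.

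First I would record that $E_j^m$ lies in $\mathcal H[i]$ and is projective in $\mathcal H$: as a module it is isomorphic to the regular module $H_{mj}$, hence projective, and since $j\neq i$ we have $mj-i\notin Q^+$, so $1_{mj-i,i}=0$ and $E_j^m\in\mathcal H[i]$. Thus Proposition \ref{projres} applies and $\mathrm{Ad}_{E_i}^{(n)}(E_j^m)$ is a projective resolution of $\mathrm{ad}_{E_i}^{(n)}(E_j^m)$ in $\mathcal H$. Concretely, by construction it is a \emph{bounded} complex whose term in cohomological degree $-k$ is (a shift of) $E_i^{(n-k)}E_j^mE_i^{(k)}$; each such term is a monoidal product of the projective objects $E_i^{(n-k)}$, $E_j^m$, $E_i^{(k)}$ (the $E_i^{(\ell)}$ being direct summands of the regular modules $E_i^\ell$), hence projective, since the monoidal product of projectives is projective because $H_{\gamma+\delta}1_{\gamma,\delta}$ is free as a right $H_{\gamma,\delta}$-module by (\ref{coset}). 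By Theorem \ref{cohodp} the cohomology of this complex is concentrated in degree $0$, where it equals $\mathrm{ad}_{E_i}^{(n)}(E_j^m)$.

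Next I would identify that degree-$0$ cohomology with zero. The module $\mathrm{ad}_{E_i}^{(n)}(E_j^m)$ is a module over $H_{\beta}^i$ with $\beta=ni+mj$, and $s_i(\beta)=mj-(n+mc_{i,j})i$, which lies outside $Q^+$ precisely when $n>-mc_{i,j}$. Under that hypothesis Proposition \ref{vanish} gives $H_{\beta}^i=0$, so $\mathrm{ad}_{E_i}^{(n)}(E_j^m)=0$.

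Putting this together, $\mathrm{Ad}_{E_i}^{(n)}(E_j^m)$ is a bounded acyclic complex of projective objects of $\mathcal H$, and is therefore null-homotopic, by the standard fact that an acyclic bounded-above complex of projectives is contractible. I do not anticipate a serious obstacle; the only points requiring care are the projectivity of the individual terms and the fact that the entire argument is carried out inside $\mathcal H$ (not $\mathcal H[i]$), which is precisely the setting of Proposition \ref{projres}. Alternatively, one could bypass Proposition \ref{projres} entirely and argue directly from the construction of $\mathrm{Ad}_{E_i}^{(n)}$, Theorem \ref{cohodp}, and Proposition \ref{vanish}.
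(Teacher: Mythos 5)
Your proof is correct and follows exactly the paper's argument: invoke Proposition \ref{projres} to identify $\mathrm{Ad}_{E_i}^{(n)}(E_j^m)$ as a projective resolution of $\mathrm{ad}_{E_i}^{(n)}(E_j^m)$, then use Proposition \ref{vanish} with $\beta = ni + mj$ to see that $H_{\beta}^i = 0$, so the complex is an acyclic bounded complex of projectives and hence contractible. The extra detail you supply (projectivity of the individual terms, the explicit computation of $s_i(\beta)$) is all correct but merely unpacks what the paper leaves implicit.
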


	\begin{proof}
		By Proposition \ref{projres}, $\mathrm{Ad}_{E_i}^{(n)}(E_j^m)$ is a projective resolution of $\mathrm{ad}_{E_i}^{(n)}(E_j^m)$. Assume $n>-mc_{i,j}$. By Proposition \ref{vanish}, $H_{ni+mj}^i=0$. Hence $\mathrm{ad}_{E_i}^{(n)}(E_j^m)=0$ and $\mathrm{Ad}_{E_i}^{(n)}(E_j^m)$ is null-homotopic.
	\end{proof}

	\bigskip
	
	\bibliographystyle{alpha}
	\bibliography{biblio}
	
	\bigskip
	
	{\small \textsc{UCLA Mathematics Department, Box 951555, Los Angeles, CA 90095-1555}}
	
	\textit{E-mail address:} \href{mailto:lvera@math.ucla.edu}{\texttt{lvera@math.ucla.edu}}
	
\end{document}